\newtheorem{theorem}{Theorem}
\newtheorem{cor}[theorem]{Corollary}
\newtheorem{lemma}[theorem]{Lemma}
\newtheorem{prop}[theorem]{Proposition}
\theoremstyle{definition}
\newtheorem{definition}[theorem]{Definition}
\newtheorem{example}[theorem]{Example}
\theoremstyle{remark}
\newtheorem{remark}[theorem]{Remark}
\DeclareMathOperator{\Dom}{Dom}
\def\ThesisYear{2010}
\begin{document}

\fancyhf{}

\fancyhead[LO]{\slshape \rightmark}
\fancyhead[RE]{\slshape\leftmark} \fancyfoot[C]{\thepage}


\iffalse
  \TitlePage
    \DRAFT 
    \GRID  
    \HEADER{\BAR\FIG{\epsfig{file=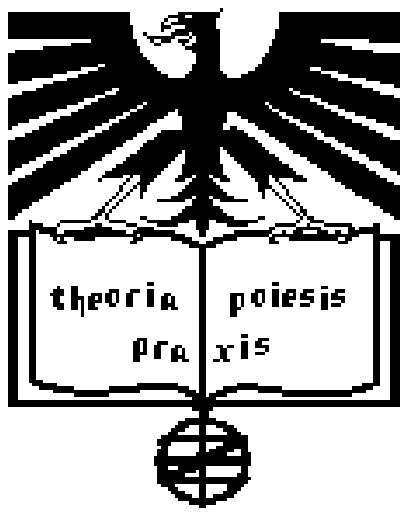,height=60mm}}}   
           {\ThesisYear}
    \TITLE{Rui A. C. Ferreira}
          {Como escrever uma tese bonita e cheia de resultados importantes}
  \EndTitlePage

\else
  \TitlePage
    \HEADER{\BAR\FIG{\begin{minipage}{80mm} 
            ``When I have clarified and exhausted a subject, then I turn away from it, in order to go into darkness again.''
             \begin{flushright}
              --- Carl Friedrich Gauss
             \end{flushright}
            \end{minipage}}}
           {\ThesisYear}
    \TITLE{Rui A. C. Ferreira}
          {Calculus of Variations on Time Scales\\ and Discrete Fractional Calculus}
  \EndTitlePage
\fi

\titlepage\ \endtitlepage 

\TitlePage
  \HEADER{}{\ThesisYear}
  \TITLE{Rui A. C. Ferreira}
        {C\'{a}lculo das Varia\c{c}\~{o}es em Escalas Temporais e C\'{a}lculo Fraccion\'{a}rio Discreto}
  \vspace*{15mm}
  \TEXT{}
       {Disserta\c c\~ao apresentada \`a Universidade de Aveiro para cumprimento dos requisitos
        necess\'arios \`a obten\c c\~ao do grau de Doutor em Matem\'{a}tica, re\-a\-li\-za\-da sob a orienta\c c\~ao
        cient\'\i fica de Delfim Fernando Marado Torres, Professor Associado do Departamento de Matem\'{a}tica da Universidade de Aveiro, e Martin Bohner,
        Professor Catedr\'{a}tico do Departamento de Matem\'{a}tica e Estat\'{\i}stica da Universidade de Ci\^{e}ncia e Tecnologia de Missouri, EUA.}
\EndTitlePage

\titlepage\ \endtitlepage 

\TitlePage
  \vspace*{55mm}
  \TEXT{\textbf{o j\'uri~/~the jury\newline}}
       {}
  \TEXT{presidente~/~president}
       {\textbf{Ant\'{o}nio Manuel Melo de Sousa Pereira}\newline {\small
        Professor Catedr\'atico da Universidade de Aveiro (por delega\c c\~ao do Reitor da
        Universidade de Aveiro)}}
  \vspace*{5mm}
  \TEXT{vogais~/~examiners committee}
  {\textbf{Martin Bohner}\newline {\small
        Professor Catedr\'atico da Universidade de Ci\^{e}ncia e Tecnologia de Missouri (co-orientador)}}
\vspace*{5mm}
  \TEXT{}
       {\textbf{Ant\'{o}nio Costa Ornelas Gon\c{c}alves}\newline {\small
        Professor Associado com Agrega\c{c}\~{a}o da Escola de Ci\^{e}ncias e Tecnologia da Universidade de \'{E}vora}}
  \vspace*{5mm}
  \TEXT{}
       {\textbf{Delfim Fernando Marado Torres}\newline {\small
        Professor Associado da Universidade de Aveiro (orientador)}}
  \vspace*{5mm}
  \TEXT{}
       {\textbf{Maria Lu\'{\i}sa Ribeiro dos Santos Morgado}\newline {\small
        Professora Auxiliar da Escola de Ci\^{e}ncias e Tecnologia da Universidade Tr\'{a}s-os-Montes e Alto Douro}}
        \vspace*{5mm}
  \TEXT{}
       {\textbf{Nat\'{a}lia da Costa Martins}\newline {\small
        Professora Auxiliar da Universidade de Aveiro}}
\EndTitlePage

\titlepage\ \endtitlepage 

\TitlePage
  \vspace*{55mm}
  \TEXT{\textbf{agradecimentos~/\newline acknowledgements}}
       {I am deeply grateful to my advisors, Delfim F. M. Torres
       and Martin Bohner, for their inestimable support during
       this thesis elaboration. I must emphasize the indisputable scientific
       support that I was provided, being a privilege to work with
       them. The hospitality of Martin Bohner
       during my stay in Rolla is also gratefully acknowledged.\\
       I also thank \emph{The Portuguese Foundation for Science and
       Technology} (FCT) for the financial support through the PhD fellowship SFRH/BD/39816/2007,
       without which this work wouldn't be possible.\\
       My next thank goes to the people with whom I interacted at the University of Aveiro, Missouri University of Science and Technology
       and the Lusophone University of Humanities and Technologies, that in some way contributed to this work.\\
      To all my friends I just want to say: "Thank you for all your encouragement
      words!".\\
       Finally, and as always, I wish to thank and dedicate this dissertation to my family.}

\vfil

\TEXT{}{\begin{center}
\includegraphics[scale=0.5]{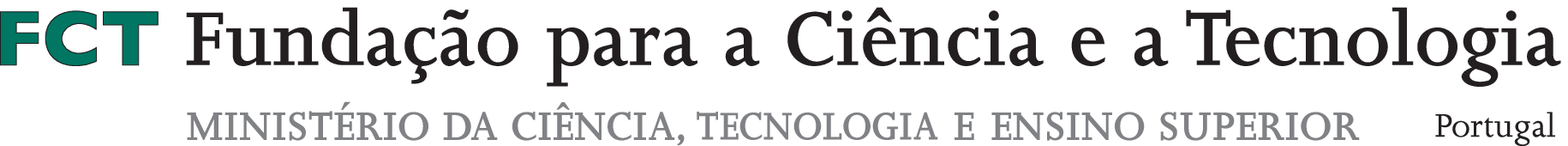}
\end{center}}

\EndTitlePage

\titlepage\ \endtitlepage 

\TitlePage
  \vspace*{55mm}
  \TEXT{\textbf{Resumo}}
       {Estudamos problemas do c\'{a}lculo das varia\c{c}\~{o}es e controlo \'{o}ptimo no
contexto das escalas temporais. Especificamente, obtemos
condi\c{c}\~{o}es necess\'{a}rias de optimalidade do tipo de
Euler--Lagrange tanto para lagrangianos
dependendo de derivadas delta de ordem superior como para problemas isoperim\'{e}tricos.
Desenvolvemos tamb\'{e}m alguns m\'{e}todos directos que permitem
resolver determinadas classes de problemas variacionais atrav\'{e}s de desigualdades
em escalas temporais. No \'{u}ltimo cap\'{\i}tulo apresentamos
operadores de diferen\c{c}a fraccion\'{a}rios e propomos um novo
c\'{a}lculo das varia\c{c}\~{o}es fraccion\'{a}rio em tempo discreto.
Obtemos as correspondentes condi\c{c}\~{o}es necess\'{a}rias
de Euler--Lagrange e Legendre, ilustrando depois a teoria
com alguns exemplos.\newline\newline
\textbf{Palavras chave:} c\'{a}lculo das varia\c{c}\~{o}es, condi\c{c}\~{o}es
necess\'{a}rias de optimalidade do tipo de Euler--Lagrange e Legendre,
controlo \'{o}ptimo, derivada delta, derivada diamond,
derivada fraccion\'{a}ria discreta, desigualdades integrais, escalas temporais.}
  \TEXT{}{}
  \vspace{5cm}

\EndTitlePage

\titlepage\ \endtitlepage 

\TitlePage
  \vspace*{55mm}
  \TEXT{\textbf{Abstract}}
       {We study problems of the calculus of variations and optimal control within the framework
       of time scales. Specifically, we obtain Euler--Lagrange type equations for both Lagrangians
       depending on higher order delta derivatives and isoperimetric problems. We also develop some direct methods
       to solve certain classes of variational problems via dynamic inequalities. In the last chapter we introduce
       fractional difference operators and propose a new discrete-time fractional calculus of variations.
       Corresponding Euler--Lagrange and Legendre necessary optimality conditions are derived
       and some illustrative examples provided.\newline\newline
\textbf{Keywords:} calculus of variations,
necessary optimality conditions of Euler--Lagrange and Legendre type,
optimal control, delta-derivative, diamond-derivative,
discrete fractional derivative, integral inequalities, time scales.\newline\newline
\textbf{2010 Mathematics Subject Classification:} 26D10, 26D15, 26E70, 34A40, 34N05, 39A12, 45J05, 49K05.}

\EndTitlePage

\titlepage\ \endtitlepage 





\pagenumbering{roman}
\addcontentsline{toc}{chapter}{Contents}
\tableofcontents

\cleardoublepage

\pagenumbering{arabic}

\chapter*{Introduction\markboth{INTRODUCTION}{}}\addcontentsline{toc}{chapter}{Introduction}

This work started when the author first met his advisor Delfim F.
M. Torres in July, 2006. In a preliminary conversation Delfim F.
M. Torres introduced to the author the Time Scales calculus.
\begin{quote}
``A major task of mathematics today is to harmonize the continuous
and the discrete, to include them in one comprehensive
mathematics, and to eliminate obscurity from both."
\end{quote}
E. T. Bell wrote this in 1937 and Stefan Hilger accomplished it in
his PhD thesis \cite{HilgerThesis} in 1988. Stefan Hilger defined
a time scale as a nonempty closed subset of the real numbers and
on such sets he defined a (delta) $\Delta$-derivative and a
$\Delta$-integral. These delta derivative and integral are the
classical derivative and Riemann integral when the time scale is
the set of the real numbers, and become the forward difference and
a sum when the time scale is the set of the integer numbers (cf.
the definitions in Chapter \ref{chap1}). This observation
highlights one of the purposes of Hilger's work: unification of
the differential and difference calculus. But at the same time,
since a time scale can be any other subset of the real numbers, an
extension was also achieved (see Remark \ref{rem1} pointing out
this fact). As written in \cite{livro}, the time scales calculus
has a tremendous potential for applications. For example, it can
model insect populations that are continuous while in season (and
may follow a difference scheme with variable step-size), die out
in (say) winter, while their eggs are incubating or dormant, and
then hatch in a new season, giving rise to a nonoverlapping
population. Motivated by its theoretical and practical usefulness
many researchers have dedicated their time to the development of
the time scales calculus in the last twenty years (cf.
\cite{livro,livro2} and references therein).

Since Delfim F. M. Torres works mainly in Calculus of Variations
and Optimal Control and in 2006 only three papers
\cite{econo,CD:Bohner:2004,zeidan}, one of them by Martin Bohner
\cite{CD:Bohner:2004}, within this topic were available in the
literature, we decided to invite Martin Bohner, whose knowledge in
time scales calculus is widely known, to be also an author's
advisor and we began to make an attempt to develop the theory.
This was indeed the main motivation for this work.

The two research papers \cite{CD:Bohner:2004} and \cite{zeidan}
were published in 2004. In the first one, necessary conditions for
weak local minima of the basic problem of the calculus of
variations were established, among them the Euler--Lagrange
equation, the Legendre necessary condition, the strengthened
Legendre condition, and the Jacobi condition, while in the second
one, a calculus of variations problem with variable endpoints was
considered in the space of piecewise rd-continuously
$\Delta$-differentiable functions. For this problem, the
Euler–-Lagrange equation, the transversality condition, and the
accessory problem were derived as necessary conditions for weak
local optimality. Moreover, assuming the coercivity of the second
variation, a corresponding second order sufficiency criterion was
established. The paper \cite{econo} was published in 2006 and
there, were also obtained necessary optimality conditions for the
basic problem of the calculus of variations but this time for the
class of functions having continuous (nabla) $\nabla$-derivative
(cf. Definition \ref{defnabla}). The authors of \cite{econo} find
this derivative most adequate to use in some problems arising in
economics.

In a first step towards the development of the theory we decided
to study a variant of the basic problem of the calculus of
variations, namely, the problem in which the Lagrangian depends
on higher order $\Delta$-derivatives. What seemed to be a natural
consequence of the basic problem (as it is in the differential
case) it turned out not to be! We only achieved partial results
for this problem, being the time scales restricted to the ones
with forward operator given by $\sigma(t)=a_1t+a_0$ for some
$a_1\in\mathbb{R}^+$ and $a_0\in\mathbb{R}$, $t \in
[a,\rho(b)]_\mathbb{T}$ (see Section \ref{higherorder} for the
discussions). Another subject that we studied was the
isoperimetric problem and here we were able to prove the
corresponding Euler--Lagrange equation on a general time scale
(cf. Theorem \ref{T1}). We were also interested in using
direct methods to solve some calculus of variations and optimal
control problems. This is particularly useful since even simple
classes of problems lead to dynamic Euler--Lagrange equations for
which methods to compute explicit solutions are very hard or not
known. In order to accomplish this, some integral
inequalities\footnote{We have also applied some integral
inequalities to solve some initial value problems on time scales
(cf. Chapter \ref{sec:Prel}).} needed to be proved on a general
time scale. This is shown in Chapter \ref{chap2}.

Another subject that interests us and that we have been working in
is the development of the discrete fractional calculus
\cite{Miller} and, in particular, of the discrete fractional
calculus of variations. Recently, two research papers
\cite{Atici0,Atici} appeared in the literature and therein it is
suggested that in the future a general theory of a
\emph{fractional time scales calculus} can exist (see the book
\cite{Miller1} for an introduction to fractional calculus and its
applications). We have made already some progresses in this field
(specifically for the time scale $h\mathbb{Z}, h>0$), namely in
the calculus of variations, and we present our results in Chapter
\ref{disfrac}.

This work is divided in two major parts. The first part has three
chapters in which we provide some preliminaries about time scales
calculus, calculus of variations, and integral inequalities. The
second part is divided in four chapters and in each we present our
original work. Specifically, in Section \ref{higherorder}, the
Euler--Lagrange equation for problems of the calculus of
variations depending on the $\Delta$-derivatives up to order
$r\in\mathbb{N}$ of a function is obtained. In Section \ref{iso},
we prove a necessary optimality condition for isoperimetric
problems. In Sections \ref{ineq} and \ref{sec:app:CV}, some
classes of variational problems are solved directly with the help
of some integral inequalities. In Section \ref{sec:mainResults}
some extensions of Gronwall's type inequalities are obtained while
in Section \ref{ineconstan}, using topological methods, we prove
that certain integrodynamic equations have $C^1[\mathbb{T}]$
solutions. In Section \ref{diamond}, we prove
H$\ddot{\mbox{o}}$lder, Cauchy--Schwarz, Minkowski and Jensen's
type inequalities in the more general setting of
$\Diamond_\alpha$-integrals. In Section \ref {duasvar}, we obtain
some Gronwall--Bellman--Bihari type inequalities for functions
depending on two time scales variables. Chapter \ref{disfrac} is
devoted to a preliminary study of the calculus of variations using
discrete fractional derivatives. In Section \ref{sec0}, we obtain
some results that in turn will be used in Section \ref{sec1} to
prove the main results, namely, an Euler--Lagrange type equation
(cf. Theorem~\ref{teorem0}) and a Legendre's necessary optimality
condition (cf. Theorem~\ref{teorem1}).

Finally, we write our conclusions in Chapter~\ref{conclusao} as
well as some projects for the future development of some subjects
herein studied.

\clearpage{\thispagestyle{empty}\cleardoublepage}

\part{Synthesis}

\clearpage{\thispagestyle{empty}\cleardoublepage}

\chapter{Time Scales Calculus: definitions and basic
results}\label{chap1}

A nonempty closed subset of $\mathbb{R}$ is called a \emph{time
scale}\index{Time Scale} and is denoted by $\mathbb{T}$. The two
most basic examples of time scales are the sets $\mathbb{R}$ and
$\mathbb{Z}$. It is clear that closed subsets of $\mathbb{R}$ may
not be connected, e.g., $[0,1]\cup[2,3]$. The following operators
are massively used within the theory:

\begin{definition}
The mapping $\sigma:\mathbb{T}\rightarrow\mathbb{T}$, defined by
$\sigma(t)=\inf{\{s\in\mathbb{T}:s>t\}}$ with
$\inf\emptyset=\sup\mathbb{T}$ (\textrm{i.e.}, $\sigma(M)=M$ if
$\mathbb{T}$ has a maximum $M$) is called \emph{forward jump
operator}\index{Forward jump operator}. Accordingly we define the
\emph{backward jump operator}\index{Backward jump operator}
$\rho:\mathbb{T}\rightarrow\mathbb{T}$ by
$\rho(t)=\sup{\{s\in\mathbb{T}:s<t\}}$ with
$\sup\emptyset=\inf\mathbb{T}$ (\textrm{i.e.}, $\rho(m)=m$ if
$\mathbb{T}$ has a minimum $m$). The symbol $\emptyset$ denotes
the empty set.
\end{definition}

The following classification of points is used within the theory:
A point $t\in\mathbb{T}$ is called
\emph{right-dense}\index{Points!right-dense},
\emph{right-scattered}\index{Points!right-scattered},
\emph{left-dense}\index{Points!left-dense} and
\emph{left-scattered}\index{Points!left-scattered} if
$\sigma(t)=t$, $\sigma(t)>t$, $\rho(t)=t$ and $\rho(t)<t$,
respectively.

Let us define the sets $\mathbb{T}^{\kappa^n}$, inductively:
$\mathbb{T}^{\kappa^1}=\mathbb{T}^\kappa=\{t\in\mathbb{T}:\mbox{$t$ non-maximal or
left-dense}\}$ and
$\mathbb{T}^{\kappa^n}$=$(\mathbb{T}^{\kappa^{n-1}})^\kappa$,
$n\geq 2$. Also, $\mathbb{T}_\kappa=\{t\in\mathbb{T}:\mbox{$t$
non-minimal or right-dense}\}$ and
$\mathbb{T}_{\kappa^n}$
$=(\mathbb{T}_{\kappa^{n-1}})_\kappa$,
$n\geq 2$. We denote $\mathbb{T}^{\kappa}\cap\mathbb{T}_{\kappa}$
by $\mathbb{T}^{\kappa}_{\kappa}$.

The functions $\mu,\nu:\mathbb{T}\rightarrow[0,\infty)$ are
defined by $\mu(t)=\sigma(t)-t$ and $\nu(t)=t-\rho(t)$.

\begin{example}
If $\mathbb{T}=\mathbb{R}$, then $\sigma(t)=\rho(t)=t$ and
$\mu(t)=\nu(t)=0$. If $\mathbb{T}=[0,1]\cup[2,3]$, then
\[ \sigma(t) = \left\{ \begin{array}{ll}
t & \mbox{if $t\in[0,1)\cup[2,3]$};\\
2 & \mbox{if $t=1$},\end{array} \right. \] while,
\[ \rho(t) = \left\{ \begin{array}{ll}
t & \mbox{if $t\in[0,1]\cup(2,3]$};\\
1 & \mbox{if $t=2$}.\end{array} \right. \] Also, we have that,
\[ \mu(t) = \left\{ \begin{array}{ll}
0 & \mbox{if $t\in[0,1)\cup[2,3]$};\\
1 & \mbox{if $t=1$},\end{array} \right. \] and,
\[ \nu(t) = \left\{ \begin{array}{ll}
0 & \mbox{if $t\in[0,1]\cup(2,3]$};\\
1 & \mbox{if $t=2$}.\end{array} \right. \]
\end{example}
For two points $a,b\in\mathbb{T}$, the time scales interval is
defined by
$$[a,b]_{\mathbb{T}}=\{t\in\mathbb{T}:a\leq t\leq b\}.$$

We now state the two definitions of differentiability on time
scales. Throughout we will frequently write
$f^\sigma(t)=f(\sigma(t))$ and $f^\rho(t)=f(\rho(t))$.

\begin{definition}\label{def2}
We say that a function $f:\mathbb{T}\rightarrow\mathbb{R}$ is
\emph{$\Delta$-differentiable}\index{Function!$\Delta$-differentiable}
at $t\in\mathbb{T}^\kappa$ if there is a number $f^{\Delta}(t)$
such that for all $\varepsilon>0$ there exists a neighborhood $U$
of $t$ (\textrm{i.e.}, $U=(t-\delta,t+\delta)\cap\mathbb{T}$ for
some $\delta>0$) such that
$$|f^\sigma(t)-f(s)-f^{\Delta}(t)(\sigma(t)-s)|
\leq\varepsilon|\sigma(t)-s|,\quad\mbox{ for all $s\in U$}.$$ We
call $f^{\Delta}(t)$ the \emph{$\Delta$-derivative} of $f$ at $t$.
\end{definition}
\begin{remark}
We note that if the number $f^\Delta(t)$ of Definition \ref{def2}
exists then it is unique (see \cite{Hilger90,Hilger97}, and also
the recent paper \cite{TSPORTU} written in Portuguese).
\end{remark}

The $\Delta$-derivative of order $n\in\mathbb{N}$ of a function
$f$ is defined by
$f^{\Delta^n}(t)=\left(f^{\Delta^{n-1}}(t)\right)^\Delta$,
$t\in\mathbb{T}^{\kappa^n}$, provided the right-hand side of the
equality exists, where $f^{\Delta^0}=f$.

\begin{definition}\label{defnabla}
We say that a function $f:\mathbb{T}\rightarrow\mathbb{R}$ is
\emph{$\nabla$-differentiable}\index{Function!$\nabla$-differentiable}
at $t\in\mathbb{T}_\kappa$ if there is a number $f^{\nabla}(t)$
such that for all $\varepsilon>0$ there exists a neighborhood $U$
of $t$ (\textrm{i.e.}, $U=(t-\delta,t+\delta)\cap\mathbb{T}$ for
some $\delta>0$) such that
$$|f^\rho(t)-f(s)-f^{\nabla}(t)(\rho(t)-s)|
\leq\varepsilon|\rho(t)-s|,\quad\mbox{ for all $s\in U$}.$$ We
call $f^{\nabla}(t)$ the \emph{$\nabla$-derivative} of $f$ at $t$.
\end{definition}

Some basic properties will now be given for the
$\Delta$-derivative. We refer the reader to the works presented in
\cite{livro} for results on $\nabla$-derivatives.

\begin{remark}
Our results are mainly proved using the $\Delta$-derivative. We
point out that it is immediate to get the analogous ones for the
$\nabla$-derivative. This is done in an elegant way using the
recent duality theory of C. Caputo \cite{caputo}.
\end{remark}

\begin{theorem}\cite[Theorem~1.16]{livro}
\label{teorema0} Assume $f:\mathbb{T}\rightarrow\mathbb{R}$ is a
function and let $t\in\mathbb{T}^\kappa$. Then we have the
following:
\begin{enumerate}
    \item If $f$ is $\Delta$-differentiable at $t$, then $f$ is
    continuous at $t$.
    \item If $f$ is continuous at $t$ and $t$ is right-scattered,
    then $f$ is $\Delta$-differentiable at $t$ with
    \begin{equation}\label{derdiscr}f^\Delta(t)=\frac{f^\sigma(t)-f(t)}{\mu(t)}.\end{equation}
    \item If $t$ is right-dense, then $f$ is $\Delta$-differentiable at
    $t$ if and only if the limit
    $$\lim_{s\rightarrow t}\frac{f(s)-f(t)}{s-t}$$
    exists as a finite number. In this case,
    $$f^\Delta(t)=\lim_{s\rightarrow t}\frac{f(s)-f(t)}{s-t}.$$
    \item If $f$ is $\Delta$-differentiable at $t$, then
    \begin{equation}\label{transfor}
    f^\sigma(t)=f(t)+\mu(t)f^\Delta(t).
    \end{equation}
\end{enumerate}
\end{theorem}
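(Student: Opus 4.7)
The plan is to take the four claims in turn, leveraging the asymmetric form of Definition~\ref{def2} directly; every part reduces to a clever choice of the point $s$ or a comparison of two instances of the defining inequality.

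For part (4), the idea is simply to put $s=t$ into the defining inequality: this gives
\[
|f^\sigma(t)-f(t)-f^\Delta(t)\mu(t)|\leq \varepsilon\,\mu(t)\qquad\text{for every }\varepsilon>0.
\]
If $t$ is right-dense then $\mu(t)=0$ and $f^\sigma(t)=f(t)$, so \eqref{transfor} is trivial; if $t$ is right-scattered then $\mu(t)>0$, letting $\varepsilon\to 0^+$ forces the left-hand side to be zero, which is exactly \eqref{transfor}.

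For part (3), when $t$ is right-dense $\sigma(t)=t$, so the defining inequality becomes
\[
|f(t)-f(s)-f^\Delta(t)(t-s)|\leq \varepsilon|t-s|,
\]
i.e.\ $\bigl|\tfrac{f(s)-f(t)}{s-t}-f^\Delta(t)\bigr|\leq\varepsilon$ for $s\in U\setminus\{t\}$. This is precisely the classical limit definition, so $\Delta$-differentiability at $t$ is equivalent to existence of the classical derivative and the two values agree.

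For part (2), $t$ is right-scattered, so $\mu(t)>0$ and the candidate derivative $L:=(f^\sigma(t)-f(t))/\mu(t)$ is well defined. I would substitute $L$ for $f^\Delta(t)$ and simplify using $\sigma(t)-s=\mu(t)+(t-s)$, obtaining the algebraic identity
\[
f^\sigma(t)-f(s)-L(\sigma(t)-s)=\bigl(f(t)-f(s)\bigr)-L(t-s).
\]
Given $\varepsilon>0$, continuity of $f$ at $t$ yields a neighborhood on which $|f(t)-f(s)|$ is small; shrinking that neighborhood so that also $|t-s|<\mu(t)/2$ (whence $|\sigma(t)-s|\geq\mu(t)/2$) and $|L||t-s|$ is small, one bounds the above by $\varepsilon|\sigma(t)-s|$, verifying Definition~\ref{def2} with derivative $L$.

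Finally, for part (1), which I expect to be the subtlest step, I would apply the defining inequality twice: once with the given $s\in U$ and once with $s$ replaced by $t$. Subtracting and using the triangle inequality yields
\[
|f(s)-f(t)-f^\Delta(t)(s-t)|\leq \varepsilon\bigl(|\sigma(t)-s|+\mu(t)\bigr),
\]
so
\[
|f(s)-f(t)|\leq |f^\Delta(t)|\,|s-t|+\varepsilon\bigl(|\sigma(t)-s|+\mu(t)\bigr).
\]
Fixing $\varepsilon$ first so that the $\varepsilon$-term is as small as desired (using that $|\sigma(t)-s|\leq\mu(t)+|s-t|$ stays bounded near $t$), and then letting $s\to t$ to kill the $|f^\Delta(t)|\,|s-t|$ term, gives continuity of $f$ at $t$. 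The main obstacle here is precisely the bookkeeping of the two parameters $\varepsilon$ and $\delta$, since the right-hand side contains the unavoidable additive term $\varepsilon\mu(t)$ coming from the gap at a right-scattered point; this is handled by choosing $\varepsilon$ before $\delta$.
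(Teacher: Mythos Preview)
Your argument for each of the four parts is correct and is essentially the standard proof (as found in \cite[Theorem~1.16]{livro}). Note, however, that the paper itself does not supply a proof of this theorem: it is quoted as a preliminary result with a citation to the textbook, so there is no ``paper's own proof'' to compare against. Your write-up would serve as a self-contained replacement for that citation.
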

It is an immediate consequence of Theorem \ref{teorema0} that if
$\mathbb{T}=\mathbb{R}$, then the $\Delta$-derivative becomes the
classical one, i.e., $f^\Delta=f'$ while if
$\mathbb{T}=\mathbb{Z}$, the $\Delta$-derivative reduces to the
\emph{forward difference} $f^\Delta(t)=\Delta f(t)=f(t+1)-f(t)$.

\begin{theorem} \cite[Theorem~1.20]{livro}
Assume $f,g:\mathbb{T}\rightarrow\mathbb{R}$ are
$\Delta$-differentiable at $t\in\mathbb{T}^\kappa$. Then:
\begin{enumerate}
    \item The sum $f+g:\mathbb{T}\rightarrow\mathbb{R}$ is $\Delta$-differentiable at
    $t$ and $(f+g)^\Delta(t)=f^\Delta(t)+g^\Delta(t)$.
    \item For any number $\xi\in\mathbb{R}$, $\xi f:\mathbb{T}\rightarrow\mathbb{R}$ is $\Delta$-differentiable at
    $t$ and $(\xi f)^\Delta(t)=\xi f^\Delta(t)$.
    \item The product $fg:\mathbb{T}\rightarrow\mathbb{R}$ is $\Delta$-differentiable at
    $t$ and
    \begin{equation}\label{produto}
    (fg)^\Delta(t)=f^\Delta(t)g(t)+f^\sigma(t)g^\Delta(t)=f(t)g^\Delta(t)+f^\Delta(t)g^\sigma(t).
    \end{equation}
\end{enumerate}
\end{theorem}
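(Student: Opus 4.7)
The plan is to work directly from Definition \ref{def2}, handling the three items in turn with an $\varepsilon$--$\delta$ style argument, and to derive the second form of the product rule algebraically from the first via the identity \eqref{transfor}.

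For the sum rule, I would fix $\varepsilon>0$ and apply the definition to $f$ and $g$ with tolerance $\varepsilon/2$, obtaining neighborhoods $U_f$ and $U_g$ of $t$. On the intersection $U=U_f\cap U_g$, the triangle inequality applied to
\[
|(f+g)^\sigma(t)-(f+g)(s)-(f^\Delta(t)+g^\Delta(t))(\sigma(t)-s)|
\]
gives the bound $\varepsilon|\sigma(t)-s|$, establishing both the differentiability of $f+g$ at $t$ and the claimed formula. The scalar-multiplication case is similar: if $\xi=0$ it is trivial, otherwise one runs the definition for $f$ with tolerance $\varepsilon/|\xi|$.

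The main obstacle is the product rule, where the correct splitting of the increment is essential. I would write
\[
(fg)^\sigma(t)-(fg)(s)=f^\sigma(t)\bigl[g^\sigma(t)-g(s)\bigr]+g(s)\bigl[f^\sigma(t)-f(s)\bigr],
\]
and then subtract and add the candidate linear term $\bigl(f^\Delta(t)g(t)+f^\sigma(t)g^\Delta(t)\bigr)(\sigma(t)-s)$. This yields
\[
f^\sigma(t)\bigl[g^\sigma(t)-g(s)-g^\Delta(t)(\sigma(t)-s)\bigr]+g(s)\bigl[f^\sigma(t)-f(s)-f^\Delta(t)(\sigma(t)-s)\bigr]+f^\Delta(t)(\sigma(t)-s)\bigl[g(s)-g(t)\bigr].
\]
Each of the three pieces is small on a suitable neighborhood: the first two by the $\Delta$-differentiability of $g$ and $f$ at $t$, and the third by item 1 of Theorem~\ref{teorema0}, which guarantees continuity of $g$ at $t$ (so $g(s)\to g(t)$ as $s\to t$ in $\mathbb{T}$), together with the boundedness of $g(s)$ near $t$ needed to control the second term. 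Taking each contribution of size $\varepsilon|\sigma(t)-s|/3$ on a common neighborhood $U$ yields the first equality in \eqref{produto}.

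Finally, to obtain the second equality, I would substitute \eqref{transfor} into the first form:
\[
f^\Delta(t)g(t)+f^\sigma(t)g^\Delta(t)=f^\Delta(t)g(t)+\bigl[f(t)+\mu(t)f^\Delta(t)\bigr]g^\Delta(t)=f(t)g^\Delta(t)+f^\Delta(t)\bigl[g(t)+\mu(t)g^\Delta(t)\bigr],
\]
and the bracket on the right equals $g^\sigma(t)$ by \eqref{transfor} applied to $g$. The hardest step, as noted, is keeping track of the three error terms in the product-rule estimate; everything else is formal manipulation.
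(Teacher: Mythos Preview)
The paper does not actually prove this theorem; it is quoted verbatim from \cite[Theorem~1.20]{livro} as background material, with no proof supplied. Your proposal is correct and is essentially the standard argument one finds in that reference: the $\varepsilon/2$ triangle-inequality treatment of the sum, the three-term splitting of the product increment controlled via Definition~\ref{def2} and the continuity granted by item~1 of Theorem~\ref{teorema0}, and the algebraic passage between the two product forms using \eqref{transfor}. There is nothing to compare against here, and your sketch would go through as written.
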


The next lemma will be used later in this text, specifically, in
Section \ref{higherorder}. We give a proof here.
\begin{lemma}
\label{lema0} Let $t\in \mathbb{T}^\kappa$ ($t\neq\min\mathbb{T}$)
satisfy the property $\rho(t)=t<\sigma(t)$. Then, the jump
operator $\sigma$ is not continuous at $t$ and therefore not
$\Delta$-differentiable also.
\end{lemma}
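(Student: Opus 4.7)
The plan is to exploit the two-sided nature of $t$: the condition $\rho(t)=t$ makes $t$ left-dense, while $\sigma(t)>t$ makes it right-scattered. These pull $\sigma$ in opposite directions near $t$, which should break continuity on the left.

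First, I would use left-density to produce a sequence $(s_n) \subset \mathbb{T}$ with $s_n < t$ and $s_n \to t$. Such a sequence exists because $\rho(t)=t$ means $\sup\{s \in \mathbb{T} : s < t\} = t$ (note that $t \neq \min \mathbb{T}$ ensures this set is nonempty, so the $\sup$ is an honest supremum and not defined by the convention $\sup\emptyset = \inf\mathbb{T}$). For each such $s_n$, since $t \in \mathbb{T}$ and $t > s_n$, the definition $\sigma(s_n)=\inf\{s \in \mathbb{T} : s > s_n\}$ gives $s_n < \sigma(s_n) \leq t$. Squeezing, $\sigma(s_n) \to t$ as $n \to \infty$.

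On the other hand $\sigma(t) = t + \mu(t) > t$ by hypothesis. Thus
\[
\lim_{n \to \infty} \sigma(s_n) = t \neq \sigma(t),
\]
which shows that $\sigma$ fails to be (left-)continuous at $t$ in the sense of time-scale neighborhoods $U=(t-\delta,t+\delta)\cap\mathbb{T}$: every such $U$ contains some $s_n$, so $\sigma(s_n)$ cannot be made arbitrarily close to $\sigma(t)$.

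Finally, the non-differentiability is then a one-line consequence of the contrapositive of Theorem \ref{teorema0}(1): if $\sigma$ were $\Delta$-differentiable at $t$, it would have to be continuous there. The only step that requires a moment of care is verifying that the left-dense condition actually produces the approaching sequence given that $t\neq\min\mathbb{T}$ — this is the reason the hypothesis $t\neq\min\mathbb{T}$ is explicitly assumed — but otherwise the argument is essentially immediate.
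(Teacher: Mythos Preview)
Your proof is correct and follows essentially the same approach as the paper: both show that $\lim_{s\to t^-}\sigma(s)=t\neq\sigma(t)$ using the squeeze $s\le\sigma(s)\le t$ for $s<t$, then invoke Theorem~\ref{teorema0}(1). One tiny slip: the inequality should read $s_n\le\sigma(s_n)\le t$ (the left inequality need not be strict when $s_n$ is right-dense), but the squeeze argument goes through unchanged.
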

\begin{proof}
We begin to prove that $\lim_{s\rightarrow t^-}\sigma(s)=t$. Let
$\varepsilon>0$ and take $\delta=\varepsilon$. Then for all
$s\in(t-\delta,t)$ we have
$|\sigma(s)-t|\leq|s-t|<\delta=\varepsilon$. Since $\sigma(t)>t$,
this implies that $\sigma$ is not continuous at $t$, hence not
$\Delta$-differentiable by 1. of Theorem \ref{teorema0}.
\end{proof}

Now we turn to \emph{$\Delta$-integration on time scales}.

\begin{definition}
A function $f:\mathbb{T}\rightarrow\mathbb{R}$ is called
\emph{rd-continuous}\index{Function!rd-continuous} if it is
continuous at right-dense points and if the left-sided limit
exists at left-dense points.
\end{definition}

\begin{remark}
We denote the set of all rd-continuous functions by
C$_{\textrm{rd}}$ or C$_{\textrm{rd}}(\mathbb{T})$, and the set of
all $\Delta$-differentiable functions with rd-continuous
derivative by C$_{\textrm{rd}}^1$ or
C$_{\textrm{rd}}^1(\mathbb{T})$.
\end{remark}

\begin{example}
Consider $\mathbb{T}=\bigcup_{k=0}^{\infty}[2k,2k+1]$. For this
time scale,
\[
\mu(t) = \left\{
\begin{array}{ll}
0 & \mbox{if $t \in\bigcup_{k=0}^\infty[2k,2k+1)$};\\
1 & \mbox{if $t \in\bigcup_{k=0}^\infty\{2k+1\}$}.
\end{array}
\right. \] Let us consider $t\in[0,1]\cap\mathbb{T}$. Then, we
have
\begin{equation*}
f^\Delta(t)=\lim_{s\rightarrow t}\frac{f(t)-f(s)}{t-s},\
t\in[0,1)\, ,
\end{equation*}
provided this limit exists, and
$$f^\Delta(1)=\frac{f(2)-f(1)}{2-1},$$
provided $f$ is continuous at $t=1$. Let $f$ be defined on
$\mathbb{T}$ by
\[
f(t) = \left\{
\begin{array}{ll}
t & \mbox{if $t\in[0,1)$};\\
2 & \mbox{if $t\geq 1$}.
\end{array}
\right.
\]
We observe that at $t=1$ $f$ is rd-continuous (since
$\lim_{t\rightarrow 1^-}f(t)=1$) but not continuous (since
$f(1)=2$).
\end{example}

\begin{definition}
A function $F:\mathbb{T}\rightarrow\mathbb{R}$ is called an
\emph{antiderivative}\index{Antiderivative} of
$f:\mathbb{T}^\kappa\rightarrow\mathbb{R}$ provided
$F^\Delta(t)=f(t)$ for all $t\in\mathbb{T}^\kappa$.
\end{definition}

\begin{theorem} \cite[Theorem~1.74]{livro}
Every rd-continuous function has an antiderivative.
\end{theorem}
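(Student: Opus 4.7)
The plan is to construct an antiderivative explicitly via a Riemann-style $\Delta$-integral tied to the structure of $\mathbb{T}$. Fix $a\in\mathbb{T}$ and set $F(t)=\int_a^t f(\tau)\,\Delta\tau$, where for $t\in[a,b]_{\mathbb{T}}$ the integral is defined as the limit of Riemann $\Delta$-sums $\sum_{i=1}^n f(\xi_i)(t_i-t_{i-1})$ over partitions $a=t_0<t_1<\cdots<t_n=t$ of $[a,t]_{\mathbb{T}}$ with tags $\xi_i\in[t_{i-1},t_i)_{\mathbb{T}}$, refined so that each interval of length greater than a prescribed $\delta$ is right-scattered. My goal is to show this construction is well-defined for rd-continuous $f$ and then verify $F^{\Delta}(t)=f(t)$ on $\mathbb{T}^{\kappa}$.

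First I would establish integrability: an rd-continuous function on a compact time-scales interval $[a,b]_{\mathbb{T}}$ is bounded and, using the Heine-type argument adapted to time scales, can be uniformly approximated by step functions that are constant on pieces $[t_{i-1},t_i)_{\mathbb{T}}$. Cauchy's criterion for Riemann $\Delta$-sums then gives convergence of the partial sums, so $F(t)$ is unambiguously defined. The routine linearity and additivity properties $\int_a^c=\int_a^b+\int_b^c$ follow, and in particular, at a right-scattered point $t$, every admissible partition eventually forces the single step $[t,\sigma(t))_{\mathbb{T}}$, giving $F^{\sigma}(t)-F(t)=f(t)\mu(t)$.

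Next I would differentiate $F$ at each $t\in\mathbb{T}^{\kappa}$ by a case split dictated by Theorem \ref{teorema0}. If $t$ is right-scattered, the previous identity combined with (\ref{derdiscr}) immediately yields $F^{\Delta}(t)=(F^{\sigma}(t)-F(t))/\mu(t)=f(t)$. If $t$ is right-dense, then $f$ is continuous at $t$, so for any $\varepsilon>0$ there is a neighborhood $U$ of $t$ with $|f(\tau)-f(t)|<\varepsilon$ for $\tau\in U\cap\mathbb{T}$; estimating $|F(s)-F(t)-f(t)(s-t)|=|\int_t^s(f(\tau)-f(t))\,\Delta\tau|\leq\varepsilon|s-t|$ and noting $\sigma(t)=t$ verifies the defining inequality in Definition \ref{def2} with $F^{\Delta}(t)=f(t)$.

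The main obstacle will be the first step: constructing a serviceable Riemann $\Delta$-integral and proving rd-continuous functions are integrable, because partitions must simultaneously accommodate the dense and scattered portions of $\mathbb{T}$, and uniform continuity on the right-dense clusters is not automatic from continuity alone. The key technical point is a Heine--Borel-style refinement showing that, given $\varepsilon>0$, one can choose a partition with each non-scattered subinterval of mesh less than a $\delta$ on which $f$ oscillates by less than $\varepsilon$; this is exactly where rd-continuity (rather than mere continuity at isolated points) pays off, since left-dense points are handled by the existence of left-sided limits. Once this uniform-approximation fact is in hand, both the integrability and the differentiation argument above fall into place.
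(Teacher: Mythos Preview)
The paper does not supply its own proof of this statement: it is quoted verbatim as \cite[Theorem~1.74]{livro} and used as a black box, so there is nothing in the paper to compare your attempt against.

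For what it is worth, your outline is a reasonable sketch of how the result is actually established in the cited reference (Bohner--Peterson): one builds the Cauchy $\Delta$-integral on compact intervals, shows rd-continuous functions are integrable, and then differentiates case by case using the dichotomy in Theorem~\ref{teorema0}. The delicate step you correctly flag---that rd-continuous functions on a compact time-scales interval admit a uniform modulus of continuity on the right-dense portion, so that Riemann $\Delta$-sums converge---is handled in the book via the notion of regulated functions and pre-antiderivatives rather than a direct Heine--Borel argument, but the underlying idea is the same. Your right-scattered case is fine as written; in the right-dense case your estimate $|F(s)-F(t)-f(t)(s-t)|\le\varepsilon|s-t|$ tacitly uses the bound $|\int_t^s g\,\Delta\tau|\le |s-t|\sup|g|$, which you would need to have already derived from your construction of the integral, so be sure that step is in place before invoking it.
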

Let $f:\mathbb{T}^\kappa:\rightarrow\mathbb{R}$ be a rd-continuous
function and let $F:\mathbb{T}\rightarrow\mathbb{R}$ be an
antiderivative of $f$. Then, the
\emph{$\Delta$-integral}\index{$\Delta$-integral} is defined by
$\int_s^r f(t)\Delta t=F(r)-F(s)$ for all $r,s\in\mathbb{T}$. It
satisfies
\begin{equation}
\label{sigma}
\int_t^{\sigma(t)}f(\tau)\Delta\tau=\mu(t)f(t),\quad
t\in\mathbb{T}^\kappa.
\end{equation}

\begin{theorem}\cite[Theorem~1.77]{livro}\label{teorema1}
Let $a,b,c\in\mathbb{T}$, $\xi\in\mathbb{R}$ and
$f,g\in$C$_{\textrm{rd}}(\mathbb{T}^\kappa)$. Then,
\begin{enumerate}
    \item $\int_a^b[f(t)+g(t)]\Delta t=\int_a^bf(t)\Delta t+\int_a^bg(t)\Delta
    t$.
    \item $\int_a^b(\xi f)(t)\Delta t=\xi\int_a^b f(t)\Delta t$.
    \item $\int_a^b f(t)\Delta t=-\int_b^a f(t)\Delta t$.
    \item $\int_a^bf(t)\Delta t=\int_a^cf(t)\Delta t+\int_c^bf(t)\Delta
    t$.
    \item $\int_a^af(t)\Delta t=0$.
    \item if $|f(t)|\leq g(t)$ on $[a,b]_{\mathbb{T}}^\kappa$,
    then
    $$\left|\int_a^bf(t)\Delta t\right|\leq \int_a^bg(t)\Delta t.$$
\end{enumerate}
\end{theorem}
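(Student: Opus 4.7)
The plan is to read off all six identities from the definition $\int_s^r f(t)\Delta t=F(r)-F(s)$, where $F$ is any $\Delta$-antiderivative of $f$. Such an $F$ is guaranteed to exist because $f$ is rd-continuous and the preceding theorem supplies an antiderivative in that case. Items 1--5 then reduce to algebraic manipulations of values of $F$, assisted only by the sum and scalar rules for the $\Delta$-derivative; the genuine work is in item 6, which demands the monotonicity of the integral.

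For item 1, I pick antiderivatives $F^\Delta=f$ and $G^\Delta=g$; the sum rule for $\Delta$-derivatives yields $(F+G)^\Delta=f+g$, so $F+G$ is an antiderivative of $f+g$ and the identity follows from $(F+G)(b)-(F+G)(a)=[F(b)-F(a)]+[G(b)-G(a)]$. Item 2 is the analogous use of the scalar rule $(\xi F)^\Delta=\xi f$. Items 3, 4, and 5 are then pure telescoping identities for values of $F$: $F(b)-F(a)=-(F(a)-F(b))$, $[F(c)-F(a)]+[F(b)-F(c)]=F(b)-F(a)$, and $F(a)-F(a)=0$. None of these uses anything beyond Theorem~\ref{teorema0} and the basic derivative rules.

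The main obstacle is item 6. From $|f|\le g$ one obtains $g-f\ge 0$ and $g+f\ge 0$ on $[a,b]_\mathbb{T}^\kappa$, so by items 1 and 2 the inequality $\bigl|\int_a^b f\,\Delta t\bigr|\le\int_a^b g\,\Delta t$ reduces to the positivity statement: if $h$ is rd-continuous with $h\ge 0$ on $[a,b]_\mathbb{T}^\kappa$, then $\int_a^b h\,\Delta t\ge 0$. Equivalently, any antiderivative $H$ of such an $h$ is non-decreasing on $[a,b]_\mathbb{T}$, and the whole theorem hinges on establishing this single monotonicity lemma.

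To obtain the non-decreasing property, I would run a time-scale induction on the set $\mathcal{S}=\{t\in[a,b]_\mathbb{T}:H(t)\ge H(a)\}$. Clearly $a\in\mathcal{S}$. At a right-scattered $t\in\mathcal{S}$, identity~\eqref{transfor} combined with $H^\Delta(t)=h(t)\ge 0$ gives $H^\sigma(t)=H(t)+\mu(t)h(t)\ge H(t)\ge H(a)$, so $\sigma(t)\in\mathcal{S}$. At a right-dense $t\in\mathcal{S}$, the limit characterisation of Theorem~\ref{teorema0}(3) together with the continuity of $H$ from Theorem~\ref{teorema0}(1) yields a right neighbourhood of $t$ sitting inside $\mathcal{S}$. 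At a left-dense $t$ approached by points of $\mathcal{S}$, continuity of $H$ preserves the inequality $H(t)\ge H(a)$ in the limit. The time-scale induction principle then forces $\mathcal{S}=[a,b]_\mathbb{T}$, so in particular $H(b)\ge H(a)$, which completes item 6.
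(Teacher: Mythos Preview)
The paper does not supply its own proof of this theorem: it is quoted verbatim from \cite[Theorem~1.77]{livro} and used as background. There is therefore nothing in the paper to compare your argument against; your proposal is effectively a reconstruction of the proof from the cited reference.

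Your treatment of items 1--5 is correct and is exactly the standard derivation from the antiderivative definition. For item 6 your reduction to the monotonicity lemma ``$H^\Delta\ge 0$ on $[a,b]_\mathbb{T}^\kappa$ implies $H$ non-decreasing'' is the right move, and the time-scale induction outline is the approach used in \cite{livro}. There is, however, one soft spot in your sketch: in the right-dense step you assert that continuity of $H$ plus $H^\Delta(t)=h(t)\ge 0$ gives a right neighbourhood inside $\mathcal{S}$, but when $h(t)=0$ and simultaneously $H(t)=H(a)$ the differentiability estimate only yields $H(s)\ge H(t)-\varepsilon(s-t)$, which does not by itself force $H(s)\ge H(a)$. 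The usual fix (and what the book does) is to run the induction on the $\varepsilon$-relaxed statement $S_\varepsilon(t)$: ``$H(t)\ge H(a)-\varepsilon(t-a)$'' for each fixed $\varepsilon>0$; the right-dense case then goes through cleanly, and letting $\varepsilon\downarrow 0$ afterwards gives the desired inequality. With that adjustment your argument is complete.
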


One can easily prove \cite[Theorem~1.79]{livro} that, when
$\mathbb{T}=\mathbb{R}$ then $\int_a^bf(t)\Delta
t=\int_a^bf(t)dt$, being the right-hand side of the equality the
usual Riemann integral, and when $\mathbb{T}=\mathbb{Z}$ then
$\int_a^bf(t)\Delta t=\sum_{t=a}^{b-1}f(t)$.

\begin{remark}
It is an immediate consequence of the last item in Theorem
\ref{teorema1} that,
\begin{equation}\label{desbasic}
f(t)\leq g(t),\quad\forall t\in[a,b]_{\mathbb{T}}^\kappa\quad
\mbox{implies}\quad \int_a^b f(t)\Delta t\leq\int_a^b g(t)\Delta
t.
\end{equation}
\end{remark}

We now present the (often used in this text) integration by parts
formulas for the $\Delta$-integral:

\begin{equation}
 \int_{a}^{b}f^\sigma(t)g^{\Delta}(t)\Delta t
 =\left[(fg)(t)\right]_{t=a}^{t=b}-\int_{a}^{b}f^{\Delta}(t)g(t)\Delta
 t\label{partes1};
\end{equation}
\begin{equation}
\int_{a}^{b}f(t)g^{\Delta}(t)\Delta t
=\left[(fg)(t)\right]_{t=a}^{t=b}-\int_{a}^{b}f^{\Delta}(t)g^\sigma(t)\Delta
t.\label{partes2}
\end{equation}
\begin{remark}
For analogous results on $\nabla$-integrals the reader can
consult, e.g., \cite{livro2}.
\end{remark}

Some more definitions and results must be presented since they
will be used in the sequel. We start defining the polynomials on
time scales. Throughout the text we put
$\mathbb{N}_0=\{0,1,\ldots\}$.

\begin{definition}\label{polinomios}
The functions $g_k,h_k:\mathbb{T}^2\rightarrow\mathbb{R}$, defined
recursively by
$$g_0(t,s)=h_0(t,s)=1,\quad s,t\in\mathbb{T},$$
$$g_{k+1}(t,s)=\int_s^tg_k(\sigma(\tau),s)\Delta\tau,\quad
h_{k+1}(t,s)=\int_s^th_k(\tau,s)\Delta\tau,\quad
k\in\mathbb{N}_0,\quad s,t\in\mathbb{T},$$ are called the
\emph{polynomials on time scales}.
\end{definition}
Now we define the \emph{exponential function}. First we need the
concept of \emph{regressivity}.

\begin{definition}
A function $p:\mathbb{T}^\kappa\rightarrow\mathbb{R}$ is
\emph{regressive}\index{Function!regressive} provided
$$1+\mu(t)p(t)\neq 0$$
holds for all $t\in\mathbb{T}^\kappa$. We denote by $\mathcal{R}$
the set of all regressive and rd-continuous functions. The set of
all \emph{positively regressive} functions is defined by
$$\mathcal{R}^+=\{p\in\mathcal{R}:1+\mu(t)p(t)> 0,\ \mbox{for all}\ t\in\mathbb{T}^\kappa\}.$$
\end{definition}

\begin{theorem}\cite[Theorem~1.37]{livro2}
Suppose $p\in\mathcal{R}$ and fix $t_0\in\mathbb{T}$. Then the
initial value problem
\begin{equation}\label{exp}
y^\Delta=p(t)y,\ y(t_0)=1
\end{equation}
has a unique solution on $\mathbb{T}$.
\end{theorem}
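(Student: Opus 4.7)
The plan is to convert the dynamic equation into an equivalent integral equation and produce a solution by a contraction-mapping/Picard iteration, then extend it to all of $\mathbb{T}$ using the regressivity hypothesis, and finally deduce uniqueness from a Gronwall-type estimate.

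First I would observe that, because $p$ is rd-continuous, a function $y\in\mathrm{C}_{\mathrm{rd}}^{1}(\mathbb{T})$ satisfies \eqref{exp} on an interval containing $t_0$ if and only if it satisfies the integral equation
\begin{equation*}
y(t)=1+\int_{t_0}^{t}p(s)y(s)\,\Delta s,
\end{equation*}
which follows directly from the definition of the antiderivative and the fact that the right-hand side is $\Delta$-differentiable with derivative $p(t)y(t)$ whenever $y$ is rd-continuous. On a sufficiently small time-scale interval $[t_0-\delta,t_0+\delta]_{\mathbb T}$, the map $T:y\mapsto 1+\int_{t_0}^{\cdot}p(s)y(s)\,\Delta s$ is a contraction on the Banach space $\mathrm{C}_{\mathrm{rd}}([t_0-\delta,t_0+\delta]_{\mathbb T})$ equipped with the sup norm, provided $\delta$ is chosen so that $\delta\cdot\sup|p|<1$. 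The Banach fixed-point theorem then yields a unique local solution.

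Next I would extend this local solution to all of $\mathbb{T}$. In the forward direction, suppose the solution has been constructed on $[t_0,t^{\ast}]_{\mathbb T}$. If $t^{\ast}$ is right-scattered, the identity \eqref{transfor} together with \eqref{exp} forces
\begin{equation*}
y(\sigma(t^{\ast}))=y(t^{\ast})+\mu(t^{\ast})p(t^{\ast})y(t^{\ast})=\bigl(1+\mu(t^{\ast})p(t^{\ast})\bigr)y(t^{\ast}),
\end{equation*}
which uniquely defines $y$ at $\sigma(t^{\ast})$. If $t^{\ast}$ is right-dense, another application of the local contraction argument pushes the solution past $t^{\ast}$. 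A standard transfinite/continuation argument then covers the entire forward portion of $\mathbb{T}$. Backward extension proceeds analogously: at left-scattered points the relation $y(t^{\ast})=(1+\mu(\rho(t^{\ast}))p(\rho(t^{\ast})))y(\rho(t^{\ast}))$ is inverted using $1+\mu p\neq 0$, which is exactly where the regressivity hypothesis $p\in\mathcal R$ is indispensable; at left-dense points one again invokes the local contraction.

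For uniqueness globally, I would suppose $y_1,y_2$ are two solutions and let $w=y_1-y_2$, which satisfies $w^{\Delta}=p(t)w$ with $w(t_0)=0$. On any compact interval $[a,b]_{\mathbb T}\ni t_0$ the integral form gives $|w(t)|\le M\int_{t_0}^{t}|w(s)|\,\Delta s$ with $M=\sup_{[a,b]_{\mathbb T}}|p|$, and a Gronwall-type inequality on time scales (available from the preliminary chapters) forces $w\equiv 0$. I expect the main obstacle to be the bookkeeping in the extension step: showing that the local solutions obtained at distinct base points piece together consistently and that the maximal interval of existence is all of $\mathbb{T}$, especially across accumulations of right-scattered points or clusters of dense points, where the regressivity condition $1+\mu p\neq 0$ must be used repeatedly to ensure no obstruction appears.
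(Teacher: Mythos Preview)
The paper does not prove this theorem: it is quoted verbatim as \cite[Theorem~1.37]{livro2} and used only to define the exponential function $e_p(\cdot,t_0)$. There is therefore no in-paper proof to compare against.

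That said, your approach is viable but differs from the standard argument in Bohner--Peterson, which defines $e_p(t,t_0)$ directly through the cylinder transformation on the Hilger complex plane and then verifies it solves \eqref{exp}. Your Picard/contraction route is more elementary in spirit but requires more bookkeeping: the local contraction step is fine (rd-continuous functions are bounded on compact time-scale intervals, so $\sup|p|$ makes sense), and you correctly identify that regressivity is needed precisely to invert $1+\mu(\rho(t^\ast))p(\rho(t^\ast))$ in the backward step. Two points deserve care. First, on a time scale the ``small interval'' $[t_0-\delta,t_0+\delta]_{\mathbb T}$ may contain only $t_0$, in which case the contraction step is vacuous and the extension proceeds immediately via the scattered-point recursion; your argument still works, but the dichotomy between dense and scattered neighbours should be made explicit from the outset rather than only in the extension step. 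Second, for uniqueness you invoke a Gronwall inequality ``from the preliminary chapters''; the version stated there (Theorem~\ref{gronw}) applies with $b=|p|\in\mathcal R^+$ since $1+\mu|p|>0$ automatically, so this is legitimate, but you should say so rather than leave it implicit.
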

\begin{definition}
Let $p\in\mathcal{R}$ and $t_0\in\mathbb{T}$. The exponential
function on time scales\index{Exponential function on time scales}
is defined by the solution of the IVP \eqref{exp} and is denoted
by $e_p(\cdot,t_0)$.
\end{definition}

\begin{remark}
If $\mathbb{T}=\mathbb{R}$, the exponential function is given by
$$e_p(t,t_0)=e^{\int_{t_0}^t p(\tau) d\tau},$$with
$t,t_0\in\mathbb{R}$ and $p:\mathbb{R}\rightarrow\mathbb{R}$ a
continuous function. For $\mathbb{T}=\mathbb{Z}$, the exponential
function is $$e_p(t,t_0)=\prod_{\tau=t_0}^t[1+p(\tau)],$$for
$t,t_0\in\mathbb{Z}$ with $t_0<t$ and
$p:\mathbb{Z}\rightarrow\mathbb{R}$ a sequence satisfying
$p(t)\neq -1$ for all $t\in\mathbb{Z}$. Further examples of
exponential functions can be found in \cite[Sect.~2.3]{livro}.
\end{remark}

The reader can find several properties of the exponential function
in \cite{livro}. Let us now present some results about linear
dynamic equations.

For $n\in \mathbb{N}_0$ and rd-continuous functions
$p_i:\mathbb{T}\rightarrow \mathbb{R}$, $1\leq i\leq n$, let us
consider the $n$th order linear dynamic equation
\begin{equation}
\label{linearDiffequa} Ly=0\, , \quad \text{ where }
Ly=y^{\Delta^n}+\sum_{i=1}^n p_iy^{\Delta^{n-i}} \, .
\end{equation}
A function $y:\mathbb{T}\rightarrow \mathbb{R}$ is said to be a
solution of equation (\ref{linearDiffequa}) on $\mathbb{T}$
provided $y$ is $n$ times delta differentiable on
$\mathbb{T}^{\kappa^n}$ and satisfies $Ly(t)=0$ for all
$t\in\mathbb{T}^{\kappa^n}$.

\begin{lemma}\cite[p.~239]{livro}
\label{8:88 Bohner} If $z=\left(z_1, \ldots,z_n\right) :
\mathbb{T} \rightarrow \mathbb{R}^n$ satisfies for all $t\in
\mathbb{T}^\kappa$
\begin{equation}\label{5.86}
z^\Delta=A(t)z(t),\qquad\mbox{where}\qquad A=\left(
                                               \begin{array}{ccccc}
                                                 0 & 1 & 0 & \ldots & 0 \\
                                                 \vdots & 0 & 1 & \ddots & \vdots \\
                                                 \vdots &  & \ddots & \ddots & 0 \\
                                                 0 & \ldots & \ldots & 0 & 1 \\
                                                 -p_n & \ldots & \ldots & -p_2 & -p_1 \\
                                               \end{array}
                                             \right)
\end{equation}
then $y=z_1$ is a solution of equation \eqref{linearDiffequa}.
Conversely, if $y$ solves \eqref{linearDiffequa} on $\mathbb{T}$,
then $z=\left(y, y^\Delta,\ldots, y^{\Delta^{n-1}}\right) :
\mathbb{T}\rightarrow \mathbb{R}$ satisfies \eqref{5.86} for all
$t\in\mathbb{T}^{\kappa^n}$.
\end{lemma}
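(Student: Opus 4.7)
The plan is to unpack the matrix equation componentwise and then carry out the standard reduction between scalar higher-order equations and first-order systems via the companion matrix; everything will follow from applying the definition of $\Delta$-derivative component by component and matching indices in $L$.

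For the forward direction, I would first read off from $z^{\Delta} = A(t) z(t)$ the $n$ scalar equations
\begin{equation*}
z_i^{\Delta}(t) = z_{i+1}(t), \quad i = 1,\ldots,n-1, \qquad z_n^{\Delta}(t) = -\sum_{i=1}^n p_i(t)\, z_{n-i+1}(t),
\end{equation*}
which hold for all $t \in \mathbb{T}^{\kappa}$ (and the last equation in fact on $\mathbb{T}^{\kappa^n}$, because each successive derivative shrinks the domain). Setting $y = z_1$, a short induction on $k$ using the first $n-1$ equations yields $y^{\Delta^k} = z_{k+1}$ for $k = 0, 1, \ldots, n-1$. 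Then one more $\Delta$-derivative combined with the last equation gives $y^{\Delta^n} = z_n^{\Delta} = -\sum_{i=1}^n p_i\, y^{\Delta^{n-i}}$, which is exactly $Ly = 0$.

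For the converse, given a solution $y$ of \eqref{linearDiffequa} on $\mathbb{T}$, I would define $z_i := y^{\Delta^{i-1}}$ for $i = 1,\ldots,n$, so that $z : \mathbb{T} \to \mathbb{R}^n$ is well defined on the appropriate $\kappa$-sets by hypothesis. Direct $\Delta$-differentiation gives $z_i^{\Delta} = y^{\Delta^i} = z_{i+1}$ for $i = 1,\ldots,n-1$, matching the first $n-1$ rows of $A(t)$. For the last component, using $Ly = 0$ one gets
\begin{equation*}
z_n^{\Delta} = y^{\Delta^n} = -\sum_{i=1}^n p_i\, y^{\Delta^{n-i}} = -\sum_{i=1}^n p_i\, z_{n-i+1} = -p_n z_1 - p_{n-1} z_2 - \cdots - p_1 z_n,
\end{equation*}
which is precisely the last row of $A(t) z(t)$. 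Hence $z^{\Delta} = A(t) z(t)$ on $\mathbb{T}^{\kappa^n}$, completing the equivalence.

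There is really no hard step here; the only thing to be careful about is the index bookkeeping in the last row of $A$ (matching $z_{n-i+1}$ against $y^{\Delta^{n-i}}$) and the domain of validity, namely tracking that we need $t \in \mathbb{T}^{\kappa^n}$ in the converse direction because $y$ is differentiated $n$ times. Both implications are then just direct verifications.
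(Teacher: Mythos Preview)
The paper does not supply its own proof of this lemma: it is quoted verbatim as a background result from \cite[p.~239]{livro}, so there is no in-paper argument to compare against. Your proposal is the standard companion-matrix reduction and is correct; the only minor quibble is the parenthetical remark about the last equation holding ``in fact on $\mathbb{T}^{\kappa^n}$'' in the forward direction, which is slightly misplaced---in that direction all $n$ component equations of $z^\Delta = A(t)z(t)$ are assumed on $\mathbb{T}^\kappa$, and the domain shrinkage to $\mathbb{T}^{\kappa^n}$ arises only when you repeatedly differentiate to reach $y^{\Delta^n}$ (and, as you correctly note, in the converse direction).
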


\begin{definition}\cite[p.~239]{livro}
We say that equation (\ref{linearDiffequa}) is \emph{regressive}
provided $I + \mu(t) A(t)$ is invertible for all $t \in
\mathbb{T}^\kappa$, where $A$ is the matrix in \eqref{5.86}.
\end{definition}

\begin{definition}\cite[p.~243]{livro}
Let $y_k:\mathbb{T}\rightarrow\mathbb{R}$ be (m-1) times
$\Delta$-differentiable functions for all $1\leq k\leq m$. We then
define the \emph{Wronski determinant} $W=W(y_1,\ldots,y_m)$ of the
set $\{y_1,\ldots,y_m\}$ by $W(y_1,\ldots,y_m)=\det
V(y_1,\ldots,y_m)$, where
\begin{equation}
V(y_1,\ldots,y_m)=\left(
                                               \begin{array}{cccc}
                                                 y_1 & y_2 & \ldots & y_m \\
                                                 y_1^\Delta & y_2^\Delta & \ldots & y_m^\Delta \\
                                                 \vdots & \vdots &  & \vdots \\
                                                 y_1^{\Delta^{m-1}} & y_2^{\Delta^{m-1}} & \ldots &
                                                 y_m^{\Delta^{m-1}}
                                               \end{array}
                                             \right).
\end{equation}
\end{definition}
\begin{definition}
A set of solutions $\{y_1,\ldots,y_n\}$ of the regressive equation
\eqref{linearDiffequa} is called a \emph{fundamental system} for
\eqref{linearDiffequa} if there is
$t_0\in\mathbb{T}^{\kappa^{n-1}}$ such that $W(y_1,\ldots,y_n)\neq
0$.
\end{definition}

\begin{definition}\cite[p.~250]{livro}\label{cauchy}
We define the Cauchy function $y:\mathbb{T} \times
\mathbb{T}^{\kappa^n}\rightarrow \mathbb{R}$ for the linear
dynamic equation~(\ref{linearDiffequa}) to be, for each fixed
$s\in\mathbb{T}^{\kappa^n}$, the solution of the initial value
problem
\begin{equation}
\label{IVP} Ly=0,\quad
y^{\Delta^i}\left(\sigma(s),s\right)=0,\quad 0\leq i \leq
n-2,\quad y^{\Delta^{n-1}}\left(\sigma(s),s\right)=1\, .
\end{equation}
\end{definition}

\begin{theorem}\cite[p.~251]{livro}\label{eqsol}
Suppose $\{y_1,\ldots,y_n\}$ is a fundamental system of the
regressive equation~(\ref{linearDiffequa}). Let $f\in C_{rd}$.
Then the solution of the initial value problem
$$
Ly=f(t),\quad y^{\Delta^i}(t_0)=0,\quad 0\leq i\leq n-1 \, ,
$$
is given by $y(t)=\int_{t_0}^t y(t,s)f(s)\Delta s$, where $y(t,s)$
is the Cauchy function for \eqref{linearDiffequa}.
\end{theorem}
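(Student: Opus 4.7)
The plan is to verify directly that $y(t)=\int_{t_0}^t y(t,s)f(s)\Delta s$ satisfies the stated initial value problem, by computing $y^{\Delta^k}(t)$ inductively for $0\leq k\leq n$ and invoking the defining properties of the Cauchy function together with the fact that, for each fixed $s$, $y(\cdot,s)$ solves $Ly=0$.

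First I would observe that $y(t_0)=\int_{t_0}^{t_0}y(t_0,s)f(s)\Delta s=0$ trivially. The main computational tool is the time scales Leibniz rule: for a suitably smooth $g$,
\begin{equation*}
\left(\int_{t_0}^t g(t,s)\Delta s\right)^{\!\Delta}=\int_{t_0}^t g^\Delta(t,s)\Delta s+g(\sigma(t),t),
\end{equation*}
where $g^\Delta$ denotes the partial $\Delta$-derivative with respect to the first argument. Applying this with $g(t,s)=y(t,s)f(s)$ yields
\begin{equation*}
y^\Delta(t)=\int_{t_0}^t y^\Delta(t,s)f(s)\Delta s+y(\sigma(t),t)f(t),
\end{equation*}
and the boundary term vanishes by the Cauchy function condition $y(\sigma(s),s)=y^{\Delta^0}(\sigma(s),s)=0$. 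Iterating this argument $n-1$ times, I would show by induction that for every $0\leq k\leq n-1$,
\begin{equation*}
y^{\Delta^k}(t)=\int_{t_0}^t y^{\Delta^k}(t,s)f(s)\Delta s,
\end{equation*}
because at each step the newly produced boundary term $y^{\Delta^{k-1}}(\sigma(t),t)f(t)$ equals zero by the Cauchy conditions for $0\leq i\leq n-2$. Evaluating at $t=t_0$ gives $y^{\Delta^k}(t_0)=0$ for $0\leq k\leq n-1$, establishing all the initial conditions at once.

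Differentiating one more time, the boundary term no longer vanishes: the Cauchy condition $y^{\Delta^{n-1}}(\sigma(s),s)=1$ produces
\begin{equation*}
y^{\Delta^n}(t)=\int_{t_0}^t y^{\Delta^n}(t,s)f(s)\Delta s+f(t).
\end{equation*}
Combining these formulas and using linearity of $L$ in the $t$ variable,
\begin{equation*}
Ly(t)=f(t)+\int_{t_0}^t\Bigl[y^{\Delta^n}(t,s)+\sum_{i=1}^n p_i(t)y^{\Delta^{n-i}}(t,s)\Bigr]f(s)\Delta s=f(t)+\int_{t_0}^t(L\,y(\cdot,s))(t)\,f(s)\Delta s.
\end{equation*}
Since for each fixed $s$ the Cauchy function $y(\cdot,s)$ solves $Ly=0$ by Definition~\ref{cauchy}, the integrand is identically zero and $Ly(t)=f(t)$, as required.

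The main obstacle I anticipate is rigorously justifying the time scales Leibniz rule in this setting, in particular the interchange of $\Delta$-differentiation and $\Delta$-integration and the correct identification of the boundary correction term $g(\sigma(t),t)$; once this is in hand, the rest is a clean induction in which everything is engineered to cancel by design of the Cauchy function. A secondary care point is to check that all $y^{\Delta^k}(t,s)$ are rd-continuous in $s$ so that the integrals make sense, which follows from the fundamental system generating $y(t,s)$ explicitly via Cramer's rule on the Wronskian from Lemma~\ref{8:88 Bohner}.
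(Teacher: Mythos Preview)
Your proof is correct and is precisely the standard variation-of-parameters argument. Note, however, that the paper does not actually prove this theorem: it is quoted directly from \cite[p.~251]{livro} as background material, so there is no ``paper's own proof'' to compare against. Your direct verification via the Leibniz rule (Theorem~\ref{teork}), the inductive formula $y^{\Delta^k}(t)=\int_{t_0}^t y^{\Delta^k}(t,s)f(s)\Delta s$ with boundary terms killed by the Cauchy conditions $y^{\Delta^i}(\sigma(s),s)=0$ for $0\le i\le n-2$, and the final appearance of $f(t)$ from $y^{\Delta^{n-1}}(\sigma(s),s)=1$, is exactly how this is proved in the cited source.
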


We end this section enunciating three theorems, namely, the
derivative under the integral sign, a chain rule\index{Chain rule
on time scales} and a mean value theorem on time scales\index{Mean
value theorem on time scales}.

\begin{theorem}\cite[Theorem~1.117]{livro}  \label{teork}
Let $t_0\in\mathbb{T}^\kappa$ and assume
$k:\mathbb{T}\times\mathbb{T}\rightarrow\mathbb{R}$ is continuous
at $(t,t)$, where $t\in\mathbb{T}^\kappa$ with $t>t_0$. In
addition, assume that $k(t,\cdot)$ is rd-continuous on
$[t_0,\sigma(t)]$. Suppose that for each $\varepsilon>0$ there
exists a neighborhood $U$ of $t$, independent of
$\tau\in[t_0,\sigma(t)]$, such that
$$|k(\sigma(t),\tau)-k(s,\tau)-k^{\Delta_1}(t,\tau)(\sigma(t)-s)|
\leq\varepsilon|\sigma(t)-s|\ \mbox{for all $s\in U$},$$ where
$k^{\Delta_1}$ denotes the delta derivative of $k$ with respect to
the first variable. Then,
$$g(t):=\int_{t_0}^t k(t,\tau)\Delta\tau\ \mbox{implies}\ g^\Delta(t)=\int_{t_0}^t k^{\Delta_1}(t,\tau)\Delta\tau+k(\sigma(t),t).$$
\end{theorem}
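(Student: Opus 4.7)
The plan is to verify Definition~\ref{def2} directly for $g$, with candidate value
\[
D := \int_{t_0}^t k^{\Delta_1}(t,\tau)\Delta\tau + k(\sigma(t),t).
\]
I would start from $g^\sigma(t)=\int_{t_0}^{\sigma(t)}k(\sigma(t),\tau)\Delta\tau$, apply additivity of the $\Delta$-integral (item~4 of Theorem~\ref{teorema1}) to split this at $s$ and at $t$, use \eqref{sigma} in the form $\int_t^{\sigma(t)}k(\sigma(t),\tau)\Delta\tau=\mu(t)k(\sigma(t),t)$, and then insert and subtract $(\sigma(t)-s)\int_{t_0}^s k^{\Delta_1}(t,\tau)\Delta\tau$. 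A short algebraic rearrangement expresses
\[
g^\sigma(t)-g(s)-D(\sigma(t)-s)=\mathrm{(A)}+\mathrm{(B)}+\mathrm{(C)},
\]
where
\begin{align*}
\mathrm{(A)} &= \int_{t_0}^s\bigl[k(\sigma(t),\tau)-k(s,\tau)-k^{\Delta_1}(t,\tau)(\sigma(t)-s)\bigr]\Delta\tau,\\
\mathrm{(B)} &= -(\sigma(t)-s)\int_s^t k^{\Delta_1}(t,\tau)\Delta\tau,\\
\mathrm{(C)} &= \int_s^t\bigl[k(\sigma(t),\tau)-k(\sigma(t),t)\bigr]\Delta\tau.
\end{align*}

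Next I would bound each piece by a constant multiple of $\varepsilon|\sigma(t)-s|$ on a sufficiently small neighborhood $U=(t-\delta,t+\delta)\cap\mathbb{T}$. The main hypothesis delivers $|\mathrm{(A)}|\le\varepsilon(t-t_0+\delta)|\sigma(t)-s|$ after integrating the pointwise estimate over $[t_0,s]_\mathbb{T}$. For (B), the rd-continuity of $k^{\Delta_1}(t,\cdot)$ gives a uniform bound $M$ on the compact interval $[t_0,\sigma(t)]_\mathbb{T}$, so $|\mathrm{(B)}|\le M|t-s||\sigma(t)-s|$, which is at most $\varepsilon|\sigma(t)-s|$ once $\delta<\varepsilon/M$.

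The hard part will be (C), and I would distinguish two cases. If $t$ is right-scattered, shrinking $\delta$ below $\mu(t)$ forces $s\le t$ and $|\sigma(t)-s|\ge\mu(t)>0$; the rd-continuity of $k(\sigma(t),\cdot)$ bounds the integrand by some $M'$, so $|\mathrm{(C)}|\le M'|t-s|$ can be made arbitrarily small, in particular $\le\varepsilon|\sigma(t)-s|$ after further shrinking $\delta$. If $t$ is right-dense, then $\sigma(t)=t$ and $|\sigma(t)-s|=|t-s|$; here the joint continuity of $k$ at $(t,t)$, specialized to the slice with first coordinate $t$, supplies $|k(t,\tau)-k(t,t)|<\varepsilon$ for $\tau$ near enough to $t$, whence $|\mathrm{(C)}|\le\varepsilon|t-s|=\varepsilon|\sigma(t)-s|$. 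Summing the three estimates and absorbing the constants yields $|g^\sigma(t)-g(s)-D(\sigma(t)-s)|\le C\varepsilon|\sigma(t)-s|$ with $C$ independent of $\varepsilon$, which is exactly Definition~\ref{def2} and gives $g^\Delta(t)=D$.
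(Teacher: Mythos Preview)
The paper does not supply a proof of Theorem~\ref{teork}; the result is quoted without proof from \cite[Theorem~1.117]{livro} and used as a black-box tool elsewhere in the thesis. There is therefore no in-paper proof to compare your proposal against.

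For what it is worth, your argument is essentially the standard one from the cited source: verify Definition~\ref{def2} directly by decomposing $g^\sigma(t)-g(s)-D(\sigma(t)-s)$ into a part controlled by the uniform differentiability hypothesis (your (A)) and residual parts (your (B), (C)) that are small because $s$ is near $t$. The algebraic decomposition you wrote is correct. Two minor technical points you would need to tidy up in a full write-up: first, in bounding (B) you appeal to rd-continuity of $k^{\Delta_1}(t,\cdot)$, which is not an explicit hypothesis; a uniform bound on $k^{\Delta_1}(t,\cdot)$ over $[t_0,\sigma(t)]$ can instead be extracted directly from the main estimate by fixing any $s_0\in U$ with $\sigma(t)\neq s_0$ and rearranging. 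Second, when $t$ is right-dense and $s>t=\sigma(t)$, the integral in (A) runs over $[t_0,s]$, which exceeds the range $[t_0,\sigma(t)]$ on which the hypothesis is stated; the usual fix is to peel off the extra piece $\int_t^s[\ldots]\Delta\tau$ and absorb it into a term of type (C). These are routine adjustments and do not affect the soundness of your overall strategy.
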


\begin{theorem}\cite[Theorem~1.90]{livro}
\label{teor1} Let $f:\mathbb{R}\rightarrow\mathbb{R}$ be
continuously differentiable and suppose
$g:\mathbb{T}\rightarrow\mathbb{R}$ is $\Delta$-differentiable on
$\mathbb{T}^\kappa$. Then, $f\circ g$ is $\Delta$-differentiable
and the formula
$$(f\circ g)^\Delta(t)=\left\{\int_0^1f'(g(t)+h\mu(t)g^\Delta(t))dh\right\}g^\Delta(t),\quad t\in\mathbb{T}^\kappa$$
holds.
\end{theorem}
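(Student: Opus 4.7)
The plan is to treat the two cases $\mu(t)>0$ (right-scattered) and $\mu(t)=0$ (right-dense) separately, in each case applying the fundamental theorem of calculus to the composition $f\circ g$ in the form
\[
f(y)-f(x)=\int_0^1 f'(x+h(y-x))\,dh\,(y-x),
\]
which is valid since $f\in C^1(\mathbb{R})$.

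First, suppose $t\in\mathbb{T}^\kappa$ is right-scattered. Then $\mu(t)>0$, and by part 2 of Theorem~\ref{teorema0} it suffices to compute $(f\circ g)^\sigma(t)-(f\circ g)(t)$ and divide by $\mu(t)$. Applying the identity above with $x=g(t)$ and $y=g^\sigma(t)$ and then using the formula $g^\sigma(t)=g(t)+\mu(t)g^\Delta(t)$ from \eqref{transfor}, I obtain
\[
f(g^\sigma(t))-f(g(t))=\left\{\int_0^1 f'\bigl(g(t)+h\mu(t)g^\Delta(t)\bigr)\,dh\right\}\mu(t)g^\Delta(t).
\]
Dividing by $\mu(t)$ yields the desired formula in this case, and also shows that $f\circ g$ is $\Delta$-differentiable at $t$ (again by part 2 of Theorem~\ref{teorema0}, since continuity of $f\circ g$ at a right-scattered point is automatic).

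Second, suppose $t$ is right-dense, so $\mu(t)=0$ and the target formula reduces to $(f\circ g)^\Delta(t)=f'(g(t))g^\Delta(t)$. By part 3 of Theorem~\ref{teorema0}, I must show that the limit $\lim_{s\to t}\frac{f(g(s))-f(g(t))}{s-t}$ exists and equals $f'(g(t))g^\Delta(t)$. Applying the same integral identity with $x=g(t)$ and $y=g(s)$ gives
\[
\frac{f(g(s))-f(g(t))}{s-t}=\left\{\int_0^1 f'\bigl(g(t)+h(g(s)-g(t))\bigr)\,dh\right\}\frac{g(s)-g(t)}{s-t}.
\]
By part 1 of Theorem~\ref{teorema0}, $g$ is continuous at $t$, so $g(s)\to g(t)$ as $s\to t$; combined with continuity of $f'$, the integrand converges uniformly in $h\in[0,1]$ to $f'(g(t))$ on a small neighborhood of $t$, so the braced integral tends to $f'(g(t))$. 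The second factor tends to $g^\Delta(t)$ by part 3 of Theorem~\ref{teorema0}, and multiplying the limits gives $f'(g(t))g^\Delta(t)$, which equals the expression on the right-hand side of the claim when $\mu(t)=0$.

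The main technical point is the passage to the limit inside the integral in the right-dense case; this is handled cleanly by the uniform continuity of $f'$ on the compact set $\{g(t)+h(g(s)-g(t)):h\in[0,1],\,s\in\overline{U}\}$ for $U$ a sufficiently small neighborhood of $t$ on which $g$ is bounded (which holds because $g$ is continuous at $t$). The right-scattered case is essentially algebraic once \eqref{transfor} and \eqref{derdiscr} are in hand, so the only real work is the dominated/uniform convergence step in the right-dense case.
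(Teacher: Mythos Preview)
The paper does not actually prove this statement; it is quoted verbatim from \cite[Theorem~1.90]{livro} as a preliminary result, with no proof given. Your argument is correct and is essentially the standard proof one finds in the cited reference: split into the right-scattered and right-dense cases, use the integral form of the mean value theorem $f(y)-f(x)=\int_0^1 f'(x+h(y-x))\,dh\,(y-x)$, and in the right-dense case pass to the limit using continuity of $f'$ and of $g$ at $t$. One small remark: in the right-scattered case you say continuity of $f\circ g$ at $t$ is ``automatic''; more precisely, it follows because $g$ is continuous at $t$ (by part~1 of Theorem~\ref{teorema0}, since $g$ is $\Delta$-differentiable there) and $f$ is continuous, so part~2 of Theorem~\ref{teorema0} indeed applies.
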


\begin{theorem}\cite[Theorem~1.14]{livro2}\label{meanvaluethm}
Let $f$ be a continuous function on $[a,b]_\mathbb{T}$ that is
$\Delta$-differentiable on $[a,b)_\mathbb{T}$. Then there exist
$\xi,\tau\in[a,b)_\mathbb{T}$ such that
$$f^\Delta(\xi)\leq\frac{f(b)-f(a)}{b-a}\leq f^\Delta(\tau).$$
\end{theorem}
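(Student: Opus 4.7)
My plan is the classical Rolle/MVT trick adapted to the one-sided nature of the $\Delta$-derivative. I would set
\[
K=\frac{f(b)-f(a)}{b-a},\qquad g(t)=f(t)-Kt,\quad t\in[a,b]_\mathbb{T},
\]
so that $g$ is continuous on $[a,b]_\mathbb{T}$, $\Delta$-differentiable on $[a,b)_\mathbb{T}$ with $g^\Delta=f^\Delta-K$, and (by direct computation) $g(a)=g(b)$. Finding $\xi$ with $f^\Delta(\xi)\le K$ is then equivalent to producing $\xi\in[a,b)_\mathbb{T}$ with $g^\Delta(\xi)\le 0$, and analogously for $\tau$ with $g^\Delta(\tau)\ge 0$.

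The key compactness observation is that $[a,b]_\mathbb{T}$ is closed and bounded in $\mathbb{R}$, hence compact, so continuity of $g$ ensures $g$ attains its maximum $M$ and minimum $m$ on $[a,b]_\mathbb{T}$. I locate $\xi$ from a maximum point of $g$: if $g$ attains its maximum at some point of $[a,b)_\mathbb{T}$, I call that point $\xi$; otherwise the maximum is attained only at $b$, but then the equality $g(a)=g(b)$ forces $g(a)=M$ as well, so I set $\xi=a\in[a,b)_\mathbb{T}$. In either situation $g(s)\le g(\xi)$ for every $s\in[a,b]_\mathbb{T}$.

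Now I split on the local structure of $\mathbb{T}$ at $\xi$ using Theorem~\ref{teorema0}. If $\xi$ is right-scattered, formula~\eqref{derdiscr} gives $g^\Delta(\xi)=\bigl(g(\sigma(\xi))-g(\xi)\bigr)/\mu(\xi)\le 0$, since $g(\sigma(\xi))\le g(\xi)$. If $\xi$ is right-dense, item~3 of Theorem~\ref{teorema0} expresses $g^\Delta(\xi)$ as the two-sided limit $\lim_{s\to\xi}(g(s)-g(\xi))/(s-\xi)$; restricting to $s>\xi$ (for which the quotient is $\le 0$ by the maximum property) forces this limit to be $\le 0$. In both cases $f^\Delta(\xi)\le K$. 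The construction of $\tau$ is entirely symmetric: choose a minimum point of $g$ inside $[a,b)_\mathbb{T}$ (using $g(a)=g(b)$ again to fall back on $\tau=a$ if necessary), and the same right-scattered/right-dense dichotomy yields $g^\Delta(\tau)\ge 0$, i.e. $f^\Delta(\tau)\ge K$.

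The main subtlety, and the only non-routine step, is ensuring that the chosen extremum lies in $[a,b)_\mathbb{T}$ rather than at $b$: the identity $g(a)=g(b)$ is precisely what allows me to always relocate an extremum from $b$ back to $a$, which lies in $[a,b)_\mathbb{T}$ by definition. Apart from this, the asymmetry of $\Delta$-differentiation (right-sided at scattered points, two-sided at dense points) is handled uniformly by Theorem~\ref{teorema0}, so no further case analysis is required.
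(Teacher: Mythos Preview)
The paper does not supply its own proof of this statement: it is quoted verbatim from \cite[Theorem~1.14]{livro2} as background material in Chapter~\ref{chap1}, so there is no in-paper argument to compare against. Your proposal is correct and is essentially the standard proof one finds in the cited source: reduce to a Rolle-type statement via $g(t)=f(t)-Kt$, use compactness of $[a,b]_\mathbb{T}$ to locate extrema of $g$, exploit $g(a)=g(b)$ to ensure an extremum can always be taken in $[a,b)_\mathbb{T}$, and then read off the sign of $g^\Delta$ at that extremum via the right-scattered/right-dense dichotomy of Theorem~\ref{teorema0}.
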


\clearpage{\thispagestyle{empty}\cleardoublepage}

\chapter{Calculus of Variations and Optimal Control}

The calculus of variations deals with finding extrema and, in this
sense, it can be considered a branch of optimization. The problems
and techniques in this branch, however, differ markedly from those
involving the extrema of functions of several variables owing to
the nature of the domain on the quantity to be optimized. The
calculus of variations is concerned with finding extrema for
functionals, i.e., for mappings from a set of functions to the
real numbers. The candidates in the competition for an extremum
are thus functions as opposed to vectors in $\mathbb{R}^n$, and
this furnishes the subject a distinct character. The functionals
are generally defined by definite integrals; the set of functions
are often defined by boundary conditions and smoothness
requirements, which arise in the formulation of the problem/model.
Let us take a look at the classical (basic) problem\index{Calculus
of Variations!classical problem} of the calculus of variations:
find a function $y\in C^1[a,b]$ such that
\begin{equation}\label{Prob0}
\mathcal{L}[y(\cdot)]=\int_{a}^{b}L(t,y(t),y'(t))d t
\longrightarrow \min ,\quad y(a)=y_a,\quad y(b)=y_b,
\end{equation}
with $a,b,y_a,y_b\in\mathbb{R}$ and $L(t,u,v)$ satisfying some
smoothness properties.

The enduring interest in the calculus of variations is in part due
to its applications. We now present a historical example of this.

\begin{example}[Brachystochrones]
The history of the calculus of variations essentially begins with
a problem posed by Johann Bernoulli (1696) as a challenge to the
mathematical comunity and in particular to his brother Jacob. The
problem is important in the history of the calculus of variations
because the method developed by Johann's pupil, Euler, to solve
this problem provided a sufficiently general framework to solve
other variational problems \cite{Brunt}.

The problem that Johann posed was to find the shape of a wire
along which a bead initially at rest slides under gravity from one
end to the other in minimal time. The endpoints of the wire are
specified and the motion of the bead is assumed frictionless. The
curve corresponding to the shape of the wire is called a
\emph{brachystochrone} or a curve of fastest descent.

The problem attracted the attention of various mathematicians
throughout the time including Huygens, L'H\^{o}pital, Leibniz, Newton,
Euler and Lagrange (see \cite{Brunt} and references cited therein
for more historical details).

To model Bernoulli's problem we use Cartesian coordinates with the
positive $y$-axis oriented in the direction of the gravitational
force. Let $(a,y_a)$ and $(b,y_b)$ denote the coordinates of the
initial and final positions of the bead, respectively. Here, we
require that $a<b$ and $y_a<y_b$. The problem consists of
determining, among the curves that have $(a,y_a)$ and $(b,y_b)$ as
endpoints, the curve on which the bead slides down from $(a,y_a)$
to $(b,y_b)$ in minimum time. The problem makes sense only for
continuous curves. We make the additional simplifying (but
reasonable) assumptions that the curve can be represented by a
function $y:[a,b]\rightarrow\mathbb{R}$ and that $y$ is at least
piecewise differentiable in the interval $[a,b]$. Now, the total
time it takes the bead to slide down a curve is given by
\begin{equation}\label{brach0}
T[y(\cdot)]=\int_0^l\frac{ds}{v(s)},
\end{equation}
where $l$ denotes the arclength of the curve, $s$ is the arclength
parameter, and $v$ is the velocity of the bead $s$ units down the
curve from $(a,b)$.

We now derive an expression for the velocity in terms of the
function $y$. We use the law of conservation of energy to achieve
this. At any position $(x,y(x))$ on the curve, the sum of the
potential and kinetic energies of the bead is a constant. Hence
\begin{equation}\label{brach1}
\frac{1}{2}mv^2(x)+mgy(x)=c,
\end{equation}
where $m$ is the mass of the bead, $v$ is the velocity of the bead
at $(x,y(x))$, and $c$ is a constant. Solving equation
\eqref{brach1} for $v$ gives
$$v(x)=\sqrt{\frac{2c}{m}-2gy(x)}.$$
Equality \eqref{brach0} becomes
$$T[y(\cdot)]=\int_a^b\frac{\sqrt{1+y'^2(x)}}{\sqrt{\frac{2c}{m}-2gy(x)}}dx.$$
We thus seek a function $y$ such that $T$ is minimum and
$y(a)=y_a$, $y(b)=y_b$.

It can be shown that the extrema for $T$ is a portion of the curve
called \emph{cycloid} (cf. Example 2.3.4 in \cite{Brunt}).
\end{example}

Let us return to the problem given in \eqref{Prob0} and write the
first and second order necessary optimality conditions for it,
i.e., the well-known Euler--Lagrange equation and Legendre's
necessary condition, respectively.

\begin{theorem}\label{teor2}(cf., e.g., \cite{Brunt})
Suppose that $L_{uu}$, $L_{uv}$ and $L_{vv}$ exist and are
continuous. Let $\hat{y}\in C^1[a,b]$ be a solution to the problem
given in \eqref{Prob0}. Then, necessarily
\begin{enumerate}
    \item
    $\frac{d}{dt}L_v(t,\hat{y}(t),\hat{y}'(t))=L_u(t,\hat{y}(t),\hat{y}'(t)),\quad
    t\in[a,b]\quad\mbox{\emph{(Euler--Lagrange equation\index{Euler--Lagrange equation!continuos case})}}$.
    \item $L_{vv}(t,\hat{y}(t),\hat{y}'(t))\geq 0,\quad t\in[a,b]\quad\mbox{\emph{(Legendre's
    condition\index{Legendre's necessary condition!continuous case})}}$.
\end{enumerate}
\end{theorem}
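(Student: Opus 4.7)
The strategy is the classical one based on the first and second variations. I fix an arbitrary test function $\eta\in C^1[a,b]$ with $\eta(a)=\eta(b)=0$, and consider the one-parameter family of admissible functions $y_\varepsilon(t)=\hat{y}(t)+\varepsilon\eta(t)$. Since $\hat{y}$ is a minimizer, the real function
\[
\Phi(\varepsilon)=\mathcal{L}[y_\varepsilon(\cdot)]=\int_a^b L\bigl(t,\hat{y}(t)+\varepsilon\eta(t),\hat{y}'(t)+\varepsilon\eta'(t)\bigr)\,dt
\]
has a minimum at $\varepsilon=0$. The continuity hypotheses on the second partials of $L$ justify differentiating twice under the integral sign, and so necessarily $\Phi'(0)=0$ and $\Phi''(0)\geq 0$.

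For the Euler--Lagrange equation I would compute
\[
\Phi'(0)=\int_a^b\bigl[L_u(t,\hat{y},\hat{y}')\eta(t)+L_v(t,\hat{y},\hat{y}')\eta'(t)\bigr]\,dt=0.
\]
To avoid having to assume a priori that $t\mapsto L_v(t,\hat{y}(t),\hat{y}'(t))$ is differentiable, I would invoke the Dubois--Reymond trick: set $A(t)=\int_a^t L_u(s,\hat{y}(s),\hat{y}'(s))\,ds$, integrate the first term by parts so that
\[
\int_a^b\bigl[L_v(t,\hat{y},\hat{y}')-A(t)\bigr]\eta'(t)\,dt=0\qquad\text{for all admissible }\eta,
\]
and then apply the fundamental lemma of the calculus of variations (in the form: if $\int_a^b g\eta'\,dt=0$ for every $\eta\in C^1$ vanishing at the endpoints, then $g$ is constant). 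This yields $L_v(t,\hat{y},\hat{y}')=A(t)+c$, so $t\mapsto L_v(t,\hat{y}(t),\hat{y}'(t))$ is of class $C^1$, and differentiating gives $\tfrac{d}{dt}L_v=L_u$.

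For Legendre's condition I compute the second variation,
\[
\Phi''(0)=\int_a^b\bigl[L_{uu}\eta^2+2L_{uv}\eta\eta'+L_{vv}(\eta')^2\bigr]\,dt\geq 0,
\]
where the partials are evaluated along $\hat{y}$. Fix any $\tau\in[a,b)$ and, for small $h>0$ with $\tau+h\leq b$, choose a triangular bump $\eta_h$ supported in $[\tau,\tau+h]$ with slope $\pm 1$ and peak height $h/2$; then $\eta_h=O(h)$ while $\eta_h'=\pm 1$ a.e.\ on the support. Substituting into the inequality, the term $\int L_{vv}(\eta_h')^2\,dt$ is $O(h)$ while the remaining two terms are $O(h^2)$ and $O(h^3)$ respectively. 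Dividing by $h$ and letting $h\to 0^+$, the continuity of $L_{vv}$ along $\hat{y}$ gives $L_{vv}(\tau,\hat{y}(\tau),\hat{y}'(\tau))\geq 0$. (A limiting argument at $t=b$ finishes the closed interval.) The main technical obstacle is the Euler--Lagrange step, precisely because one may not assume $\frac{d}{dt}L_v$ exists before proving it; the Dubois--Reymond reformulation is what makes the argument rigorous under the stated smoothness hypotheses.
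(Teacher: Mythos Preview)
Your proof is correct and follows the standard classical route. Note, however, that the paper does not actually prove this theorem: it is stated as a known result with a citation to van Brunt's textbook, so there is no paper proof to compare against. Your argument via the first variation and the Dubois--Reymond lemma for the Euler--Lagrange equation, and the second variation with triangular bump functions for Legendre's condition, is exactly the textbook treatment one would find in the cited reference.
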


It can be constructed a discrete analogue of problem
\eqref{Prob0}: Let $a,b\in\mathbb{Z}$. Find a function $y$ defined
on the set $\{a,a+1,\ldots,b-1,b\}$ such that
\begin{equation}\label{Prob1}
\mathcal{L}[y(\cdot)]=\sum_{t=a}^{b-1}L(t,y(t+1),\Delta y(t))
\longrightarrow \min ,\quad y(a)=y_a,\quad y(b)=y_b,
\end{equation}
with $y_a,y_b\in\mathbb{R}$. For this problem we have the
following result.
\begin{theorem}\label{teoremadisc}(cf., e.g., \cite{discCV})
Suppose that $L_{uu}$, $L_{uv}$ and $L_{vv}$ exist and are
continuous. Let $\hat{y}$ be a solution to the problem given in
\eqref{Prob1}. Then, necessarily
\begin{enumerate}
    \item
    $\Delta L_v(t)=L_u(t),\quad
    t\in\{a,\ldots,b-2\}\quad\mbox{\emph{(discrete Euler--Lagrange equation\index{Euler--Lagrange equation!discrete case})}}$.
    \item $L_{vv}(t)+2L_{uv}(t)+L_{uu}(t)+L_{vv}(t+1)\geq 0$,
    $t\in\{a,\ldots,b-2\}$, (discrete Legendre's
    condition\index{Legendre's necessary condition!discrete case}),
\end{enumerate}
where $(t)=(t,\hat{y}(t+1),\Delta\hat{y}(t))$.
\end{theorem}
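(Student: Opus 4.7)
The plan is to adapt the classical variational argument from Theorem~\ref{teor2} to the discrete setting. Fix an auxiliary function $\eta:\{a,a+1,\ldots,b\}\to\mathbb{R}$ with $\eta(a)=\eta(b)=0$, and set $\phi(\varepsilon):=\mathcal{L}[\hat{y}+\varepsilon\eta]$. Every competitor $\hat{y}+\varepsilon\eta$ satisfies the prescribed boundary conditions, so $\varepsilon=0$ is an unconstrained minimum of the smooth one-variable function $\phi$; hence $\phi'(0)=0$ and $\phi''(0)\geq 0$. Using the shorthand $(t)=(t,\hat{y}(t+1),\Delta\hat{y}(t))$, differentiating under the finite sum (which is legitimate by the assumed continuity of the second partials of $L$) gives
$$\phi'(0)=\sum_{t=a}^{b-1}\bigl[L_u(t)\,\eta(t+1)+L_v(t)\,\Delta\eta(t)\bigr].$$

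For the Euler--Lagrange equation I would apply discrete summation by parts to the second term -- the identity $\sum_{t=a}^{b-1}f(t)\Delta g(t)=[fg]_a^b-\sum_{t=a}^{b-1}\Delta f(t)\,g(t+1)$, which is just \eqref{partes2} specialised to $\mathbb{T}=\mathbb{Z}$. The boundary contribution vanishes thanks to $\eta(a)=\eta(b)=0$, leaving
$$\phi'(0)=\sum_{t=a}^{b-2}\bigl[L_u(t)-\Delta L_v(t)\bigr]\eta(t+1),$$
where the term at $t=b-1$ drops because $\eta(b)=0$. Since $\eta$ is free at the interior indices $\{a+1,\ldots,b-1\}$, testing against the spike variation concentrated at a single index (a discrete fundamental lemma) forces $\Delta L_v(t)=L_u(t)$ for every $t\in\{a,\ldots,b-2\}$.

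For the Legendre condition a second differentiation produces
$$\phi''(0)=\sum_{t=a}^{b-1}\bigl[L_{uu}(t)\,\eta^2(t+1)+2L_{uv}(t)\,\eta(t+1)\,\Delta\eta(t)+L_{vv}(t)\,(\Delta\eta(t))^2\bigr]\geq 0.$$
The idea is to localise the inequality. For $s\in\{a+1,\ldots,b-1\}$ choose the spike $\eta(s)=1$, $\eta(t)=0$ for $t\neq s$; since $s$ is interior the boundary conditions are respected. Then $\eta(t+1)$ is non-zero only at $t=s-1$ (with value $1$), while $\Delta\eta(t)$ equals $+1$ at $t=s-1$ and $-1$ at $t=s$. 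Substituting, only four summands survive, and after relabelling $t=s-1\in\{a,\ldots,b-2\}$ the inequality collapses to
$$L_{uu}(t)+2L_{uv}(t)+L_{vv}(t)+L_{vv}(t+1)\geq 0,$$
which is exactly the stated discrete Legendre condition.

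The only delicate point is the bookkeeping of index shifts so that the boundary terms cancel cleanly and the Euler--Lagrange range contracts from $\{a,\ldots,b-1\}$ to $\{a,\ldots,b-2\}$; I do not expect a genuine obstacle, since everything is a finite sum with smooth integrand and the spike variations provide enough freedom for both the first-order and second-order conclusions.
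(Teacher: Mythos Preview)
Your argument is correct. The paper does not actually prove Theorem~\ref{teoremadisc}; it merely cites it as a known result from \cite{discCV}, and the time-scale generalisation that follows (the unnamed theorem after Lemma~\ref{lem:DR}) is likewise quoted from \cite{CD:Bohner:2004} without proof. Your proof is the standard variational derivation and is precisely what underlies those cited results: set up $\phi(\varepsilon)=\mathcal{L}[\hat{y}+\varepsilon\eta]$, use summation by parts on $\phi'(0)$ together with the discrete fundamental lemma for the Euler--Lagrange equation, and test $\phi''(0)\geq 0$ against a spike variation for the Legendre condition.

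One small point worth tightening: when you invoke \eqref{partes2} with $f=L_v$, the boundary term formally involves $L_v(b)$, which is not defined since $L_v(t)=L_v(t,\hat{y}(t+1),\Delta\hat{y}(t))$ requires $t\leq b-1$. You sidestep this because the factor $\eta(b)=0$ kills that term, but it is cleaner to do the Abel rearrangement directly (shift the index in $\sum L_v(t)\eta(t+1)$ and subtract) so that $L_v(b)$ never appears. This is exactly the ``bookkeeping of index shifts'' you flag, and once handled there is no gap.
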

In 2004, M. Bohner wrote a paper \cite{CD:Bohner:2004} in which it
was introduced the calculus of variations on time scales. We now
present some of the results obtained.
\begin{definition}
A function $f$ defined on $[a,b]_\mathbb{T}\times\mathbb{R}$ is
called \emph{continuous in the second variable, uniformly in the
first variable}, if for each $\varepsilon>0$ there exists
$\delta>0$ such that $|x_1-x_2|<\delta$ implies
$|f(t,x_1)-f(t,x_2)|<\varepsilon$ for all $t\in[a,b]_\mathbb{T}$.
\end{definition}
\begin{lemma}[cf. Lemma 2.2 in \cite{CD:Bohner:2004}]
Suppose that $F(x)=\int_a^bf(t,x)\Delta t$ is well defined. If
$f_x$ is continuous in $x$, uniformly in $t$, then
$F'(x)=\int_a^bf_x(t,x)\Delta t$.
\end{lemma}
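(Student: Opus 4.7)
The plan is to start from the difference quotient and push the limit inside the $\Delta$-integral, using the uniform continuity of $f_x$ in $x$ as the key ingredient; the variable of differentiation $x$ is a real variable, so classical calculus (not time scales calculus) does the work in that slot.

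First, I fix $x\in\mathbb{R}$ and use the linearity of the $\Delta$-integral (items~1 and~2 of Theorem~\ref{teorema1}) to write, for $h\neq 0$,
\[
\frac{F(x+h)-F(x)}{h}=\int_{a}^{b}\frac{f(t,x+h)-f(t,x)}{h}\,\Delta t.
\]
For each fixed $t\in[a,b]_{\mathbb{T}}$ the function $x\mapsto f(t,x)$ is differentiable in the ordinary sense (it has continuous derivative $f_x(t,\cdot)$ by hypothesis), so the classical mean value theorem gives some $\theta=\theta(t,h)\in(0,1)$ with
\[
\frac{f(t,x+h)-f(t,x)}{h}=f_x(t,x+\theta h).
\]

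Next, I subtract the candidate derivative and estimate the discrepancy using item~6 of Theorem~\ref{teorema1}:
\[
\left|\frac{F(x+h)-F(x)}{h}-\int_{a}^{b}f_x(t,x)\,\Delta t\right|
\le\int_{a}^{b}\bigl|f_x(t,x+\theta h)-f_x(t,x)\bigr|\,\Delta t.
\]
Here is where the hypothesis is used. Given $\varepsilon>0$, the uniform (in $t$) continuity of $f_x$ in its second variable supplies $\delta>0$ such that $|y_1-y_2|<\delta$ implies $|f_x(t,y_1)-f_x(t,y_2)|<\varepsilon/(b-a)$ for every $t\in[a,b]_{\mathbb{T}}$. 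For $0<|h|<\delta$ we have $|\theta h|<\delta$, so the integrand is pointwise bounded by $\varepsilon/(b-a)$, and the right-hand side is bounded by $\varepsilon$. Letting $h\to 0$ and then $\varepsilon\to 0$ yields $F'(x)=\int_a^b f_x(t,x)\,\Delta t$.

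The main obstacle is conceptual rather than computational: the intermediate point $\theta(t,h)$ produced by the mean value theorem depends on $t$, so mere pointwise continuity of $f_x$ in $x$ would not let us pass the limit under the $\Delta$-integral. The uniformity in $t$ built into the hypothesis is exactly what neutralizes this $t$-dependence and allows the estimate above to be made uniformly in $t$.
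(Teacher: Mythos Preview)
Your proof is correct and is exactly the standard argument one expects here. Note, however, that the paper does not supply its own proof of this lemma; it merely quotes the statement from \cite{CD:Bohner:2004}, so there is no in-paper proof to compare against. Your use of the classical mean value theorem in the $x$-variable together with the uniform-in-$t$ continuity of $f_x$ to control the integrand uniformly is precisely the intended mechanism, and your closing remark correctly identifies why the uniformity hypothesis is indispensable.
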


We now introduce the basic problem of the calculus of variations
on time scales: Let $a,b\in\mathbb{T}$ and
$L(t,u,v):[a,b]^\kappa_\mathbb{T}\times\mathbb{R}^2\rightarrow\mathbb{R}$
be a function. Find a function $y\in \textrm{C}_{\textrm{rd}}^1$
such that
\begin{equation}\label{Prob2}
\mathcal{L}[y(\cdot)]=\int_{a}^{b}L(t,y^\sigma(t),y^\Delta(t))\Delta
t \longrightarrow \min ,\quad y(a)=y_a,\quad y(b)=y_b,
\end{equation}
with $y_a,y_b\in\mathbb{R}$.

\begin{definition}
For $f\in\textrm{C}_{\textrm{rd}}^1$ we define the norm
$$\|f\|=\sup_{t\in[a,b]^\kappa_\mathbb{T}}|f^\sigma(t)|+\sup_{t\in[a,b]^\kappa_\mathbb{T}}|f^\Delta(t)|.$$
A function $\hat{y}\in \textrm{C}_{\textrm{rd}}^1$ with
$\hat{y}(a)=y_a$ and $\hat{y}(b)=y_b$ is called a (weak) local
minimum for problem \eqref{Prob2} provided there exists $\delta>0$
such that $\mathcal{L}(\hat{y})\leq\mathcal{L}(\hat{y})$ for all
$y\in \textrm{C}_{\textrm{rd}}^1$ with $y(a)=y_a$ and $y(b)=y_b$
and $\|y-\hat{y}\|<\delta$.
\end{definition}
\begin{definition}
A function $\eta\in\textrm{C}_{\textrm{rd}}^1$ is called an
\emph{admissible variation} provided $\eta\neq 0$ and
$\eta(a)=\eta(b)=0$.
\end{definition}
\begin{lemma}[cf. Lemma 3.4 in \cite{CD:Bohner:2004}]\label{lema1}
Let $y,\eta\in\textrm{C}_{\textrm{rd}}^1$ be arbitrary fixed
functions. Put
$f(t,\varepsilon)=L(t,y^\sigma(t)+\varepsilon\eta^\sigma(t),y^\Delta(t)+\varepsilon\eta^\Delta(t))$
and $\Phi(\varepsilon)=\mathcal{L}(y+\varepsilon\eta)$,
$\varepsilon\in\mathbb{R}$. If $f_\varepsilon$ and
$f_{\varepsilon\varepsilon}$ are continuous in $\varepsilon$,
uniformly in $t$, then
\begin{align}
\Phi'(\varepsilon)&=\int_a^b[L_u(t,y^\sigma(t),y^\Delta(t))\eta^\sigma(t)+L_v(t,y^\sigma(t),y^\Delta(t))\eta^\Delta(t)]\Delta t,\nonumber\\
\Phi''(\varepsilon)&=\int_a^b\{L_{uu}[y](t)(\eta^\sigma(t))^2+2\eta^\sigma(t)L_{uv}[y](t)\eta^\Delta(t)+L_{vv}[y](t)(\eta^\Delta(t))^2\}\Delta
t,\nonumber
\end{align}
where $[y](t)=(t,y^\sigma(t),y^\Delta(t))$.
\end{lemma}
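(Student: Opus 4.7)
The plan is to recognize that $\Phi(\varepsilon)=\int_a^b f(t,\varepsilon)\Delta t$, where $\varepsilon$ is a real parameter (not a time-scales variable), so the differentiation in $\varepsilon$ is purely classical, and only the $\Delta$-integral sits in front of it. Thus the whole result reduces to differentiating under the $\Delta$-integral sign twice, together with the ordinary chain rule applied pointwise to $L(t,\cdot,\cdot)$.

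First, I would invoke the preceding lemma (the time-scales analogue of Leibniz's rule, i.e.\ Lemma~2.2 of \cite{CD:Bohner:2004} as quoted in the excerpt). The hypothesis that $f_\varepsilon$ is continuous in $\varepsilon$ uniformly in $t$ is exactly what that lemma requires, so it yields
\[
\Phi'(\varepsilon)=\int_a^b f_\varepsilon(t,\varepsilon)\,\Delta t.
\]
Then I would compute $f_\varepsilon$ by the classical chain rule in the scalar variable $\varepsilon$: since $f(t,\varepsilon)=L(t,u(\varepsilon),v(\varepsilon))$ with $u(\varepsilon)=y^\sigma(t)+\varepsilon\eta^\sigma(t)$ and $v(\varepsilon)=y^\Delta(t)+\varepsilon\eta^\Delta(t)$, we get
\[
f_\varepsilon(t,\varepsilon)=L_u\,\eta^\sigma(t)+L_v\,\eta^\Delta(t),
\]
with the partials of $L$ evaluated at $(t,u(\varepsilon),v(\varepsilon))$. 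Substituting back yields the claimed formula for $\Phi'(\varepsilon)$ (with the understanding, as in the displayed formula, that the notation $[y](t)$ encodes the current arguments).

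For $\Phi''$, I would apply the same Leibniz rule a second time, now to $\Phi'$; the hypothesis that $f_{\varepsilon\varepsilon}$ is continuous in $\varepsilon$ uniformly in $t$ is precisely what is needed. A second application of the classical chain rule to the expression for $f_\varepsilon$ produces
\[
f_{\varepsilon\varepsilon}(t,\varepsilon)=L_{uu}(\eta^\sigma)^2+2L_{uv}\,\eta^\sigma\eta^\Delta+L_{vv}(\eta^\Delta)^2,
\]
using $L_{uv}=L_{vu}$ to combine the two cross terms; integrating gives the stated formula for $\Phi''(\varepsilon)$.

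There is no real obstacle here. The only point worth flagging is a conceptual one: although $t$ lives on a time scale and the outer integral is a $\Delta$-integral, the perturbation parameter $\varepsilon$ is an ordinary real variable, so no time-scales chain rule (Theorem~\ref{teor1}) is ever invoked on the inside; all the time-scales content is packaged into the Leibniz rule that lets us move the $\varepsilon$-derivative past the $\Delta$-integral. Once that observation is made, the proof reduces to two invocations of that Leibniz rule followed by ordinary multivariable chain-rule computations.
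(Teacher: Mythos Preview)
Your proposal is correct and is exactly the intended argument: two applications of the Leibniz-type lemma quoted just before (Lemma~2.2 of \cite{CD:Bohner:2004}), each followed by the ordinary chain rule in the real parameter~$\varepsilon$. Note, however, that the paper itself does not supply a proof of this lemma; it is stated in the synthesis chapter as a result from \cite{CD:Bohner:2004}, so there is no in-paper proof to compare against---your write-up matches the standard derivation one finds in that reference.
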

The next lemma is the time scales version of the classical
Dubois--Reymond lemma\index{Dubois–-Reymond lemma on time scales}.
\begin{lemma}[cf. Lemma 4.1 in \cite{CD:Bohner:2004}]
\label{lem:DR} Let $g\in
C_{\textrm{rd}}([a,b]_\mathbb{T}^\kappa)$. Then,
$$\int_{a}^{b}g(t)\eta^\Delta(t)\Delta t=0,\quad
\mbox{for all $\eta\in C_{\textrm{rd}}^1([a,b]_\mathbb{T})$ with
$\eta(a)=\eta(b)=0$},$$ holds if and only if
$$g(t)=c,\quad\mbox{on $[a,b]^\kappa_\mathbb{T}$ for some
$c\in\mathbb{R}$}.$$
\end{lemma}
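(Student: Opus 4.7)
The plan is to prove both directions separately, with the sufficiency being immediate and the necessity relying on a clever choice of test variation.

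For the sufficient direction, suppose $g(t) \equiv c$ on $[a,b]_\mathbb{T}^\kappa$. Then, by items 2 and 4 of Theorem~\ref{teorema1}, together with the fundamental property of the $\Delta$-integral (the antiderivative relation), any admissible variation $\eta$ satisfies
$$\int_a^b g(t)\eta^\Delta(t)\Delta t = c\int_a^b \eta^\Delta(t)\Delta t = c[\eta(b)-\eta(a)] = 0.$$

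For the necessary direction, the idea is to reverse-engineer a specific admissible variation whose $\Delta$-derivative is essentially $g-c$. Define
$$c := \frac{1}{b-a}\int_a^b g(t)\Delta t, \qquad \eta(t):=\int_a^t \bigl[g(s)-c\bigr]\Delta s.$$
Since $g-c$ is rd-continuous on $[a,b]_\mathbb{T}^\kappa$, the function $\eta$ is an antiderivative of $g-c$ (so $\eta\in C^1_{\mathrm{rd}}([a,b]_\mathbb{T})$ and $\eta^\Delta(t)=g(t)-c$). By construction $\eta(a)=0$, and the particular choice of the constant $c$ gives
$$\eta(b)=\int_a^b g(s)\Delta s - c(b-a)=0,$$
so $\eta$ is an admissible variation. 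Substituting into the hypothesis yields
$$0=\int_a^b g(t)\bigl[g(t)-c\bigr]\Delta t.$$
Moreover, again by the choice of $c$,
$$\int_a^b c\bigl[g(t)-c\bigr]\Delta t = c\int_a^b g(t)\Delta t - c^2(b-a)=0.$$
Subtracting, I obtain $\int_a^b [g(t)-c]^2 \Delta t = 0$.

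The main obstacle, and the only subtle step, is the passage from this vanishing integral of a non-negative rd-continuous integrand to pointwise vanishing of $g-c$ on all of $[a,b]_\mathbb{T}^\kappa$. At a right-scattered point $t$, formula \eqref{sigma} shows that the integral contains the nonnegative summand $\mu(t)[g(t)-c]^2$, which must therefore be zero, forcing $g(t)=c$. At a right-dense point $t$, rd-continuity gives ordinary continuity of $g$ at $t$, so if $g(t)\neq c$ then $[g-c]^2$ is bounded below by a positive constant on a one-sided neighborhood of $t$ inside $\mathbb{T}$, producing a strictly positive contribution to the $\Delta$-integral by the monotonicity relation \eqref{desbasic} applied to a suitably truncated comparison function. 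Either case contradicts the vanishing of the integral, so $g\equiv c$ on $[a,b]_\mathbb{T}^\kappa$, completing the proof.
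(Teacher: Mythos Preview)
Your proof is correct and follows the standard Dubois--Reymond argument. Note, however, that the paper does not actually supply its own proof of this lemma: it merely cites Lemma~4.1 of \cite{CD:Bohner:2004} and uses the result. Your argument---choosing $c$ to be the average of $g$, taking $\eta(t)=\int_a^t[g(s)-c]\Delta s$ as the test variation, and deducing $\int_a^b[g(t)-c]^2\Delta t=0$---is precisely the approach used in that reference, so there is nothing substantively different to compare. Your handling of the final step (splitting into right-scattered and right-dense points to conclude pointwise vanishing) is also the standard route and is sound.
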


Nest theorem contains first and second order necessary optimality
conditions for problem defined by \eqref{Prob2}. Its proof can be
seen in \cite{CD:Bohner:2004}.

\begin{theorem}
Suppose that $L$ satisfies the assumption of Lemma \ref{lema1}. If
$\hat{y}\in\textrm{C}_{\textrm{rd}}^1$ is a (weak) local minimum
for problem given by \eqref{Prob2}, then necessarily
\begin{enumerate}
    \item
    $ L_v^\Delta[\hat{y}](t)=L_u[\hat{y}](t),\quad
    t\in[a,b]^{\kappa^2}_\mathbb{T}\quad\mbox{\emph{(time scales Euler--Lagrange equation\index{Euler--Lagrange equation!time scales case})}}$.
    \item $L_{vv}[\hat{y}](t)+\mu(t)\{2L_{uv}[\hat{y}](t)+\mu(t)L_{uu}[\hat{y}](t)+(\mu^\sigma(t))^\ast L_{vv}[\hat{y}](\sigma(t))\}\geq 0,\quad t\in[a,b]^{\kappa^2}_\mathbb{T}\quad\\\mbox{\emph{(time scales Legendre's
    condition\index{Legendre's necessary condition!time scales case})}}$,
\end{enumerate}
where $[y](t)=(t,y^\sigma(t),y^\Delta(t))$ and
$\alpha^\ast=\frac{1}{\alpha}$ if
$\alpha\in\mathbb{R}\backslash\{0\}$ and $0^\ast=0$.
\end{theorem}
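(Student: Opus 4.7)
The plan is to fix a weak local minimum $\hat y$, take an arbitrary admissible variation $\eta \in C_{\textrm{rd}}^1$ (so $\eta(a)=\eta(b)=0$), and set $\Phi(\varepsilon)=\mathcal{L}(\hat y+\varepsilon\eta)$. Since $\varepsilon=0$ is then a local minimum of $\Phi$, we have $\Phi'(0)=0$ and $\Phi''(0)\geq 0$, with explicit expressions for these derivatives furnished by Lemma~\ref{lema1}.

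For the Euler--Lagrange equation I would introduce the antiderivative $A(t)=\int_a^t L_u[\hat y](\tau)\Delta\tau$, so that $A^\Delta=L_u[\hat y]$ and $A(a)=0$. Applying the integration-by-parts formula~\eqref{partes1} with $f=\eta$, $g=A$ and using $\eta(a)=\eta(b)=0$ yields
\[
\int_a^b L_u[\hat y](t)\,\eta^\sigma(t)\,\Delta t=-\int_a^b A(t)\,\eta^\Delta(t)\,\Delta t,
\]
so that the identity $\Phi'(0)=0$ rewrites as $\int_a^b\{L_v[\hat y](t)-A(t)\}\eta^\Delta(t)\,\Delta t=0$ for every admissible $\eta$. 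The Dubois--Reymond lemma (Lemma~\ref{lem:DR}) then forces $L_v[\hat y]-A\equiv c$ on $[a,b]_{\mathbb{T}}^\kappa$ for some constant $c$, and $\Delta$-differentiating this identity (which is legitimate because $A$ is an antiderivative of an rd-continuous function) yields $L_v^\Delta[\hat y](t)=L_u[\hat y](t)$ on $[a,b]_{\mathbb{T}}^{\kappa^2}$, as required.

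For Legendre's condition, $\Phi''(0)\geq 0$ combined with Lemma~\ref{lema1} gives
\[
\int_a^b\bigl\{L_{uu}[\hat y](\eta^\sigma)^2+2L_{uv}[\hat y]\,\eta^\sigma\eta^\Delta+L_{vv}[\hat y](\eta^\Delta)^2\bigr\}\Delta t\geq 0
\]
for every admissible $\eta$. To extract a pointwise inequality at a prescribed $t_0\in[a,b]_{\mathbb{T}}^{\kappa^2}$ I would construct a family of localized variations $\eta$ that vanish outside a small neighborhood of $t_0$ and evaluate the integrand explicitly via \eqref{sigma}. When $t_0$ is right-scattered, a piecewise-affine $\eta$ supported essentially on $[t_0,\sigma(\sigma(t_0))]_\mathbb{T}$ lets the quadratic form collapse to a positive multiple of the bracketed expression $L_{vv}[\hat y](t_0)+\mu(t_0)\{2L_{uv}[\hat y](t_0)+\mu(t_0)L_{uu}[\hat y](t_0)+(\mu^\sigma(t_0))^\ast L_{vv}[\hat y](\sigma(t_0))\}$; when $t_0$ is right-dense, shrinking the support of $\eta$ to $t_0$ reduces the inequality to $L_{vv}[\hat y](t_0)\geq 0$, which matches the statement because $\mu(t_0)=0$ and $0^\ast=0$ annihilate the remaining terms.

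The main obstacle will be this last step. Designing the variation so that the cross-term $\eta^\sigma\eta^\Delta$, the square $(\eta^\sigma)^2$ (which is anchored at $\sigma(t_0)$), and the square $(\eta^\Delta)^2$ (which mixes the values at $t_0$ and $\sigma(t_0)$) combine with the precise weights needed to reproduce the hybrid combination $L_{vv}+\mu\{2L_{uv}+\mu L_{uu}+(\mu^\sigma)^\ast L_{vv}\circ\sigma\}$, rather than a weaker or off-by-a-factor variant, is delicate. In particular, the $(\mu^\sigma(t_0))^\ast$ factor arises from balancing contributions at $\sigma(t_0)$, and its correct appearance demands case-splitting on whether $\sigma(t_0)$ is itself right-scattered or right-dense and a careful use of \eqref{transfor} to eliminate $(\eta^\sigma)^2$-contributions in favour of $\eta^\Delta$-contributions.
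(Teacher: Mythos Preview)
The paper does not actually prove this theorem; it merely states it and cites \cite{CD:Bohner:2004} with the remark ``Its proof can be seen in \cite{CD:Bohner:2004}.'' So there is no in-paper proof to compare against.

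That said, your outline is the standard one from the cited reference. For part~1, the antiderivative-plus-Dubois--Reymond argument you give is exactly Bohner's proof; it is correct as written. For part~2, your high-level plan (localize the variation at a point $t_0$, evaluate the quadratic form, split on the right-density of $t_0$ and of $\sigma(t_0)$) is also the right one, and your diagnosis of where the difficulty lies is accurate. The concrete variation used in \cite{CD:Bohner:2004} is essentially the one you describe: an $\eta$ with $\eta(\sigma(t_0))$ the only nonzero node in the scattered case, and a tent-shaped $\eta$ of shrinking support in the dense case. The $(\mu^\sigma(t_0))^\ast$ factor indeed emerges from the subcase where $\sigma(t_0)$ is right-scattered versus right-dense, exactly as you anticipate; in the latter subcase the $L_{vv}[\hat y](\sigma(t_0))$ contribution is absorbed into a vanishing remainder rather than appearing with a finite weight.

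In short: your proposal is correct and matches the approach of the cited source; the paper itself contains nothing further to compare.
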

\begin{example}
Consider the following problem
\begin{equation}
\mathcal{L}[y(\cdot)]=\int_{a}^{b}[y^\Delta(t)]^2\Delta t
\longrightarrow \min ,\quad y(a)=y_a,\quad y(b)=y_b.
\end{equation}
Its Euler--Lagrange equation is $y^{\Delta^2}(t)=0$ for all
$t\in[a,b]^{\kappa^2}_\mathbb{T}$. It follows that $y(t)=ct+d$,
and the constants $c$ and $d$ are obtained using the boundary
conditions $y(a)=y_a$ and $y(b)=y_b$. Note that Legendre's
condition is always satisfied in this problem since $\mu(t)\geq 0$
and $L_{uu}=L_{uv}=0$, $L_{vv}=2$.
\end{example}

\clearpage{\thispagestyle{empty}\cleardoublepage}

\chapter{Inequalities}

Inequalities have proven to be one of the most important tools for
the development of many branches of mathematics. Indeed, this
importance seems to have increased considerably during the last
century and the theory of inequalities may nowadays be regarded as
an independent branch of mathematics. A particular feature that
makes the study of this interesting topic so fascinating arises
from the numerous fields of applications, such as fixed point
theory and calculus of variations.

The integral inequalities of various types have been widely
studied in most subjects involving mathematical analysis. In
recent years, the application of integral inequalities has greatly
expanded and they are now used not only in mathematics but also in
the areas of physics, technology and biological sciences
\cite{Pch}. Moreover, many physical problems arising in a wide
variety of applications are governed by both ordinary and partial
difference equations \cite{book:DCV} and, since the integral
inequalities with explicit estimates are so important in the study
of properties (including the existence itself) of solutions of
differential and integral equations, their discrete analogues
should also be useful in the study of properties of solutions of
difference equations.

An early significant integral inequality and certainly a keystone
in the development of the theory of differential equations can be
stated as follows.
\begin{theorem}\label{thm8}
If $u\in C[a,b]$ is a nonnegative function and
$$u(t)\leq c+\int_a^tdu(s)ds,$$
for all $t\in[a,b]$ where $c,d$ are nonnegative constants, then
the function $u$ has the estimate
$$u(t)\leq c\exp(d(t-a)),\quad t\in[a,b].$$
\end{theorem}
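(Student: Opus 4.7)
The plan is to reduce this integral inequality to a first-order linear differential inequality and solve it with an integrating factor, which is the cleanest route for the continuous setting and mirrors the classical proof of Gronwall's inequality.

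First I would introduce the auxiliary function
\[
v(t) := c + \int_a^t d\, u(s)\, ds, \qquad t\in[a,b].
\]
By hypothesis, $u(t)\le v(t)$ pointwise, and since $u$ is continuous, $v$ is differentiable on $[a,b]$ with $v'(t)=d\,u(t)$. Using the pointwise bound $u(t)\le v(t)$ together with $d\ge 0$ gives the differential inequality
\[
v'(t) \;\le\; d\, v(t), \qquad v(a)=c.
\]

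Next I would multiply through by the integrating factor $e^{-d(t-a)}>0$, which preserves the inequality, to obtain
\[
\bigl(v(t)\, e^{-d(t-a)}\bigr)' \;=\; \bigl(v'(t)-d\,v(t)\bigr)e^{-d(t-a)} \;\le\; 0.
\]
Integrating from $a$ to $t$ and using $v(a)=c$ yields $v(t)e^{-d(t-a)}\le c$, that is, $v(t)\le c\exp(d(t-a))$. Since $u(t)\le v(t)$, the conclusion follows.

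The only genuinely delicate point, rather than a real obstacle, is justifying the pointwise estimate $u(t)\le v(t)$ when $c=0$ and $d=0$, but in that corner case the hypothesis already gives $u(t)\le 0$ and, together with nonnegativity, $u\equiv 0$, which trivially satisfies the bound. An alternative route, should one prefer to avoid any smoothness of $v$, is to apply the inequality to itself iteratively and recognize the exponential series as the resulting majorant; however, the integrating factor argument above is shorter and completely self-contained.
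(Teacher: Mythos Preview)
Your argument is correct and is the standard integrating-factor proof of Gronwall's inequality. Note, however, that the paper does not actually supply its own proof of this theorem: it is stated as classical background material with a reference to Gronwall's 1919 paper, so there is nothing to compare against here.

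One small remark: the ``delicate point'' you flag is not actually delicate. The inequality $u(t)\le v(t)$ is precisely the hypothesis, valid for all nonnegative $c,d$; no separate treatment of the case $c=d=0$ is needed, and your main argument already covers it (with $d=0$ the integrating factor is identically $1$ and $v\equiv c$).
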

The above theorem was discovered by Gronwall \cite{GronwallIne} in
1919 and is now known as \emph{Gronwall's inequality}. The
discrete version of Theorem \ref{thm8} seems to have appeared
first in the work of Mikeladze \cite{mike} in 1935.

In 1956, Bihari \cite{bihari} gave a nonlinear generalization of
Gronwall's inequality by studying the inequality
$$u(t)\leq c+\int_a^t dw(u(s))ds,$$
where $w$ satisfies some prescribed conditions, which is of
fundamental importance in the study of nonlinear problems and is
known as \emph{Bihari's inequality}.

Other fundamental inequalities as, the H$\ddot{\mbox{o}}$lder (in
particular, Cauchy--Schwarz) inequality, the Minkowski inequality
and the Jensen inequality caught the fancy of a number of research
workers. Let us state and then provide a simple example of
application (specifically to a calculus of variations problem) of
the Jensen inequality.
\begin{theorem}\emph{\cite{mit}}
\label{thm1} If $g \in C([a, b], (c, d))$ and $f \in C((c, d),
\mathbb{R} )$ is convex, then
$$
f \left( \frac{\int_{a}^{b}g(s) ds}{b-a}\right ) \leq
\frac{\int_{a}^{b}f(g(s)) ds}{b-a}.
$$
\end{theorem}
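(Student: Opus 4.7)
The plan is to use the classical supporting-hyperplane characterization of convex functions: at every interior point of the domain a convex function lies above some affine function that agrees with it at that point. More precisely, I would first invoke the fact that a convex function $f\in C((c,d),\mathbb{R})$ admits at every $x_{0}\in(c,d)$ a slope $k\in\mathbb{R}$ (for instance $k=f'_{-}(x_{0})$ or any value between the one-sided derivatives) such that
$$f(x)\ \geq\ f(x_{0})+k(x-x_{0})\quad\text{for all }x\in(c,d).$$
This is the single analytical input I need; it is standard and I would either quote it or sketch it from the monotonicity of difference quotients of convex functions.

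Next I would choose $x_{0}$ to be precisely the quantity on the left-hand side of the proposed inequality, namely
$$x_{0}\ :=\ \frac{1}{b-a}\int_{a}^{b}g(s)\,ds.$$
A small but necessary step is to check that $x_{0}\in(c,d)$, so that the supporting-line inequality may be applied at $x_{0}$. Since $g$ is continuous on the compact interval $[a,b]$ with values in $(c,d)$, it attains its minimum $m$ and maximum $M$ and both lie in $(c,d)$; hence $m\leq x_{0}\leq M$, and in fact $x_{0}\in(c,d)$ (either by strict inequality when $g$ is non-constant, or trivially when $g$ is constant, in which case the theorem is immediate anyway).

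With the supporting-line inequality in hand, I would substitute $x=g(s)$ for each $s\in[a,b]$ to obtain
$$f(g(s))\ \geq\ f(x_{0})+k\bigl(g(s)-x_{0}\bigr),$$
and then integrate this pointwise inequality over $[a,b]$. The affine term contributes
$$\int_{a}^{b}\!\bigl[f(x_{0})+k(g(s)-x_{0})\bigr]ds=(b-a)f(x_{0})+k\!\left(\int_{a}^{b}g(s)\,ds-(b-a)x_{0}\right)=(b-a)f(x_{0}),$$
by the very definition of $x_{0}$. Dividing through by $b-a$ yields the desired inequality.

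The main obstacle is really only a conceptual one rather than a computational one: producing the supporting line (subgradient) for a merely continuous convex function. Once that lemma is in place, every remaining step is either an integration or a substitution and carries no hidden difficulty. If one were uncomfortable quoting the subgradient existence, the backup plan would be to approximate the integral by Riemann sums and apply finite Jensen (which itself follows by induction from the two-point definition of convexity), then pass to the limit using the continuity of $f$ and $g$; but the supporting-line argument is cleaner and avoids any limiting procedure.
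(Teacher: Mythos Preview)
Your argument is correct. Note, however, that the paper does not actually supply a proof of this particular statement: it is quoted as a classical result from \cite{mit}. That said, when the paper proves its own generalizations of Jensen's inequality (see the proof of Theorem~\ref{thm4} for the $\Diamond_\alpha$-integral), it uses precisely the supporting-line approach you outline---pick $x_0$ equal to the weighted average, invoke the existence of $k\in\mathbb{R}$ with $f(x)\geq f(x_0)+k(x-x_0)$ for all $x\in(c,d)$ (there cited from \cite[Exercise~3.42C]{fol}), substitute $x=g(s)$, integrate, and observe that the linear term vanishes by the choice of $x_0$. So your proposal coincides with the method the paper itself adopts for the extended versions.
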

\begin{example}
Consider the following calculus of variations problem: Find the
minimum of the functional $\mathcal{L}$ defined by
$$\mathcal{L}[y(\cdot)]=\int_0^1 [y'(t)]^4 dt,$$
with the boundary conditions $y(0)=0$ and $y(1)=1$. Since
$f(x)=x^4$ is convex for all $x\in\mathbb{R}$ we have that
$$\mathcal{L}[y(\cdot)]\geq\left(\int_0^1 y'(t) dt\right)^4=1,$$
for all $y\in C^1[0,1]$. Now, the function $\hat{y}(t)=t$ satisfies
the boundary conditions and is such that
$\mathcal{L}[\hat{y}(t)]=1$. Therefore the functional
$\mathcal{L}$ achieves its minimum at $\hat{y}$.
\end{example}

At the time of the beginning of this work there was already work
done in the development of inequalities of the above mentioned
type, i.e., Jensen, Gronwall, Bihari, etc., within the time scales
setting (cf. \cite{inesurvey,Bihary,rev1:r,Gronwall,wong}). We now
state two of them being the others presented when required. The
first one is a comparison theorem.

\begin{theorem}[Theorem 5.4 of \cite{inesurvey}]\label{teo3}
Let $a\in\mathbb{T}$, $y\in C_\textrm{rd}(\mathbb{T})$ and $f\in
C_\textrm{rd}(\mathbb{T}^\kappa)$ and $p\in\mathcal{R}^+$. Then,
$$y^\Delta(t)\leq p(t)y(t)+f(t),\quad t\in\mathbb{T}^\kappa,$$
implies
$$y(t)\leq y(a)e_p(t,a)+\int_a^t e_p(t,\sigma(\tau))f(\tau)\Delta\tau,\quad t\in\mathbb{T}.$$
\end{theorem}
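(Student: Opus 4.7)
The plan is to prove this by the classical integrating factor method adapted to time scales, the integrating factor being the exponential $e_p(\cdot,a)$. Regressivity (indeed positive regressivity) of $p$ guarantees that $e_p(\cdot,a)$ exists, is $\Delta$-differentiable on $\mathbb{T}^\kappa$ with $[e_p(\cdot,a)]^\Delta=p\,e_p(\cdot,a)$, and is strictly positive on $\mathbb{T}$. Positivity will be what lets me divide and multiply through the inequality without reversing it.

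First I would introduce the auxiliary function $z(t)=y(t)/e_p(t,a)$, noting that $z(a)=y(a)$. Then I would compute $z^\Delta$ by the quotient rule on time scales: writing $e=e_p(\cdot,a)$ and using $e^\Delta=p\,e$ and $e^\sigma=e+\mu p\,e=(1+\mu p)e$, one obtains
\begin{equation*}
z^\Delta(t)=\frac{y^\Delta(t)\,e(t)-y(t)\,e^\Delta(t)}{e(t)\,e^\sigma(t)}=\frac{y^\Delta(t)-p(t)y(t)}{e_p(\sigma(t),a)}.
\end{equation*}
Invoking the hypothesis $y^\Delta(t)\leq p(t)y(t)+f(t)$ and using $e_p(\sigma(t),a)>0$ (which follows from $p\in\mathcal R^+$), this gives the pointwise bound
\begin{equation*}
z^\Delta(t)\leq \frac{f(t)}{e_p(\sigma(t),a)},\qquad t\in\mathbb{T}^\kappa.
\end{equation*}

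Next I would $\Delta$-integrate this inequality from $a$ to $t$, using the monotonicity property \eqref{desbasic} of the $\Delta$-integral. This yields
\begin{equation*}
\frac{y(t)}{e_p(t,a)}-y(a)=z(t)-z(a)\leq \int_a^t \frac{f(\tau)}{e_p(\sigma(\tau),a)}\,\Delta\tau.
\end{equation*}
Multiplying both sides by $e_p(t,a)>0$ preserves the inequality. To recognize the right-hand side as $\int_a^t e_p(t,\sigma(\tau))f(\tau)\Delta\tau$, I would invoke the semigroup/cocycle identity for the exponential, namely $e_p(t,a)/e_p(\sigma(\tau),a)=e_p(t,\sigma(\tau))$, which is a standard property of $e_p$ on time scales and is proved from the uniqueness part of the IVP \eqref{exp}.

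The only genuinely delicate step is the quotient-rule calculation together with the correct use of positivity of $e_p(\sigma(t),a)$; once that is in place the argument is linear and monotonicity does the rest. I do not expect any obstruction beyond bookkeeping: the positive regressivity hypothesis was inserted precisely to make every division and multiplication by $e_p$ sign-preserving.
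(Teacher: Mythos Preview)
Your proof is correct and is the standard integrating-factor argument for this comparison result. Note, however, that the paper does not supply its own proof of this theorem: it is merely quoted from \cite{inesurvey} (Theorem~5.4 there) as a preliminary tool, so there is no in-paper proof to compare against. Your argument is precisely the one given in that reference, and every step---the quotient rule computation, the sign-preservation via $p\in\mathcal{R}^+$, the integration, and the cocycle identity $e_p(t,a)/e_p(\sigma(\tau),a)=e_p(t,\sigma(\tau))$---is valid.
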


The next theorem presents Gronwall's inequality on time scales and
can be found in \cite[Theorem~5.6]{inesurvey}.
\begin{theorem}[Gronwall's inequality on time scales]\index{Gronwall's inequality on time scales}
\label{gronw} Let $t_0\in\mathbb{T}$. Suppose that
$u,a,b\in$C$_{\textrm{rd}}(\mathbb{T})$ and $b\in\mathcal{R}^+$,
$b\geq 0$. Then,
$$u(t)\leq a(t)+\int_{t_0}^t b(\tau)u(\tau)\Delta\tau\quad \mbox{for all}\quad t\in\mathbb{T}$$
implies
$$u(t)\leq a(t)+\int_{t_0}^t a(\tau)b(\tau)e_b(t,\sigma(\tau))\Delta\tau\quad \mbox{for all}\quad t\in\mathbb{T}.$$
\end{theorem}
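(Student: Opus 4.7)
The plan is to reduce the integral inequality to a differential inequality by introducing an auxiliary function, and then to apply the comparison result already available as Theorem~\ref{teo3}.

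First I would define
$$y(t):=\int_{t_0}^{t} b(\tau)u(\tau)\Delta\tau,$$
so that $y(t_0)=0$ and, by the fundamental properties of the $\Delta$-integral, $y^{\Delta}(t)=b(t)u(t)$ on $\mathbb{T}^\kappa$. The hypothesis rewrites as $u(t)\leq a(t)+y(t)$, and multiplying by $b(t)\geq 0$ preserves the inequality (this uses only the nonnegativity of $b$, not regressivity yet), giving
$$y^{\Delta}(t)=b(t)u(t)\leq b(t)y(t)+a(t)b(t).$$

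Next I would invoke Theorem~\ref{teo3} with $p=b\in\mathcal{R}^+$ and $f=ab\in C_{\mathrm{rd}}(\mathbb{T}^\kappa)$, which is precisely the hypothesis we need on the coefficients. The conclusion of that theorem applied to $y$ yields
$$y(t)\leq y(t_0)e_b(t,t_0)+\int_{t_0}^{t}e_b(t,\sigma(\tau))a(\tau)b(\tau)\Delta\tau.$$
Since $y(t_0)=0$, the first term vanishes, and combining with $u(t)\leq a(t)+y(t)$ gives exactly the desired bound
$$u(t)\leq a(t)+\int_{t_0}^{t}a(\tau)b(\tau)e_b(t,\sigma(\tau))\Delta\tau.$$

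There is no real obstacle here once Theorem~\ref{teo3} is in hand; the only points to be careful about are that $b\geq 0$ is needed to preserve the direction of the inequality when multiplying $u(t)\leq a(t)+y(t)$ by $b(t)$, and that $b\in\mathcal{R}^+$ is exactly the regressivity condition required to apply the comparison theorem. So the work is essentially bookkeeping: identify the right auxiliary function, verify its initial value and its $\Delta$-derivative, and then quote Theorem~\ref{teo3}.
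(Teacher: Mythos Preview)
Your proof is correct. Note, however, that the paper does not actually supply its own proof of this theorem: it is quoted as a known result from \cite[Theorem~5.6]{inesurvey}, so there is nothing in the paper to compare against directly. That said, your argument is precisely the standard one found in the literature (and in particular in the cited source): introduce $y(t)=\int_{t_0}^{t}b(\tau)u(\tau)\Delta\tau$, derive the linear differential inequality $y^{\Delta}\leq b\,y+ab$, and apply the comparison result (Theorem~\ref{teo3}) together with $y(t_0)=0$. Your remarks on where nonnegativity versus positive regressivity of $b$ are used are accurate.
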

If we consider the time scale $\mathbb{T}=h\mathbb{Z}$ we get,
from Theorem \ref{gronw}, a discrete version of the Gronwall
inequality (cf. \cite[Example~5.1]{inesurvey}).
\begin{cor}
If $c,d$ are two nonnegative constants, $a,b\in h\mathbb{Z}$ and
$u$ is a function defined on $[a,b]\cap h\mathbb{Z}$, then the
inequality
$$u(t)\leq c+\sum_{k=\frac{a}{h}}^{\frac{t}{h}-1}du(kh)h,\quad t\in [a,b]\cap h\mathbb{Z},$$
implies
$$u(t)\leq c+\sum_{k=\frac{a}{h}}^{\frac{t}{h}-1}cd(1+dh)^{\frac{t-h(k+1)}{h}}h,\quad t\in [a,b]\cap h\mathbb{Z}.$$
\end{cor}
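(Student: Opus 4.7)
The plan is to derive this corollary as a direct specialization of Theorem~\ref{gronw} to the time scale $\mathbb{T} = h\mathbb{Z}$, so essentially the work consists of translating the general statement into the concrete arithmetic of $h\mathbb{Z}$. I would apply Theorem~\ref{gronw} with $t_0 = a$, with the ``function'' $a(\tau)$ there taken to be the constant $c$, and with $b(\tau)$ taken to be the constant $d$; these are trivially rd-continuous on any time scale, and the hypothesis $b \in \mathcal{R}^+$ reduces to $1 + \mu(t)\,d = 1 + hd > 0$, which is automatic from $h > 0$ and $d \geq 0$.

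Next I would translate $\Delta$-integrals on $h\mathbb{Z}$ into finite sums. Since $\mu(t) = h$ and $\sigma(t) = t + h$ on $h\mathbb{Z}$, the identity $\int_t^{\sigma(t)} f(\tau)\,\Delta\tau = \mu(t) f(t)$ together with additivity gives
\[
\int_{a}^{t} f(\tau)\,\Delta\tau \;=\; \sum_{k=a/h}^{t/h - 1} f(kh)\, h,
\qquad t \in [a,b]\cap h\mathbb{Z}.
\]
Applying this to $f(\tau) = d\,u(\tau)$ converts the hypothesis of Theorem~\ref{gronw} into the hypothesis of the corollary, and applying it to $f(\tau) = c\,d\,e_d(t,\sigma(\tau))$ converts the conclusion of Theorem~\ref{gronw} into a sum matching the right-hand side of the corollary up to one factor.

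The only real step is computing that remaining factor: the exponential $e_d(\cdot,\sigma(\tau))$ on $h\mathbb{Z}$. Here I would use that $e_d(\cdot,t_0)$ is by definition the unique solution of $y^\Delta = d\, y$ with $y(t_0) = 1$; on $h\mathbb{Z}$, formula~\eqref{derdiscr} turns this into the scalar recursion $y(t+h) = (1 + hd)\,y(t)$, so by induction
\[
e_d(t,t_0) \;=\; (1+hd)^{(t-t_0)/h}, \qquad t,t_0\in h\mathbb{Z}.
\]
Setting $t_0 = \sigma(\tau) = \tau + h$ and $\tau = kh$ yields $e_d(t,\sigma(kh)) = (1+hd)^{(t-h(k+1))/h}$, which is exactly the factor appearing in the claimed estimate. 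Substituting this into the sum produced above completes the argument.

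The argument is essentially bookkeeping: there is no genuine obstacle beyond checking the positive-regressivity condition and correctly identifying $e_d(\cdot,\sigma(\tau))$ on $h\mathbb{Z}$. The latter is the only place where one must be careful, because the formula $e_p(t,t_0) = \prod_{\tau=t_0}^{t}[1+p(\tau)]$ quoted in the earlier remark is stated for $\mathbb{T} = \mathbb{Z}$ and must be reinterpreted for step size $h$ via the defining initial value problem, rather than quoted blindly.
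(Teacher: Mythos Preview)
Your proposal is correct and takes essentially the same approach as the paper: the paper simply states that the corollary follows from Theorem~\ref{gronw} by considering the time scale $\mathbb{T}=h\mathbb{Z}$ (citing \cite[Example~5.1]{inesurvey}), and you have carefully spelled out that specialization. Your verification of positive regressivity and your derivation of $e_d(t,\sigma(\tau))=(1+dh)^{(t-h(k+1))/h}$ via the defining IVP are exactly the details needed to make that one-line reduction explicit.
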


\begin{remark}\label{rem1}
We note that many new inequalities were accomplished by the fact
that the proofs are done in a general time scale. For example, to
the best of our knowledge, no Gronwall's inequality was known for
the time scale
$\mathbb{T}=\bigcup_{k\in\mathbb{Z}}[k,k+\frac{1}{2}]$. For this
time scale, if $a,b\in\mathbb{T}$ ($a<b$), then the
$\Delta$-integral is (see \cite{Hilger90}):
$$\int_a^bf(t)\Delta t=\int_a^{[a]}f(t)dt+\sum_{k=[a]}^{[b]-1}\left[\int_k^{k+\frac{1}{2}}f(t)dt+\frac{1}{2}f(k+\frac{1}{2})\right]+\int_{[b]}^{b}f(t)dt,$$
where $[t]$ is the Gauss bracket.
\end{remark}

\clearpage{\thispagestyle{empty}\cleardoublepage}

\part{Original Work}

\clearpage{\thispagestyle{empty}\cleardoublepage}

\chapter{Calculus of Variations on Time Scales}

In this chapter we will present some of our achievements made in
the calculus of variations within the time scales setting. In
Section \ref{higherorder}, we present the Euler--Lagrange equation
for the problem of the calculus of variations depending on the
$\Delta$-derivative of order $r\in\mathbb{N}$ of a function. In
Section \ref{iso}, we prove a necessary optimality condition for
isoperimetric problems.

\section{Higher order $\Delta$-derivatives}\label{higherorder}

Working with functions that have more than the first
$\Delta$-derivative on a general time scale can cause problems.
Consider the simple example of $f(t)=t^2$. It is easy to see that
$f^\Delta$ exists and $f^\Delta(t)=t+\sigma(t)$. However, if for
example we take $\mathbb{T}=[0,1]\cup[2,3]$, then $f^{\Delta^2}$
doesn't exist at $t=1$ because $t+\sigma(t)$ is not continuous at
that point.

Here, we consider time scales such that:

\begin{description}
\item[(H)] $\sigma(t)=a_1t+a_0$ for some $a_1\in\mathbb{R}^+$ and
$a_0\in\mathbb{R}$, $t \in [a,\rho(b)]_\mathbb{T}$.
\end{description}

Under hypothesis (H) we have, among others, the differential
calculus ($\mathbb{T}=\mathbb{R}$, $a_1 = 1$, $a_0 = 0$), the
difference calculus ($\mathbb{T}=\mathbb{Z}$, $a_1 = a_0 = 1$) and
the quantum calculus ($\mathbb{T}=\{q^k:k\in\mathbb{N}_0\}$, with
$q>1$,  $a_1 = q$, $a_0 = 0$).

\begin{remark}
\label{remark0} From assumption (H) it follows by
Lemma~\ref{lema0} that it is not possible to have points which are
simultaneously left-dense and right-scattered. Also points that
are simultaneously left-scattered and right-dense do not occur,
since $\sigma$ is strictly increasing.
\end{remark}

\begin{lemma}[cf. \cite{Ferr3}]
Under hypothesis (H), if $f$ is a two times
$\Delta$-differentiable function, then the next formula holds:
\begin{equation}
\label{rui-1} f^{\sigma\Delta}(t)=a_1 f^{\Delta\sigma}(t),\quad
t\in\mathbb{T}^{\kappa^2}.
\end{equation}
\end{lemma}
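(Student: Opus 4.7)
The plan is to verify the identity pointwise for each $t\in\mathbb{T}^{\kappa^2}$, splitting on whether $t$ is right-scattered or right-dense. By Remark~\ref{remark0}, under hypothesis (H) these are the only two possibilities, since points simultaneously right-scattered and left-dense (or the reverse) cannot occur.

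In the right-scattered case I would first establish the auxiliary identity $\mu(\sigma(t))=a_1\mu(t)$. This is a direct consequence of (H): $\sigma(\sigma(t))=a_1(a_1t+a_0)+a_0$, whence $\mu(\sigma(t))=\sigma(\sigma(t))-\sigma(t)=a_1\mu(t)$. Applying formula~\eqref{derdiscr} of Theorem~\ref{teorema0} once to $f^\sigma$ at $t$ and once to $f$ at $\sigma(t)$ then gives
$$
f^{\sigma\Delta}(t)
=\frac{f(\sigma(\sigma(t)))-f(\sigma(t))}{\mu(t)}
=\frac{\mu(\sigma(t))}{\mu(t)}\cdot\frac{f^{\sigma}(\sigma(t))-f(\sigma(t))}{\mu(\sigma(t))}
=a_1\,f^{\Delta\sigma}(t).
$$

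In the right-dense case one has $\sigma(t)=t$, which combined with (H) forces the pointwise relation $a_1t+a_0=t$. I would use item 3 of Theorem~\ref{teorema0} to express
$f^{\sigma\Delta}(t)=\lim_{s\to t}\bigl(f(\sigma(t))-f(\sigma(s))\bigr)/(t-s)$
and change variables by setting $u=\sigma(s)=a_1s+a_0$. A short computation using $a_1t+a_0=t$ yields $t-s=(t-u)/a_1$, and $s\to t$ in $\mathbb{T}$ forces $u\to t$ along $\sigma(\mathbb{T})\subset\mathbb{T}$, so
$$
f^{\sigma\Delta}(t)=a_1\lim_{u\to t}\frac{f(t)-f(u)}{t-u}=a_1 f^{\Delta}(t)=a_1 f^{\Delta\sigma}(t).
$$

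The main subtlety lies in this second case: since $u$ ranges only over the subset $\sigma(\mathbb{T})$, what the change of variables produces is a priori only a restricted limit, which could conceivably exist without matching the full delta derivative. I would resolve this by invoking the hypothesis that $f$ is twice $\Delta$-differentiable, so $f^{\Delta}(t)$ exists as the unrestricted limit in Theorem~\ref{teorema0}(3); every approach to $t$ through points of $\mathbb{T}$, in particular along $\sigma(\mathbb{T})$, must therefore yield the same value, legitimising the substitution and concluding the argument.
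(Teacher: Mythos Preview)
Your proof is correct, but the paper takes a shorter and more unified route that avoids the case split entirely. The paper applies formula~\eqref{transfor} to write $f^\sigma(t)=f(t)+\mu(t)f^\Delta(t)$, then $\Delta$-differentiates using the product rule~\eqref{produto}; the key observation is that hypothesis~(H) makes $\mu$ itself $\Delta$-differentiable with $\mu^\Delta(t)=\sigma^\Delta(t)-1=a_1-1$. A second application of~\eqref{transfor} to regroup $f^\Delta(t)+\mu(t)f^{\Delta^2}(t)$ as $f^{\Delta\sigma}(t)$ then yields $f^{\sigma\Delta}(t)=(1+\mu^\Delta(t))f^{\Delta\sigma}(t)=a_1 f^{\Delta\sigma}(t)$ in one stroke, with no need to distinguish right-dense from right-scattered points.

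Your approach trades this algebraic manipulation for direct computation from the difference-quotient and limit definitions in Theorem~\ref{teorema0}. The gain is that you never invoke the product rule or the $\Delta$-differentiability of $\mu$; the cost is the case analysis and the restricted-limit subtlety you correctly flagged and resolved. Both arguments genuinely use~(H): the paper through $\mu^\Delta=a_1-1$, you through $\mu(\sigma(t))=a_1\mu(t)$ in the scattered case and the affine change of variables $u=a_1 s+a_0$ in the dense case.
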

\begin{proof}
We have
$f^{\sigma\Delta}(t)=\left[f(t)+\mu(t)f^\Delta(t)\right]^\Delta$
by formula (\ref{transfor}). By the hypothesis on $\sigma$,
function $\mu$ is $\Delta$-differentiable, hence
$\left[f(t)+\mu(t)f^\Delta(t)\right]^\Delta
=f^\Delta(t)+\mu^\Delta(t)f^{\Delta\sigma}(t)+\mu(t)f^{\Delta^2}(t)$
and applying again formula (\ref{transfor}) we obtain
$f^\Delta(t)+\mu^\Delta(t)f^{\Delta\sigma}(t)+\mu(t)f^{\Delta^2}(t)
=f^{\Delta\sigma}(t)+\mu^\Delta(t)f^{\Delta\sigma}(t)
=(1+\mu^\Delta(t))f^{\Delta\sigma}(t)$. Now we only need to
observe that $\mu^\Delta(t)=\sigma^\Delta(t)-1$ and the result
follows.
\end{proof}

We consider the Calculus of Variations problem in which the
Lagrangian depends on $\Delta$-derivatives up to order
$r\in\mathbb{N}$. Our extension for problem given in \eqref{Prob2}
is now enunciated:

\begin{equation}
\label{problema }
\begin{gathered}
\mathcal{L}[y(\cdot)]=\int_{a}^{\rho^{r-1}(b)}
L(t,y^{\sigma^r}(t),y^{\sigma^{r-1}\Delta}(t),\ldots,
y^{\sigma\Delta^{r-1}}(t),y^{\Delta^r}(t))\Delta t\longrightarrow\min, \\
y(a)=y_a,  \ \  y\left(\rho^{r-1}(b)\right)=y_b ,\\
\vdots\\ \tag{P} y^{\Delta^{r-1}}(a)=y_a^{r-1},\ \
y^{\Delta^{r-1}}\left(\rho^{r-1}(b)\right)=y_b^{r-1},
\end{gathered}
\end{equation}

Assume that the Lagrangian $L(t,u_0,u_1,\ldots,u_r)$ of problem
(\ref{problema }) has (standard) partial derivatives with respect
to $u_0,\ldots,u_r$, $r\geq 1$, and partial  $\Delta$-derivative
with respect to $t$ of order $r+1$. Let $y\in\mathrm{C}^{2r}$,
where
$$\mathrm{C}^{2r}=\left\{y:[a,b]_\mathbb{T}\rightarrow\mathbb{R}:
y^{\Delta^{2r}}\ \mbox{is continuous on}\
\mathbb{T}^{\kappa^{2r}}\right\}.$$

\begin{remark}
We will assume from now on and until the end of this section that
the time scale $\mathbb{T}$ has at least $2r+1$ points. Indeed, if
the time scale has only $2r$ points, then it can be written as
$\mathbb{T}=\{a,\sigma(a),\ldots,\sigma^{2r-1}(a)\}$. Noting that
$\rho^{r-1}(b)=\rho^{r-1}(\sigma^{2r-1}(a))=\sigma^r(a)$ and using
formula \eqref{derdiscr} we conclude that $y(t)$ for
$t\in\{a,\ldots,\sigma^{r-1}(a)\}$ can be determined using the
boundary conditions
$[y(a)=y_a,\ldots,y^{\Delta^{r-1}}(a)=y_a^{r-1}]$ and $y(t)$ for
$t\in\{\sigma^r(a),\ldots,\sigma^{2r-1}(a)\}$ can be determined
using the boundary conditions
$[y(\rho^{r-1}(b))=y_b,\ldots,y^{\Delta^{r-1}}(\rho^{r-1}(b))=y^{r-1}_b]$.
Therefore we would have nothing to minimize in problem
\eqref{problema }.
\end{remark}

\begin{definition}
We say that $y_\ast\in\mathrm{C}^{2r}$ is a \emph{weak local
minimum} for (\ref{problema }) provided there exists $\delta>0$
such that $\mathcal{L}(y_\ast)\leq\mathcal{L}(y)$ for all
$y\in\mathrm{C}^{2r}$ satisfying the constraints in (\ref{problema
}) and $\|y-y_\ast\|_{r,\infty} < \delta$, where
$$||y||_{r,\infty} := \sum_{i=0}^{r} ||y^{(i)}||_{\infty},$$
with $y^{(i)} = y^{\sigma^i\Delta^{r-i}}$ and $||y||_{\infty}:=
\sup_{t \in\mathbb{T}^{k^r}} |y(t)|$.
\end{definition}

\begin{definition}
We say that $\eta\in C^{2r}$ is an admissible variation for
problem (\ref{problema }) if
\begin{align}
\eta(a)=0,& \quad \eta\left(\rho^{r-1}(b)\right)=0 \nonumber\\
&\vdots\nonumber \\
\eta^{\Delta^{r-1}}(a)=0,&\quad
\eta^{\Delta^{r-1}}\left(\rho^{r-1}(b)\right)=0.\nonumber
\end{align}
\end{definition}

For simplicity of presentation, from now on we fix $r=3$ (the
reader can consult \cite{natorres} for a presentation with an
arbitrary $r$).

\begin{lemma}[cf. \cite{Ferr3}]
\label{lem3} Suppose that $f$ is defined on
$[a,\rho^6(b)]_\mathbb{T}$ and is continuous. Then, under
hypothesis (H), $\int_a^{\rho^5(b)}f(t)\eta^{\sigma^3}(t)\Delta
t=0$ for every admissible variation $\eta$ if and only if $f(t)=0$
for all $t\in[a,\rho^6(b)]_\mathbb{T}$.
\end{lemma}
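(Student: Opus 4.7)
The sufficiency direction is immediate: if $f\equiv 0$ on $[a,\rho^6(b)]_\mathbb{T}$, the integrand vanishes identically, so the integral is zero for every admissible $\eta$.

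For necessity I would argue by contradiction. Assume $f(t_0)\neq 0$ for some $t_0\in[a,\rho^6(b)]_\mathbb{T}$; after replacing $\eta$ by $-\eta$ (still admissible), suppose $f(t_0)>0$. The strategy is to exhibit an admissible variation $\eta$ making the integral strictly positive. Under (H), combined with Remark~\ref{remark0}, $\mathbb{T}$ is either a real interval (with $\sigma=\mathrm{id}$, every point right- and left-dense) or consists entirely of isolated points; I would treat these two cases separately.

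\emph{Isolated case.} Set $s_0:=\sigma^3(t_0)$. Because $\sigma^3$ is a strictly increasing affine bijection under (H), $s_0\in[\sigma^3(a),\rho^3(b)]_\mathbb{T}\subset(\sigma^2(a),\rho^2(b))_\mathbb{T}$, so $s_0$ is distinct from each of the six ``frozen'' values $\{a,\sigma(a),\sigma^2(a),\rho^2(b),\rho(b),b\}$ at which the boundary conditions $\eta^{\Delta^i}(a)=\eta^{\Delta^i}(\rho^2(b))=0$ ($i=0,1,2$) force an admissible variation to vanish. I would then define $\eta(s_0)=1$ and $\eta(s)=0$ for all other $s\in[a,\rho^2(b)]_\mathbb{T}$. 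Since the scale is isolated, $\eta\in C^{2r}$ is automatic and all admissibility conditions hold. The function $t\mapsto\eta^{\sigma^3}(t)$ then vanishes off $\{t_0\}$, so the $\Delta$-integral reduces to $\mu(t_0)f(t_0)>0$, a contradiction.

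\emph{Continuous case.} Here $\sigma=\mathrm{id}$, and continuity of $f$ gives an open interval $(t_1,t_2)\subset(a,b)$ on which $f>0$. I would take
\[
\eta(s) = \begin{cases} (s-t_1)^{2r+1}(t_2-s)^{2r+1}, & s\in[t_1,t_2], \\ 0, & s\in[a,b]\setminus[t_1,t_2], \end{cases}
\]
where $2r+1=7$. Each factor vanishes to order $7$ at its corresponding endpoint, so every derivative of $\eta$ up to order $2r=6$ matches (and equals zero) across each transition, giving $\eta\in C^{2r}$; and $\eta\equiv 0$ near both $a$ and $b$, so admissibility holds. Since $\sigma=\mathrm{id}$, the integrand $f\cdot\eta^{\sigma^3}=f\cdot\eta$ is strictly positive on $(t_1,t_2)$, forcing the integral to be strictly positive, again a contradiction.

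The hard part will be the smoothness bookkeeping in the continuous case: one must check that the piecewise patching of a polynomial and the zero function produces a genuine element of $C^{2r}$, which is precisely what dictates the multiplicity $2r+1$ at each support endpoint. The same template, with the polynomials $h_k$ of Definition~\ref{polinomios} playing the role of monomials, would carry the argument through for an arbitrary order $r$.
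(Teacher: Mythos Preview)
Your constructions (polynomial bump in the dense case, point-mass in the scattered case) match the paper's exactly, and your degree~$7$ bump is enough for $C^{6}$ (the paper uses degree~$8$, but either works). The gap is in your global dichotomy: hypothesis~(H) does \emph{not} force $[a,b]_\mathbb{T}$ to be either a real interval or entirely isolated. Take $q>1$ and $\mathbb{T}=\{0\}\cup\{q^{k}:k\in\mathbb{Z}\}$ with $a=0$, $b=q^{N}$; then $\sigma(t)=qt$ so (H) holds, yet $a=0$ is right-dense while every other point of $[a,b]_\mathbb{T}$ is isolated. If $t_0=0$, neither of your two cases applies as written: your ``continuous case'' needs $\sigma=\mathrm{id}$, and your ``isolated case'' invokes global isolation to get $\eta\in C^{2r}$.

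The paper avoids this by splitting on the nature of the \emph{point} $t_0$, not on the global structure of $\mathbb{T}$. If $t_0$ is right-scattered it uses your point-mass variation; if $t_0$ is right-dense it builds a polynomial bump, distinguishing the sub-case $t_0=a$ (one-sided bump on $[t_0,t_0+\delta)_\mathbb{T}$) from $t_0\neq a$ (two-sided bump, using that $t_0$ is then also left-dense by Remark~\ref{remark0}). Your argument is easily repaired along these lines: re-index the case split by whether $t_0$ is right-dense or right-scattered, and add the one-sided bump for the right-dense left endpoint.
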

\begin{proof}
If $f(t)=0$, then the result is obvious.

Now suppose without loss of generality that there exists
$t_0\in[a,\rho^6(b)]_\mathbb{T}$ such that $f(t_0)>0$. First we
consider the case in which $t_0$ is right-dense, hence left-dense
or $t_0=a$ (see Remark~\ref{remark0}). If $t_0=a$, then by the
continuity of $f$ at $t_0$ there exists a $\delta>0$ such that for
all $t\in[t_0,t_0+\delta)_\mathbb{T}$ we have $f(t)>0$. Let us
define $\eta$ by
\[ \eta(t) = \left\{ \begin{array}{ll}
(t-t_0)^8(t-t_0-\delta)^8 & \mbox{if $t \in [t_0,t_0+\delta)_\mathbb{T}$};\\
0 & \mbox{otherwise}.\end{array} \right. \] Clearly $\eta$ is a
$C^6$ function and satisfy the requirements of an admissible
variation. But with this definition for $\eta$ we get the
contradiction
$$\int_a^{\rho^5(b)}f(t)\eta^{\sigma^3}(t)\Delta t
=\int_{t_0}^{t_0+\delta}f(t)\eta^{\sigma^3}(t)\Delta t>0.$$ Now,
consider the case where $t_0\neq a$. Again, the continuity of $f$
ensures the existence of a $\delta>0$ such that for all
$t\in(t_0-\delta,t_0+\delta)_\mathbb{T}$ we have $f(t)>0$.
Defining $\eta$ by
\[ \eta(t) = \left\{ \begin{array}{ll}
(t-t_0+\delta)^8(t-t_0-\delta)^8 & \mbox{if $t \in (t_0-\delta,t_0+\delta)_\mathbb{T}$};\\
0 & \mbox{otherwise},\end{array} \right. \] and noting that it
satisfy the properties of an admissible variation, we obtain
$$\int_a^{\rho^5(b)}f(t)\eta^{\sigma^3}(t)\Delta t
=\int_{t_0-\delta}^{t_0+\delta}f(t)\eta^{\sigma^3}(t)\Delta t>0,$$
which is again a contradiction.

Assume now that $t_0$ is right-scattered. In view of
Remark~\ref{remark0}, all the points $t$ such that $t\geq t_0$
must be isolated. So, define $\eta$ such that
$\eta^{\sigma^3}(t_0)=1$ and is zero elsewhere. It is easy to see
that $\eta$ satisfies all the requirements of an admissible
variation. Further, using formula (\ref{sigma})
$$\int_a^{\rho^5(b)}f(t)\eta^{\sigma^3}(t)\Delta t
=\int_{t_0}^{\sigma(t_0)}f(t)\eta^{\sigma^3}(t)\Delta t
=\mu(t_0)f(t_0)\eta^{\sigma^3}(t_0)>0,$$ which is a contradiction.
\end{proof}

\begin{theorem}[cf. \cite{Ferr3}] \label{thm0000}
Let the Lagrangian $L(t,u_0,u_1,u_2,u_3)$ have standard partial
derivatives with respect to $u_0,u_1,u_2,u_3$ and partial
$\Delta$-derivative with respect to $t$ of order 4. On a time
scale $\mathbb{T}$ satisfying (H), if $y_\ast$ is a weak local
minimum for the problem of minimizing
$$
\int_{a}^{\rho^{2}(b)}
L\left(t,y^{\sigma^3}(t),y^{\sigma^{2}\Delta}(t),
y^{\sigma\Delta^2}(t),y^{\Delta^{3}}(t)\right)\Delta t
$$ subject to
\begin{align}
y(a)=y_a,& \ y\left(\rho^{2}(b)\right)=y_b,\nonumber\\
y^{\Delta}(a)=y_a^{1},&\
y^{\Delta}\left(\rho^{2}(b)\right)=y_b^{1},\nonumber\\
y^{\Delta^{2}}(a)=y_a^{2},&\
y^{\Delta^{2}}\left(\rho^{2}(b)\right)=y_b^{2},\nonumber
\end{align}
then $y_\ast$ satisfies the Euler--Lagrange equation
$$L_{u_0}(\cdot)-L^\Delta_{u_1}(\cdot)+\frac{1}{a_1}L^{\Delta^2}_{u_2}(\cdot)
-\frac{1}{a_1^3}L^{\Delta^3}_{u_3}(\cdot)=0,\quad
t\in[a,\rho^6(b)]_\mathbb{T},$$ where
$(\cdot)=(t,y_\ast^{\sigma^3}(t),y_\ast^{\sigma^2\Delta}(t),
y_\ast^{\sigma\Delta^2}(t),y_\ast^{\Delta^3}(t))$.
\end{theorem}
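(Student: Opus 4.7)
The plan is the classical variational argument, adapted to handle the non-commutativity of $\sigma$ and $\Delta$ under hypothesis (H). For an arbitrary admissible variation $\eta$, set $\Phi(\varepsilon)=\mathcal{L}[y_\ast+\varepsilon\eta]$; since $y_\ast+\varepsilon\eta$ remains admissible and $y_\ast$ is a weak local minimum, $\Phi'(0)=0$. Differentiation under the $\Delta$--integral (justified, as in \cite{CD:Bohner:2004}, by the smoothness hypotheses on $L$) yields
\begin{equation*}
\int_{a}^{\rho^{2}(b)}\bigl\{L_{u_{0}}(\cdot)\eta^{\sigma^{3}}+L_{u_{1}}(\cdot)\eta^{\sigma^{2}\Delta}+L_{u_{2}}(\cdot)\eta^{\sigma\Delta^{2}}+L_{u_{3}}(\cdot)\eta^{\Delta^{3}}\bigr\}\Delta t=0,
\end{equation*}
with $(\cdot)=(t,y_\ast^{\sigma^{3}}(t),y_\ast^{\sigma^{2}\Delta}(t),y_\ast^{\sigma\Delta^{2}}(t),y_\ast^{\Delta^{3}}(t))$.

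Before integrating by parts I would first verify that the admissible variation conditions $\eta^{\Delta^{i}}(a)=\eta^{\Delta^{i}}(\rho^{2}(b))=0$ for $i=0,1,2$ already force $\eta^{\sigma^{2}}$, $\eta^{\sigma\Delta}$ and $\eta^{\Delta^{2}}$ to vanish at both endpoints. Indeed, at $a$ the identity (\ref{transfor}) applied to $\eta^{\Delta}$ gives $\eta^{\Delta\sigma}(a)=\eta^{\Delta}(a)+\mu(a)\eta^{\Delta^{2}}(a)=0$, whence (\ref{rui-1}) yields $\eta^{\sigma\Delta}(a)=a_{1}\eta^{\Delta\sigma}(a)=0$; combining with $\eta^{\sigma}(a)=\eta(a)+\mu(a)\eta^{\Delta}(a)=0$ gives $\eta^{\sigma^{2}}(a)=\eta^{\sigma}(a)+\mu(a)\eta^{\sigma\Delta}(a)=0$. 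The same reasoning applies at $\rho^{2}(b)$.

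Next I would integrate each derivative--bearing term by parts using (\ref{partes2}), choosing $g$ so that $g^{\Delta}$ matches the $\Delta$--derivative of $\eta$ appearing, and invoking (\ref{rui-1}) to commute a $\sigma$ past a $\Delta$; every such swap produces a factor $1/a_{1}$. Concretely, the $L_{u_{1}}$--term collapses after one IBP to $-\int L_{u_{1}}^{\Delta}\eta^{\sigma^{3}}\Delta t$; the $L_{u_{2}}$--term, after two IBPs together with the identity $\eta^{\sigma\Delta\sigma}=\eta^{\sigma^{2}\Delta}/a_{1}$, becomes $(1/a_{1})\int L_{u_{2}}^{\Delta^{2}}\eta^{\sigma^{3}}\Delta t$; and the $L_{u_{3}}$--term, after three IBPs interleaved with three applications of (\ref{rui-1}), becomes $-(1/a_{1}^{3})\int L_{u_{3}}^{\Delta^{3}}\eta^{\sigma^{3}}\Delta t$. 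All boundary contributions involve $\eta^{\sigma^{j}\Delta^{i}}$ with $i+j\leq 2$ evaluated at $a$ or $\rho^{2}(b)$, and hence vanish by the preceding paragraph.

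Adding the four contributions yields
\begin{equation*}
\int_{a}^{\rho^{2}(b)}\Bigl\{L_{u_{0}}-L_{u_{1}}^{\Delta}+\tfrac{1}{a_{1}}L_{u_{2}}^{\Delta^{2}}-\tfrac{1}{a_{1}^{3}}L_{u_{3}}^{\Delta^{3}}\Bigr\}\eta^{\sigma^{3}}\Delta t=0.
\end{equation*}
Because $\eta$ vanishes not only at $\rho^{2}(b)$ but also at $\rho(b)$ and $b$ (the latter two following from the preliminary computation applied to the right endpoint), the integrand is zero on $[\rho^{5}(b),\rho^{2}(b)]_{\mathbb{T}}$, so the upper limit may be reduced to $\rho^{5}(b)$, placing the integral in the exact form required by Lemma~\ref{lem3}. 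An appeal to that lemma then delivers the Euler--Lagrange equation on $[a,\rho^{6}(b)]_{\mathbb{T}}$. The main technical obstacle is the bookkeeping in the $L_{u_{3}}$--term, where one must commute $\sigma$ past $\Delta$ at three successive stages and confirm that the powers of $a_{1}$ assemble into exactly the coefficient $-1/a_{1}^{3}$ demanded by the statement.
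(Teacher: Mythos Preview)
Your overall strategy---set up $\Phi'(0)=0$, integrate by parts repeatedly, commute $\sigma$ past $\Delta$ via (\ref{rui-1}), then invoke Lemma~\ref{lem3}---is exactly the paper's, and your bookkeeping of the $1/a_{1}$ powers and of the vanishing $\eta$-values at $a,\rho^{2}(b),\rho(b),b$ is correct. There is, however, one genuine technical gap.

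You propose to carry out all integrations by parts directly on $[a,\rho^{2}(b)]$. But the argument $(\cdot)$ of $L_{u_{i}}$ involves $y_\ast^{\Delta^{3}}(t)$, which (when $b$ is left-scattered) is only defined for $t\in[a,\rho^{3}(b)]_{\mathbb{T}}$. Hence $L_{u_{i}}(\cdot)$ is \emph{undefined} at $t=\rho^{2}(b)$, so formula~(\ref{partes2}) cannot be applied on $[a,\rho^{2}(b)]$: the boundary evaluation $L_{u_{i}}(\cdot)|_{t=\rho^{2}(b)}$ does not exist, and writing the product as ``undefined~$\times~0$'' is not a legal move. The problem compounds at each stage; by the third IBP you would need $L_{u_{3}}^{\Delta^{3}}$ at $\rho^{3}(b)$, hence $L_{u_{3}}$ at $b$, where $y_\ast^{\Delta^{3}}$ certainly does not live. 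For the same reason your final integral $\int_{a}^{\rho^{2}(b)}\{\cdots\}\eta^{\sigma^{3}}\Delta t$ has an integrand that is not defined at the last mesh point $\rho^{3}(b)$, so the subsequent ``reduce the upper limit to $\rho^{5}(b)$'' step is not yet meaningful.

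The paper's remedy is to peel off the last mesh point \emph{before} each IBP: write $\int_{a}^{\rho^{2}(b)}=\int_{a}^{\rho^{3}(b)}+\mu(\rho^{3}(b))[\text{integrand at }\rho^{3}(b)]$ using~(\ref{sigma}), perform the IBP on $[a,\rho^{3}(b)]$ where everything is defined, and then check that the split-off piece recombines with the new boundary term at $\rho^{3}(b)$ to vanish (this is where your computations $\eta(\rho(b))=\eta(b)=0$, $\eta^{\Delta^{2}\sigma}(\rho^{3}(b))=0$, etc., enter). Repeating, the interval shrinks successively to $[a,\rho^{4}(b)]$ and then $[a,\rho^{5}(b)]$, at which point Lemma~\ref{lem3} applies. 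You have all the right ingredients; you just need to sequence them so that $L_{u_{i}}(\cdot)$ and its $\Delta$-derivatives are never evaluated outside their natural domain.
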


\begin{proof}
Suppose that $y_\ast$ is a weak local minimum of $\mathcal{L}$.
Let $\eta\in C^6$ be an admissible variation, \textrm{i.e.},
$\eta$ is an arbitrary function such that $\eta$, $\eta^\Delta$
and $\eta^{\Delta^2}$ vanish at $t=a$ and $t=\rho^2(b)$. Define
function $\Phi:\mathbb{R}\rightarrow\mathbb{R}$ by
$\Phi(\varepsilon)=\mathcal{L}(y_\ast+\varepsilon\eta)$. This
function has a minimum at $\varepsilon=0$, so we must have
\begin{equation*}
\Phi'(0)=0.
\end{equation*}
Differentiating $\Phi$ with respect to $\varepsilon$ and setting
$\varepsilon=0$, we obtain
\begin{equation}\label{rui000}
0=\int_a^{\rho^2(b)}\Big\{L_{u_0}(\cdot)\eta^{\sigma^3}(t)
+L_{u_1}(\cdot)\eta^{\sigma^2\Delta}(t)
+L_{u_2}(\cdot)\eta^{\sigma\Delta^2}(t)
+L_{u_3}(\cdot)\eta^{\Delta^3}(t)\Big\}\Delta t.
\end{equation}
Since we will $\Delta$-differentiate $L_{u_i}$, $i=1,2,3$, we
rewrite (\ref{rui000}) in the following form:
\begin{multline}
\label{rui111}
0=\int_a^{\rho^3(b)}\Big\{L_{u_0}(\cdot)\eta^{\sigma^3}(t)
+L_{u_1}(\cdot)\eta^{\sigma^2\Delta}(t)
+L_{u_2}(\cdot)\eta^{\sigma\Delta^2}(t)
+L_{u_3}(\cdot)\eta^{\Delta^3}(t)\Big\}\Delta t\\
+\mu(\rho^3(b))\left\{L_{u_0}\eta^{\sigma^3}
+L_{u_1}\eta^{\sigma^2\Delta}+L_{u_2}\eta^{\sigma\Delta^2}
+L_{u_3}\eta^{\Delta^3}\right\}(\rho^3(b)).
\end{multline}
Integrating \eqref{rui111} by parts gives
\begin{equation}
\label{rui222}
\begin{aligned}
0&=\int_a^{\rho^3(b)}\Big\{L_{u_0}(\cdot)\eta^{\sigma^3}(t)
-L^\Delta_{u_1}(\cdot)\eta^{\sigma^3}(t)
-L^\Delta_{u_2}(\cdot)\eta^{\sigma\Delta\sigma}(t)
-L^\Delta_{u_3}(\cdot)\eta^{\Delta^2\sigma}(t)\Big\}\Delta t\\
\quad
&+\left[L_{u_1}(\cdot)\eta^{\sigma^2}(t)\right]_{t=a}^{t=\rho^3(b)}
+\left[L_{u_2}(\cdot)\eta^{\sigma\Delta}(t)\right]_{t=a}^{t=\rho^3(b)}
+\left[L_{u_3}(\cdot)\eta^{\Delta^2}(t)\right]_{t=a}^{t=\rho^3(b)}\\
\quad &+\mu(\rho^3(b))\left\{L_{u_0}\eta^{\sigma^3}
+L_{u_1}\eta^{\sigma^2\Delta}+L_{u_2}\eta^{\sigma\Delta^2}
+L_{u_3}\eta^{\Delta^3}\right\}(\rho^3(b)).
\end{aligned}
\end{equation}
Now we show how to simplify \eqref{rui222}. We start by evaluating
$\eta^{\sigma^2}(a)$:
\begin{align}
\eta^{\sigma^2}(a)&=\eta^\sigma(a)+\mu(a)\eta^{\sigma\Delta}(a)\nonumber\\
&=\eta(a)+\mu(a)\eta^\Delta(a)+\mu(a)a_1\eta^{\Delta\sigma}(a)\label{rui333}\\
&=\mu(a)a_1\left(\eta^\Delta(a)+\mu(a)\eta^{\Delta^2}(a)\right)\nonumber\\
&=0 \nonumber,
\end{align}
where the last term of (\ref{rui333}) follows from (\ref{rui-1}).
Now, we calculate $\eta^{\sigma\Delta}(a)$. By (\ref{rui-1}) we
have $\eta^{\sigma\Delta}(a)=a_1\eta^{\Delta\sigma}(a)$ and
applying (\ref{transfor}) we obtain
$$a_1\eta^{\Delta\sigma}(a)=a_1\left(\eta^\Delta(a)+\mu(a)\eta^{\Delta^2}(a)\right)=0.$$
Now we turn to analyze what happens at $t=\rho^3(b)$. It is easy
to see that if $b$ is left-dense, then the last terms of
(\ref{rui222}) vanish. So suppose that $b$ is left-scattered.
Since $\sigma$ is $\Delta$-differentiable, by Lemma~\ref{lema0} we
cannot have points that are simultaneously left-dense and
right-scattered. Hence, $\rho(b)$, $\rho^{2}(b)$ and $\rho^3(b)$
are right-scattered points. Now, by hypothesis,
$\eta^\Delta(\rho^2(b))=0$. Hence we obtain using \eqref{derdiscr}
that
$$\frac{\eta(\rho(b))-\eta(\rho^2(b))}{\mu(\rho^2(b))}=0.$$
But $\eta(\rho^2(b))=0$, hence $\eta(\rho(b))=0$. Analogously, we
have
$$\eta^{\Delta^2}(\rho^2(b))=0\Leftrightarrow\frac{\eta^\Delta(\rho(b))
-\eta^\Delta(\rho^2(b))}{\mu(\rho^2(b))}=0,$$ from what follows
that $\eta^\Delta(\rho(b))=0$. This last equality implies
$\eta(b)=0$. Applying previous expressions to the last terms of
(\ref{rui222}), we obtain:
\begin{equation}
\eta^{\sigma^2}(\rho^3(b))=\eta(\rho(b))=0,\nonumber
\end{equation}
$$\eta^{\sigma\Delta}(\rho^3(b))=\frac{\eta^{\sigma^2}(\rho^3(b))
-\eta^\sigma(\rho^3(b))}{\mu(\rho^3(b))}=0,$$
$$\eta^{\sigma^3}(\rho^3(b))=\eta(b)=0,$$
$$\eta^{\sigma^2\Delta}(\rho^3(b))=\frac{\eta^{\sigma^3}(\rho^3(b))
-\eta^{\sigma^2}(\rho^3(b))}{\mu(\rho^3(b))}=0,$$
\begin{equation*}
\begin{split}
\eta^{\sigma\Delta^2}(\rho^3(b))&=\frac{\eta^{\sigma\Delta}(\rho^2(b))
-\eta^{\sigma\Delta}(\rho^3(b))}{\mu(\rho^3(b))}\\
&=\frac{\frac{\eta^\sigma(\rho(b))
-\eta^\sigma(\rho^2(b))}{\mu(\rho^2(b))}-\frac{\eta^\sigma(\rho^2(b))
-\eta^\sigma(\rho^3(b))}{{\mu(\rho^3(b))}}}{\mu(\rho^3(b))}\\
&=0.
\end{split}
\end{equation*}
In view of our previous calculations,
\begin{multline*}
\left[L_{u_1}(\cdot)\eta^{\sigma^2}(t)\right]_{t=a}^{t=\rho^3(b)}
+\left[L_{u_2}(\cdot)\eta^{\sigma\Delta}(t)\right]_{t=a}^{t=\rho^3(b)}
+\left[L_{u_3}(\cdot)\eta^{\Delta^2}(t)\right]_{t=a}^{t=\rho^3(b)}\\
+\mu(\rho^3(b))\left\{L_{u_0}\eta^{\sigma^3}
+L_{u_1}\eta^{\sigma^2\Delta}+L_{u_2}\eta^{\sigma\Delta^2}
+L_{u_3}\eta^{\Delta^3}\right\}(\rho^3(b))
\end{multline*}
is reduced to\footnote{There is some abuse of notation:
$L_{u_3}(\rho^3(b))$ denotes $\left.L_{u_3}(\cdot)\right|_{t =
\rho^3(b)}$, that is, we substitute $t$ in
$(\cdot)=(t,y_\ast^{\sigma^3}(t),y_\ast^{\sigma^2\Delta}(t),
y_\ast^{\sigma\Delta^2}(t),y_\ast^{\Delta^3}(t))$ by $\rho^3(b)$.}
\begin{equation}
\label{rui444} L_{u_3}(\rho^3(b))\eta^{\Delta^2}(\rho^3(b))
+\mu(\rho^3(b))L_{u_3}(\rho^3(b))\eta^{\Delta^3}(\rho^3(b)).
\end{equation}
Now note that
$$\eta^{\Delta^2\sigma}(\rho^3(b))=\eta^{\Delta^2}(\rho^3(b))
+\mu(\rho^3(b))\eta^{\Delta^3}(\rho^3(b))$$ and by hypothesis
$\eta^{\Delta^2\sigma}(\rho^3(b))=\eta^{\Delta^2}(\rho^2(b))=0$.
Therefore,
$$\mu(\rho^3(b))\eta^{\Delta^3}(\rho^3(b))=-\eta^{\Delta^2}(\rho^3(b)),$$
from which follows that (\ref{rui444}) must be zero. We have just
simplified (\ref{rui222}) to
\begin{equation}
\label{rui555} 0 =
\int_a^{\rho^3(b)}\Big\{L_{u_0}(\cdot)\eta^{\sigma^3}(t)
-L^\Delta_{u_1}(\cdot)\eta^{\sigma^3}(t)
-L^\Delta_{u_2}(\cdot)\eta^{\sigma
\Delta\sigma}(t)-L^\Delta_{u_3}(\cdot)\eta^{\Delta^2\sigma}(t)\Big\}\Delta
t.
\end{equation}
In order to apply again the integration by parts formula, we must
first make some transformations in $\eta^{\sigma\Delta\sigma}$ and
$\eta^{\Delta^2\sigma}$. By (\ref{rui-1}) we have
\begin{equation}
\label{rui12}
\eta^{\sigma\Delta\sigma}(t)=\frac{1}{a_1}\eta^{\sigma^2\Delta}(t)
\end{equation}
and
\begin{equation}
\label{rui121}
\eta^{\Delta^2\sigma}(t)=\frac{1}{a_1^2}\eta^{\sigma\Delta^2}(t).
\end{equation}
Hence, (\ref{rui555}) becomes
\begin{equation}
\label{rui9} 0 =
\int_a^{\rho^3(b)}\Big\{L_{u_0}(\cdot)\eta^{\sigma^3}(t)
-L^\Delta_{u_1}(\cdot)\eta^{\sigma^3}(t)\\
-\frac{1}{a_1}L^\Delta_{u_2}(\cdot)\eta^{\sigma^2\Delta}(t)
-\frac{1}{a_1^2}L^\Delta_{u_3}(\cdot)\eta^{\sigma\Delta^2}(t)\Big\}\Delta
t.
\end{equation}
By the same reasoning as before, (\ref{rui9}) is equivalent to
\begin{multline*}
0=\int_a^{\rho^4(b)}\Big\{L_{u_0}(\cdot)\eta^{\sigma^3}(t)
-L^\Delta_{u_1}(\cdot)\eta^{\sigma^3}(t)
-\frac{1}{a_1}L^\Delta_{u_2}(\cdot)\eta^{\sigma^2\Delta}(t)
-\frac{1}{a_1^2}L^\Delta_{u_3}(\cdot)\eta^{\sigma\Delta^2}(t)\Big\}\Delta t\\
\quad +\mu(\rho^4(b))\left\{L_{u_0}\eta^{\sigma^3}
-L^\Delta_{u_1}\eta^{\sigma^3}-\frac{1}{a_1}L^\Delta_{u_2}\eta^{\sigma^2\Delta}
-\frac{1}{a_1^2}L^\Delta_{u_3}\eta^{\sigma\Delta^2}\right\}(\rho^4(b))
\end{multline*}
and integrating by parts we obtain
\begin{equation}
\label{del:rui13}
\begin{aligned}
0&=\int_a^{\rho^4(b)}\Big\{L_{u_0}(\cdot)\eta^{\sigma^3}(t)
-L^\Delta_{u_1}(\cdot)\eta^{\sigma^3}(t)
+\frac{1}{a_1}L^{\Delta^2}_{u_2}(\cdot)\eta^{\sigma^3}(t)
+\frac{1}{a_1^2}L^{\Delta^2}_{u_3}(\cdot)\eta^{\sigma\Delta\sigma}(t)\Big\} \Delta t\\
&-\left[\frac{1}{a_1}L_{u_2}^\Delta(\cdot)\eta^{\sigma^2}(t)\right]_{t=a}^{t=\rho^4(b)}
-\left[\frac{1}{a_1^2}L_{u_3}^\Delta(\cdot)\eta^{\sigma\Delta}(t)\right]_{t=a}^{t
=\rho^4(b)}\\
\quad &+\mu(\rho^4(b))\left\{L_{u_0}\eta^{\sigma^3}
-L^\Delta_{u_1}\eta^{\sigma^3}-\frac{1}{a_1}L^\Delta_{u_2}\eta^{\sigma^2\Delta}
-\frac{1}{a_1^2}L^\Delta_{u_3}\eta^{\sigma\Delta^2}\right\}(\rho^4(b)).
\end{aligned}
\end{equation}
Using analogous arguments to those above, we simplify
(\ref{del:rui13}) to
\begin{equation*}
\int_a^{\rho^4(b)}\Big\{L_{u_0}(\cdot)\eta^{\sigma^3}(t)
-L^\Delta_{u_1}(\cdot)\eta^{\sigma^3}(t)
+\frac{1}{a_1}L^\Delta_{u_2}(\cdot)\eta^{\sigma^2\Delta}(t)
+\frac{1}{a_1^3}L^{\Delta^2}_{u_3}(\cdot)\eta^{\sigma^2\Delta}(t)\Big\}\Delta
t = 0.
\end{equation*}
Calculations as done before lead us to the final expression
\begin{equation*}
\int_a^{\rho^5(b)}\Big\{L_{u_0}(\cdot)\eta^{\sigma^3}(t)
-L^\Delta_{u_1}(\cdot)\eta^{\sigma^3}(t)
+\frac{1}{a_1}L^{\Delta^2}_{u_2}(\cdot)\eta^{\sigma^3}(t)
-\frac{1}{a_1^3}L^{\Delta^3}_{u_3}(\cdot)\eta^{\sigma^3}(t)\Big\}\Delta
t = 0,
\end{equation*}
which is equivalent to
\begin{equation}
\label{almostDone}
\int_a^{\rho^5(b)}\left\{L_{u_0}(\cdot)-L^\Delta_{u_1}(\cdot)
+\frac{1}{a_1}L^{\Delta^2}_{u_2}(\cdot)
-\frac{1}{a_1^3}L^{\Delta^3}_{u_3}(\cdot)\right\}\eta^{\sigma^3}(t)\Delta
t = 0.
\end{equation}
Applying Lemma~\ref{lem3} to (\ref{almostDone}), we obtain the
Euler--Lagrange equation
$$L_{u_0}(\cdot)-L^\Delta_{u_1}(\cdot)+\frac{1}{a_1}L^{\Delta^2}_{u_2}(\cdot)
-\frac{1}{a_1^3}L^{\Delta^3}_{u_3}(\cdot)=0,\quad
t\in[a,\rho^6(b)]_\mathbb{T}.$$ The proof is complete.
\end{proof}

Following exactly the same steps of the proofs of Lemma~\ref{lem3}
and Theorem~\ref{thm0000} for an arbitrary $r\in\mathbb{N}$, one
easily obtain the Euler--Lagrange equation for problem
(\ref{problema }).

\begin{theorem}[cf. \cite{Ferr3}]\label{E-LHigher}(Necessary optimality condition for problems of the calculus of
variations with higher-order $\Delta$-derivatives) On a time scale
$\mathbb{T}$ satisfying hypothesis (H), if $y_\ast$ is a weak
local minimum for problem (\ref{problema }), then $y_\ast$
satisfies the Euler-Lagrange equation
\begin{equation}
\label{eqrder}
\sum_{i=0}^{r}(-1)^i\left(\frac{1}{a_1}\right)^{\frac{(i
-1)i}{2}}L^{\Delta^i}_{u_i}\left(t,y_\ast^{\sigma^r}(t),y_\ast^{\sigma^{r-1}\Delta}(t),
\ldots,y_\ast^{\sigma\Delta^{r-1}}(t),y_\ast^{\Delta^r}(t)\right)
= 0 ,
\end{equation}
$t\in[a,\rho^{2r}(b)]_\mathbb{T}$.
\end{theorem}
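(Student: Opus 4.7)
The plan is to mimic the proof of Theorem~\ref{thm0000} but carry out the integration by parts procedure $r$ times and track the combinatorial accumulation of factors $1/a_1$. First I would generalize Lemma~\ref{lem3} to arbitrary $r$: namely, for a continuous $f$ on $[a,\rho^{2r}(b)]_\mathbb{T}$, the identity $\int_a^{\rho^{2r-1}(b)} f(t)\eta^{\sigma^r}(t)\Delta t = 0$ for every admissible variation $\eta$ forces $f\equiv 0$. The proof is the same three-case bump argument, using compactly supported $C^{2r}$ bumps of the form $(t-t_0)^{2r+2}(t-t_0-\delta)^{2r+2}$ around right-dense $t_0$ and a point-supported variation at right-scattered $t_0$.

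Next I would set $\Phi(\varepsilon)=\mathcal{L}(y_\ast+\varepsilon\eta)$ for an arbitrary admissible variation $\eta\in C^{2r}$, differentiate through the integral, and write $\Phi'(0)=0$ as
\[
\int_a^{\rho^{r-1}(b)}\sum_{i=0}^{r} L_{u_i}(\cdot)\,\eta^{\sigma^{r-i}\Delta^i}(t)\,\Delta t = 0.
\]
For each $i\geq 1$ I would process the $i$-th summand by alternating two operations: apply integration by parts \eqref{partes2} (which transfers one $\Delta$ from $\eta$ onto $L_{u_i}$ and inserts a new $\sigma$ at the right of the remaining $\eta$-expression) and then repeatedly apply the commutation formula \eqref{rui-1} to move that $\sigma$ back to the left, producing a factor of $1/a_1$ for each commutation. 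After $j$ integrations by parts, the expression $\eta^{\sigma^{r-i}\Delta^{i-j}\sigma^j}$ will have been rewritten as $(1/a_1)^{(i-1)+(i-2)+\cdots+(i-j)}\,\eta^{\sigma^{r-i+j}\Delta^{i-j}}$. Doing this $i$ times in total produces the coefficient $(-1)^i(1/a_1)^{(i-1)i/2}$ in front of $L_{u_i}^{\Delta^i}(\cdot)\eta^{\sigma^r}(t)$, exactly as in \eqref{eqrder}.

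The main obstacle, and the step I would treat with most care, is the systematic verification that all the boundary contributions produced by the successive integrations by parts vanish. As in the proof of Theorem~\ref{thm0000}, the strategy would split into two cases. At each integration by parts step, I extend the domain of integration from $[a,\rho^{r-1+k}(b)]_\mathbb{T}$ to $[a,\rho^{r+k}(b)]_\mathbb{T}$ at the cost of an extra $\mu$-term, apply \eqref{partes2}, and then show that the resulting boundary expression (a sum of evaluations of $L_{u_i}^{\Delta^{\cdot}}\eta^{\sigma^{\cdot}\Delta^{\cdot}}$ at $a$ and at the rightmost retained point) is zero. At $t=a$, all necessary vanishings $\eta^{\sigma^j\Delta^k}(a)=0$ for $j+k<r$ follow inductively from \eqref{transfor} and \eqref{rui-1} combined with the boundary conditions $\eta^{\Delta^k}(a)=0$, $k=0,\ldots,r-1$. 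At the right endpoint, the left-dense case is immediate (the extra $\mu$ terms vanish), while in the left-scattered case Remark~\ref{remark0} forces all the points $\rho^j(b)$ to be isolated, and then \eqref{derdiscr} lets one propagate the vanishing $\eta^{\Delta^k}(\rho^{r-1}(b))=0$ back step by step, exactly as done for $r=3$.

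Finally, after the $r$ rounds of integration by parts on $[a,\rho^{2r-1}(b)]_\mathbb{T}$, the identity reduces to
\[
\int_a^{\rho^{2r-1}(b)} \Bigl\{\sum_{i=0}^{r}(-1)^i\bigl(1/a_1\bigr)^{(i-1)i/2} L_{u_i}^{\Delta^i}(\cdot)\Bigr\}\eta^{\sigma^r}(t)\,\Delta t = 0
\]
for every admissible $\eta$, and the generalized version of Lemma~\ref{lem3} yields \eqref{eqrder} on $[a,\rho^{2r}(b)]_\mathbb{T}$, completing the proof.
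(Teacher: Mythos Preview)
Your proposal is correct and follows exactly the approach the paper indicates: the paper's own proof is simply the one-line statement that one repeats the steps of Lemma~\ref{lem3} and Theorem~\ref{thm0000} for arbitrary $r$, together with the remark that the exponent $(i-1)i/2$ arises from performing $\sum_{j=1}^{i-1} j$ commutations via \eqref{rui-1}. Your write-up spells out precisely these steps (generalized bump lemma, $r$ rounds of integration by parts interleaved with $\sigma$--$\Delta$ commutations, and the boundary-term bookkeeping at $a$ and $\rho^{r-1+k}(b)$), so there is nothing to add.
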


\begin{remark}
The factor $\left(\frac{1}{a_1}\right)^{\frac{(i-1)i}{2}}$ in
(\ref{eqrder}) comes from the fact that, after each time we apply
the integration by parts formula, we commute successively $\sigma$
with $\Delta$ using (\ref{rui-1}) [see formulas (\ref{rui12}) and
(\ref{rui121})], doing this $\sum_{j=1}^{i-1}j=\frac{(i-1)i}{2}$
times for each of the parcels within the integral.
\end{remark}

\begin{example}
Let us consider the time scale
$\mathbb{T}=\{q^k:k\in\mathbb{N}_0\}$, $a,b\in\mathbb{T}$ with
$a<b$, and the problem
\begin{equation}\label{probex}
\begin{gathered}
\mathcal{L}[y(\cdot)]=\int_{a}^{\rho(b)}
(y^{\Delta^2}(t))^2\Delta t\longrightarrow\min, \\
y(a)=0,  \ \  y\left(\rho(b)\right)=1 ,\\
y^{\Delta}(a)=0,\ \ y^{\Delta}\left(\rho(b)\right)=0.
\end{gathered}
\end{equation}
We point out that for this time scale, $\sigma(t)=qt$,
$\mu(t)=(q-1)t$, and
$$f^\Delta(t)=\frac{f(qt)-f(t)}{(q-1)t},\quad\int_a^bf(t)\Delta t=\sum_{t\in[a,b)_\mathbb{T}}(q-1)tf(t).$$
By Theorem \ref{E-LHigher}, the Euler--Lagrange equation for
problem given in \eqref{probex} is
\begin{equation*}
\frac{1}{q}y^{\Delta^4}(t)=0,\quad t\in[a,\rho^4(b)]_\mathbb{T}.
\end{equation*}
It follows that
$$y^{\Delta^2}(t)=ct+d,$$
for some constants $c,d\in\mathbb{R}$. From this we get, using
$y^\Delta(a)=0$ and $y^\Delta(\rho(b))=0$,
$$y^\Delta(t)=c\int_a^ts\Delta s-\frac{c\int_a^{\rho(b)}s\Delta s}{\rho(b)-a}(t-a).$$
Finally, using the boundary conditions $y(a)=0$ and
$y(\rho(b))=1$, and defining
\begin{align}
A&=\frac{\int_a^{\rho(b)}s\Delta s}{\rho(b)-a},\nonumber\\
B&=\frac{1}{\int_a^{\rho(b)}\left(\int_a^s\tau\Delta\tau\right)\Delta
s-A\int_a^{\rho(b)}(s-a)\Delta s},\nonumber
\end{align}
we get
\begin{equation}\label{ex4}
y(t)=B\left[\int_a^{t}\left(\int_a^s\tau\Delta\tau\right)\Delta
s-A\int_a^{t}(s-a)\Delta s\right],\quad t\in[a,b]_\mathbb{T},
\end{equation}
provided the denominator of $B$ is nonzero. We end this example
showing that this is indeed the case. We start by rewriting the
denominator of $B$, which we denote by $D$, as
\begin{multline*}
D=\int_a^{\rho(b)}\left(\int_a^{s}(\tau-a)\Delta\tau\right)\Delta s\\
+a\int_a^{\rho(b)}(s-a)\Delta s-\frac{\int_a^{\rho(b)}(s-a)\Delta
s+a(\rho(b)-a)}{\rho(b)-a}\int_a^{\rho(b)}(s-a)\Delta s.
\end{multline*}
Now, note that
\begin{align}
\int_a^{\rho(b)}\left(\int_a^{s}(\tau-a)\Delta\tau\right)\Delta
s&=h_3(\rho(b),a)\nonumber\\
\int_a^{\rho(b)}(s-a)\Delta s&=h_2(\rho(b),a),\nonumber
\end{align}
where $h_i$ are given by Definition \ref{polinomios}. It is known
(cf. \cite[Example 1.104]{livro}) that
$$h_k(t,s)=\prod_{\nu=0}^{k-1}\frac{t-q^\nu s}{\sum_{\mu=0}^\nu q^\mu},\quad s,t\in\mathbb{T}.$$
Hence,
$$D=\prod_{\nu=0}^{2}\frac{\rho(b)-q^\nu a}{\sum_{\mu=0}^\nu q^\mu}+a\prod_{\nu=0}^{1}\frac{\rho(b)-q^\nu a}{\sum_{\mu=0}^\nu q^\mu}-\frac{\prod_{\nu=0}^{1}\frac{\rho(b)-q^\nu a}{\sum_{\mu=0}^\nu q^\mu}+a(\rho(b)-a)}{\rho(b)-a}\prod_{\nu=0}^{1}\frac{\rho(b)-q^\nu a}{\sum_{\mu=0}^\nu q^\mu}.$$
From this it is not difficult to achieve
$$D=\frac{q(-\rho(b)+a)(-\rho(b)+qa)(a-\rho(b)q)}{(1+q)^2(1+q+q^2)}.$$
Therefore, we conclude that $D\neq 0$. We also get another
representation for \eqref{ex4}, namely,
$$y(t)=\frac{(-t+a)(-t+qa)(a-tq)}{(-\rho(b)+a)(-\rho(b)+qa)(a-\rho(b)q)},\quad t\in[a,b]_\mathbb{T}.$$
\end{example}

\section{Isoperimetric problems}\label{iso}

In this section we make a study of isoperimetric problems on a
general time scale (see \cite{Brunt} and \cite{BHAR} for continuos
and discrete versions, respectively), proving the corresponding
necessary optimality condition. These problems add some
constraints (in integral form) to the basic problem. In the end,
we show that certain eigenvalue problems can be recast as an
isoperimetric problem.

We start by giving a proof of a technical lemma.

\begin{lemma}[cf. \cite{Ferr7}]
\label{lemtecn} Suppose that a continuous function
$f:\mathbb{T}\rightarrow\mathbb{R}$ is such that $f^\sigma(t)=0$
for all $t\in\mathbb{T}^\kappa$. Then, $f(t)=0$ for all
$t\in\mathbb{T}$ except possibly at $t=a$ if $a$ is
right-scattered.
\end{lemma}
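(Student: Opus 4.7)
The plan is to pick an arbitrary $s\in\mathbb{T}$ and show that $f(s)=0$ in every case except possibly $s=a$ when $a$ is right-scattered. Since the hypothesis reads $f(\sigma(t))=0$ for every $t\in\mathbb{T}^\kappa$, the natural thing is to recover each $s\in\mathbb{T}$ either as $\sigma(\text{something in }\mathbb{T}^\kappa)$ or as a limit of such points, and then use the continuity of $f$.

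First I would dispose of the left-scattered case. If $s\in\mathbb{T}$ is left-scattered, put $t=\rho(s)$. Since $t<s$ and $s\in\mathbb{T}$, $t$ is not the maximum of $\mathbb{T}$, so $t\in\mathbb{T}^\kappa$. Moreover $\sigma(\rho(s))=s$ (the infimum in the definition of $\sigma$ is attained at $s$ because no element of $\mathbb{T}$ lies strictly between $\rho(s)$ and $s$). Hence $f(s)=f^\sigma(t)=0$ by hypothesis.

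Next I would handle the left-dense case by a limit argument. Suppose $s$ is left-dense and $s\neq a$. Then there is a sequence $(t_n)\subset\mathbb{T}$ with $t_n<s$ and $t_n\to s^-$. Each $t_n$ lies in $\mathbb{T}^\kappa$ (it is not maximal, since $s>t_n$ belongs to $\mathbb{T}$). From the definition of $\sigma$ and the fact that $s\in\mathbb{T}$ with $s>t_n$ we obtain $t_n\leq\sigma(t_n)\leq s$, so $\sigma(t_n)\to s$. Continuity of $f$ together with $f(\sigma(t_n))=0$ yields $f(s)=\lim_n f(\sigma(t_n))=0$. The remaining subcase is $s=a$ with $a$ right-dense: then $\sigma(a)=a$ and $a\in\mathbb{T}^\kappa$ (because $a$ is left-dense by the convention $\rho(a)=\sup\emptyset=\inf\mathbb{T}=a$), so $f(a)=f^\sigma(a)=0$ directly.

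The only point at which the argument has to break down is $s=a$ when $a$ is right-scattered: no sequence $t_n<a$ exists inside $\mathbb{T}$, and $a$ is not of the form $\sigma(t)$ for any $t\in\mathbb{T}^\kappa$, so the hypothesis gives no information about $f(a)$. This is exactly the exception allowed by the statement. The main obstacle, rather than any hard analytic step, is a careful bookkeeping of the classification of points and a verification that the auxiliary points $\rho(s)$ or $t_n$ in fact lie in $\mathbb{T}^\kappa$ so that the hypothesis is applicable.
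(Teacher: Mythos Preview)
Your proof is correct and follows essentially the same approach as the paper's: both recover each point $s$ either as $\sigma(\rho(s))$ in the left-scattered case or via continuity using nearby points $\sigma(t_n)\to s$ in the left-dense case. The only cosmetic differences are that the paper splits first on right-dense versus right-scattered (handling right-dense points by $\sigma(t)=t$ directly) and, for the left-dense right-scattered case, phrases the limit as an $\varepsilon$--$\delta$ comparison between $f$ and $f^\sigma$ rather than your cleaner sequential squeeze $t_n\le\sigma(t_n)\le s$.
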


\begin{proof}
First note that, since $f^\sigma(t)=0$, then $f^\sigma(t)$ is
$\Delta$-differentiable, hence continuous for all
$t\in\mathbb{T}^\kappa$. Now, if $t$ is right-dense, the result is
obvious. Suppose that $t$ is right-scattered. We will analyze two
cases: (i) if $t$ is left-scattered, then $t\neq a$ and by
hypothesis $0=f^\sigma(\rho(t))=f(t)$; (ii) if $t$ is left-dense,
then, by the continuity of $f^\sigma$ and $f$ at $t$, we can write
\begin{align}
\forall\varepsilon>0 \, \exists\delta_1>0 : \forall
s_1\in(t-\delta_1,t]_\mathbb{T},&\label{101} \mbox{ we have }
|f^\sigma(s_1)-f^\sigma(t)|<\varepsilon \, , \\
\forall \varepsilon>0 \, \exists\delta_2>0:\forall
s_2\in(t-\delta_2,t]_\mathbb{T},&\label{102} \mbox{ we have }
|f(s_2)-f(t)|<\varepsilon \, ,
\end{align}
respectively. Let $\delta=\min\{\delta_1,\delta_2\}$ and take
$s_1\in(t-\delta,t)_\mathbb{T}$. As
$\sigma(s_1)\in(t-\delta,t)_\mathbb{T}$, take $s_2=\sigma(s_1)$.
By (\ref{101}) and (\ref{102}), we have:
$$
|-f^\sigma(t)+f(t)|=|f^\sigma(s_1)-f^\sigma(t)
+f(t)-f(s_2)|\leq|f^\sigma(s_1)-f^\sigma(t)|+|f(s_2)-f(t)|<2\varepsilon.
$$
Since $\varepsilon$ is arbitrary, $|-f^\sigma(t)+f(t)|=0$, which
is equivalent to $f(t)=f^\sigma(t)$.
\end{proof}

We now define the isoperimetric problem on time scales. Let $J:$
C$_{\textrm{rd}}^1\rightarrow\mathbb{R}$ be a functional of the
form

\begin{equation}
\label{P0} J[y(\cdot)]=\int_a^b L(t,y^\sigma(t),y^\Delta(t))\Delta
t,
\end{equation}
where
$L(t,u,v):[a,b]^\kappa_\mathbb{T}\times\mathbb{R}\times\mathbb{R}\rightarrow\mathbb{R}$
satisfies the assumption of Lemma \ref{lema1}. The
\emph{isoperimetric problem}\index{Calculus of
Variations!isoperimetric problem} consists of finding functions
$y\in\textrm{C}_{\textrm{rd}}^1$ satisfying given boundary
conditions
\begin{equation}
\label{bouncond} y(a)=y_a,\ y(b)=y_b,
\end{equation}
and a constraint of the form
\begin{equation}
\label{isocons} I[y(\cdot)]=\int_a^b
g(t,y^\sigma(t),y^\Delta(t))\Delta t=l,
\end{equation}
where
$g(t,u,v):[a,b]^\kappa\times\mathbb{R}\times\mathbb{R}\rightarrow\mathbb{R}$
satisfies the assumption of Lemma \ref{lema1}, and $l$ is a
specified real number, that takes (\ref{P0}) to a minimum.

\begin{definition}
We say that a function $y\in$ C$_{\textrm{rd}}^1$ is
\emph{admissible} for the isoperimetric problem if it satisfies
(\ref{bouncond}) and (\ref{isocons}).
\end{definition}

\begin{definition}
An admissible function $y_\ast$ is said to be an \emph{extremal}
for $I$ if it satisfies the following equation:
$$g_v(t,y^\sigma_\ast(t),y^\Delta_\ast(t))-\int_a^t
g_u(\tau,y^\sigma_\ast(\tau),y^\Delta_\ast(\tau))\Delta\tau=c,$$
for all $t\in[a,b]^\kappa_\mathbb{T}$ and some constant $c$.
\end{definition}

\begin{theorem}[cf. \cite{Ferr7}]
\label{T1} Suppose that $J$ has a local minimum at $y_\ast\in$
$C_{\textrm{rd}}^1$ subject to the boundary conditions
(\ref{bouncond}) and the isoperimetric constraint (\ref{isocons}),
and that $y_\ast$ is not an extremal for the functional $I$. Then,
there exists a Lagrange multiplier constant $\lambda$ such that
$y_\ast$ satisfies the following equation:
\begin{equation}
\label{E-L} F_v^\Delta(t,y^\sigma_\ast(t),y^\Delta_\ast(t))
-F_u(t,y^\sigma_\ast(t),y^\Delta_\ast(t))=0,\quad\mbox{for all} \
t\in[a,b]^{\kappa^2}_\mathbb{T},
\end{equation}
where $F=L-\lambda g$.
\end{theorem}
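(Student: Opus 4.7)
The plan is to convert the isoperimetric (constrained) problem into an unconstrained one by means of a two-parameter family of perturbations and the implicit function theorem, and then to extract the Euler--Lagrange equation via the Dubois--Reymond lemma (Lemma~\ref{lem:DR}). Fix an arbitrary admissible variation $\eta_1\in \textrm{C}_{\textrm{rd}}^1$ (with $\eta_1(a)=\eta_1(b)=0$) and consider
$$y(t) = y_*(t) + \varepsilon_1 \eta_1(t) + \varepsilon_2 \eta_2(t),$$
where $\eta_2$ is a second admissible variation to be chosen. Set $\hat{J}(\varepsilon_1,\varepsilon_2) = J[y]$ and $\hat{I}(\varepsilon_1,\varepsilon_2) = I[y] - l$; Lemma~\ref{lema1} ensures that both are $C^1$ in $(\varepsilon_1,\varepsilon_2)$ in a neighbourhood of the origin, and clearly $\hat{I}(0,0)=0$.

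The critical preliminary step, and the main obstacle, is to choose $\eta_2$ so that $(\partial \hat{I}/\partial \varepsilon_2)(0,0)\neq 0$. By Lemma~\ref{lema1},
$$\frac{\partial \hat{I}}{\partial \varepsilon_2}(0,0)=\int_a^b\bigl[g_u[y_*]\,\eta_2^\sigma + g_v[y_*]\,\eta_2^\Delta\bigr]\Delta t.$$
Writing $G(t)=\int_a^t g_u[y_*](\tau)\Delta\tau$ and applying the integration by parts formula \eqref{partes2} (boundary terms vanish because $\eta_2(a)=\eta_2(b)=0$ and $G(a)=0$) converts the first summand, yielding
$$\frac{\partial \hat{I}}{\partial \varepsilon_2}(0,0) = \int_a^b \bigl(g_v[y_*](t)-G(t)\bigr)\eta_2^\Delta(t)\,\Delta t.$$
If this vanished for every admissible $\eta_2$, Lemma~\ref{lem:DR} would force $g_v[y_*]-G\equiv c$ on $[a,b]_{\mathbb{T}}^\kappa$, which is precisely the statement that $y_*$ is an extremal for $I$ — contradicting the hypothesis. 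Hence such an $\eta_2$ exists; fix it once and for all.

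With $\eta_2$ fixed, the classical implicit function theorem produces a $C^1$ map $\varepsilon_1\mapsto \varepsilon_2(\varepsilon_1)$ on a neighbourhood of $0$, satisfying $\varepsilon_2(0)=0$ and $\hat{I}(\varepsilon_1,\varepsilon_2(\varepsilon_1))=0$. For $\varepsilon_1$ small the corresponding $y$ is admissible and close to $y_*$, so $\varepsilon_1\mapsto \hat{J}(\varepsilon_1,\varepsilon_2(\varepsilon_1))$ attains a local minimum at $\varepsilon_1=0$. Differentiating at $0$ and substituting $\varepsilon_2'(0) = -(\partial \hat{I}/\partial \varepsilon_1)(0,0)/(\partial \hat{I}/\partial \varepsilon_2)(0,0)$, one obtains, with $\lambda:=(\partial \hat{J}/\partial \varepsilon_2)(0,0)/(\partial \hat{I}/\partial \varepsilon_2)(0,0)$,
$$\int_a^b\bigl[F_u[y_*](t)\,\eta_1^\sigma(t) + F_v[y_*](t)\,\eta_1^\Delta(t)\bigr]\Delta t = 0 \qquad \text{for every admissible } \eta_1,$$
where $F = L - \lambda g$.

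To finish, I would perform one further integration by parts: setting $H(t)=\int_a^t F_u[y_*](\tau)\Delta\tau$, the identity above rewrites as $\int_a^b \bigl(F_v[y_*]-H\bigr)\eta_1^\Delta\,\Delta t = 0$ for every admissible $\eta_1$. Lemma~\ref{lem:DR} then gives $F_v[y_*](t)-H(t)=c$ on $[a,b]_{\mathbb{T}}^\kappa$, and $\Delta$-differentiating both sides delivers the Euler--Lagrange equation \eqref{E-L} on $[a,b]_{\mathbb{T}}^{\kappa^2}$, completing the argument.
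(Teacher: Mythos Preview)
Your proof is correct and follows essentially the same approach as the paper: a two-parameter family of variations, selection of $\eta_2$ via the Dubois--Reymond lemma so that $\partial\hat I/\partial\varepsilon_2(0,0)\neq 0$, the implicit function theorem, and a final application of Dubois--Reymond to $F=L-\lambda g$ followed by $\Delta$-differentiation. The only cosmetic difference is that the paper invokes an abstract Lagrange multiplier rule to obtain $\nabla(K-\lambda Q)(0,0)=0$, whereas you compute $\lambda$ explicitly via the chain rule on $\varepsilon_1\mapsto\hat J(\varepsilon_1,\varepsilon_2(\varepsilon_1))$; these are equivalent.
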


\begin{proof}
Let $y_\ast$ be a local minimum for $J$ and consider neighboring
functions of the form
\begin{equation}
\label{admfunct}
\hat{y}=y_\ast+\varepsilon_1\eta_1+\varepsilon_2\eta_2,
\end{equation}
where for each $i\in\{1,2\}$, $\varepsilon_i$ is a sufficiently
small parameter ($\varepsilon_1$ and $\varepsilon_2$ must be such
that $\|\hat{y}-y^\ast\|<\delta$, for some $\delta>0$),
$\eta_i(x)\in$ C$_{\textrm{rd}}^1$ and $\eta_i(a)=\eta_i(b)=0$.
Here, $\eta_1$ is an arbitrary fixed function and $\eta_2$ is a
fixed function that we will choose later.

First we show that (\ref{admfunct}) has a subset of admissible
functions for the isoperimetric problem. Consider the quantity
$$I[\hat{y}(\cdot)]=\int_a^b
g(t,y_\ast^\sigma(t)+\varepsilon_1\eta_1^\sigma(t)
+\varepsilon_2\eta_2^\sigma(t),y_\ast^\Delta(t)
+\varepsilon_1\eta_1^\Delta(t)+\varepsilon_2\eta_2^\Delta(t))\Delta
t.$$ Then we can regard $I[\hat{y}(\cdot)]$ as a function of
$\varepsilon_1$ and $\varepsilon_2$, say
$I[\hat{y}(\cdot)]=\hat{Q}(\varepsilon_1,\varepsilon_2)$. Since
$y_\ast$ is a local minimum for $J$ subject to the boundary
conditions and the isoperimetric constraint, putting
$Q(\varepsilon_1,\varepsilon_2)=\hat{Q}(\varepsilon_1,\varepsilon_2)-l$
we have that
\begin{equation}
\label{implicit1} Q(0,0)=0.
\end{equation}
By the conditions imposed on $g$, we have
\begin{align}
\frac{\partial Q}{\partial\varepsilon_2}(0,0)&=\int_a^b\left[
g_u(t,y^\sigma_\ast(t),y^\Delta_\ast(t))\eta_2^\sigma(t)
+g_v(t,y^\sigma_\ast(t),y^\Delta_\ast(t))\eta_2^\Delta(t)\right]\Delta
t\nonumber\\ &=\int_a^b
\left[g_v(t,y^\sigma_\ast(t),y^\Delta_\ast(t))-\int_a^t
g_u(\tau,y^\sigma_\ast(\tau),y^\Delta_\ast(\tau))\Delta\tau\right]\eta_2^\Delta(t)\Delta
t\label{ruii0},
\end{align}
where \eqref{ruii0} follows from \eqref{partes1} and the fact that
$\eta_2(a)=\eta_2(b)=0$. Now, consider the function
$$E(t)=g_v(t,y^\sigma_\ast(t),y^\Delta_\ast(t))-\int_a^t
g_u(\tau,y^\sigma_\ast(\tau),y^\Delta_\ast(\tau))\Delta\tau,\quad
t\in[a,b]_\mathbb{T}^\kappa.$$ Then, we can apply
Lemma~\ref{lem:DR} to show that there exists a function
$\eta_2\in$ C$_{\textrm{rd}}^1$ such that
\begin{equation}
\int_a^b \left[g_v(t,y^\sigma_\ast(t),y^\Delta_\ast(t))-\int_a^t
g_u(\tau,y^\sigma_\ast(\tau),y^\Delta_\ast(\tau))\Delta\tau\right]\eta_2^\Delta(t)\Delta
t\neq 0\nonumber,
\end{equation}
provided $y_\ast$ is not an extremal for $I$, which is indeed the
case. We have just proved that
\begin{equation}
\label{implicit2} \frac{\partial
Q}{\partial\varepsilon_2}(0,0)\neq 0.
\end{equation}
Using (\ref{implicit1}) and (\ref{implicit2}), the implicit
function theorem asserts that there exist neighborhoods $N_{1}$
and $N_{2}$ of $0$, $N_{1}\subseteq\{\varepsilon_1\ \mbox{from}\
(\ref{admfunct})\}\cap\mathbb{R}$  and
$N_{2}\subseteq\{\varepsilon_2\ \mbox{from}\
(\ref{admfunct})\}\cap\mathbb{R}$, and a function
$\varepsilon_2:N_{1}\rightarrow\mathbb{R}$ such that for all
$\varepsilon_1\in N_{1}$ we have

$$Q(\varepsilon_1,\varepsilon_2(\varepsilon_1))=0,$$
which is equivalent to
$\hat{Q}(\varepsilon_1,\varepsilon_2(\varepsilon_1))=l$. Now we
derive the necessary condition (\ref{E-L}). Consider the quantity
$J(\hat{y})=K(\varepsilon_1,\varepsilon_2)$. By hypothesis, $K$ is
minimum at $(0,0)$ subject to the constraint $Q(0,0)=0$, and we
have proved that $\nabla Q(0,0)\neq \textbf{0}$. We can appeal to
the Lagrange multiplier rule (see, \textrm{e.g.},
\cite[Theorem~4.1.1]{Brunt}) to assert that there exists a number
$\lambda$ such that
\begin{equation}
\label{rui1} \nabla(K(0,0)-\lambda Q(0,0))=\textbf{0}.
\end{equation}
Having in mind that $\eta_1(a)=\eta_1(b)=0$, we can write:
\begin{align}
\label{rui2} \frac{\partial
K}{\partial\varepsilon_1}(0,0)&=\int_a^b\left[
L_u(t,y^\sigma_\ast(t),y^\Delta_\ast(t))\eta_1^\sigma(t)
+L_v(t,y^\sigma_\ast(t),y^\Delta_\ast(t))\eta_1^\Delta(t)\right]\Delta
t\nonumber\\ &=\int_a^b
\left[L_v(t,y^\sigma_\ast(t),y^\Delta_\ast(t))-\int_a^t
L_u(\tau,y^\sigma_\ast(\tau),y^\Delta_\ast(\tau))\Delta\tau\right]\eta_1^\Delta(t)\Delta
t.
\end{align}
Similarly, we have that
\begin{equation}
\label{rui3} \frac{\partial
Q}{\partial\varepsilon_1}(0,0)=\int_a^b
\left[g_v(t,y^\sigma_\ast(t),y^\Delta_\ast(t))-\int_a^t
g_u(\tau,y^\sigma_\ast(\tau),y^\Delta_\ast(\tau))\Delta\tau\right]\eta_1^\Delta(t)\Delta
t.
\end{equation}
Combining (\ref{rui1}), (\ref{rui2}) and (\ref{rui3}), we obtain
\begin{equation}
\int_a^b \left\{L_v(\cdot)-\int_a^t
L_u(\cdot\cdot)\Delta\tau-\lambda\left(g_v(\cdot)-\int_a^t
g_u(\cdot\cdot)\Delta\tau \right)\right\}\eta^\Delta_1(t)\Delta
t=0,\nonumber
\end{equation}
where $(\cdot)=(t,y^\sigma_\ast(t),y^\Delta_\ast(t))$ and
$(\cdot\cdot)=(\tau,y^\sigma_\ast(\tau),y^\Delta_\ast(\tau))$.
Since $\eta_1$ is arbitrary, Lemma~\ref{lem:DR} implies that there
exists a constant $d$ such that
\begin{equation}
L_v(\cdot)-\lambda g_v(\cdot)-\left(\int_a^t
[L_u(\cdot\cdot)-\lambda g_u(\cdot\cdot)]\Delta\tau\right)=d,\quad
t\in[a,b]^\kappa_\mathbb{T},\nonumber
\end{equation}
or
\begin{equation}
\label{quaseE-L} F_v(\cdot)-\int_a^t F_x(\cdot\cdot)\Delta\tau=d,
\end{equation}
with $F=L-\lambda g$. Since the integral and the constant in
(\ref{quaseE-L}) are $\Delta$-differentiable, we obtain the
desired necessary optimality condition (\ref{E-L}).
\end{proof}

\begin{remark}
\label{rem:max} Theorem~\ref{T1} remains valid when $y_\ast$ is
assumed to be a local maximizer of the isoperimetric problem
\eqref{P0}-\eqref{isocons}.
\end{remark}

\begin{remark}
In Theorem~\ref{T1} we assume that $y_\ast$ is not an extremal for
$I$. This corresponds to the normal case. A version of
Theorem~\ref{T1} that involves abnormal extremals can be found in
\cite{AlmeidaTorres09}. A different hypothesis excluding
abnormality is given in \cite{MalinowTorres}.
\end{remark}

\begin{example}
Let $a,-a\in\mathbb{T}$ and suppose that we want to find functions
defined on $[-a,a]_\mathbb{T}$ that take
$$J[y(\cdot)]=\int_{-a}^a y^\sigma(t)\Delta t$$
to its largest value (see Remark~\ref{rem:max}) and that satisfy
the conditions
$$y(-a)=y(a)=0,\ \ I[y(\cdot)]=\int_{-a}^a \sqrt{1+(y^\Delta(t))^2}\Delta
t=l>2a.$$ Note that if $y$ is an extremal for $I$, then $y$ is a
line segment \cite{CD:Bohner:2004}, and therefore $y(t)=0$ for all
$t\in[-a,a]_\mathbb{T}$. This implies that $I(y)=2a>2a$, which is
a contradiction. Hence, $I$ has no extremals satisfying the
boundary conditions and the isoperimetric constraint. Using
Theorem~\ref{T1}, let
$$F=L-\lambda g=y^\sigma-\lambda\sqrt{1+(y^\Delta)^2} \, .$$
Because
$$F_{x}=1,\ \
F_{v}=\lambda\frac{y^\Delta}{\sqrt{1+(y^\Delta)^2}},$$ a necessary
optimality condition is given by the following dynamic equation:
$$\lambda\left(\frac{y^\Delta}{\sqrt{1+(y^\Delta)^2}}\right)^\Delta-1=0,\quad t\in[-a,a]^{\kappa^2}_\mathbb{T}.$$
\end{example}

If we restrict ourselves to times scales $\mathbb{T}$ satisfying
hypothesis (H) of Section \ref{higherorder} it follows that the
same proof as in Theorem~\ref{T1} can be used, \emph{mutatis
mutandis}, to obtain a necessary optimality condition for the
higher-order isoperimetric problem (\textrm{i.e.}, when $L$ and
$g$ contain higher order $\Delta$-derivatives). In this case, the
necessary optimality condition \eqref{E-L} is generalized to
$$
\sum_{i=0}^{r}(-1)^i\left(\frac{1}{a_1}\right)^{\frac{(i-1)i}{2}}
F_{u_i}^{\Delta^i}\left(t,y_\ast^{\sigma^r}(t),y_\ast^{\sigma^{r-1}\Delta}(t),
\ldots,y_\ast^{\sigma\Delta^{r-1}}(t),y_\ast^{\Delta^r}(t)\right)=0
\, ,
$$
where again $F=L-\lambda g$.


\subsection{Sturm--Liouville eigenvalue problems}
\label{sub:sec:ep}

Eigenvalue problems on time scales have been studied in
\cite{SLP}. Consider the following Sturm--Liouville eigenvalue
problem\index{Sturm--Liouville eigenvalue problem}: find
nontrivial solutions to the $\Delta$-dynamic equation
\begin{equation}
\label{rui6} y^{\Delta^2}(t) +q(t)y^\sigma(t)+\lambda
y^\sigma(t)=0,\quad t\in[a,b]_\mathbb{T}^{\kappa^2},
\end{equation}
for the unknown $y:[a,b]_\mathbb{T}\rightarrow\mathbb{R}$ subject
to the boundary conditions
\begin{equation}
\label{rui4} y(a)=y(b)=0.
\end{equation}
Here $q:[a,b]^\kappa_\mathbb{T}\rightarrow\mathbb{R}$ is a
continuous function.

Generically, the only solution to equation (\ref{rui6}) that
satisfies the boundary conditions (\ref{rui4}) is the trivial
solution, $y(t)=0$ for all $t\in[a,b]_\mathbb{T}$. There are,
however, certain values of $\lambda$ that lead to nontrivial
solutions. These are called \emph{eigenvalues}\index{Eigenvalues}
and the corresponding nontrivial solutions are called
\emph{eigenfunctions}\index{Eigenfunctions}. These eigenvalues may
be arranged as $-\infty<\lambda_1<\lambda_2<\ldots$ (cf.
\cite[Theorem~1]{SLP}) and $\lambda_1$ is called the \emph{first
eigenvalue}.

Consider the functional defined by
\begin{equation}
\label{rui7}
J[y(\cdot)]=\int_a^b[(y^\Delta)^2(t)-q(t)(y^\sigma)^2(t)]\Delta t,
\end{equation}
and suppose that $y_\ast\in$ C$_{\textrm{rd}}^2$ (functions that
are twice $\Delta$-differentiable with rd-continuous second
$\Delta$-derivative) is a local minimum for $J$ subject to the
boundary conditions (\ref{rui4}) and the isoperimetric constraint
\begin{equation}
\label{isoc1} I[y(\cdot)]=\int_a^b (y^\sigma)^2(t)\Delta t=1.
\end{equation}
If $y_\ast$ is an extremal for $I$, then we would have
$-2y^\sigma(t)=0,\ t\in[a,b]_\mathbb{T}^{\kappa}$. Noting that
$y(a)=0$, using Lemma~\ref{lemtecn} we would conclude that
$y(t)=0$ for all $t\in[a,b]_\mathbb{T}$. No extremals for $I$ can
therefore satisfy the isoperimetric condition (\ref{isoc1}).
Hence, by Theorem~\ref{T1} there is a constant $\lambda$ such that
$y_\ast$ satisfies
\begin{equation}
\label{rui5} F_{v}^\Delta(t,y^\sigma_\ast(t),y^\Delta_\ast(t))
-F_{u}(t,y^\sigma_\ast(t),y^\Delta_\ast(t))=0,\quad
t\in[a,b]_\mathbb{T}^{\kappa^2},
\end{equation}
with $F=v^2-qu^2-\lambda u^2$. It is easily seen that (\ref{rui5})
is equivalent to (\ref{rui6}). The isoperimetric problem thus
corresponds to the Sturm-Liouville problem augmented by the
normalizing condition (\ref{isoc1}), which simply scales the
eigenfunctions. Here, the Lagrange multiplier plays the role of
the eigenvalue.

The first eigenvalue has the notable property that the
corresponding eigenfunction produces the minimum value for the
functional $J$.

\begin{theorem}
\label{T2} Let $\lambda_1$ be the first eigenvalue for the
Sturm-Liouville problem (\ref{rui6}) with boundary conditions
(\ref{rui4}), and let $y_1$ be the corresponding eigenfunction
normalized to satisfy the isoperimetric constraint (\ref{isoc1}).
Then, among functions in C$_{\textrm{rd}}^2$ that satisfy the
boundary conditions (\ref{rui4}) and the isoperimetric condition
(\ref{isoc1}), the functional $J$ defined by (\ref{rui7}) has a
minimum at $y_1$. Moreover, $J[y_1(\cdot)]=\lambda_1$.
\end{theorem}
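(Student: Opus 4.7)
The plan is to verify the two assertions separately: that $J[y_1(\cdot)] = \lambda_1$, and that no admissible function in $\mathrm{C}_{\textrm{rd}}^2$ yields a strictly smaller value of $J$.

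First I would compute $J[y_1(\cdot)]$ directly. Applying the integration-by-parts identity \eqref{partes2} with $f = y_1^\Delta$ and $g = y_1$ gives
$$\int_a^b \bigl(y_1^\Delta(t)\bigr)^2 \Delta t = \bigl[y_1^\Delta(t)\,y_1(t)\bigr]_{t=a}^{t=b} - \int_a^b y_1^{\Delta^2}(t)\,y_1^\sigma(t)\,\Delta t.$$
The boundary term vanishes thanks to $y_1(a) = y_1(b) = 0$. Inserting this into the definition \eqref{rui7} of $J$ and invoking the Sturm--Liouville equation \eqref{rui6} in the form $y_1^{\Delta^2} + q\,y_1^\sigma = -\lambda_1\,y_1^\sigma$ yields
$$J[y_1(\cdot)] = -\int_a^b \bigl(y_1^{\Delta^2}(t) + q(t)\,y_1^\sigma(t)\bigr)\,y_1^\sigma(t)\,\Delta t = \lambda_1 \int_a^b \bigl(y_1^\sigma(t)\bigr)^2 \Delta t = \lambda_1,$$
where the last equality uses the normalization $I[y_1(\cdot)] = 1$ from \eqref{isoc1}.

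For the lower bound, the key observation is that the very same calculation applies to every admissible extremizer. If $y_\ast \in \mathrm{C}_{\textrm{rd}}^2$ is a local minimum of $J$ subject to the boundary conditions \eqref{rui4} and the isoperimetric constraint \eqref{isoc1}, then by the argument given just before the theorem $y_\ast$ cannot be an extremal of $I$, and Theorem~\ref{T1} furnishes a Lagrange multiplier $\lambda$ such that $y_\ast$ satisfies the Euler--Lagrange equation \eqref{rui5}, which is equivalent to the Sturm--Liouville equation \eqref{rui6} with spectral parameter $\lambda$. Repeating the integration-by-parts computation with $y_\ast$ in place of $y_1$ then gives $J[y_\ast(\cdot)] = \lambda$. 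Since $y_\ast$ is nontrivial (it satisfies $I[y_\ast(\cdot)] = 1$) the number $\lambda$ is one of the eigenvalues $\lambda_k$, and because $\lambda_1$ is the smallest we obtain $J[y_\ast(\cdot)] = \lambda \geq \lambda_1 = J[y_1(\cdot)]$. Hence the minimum of $J$ on the admissible set is attained at $y_1$ and equals $\lambda_1$.

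The main obstacle is a rigour gap rather than a computational one: the Lagrange-multiplier argument only shows that \emph{if} a minimizer exists in $\mathrm{C}_{\textrm{rd}}^2$, then its value must be an eigenvalue; one still needs to know that the infimum is attained. The cleanest way to close this gap would be to invoke the variational characterisation of $\lambda_1$ from the spectral theory developed in \cite{SLP}, namely that $\lambda_1 = \inf\{J[y(\cdot)] : y \in \mathrm{C}_{\textrm{rd}}^2,\ y(a) = y(b) = 0,\ I[y(\cdot)] = 1\}$; combined with the explicit calculation $J[y_1(\cdot)] = \lambda_1$ from the first step, this immediately yields the conclusion.
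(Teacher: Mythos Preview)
Your overall strategy coincides with the paper's: assume a minimizer $y$ exists, show via Theorem~\ref{T1} that it satisfies the Sturm--Liouville equation for some $\lambda$, integrate to obtain $J[y(\cdot)]=\lambda$, and conclude that the smallest admissible value is $\lambda_1=J[y_1(\cdot)]$. The paper likewise begins with ``Suppose that $J$ has a minimum at $y$'', so the existence gap you flag is present there too; you are simply more candid about it.

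Where the paper is more careful than your write-up is in the integration by parts. You apply \eqref{partes2} on $[a,b]$ with $f=y_1^\Delta$, which requires both the boundary value $y_1^\Delta(b)$ and the integrand $y_1^{\Delta^2}$ on all of $[a,b]_\mathbb{T}^\kappa$; but for $y_1\in\mathrm{C}_{\textrm{rd}}^2$ the second derivative is only guaranteed on $[a,b]_\mathbb{T}^{\kappa^2}$, and if $b$ is left-scattered then $y_1^\Delta(b)$ is not defined. The paper circumvents this by multiplying \eqref{rui6} by $y^\sigma$ and $\Delta$-integrating only from $a$ to $\rho(b)$, then handling the resulting boundary term $y(\rho(b))\,y^\Delta(\rho(b))$ separately in the left-dense and left-scattered cases (using $y(b)=0$ and formula~\eqref{sigma}) to recover the full integral $\int_a^b(y^\Delta)^2\Delta t$. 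Your computation is morally correct but would need this domain bookkeeping to be watertight on a general time scale.
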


\begin{proof}
Suppose that $J$ has a minimum at $y$ satisfying conditions
(\ref{rui4}) and (\ref{isoc1}). Then $y$ satisfies equation
(\ref{rui6}) and multiplying this equation by $y^\sigma$ and
$\Delta$-integrating from $a$ to $\rho(b)$ (note that
$y^{\Delta^2}$ is defined only on $[a,b]_\mathbb{T}^{\kappa^2}$),
we obtain
\begin{equation}
\label{rui8} \int_a^{\rho(b)} y^\sigma(t)y^{\Delta^2}(t)\Delta t
+\int_a^{\rho(b)} q(t)(y^\sigma)^2(t)\Delta
t+\lambda\int_a^{\rho(b)} (y^\sigma)^2(t)\Delta t=0.
\end{equation}
Since $y(a)=y(b)=0$, we get
\begin{align}
\int_a^{\rho(b)} y^\sigma(t)y^{\Delta^2}(t)\Delta
t&=\left[y(t)y^\Delta(t)\right]_{t=a}^{t=\rho(b)}-\int_a^{\rho(b)}
(y^\Delta)^2(t)\Delta t\nonumber\\
&=y(\rho(b))y^\Delta(\rho(b))-\int_a^{\rho(b)}
(y^\Delta)^2(t)\Delta t\label{isop0}.
\end{align}
If $b$ is left-dense it is clear that \eqref{isop0} is equal to
$-\int_a^b (y^\Delta)^2(t)\Delta t$. If $b$ is left-scattered
(hence $\rho(b)$ is right-scattered) we obtain for \eqref{isop0}
\begin{align}
y(\rho(b))y^\Delta(\rho(b))-\int_a^{\rho(b)} (y^\Delta)^2(t)\Delta
t&=y(\rho(b))\frac{y(b)-y(\rho(b))}{\mu(\rho(b))}-\int_a^{\rho(b)}
(y^\Delta)^2(t)\Delta t\nonumber\\
&=-\frac{y(\rho(b))^2}{\mu(\rho(b))}-\int_a^{\rho(b)}
(y^\Delta)^2(t)\Delta t\nonumber\\
&=-\int_a^{b} (y^\Delta)^2(t)\Delta t\nonumber,
\end{align}
where the last equality follows by \eqref{sigma}. With the help of
\eqref{sigma} it is easy to prove that $\int_a^{\rho(b)}
q(t)(y^\sigma)^2(t)\Delta t=\int_a^{b} q(t)(y^\sigma)^2(t)\Delta
t$ and $\int_a^{\rho(b)} (y^\sigma)^2(t)\Delta t=\int_a^{b}
(y^\sigma)^2(t)\Delta t$. By (\ref{isoc1}), (\ref{rui8}) reduces
to
$$\int_a^b [(y^\Delta)^2(t)-q(t)(y^\sigma)^2(t)]\Delta t=\lambda,$$
that is, $J[y(\cdot)]=\lambda$. Due to the isoperimetric
condition, $y$ must be a nontrivial solution to \eqref{rui6} and
therefore $\lambda$ must be an eigenvalue. Since there exists a
least element within the eigenvalues, $\lambda_1$, and a
corresponding eigenfunction $y_1$ normalized to meet the
isoperimetric condition, the minimum value for $J$ is $\lambda_1$
and $J[y_1(\cdot)]=\lambda_1$.
\end{proof}

\section{State of the Art}

The results of this chapter are already published in the following
international journals and chapters in books: \cite{Ferr3,Ferr7}.
In particular, the original results of the papers
\cite{Ferr3,Ferr7} were presented in the International Workshop on
Mathematical Control Theory and Finance, Lisbon, 10-14 April,
2007, and in the Conference on Nonlinear Analysis and
Optimization, June 18-24, 2008, Technion, Haifa, Israel,
respectively. It is worth mentioning that nowadays other
researchers are dedicating their time to the development of the
theory of the calculus of variations on time scales (see
\cite{AlmeidaTorres09,dicovots,zeidan2,MalTorres1,MalTorres2,natorres,Zhan}
and references therein).

\clearpage{\thispagestyle{empty}\cleardoublepage}

\chapter{Inequalities applied to some Calculus of Variations and Optimal Control
problems}\label{chap2}

In this chapter we prove a result that complements Jensen's
inequality on time scales (cf. Proposition \ref{prop0}) and state
some useful consequences of it. These are then applied in
Section~\ref{sec:app:CV} to solve some classes of variational
problems on time scales. A simple illustrative example is given in
Section~\ref{sec:ex}.

The method here proposed is direct\index{Direct methods}, in the
sense that permits to find directly the optimal solution instead
of using variational arguments and go through the usual procedure
of solving the associated $\Delta$ Euler--Lagrange equation. This
is particularly useful since even simple classes of problems of
the calculus of variations on time scales lead to dynamic
Euler--Lagrange equations for which methods to compute explicit
solutions are not known. A second advantage of the method here
promoted is that it provides directly an optimal solution, while
the variational method on time scales is based on necessary
optimality conditions, being necessary further analysis in order
to conclude if the candidate is a local minimizer, a local
maximizer, or just a saddle. Finally, while all the previous
methods of the calculus of variations on time scales only
establish local optimality, here we provide global solutions.

The use of inequalities to solve certain classes of optimal
control problems is an old idea with a rich history
\cite{wsc,otto,H:L:1932,inequalities,Sbordone}. We trust that the
present study will be the beginning of a class of direct methods
for optimal control problems on time scales, to be investigated
with the help of dynamic inequalities.


\section{Some integral inequalities}\label{ineq}

The next theorem is a generalization of the Jensen inequality on
time scales.

\begin{theorem}[Generalized Jensen's inequality \cite{wong}]
\label{theor1} Let $a,b\in\mathbb{T}$ and $c,d\in\mathbb{R}$.
Suppose $f:[a,b]^\kappa_{\mathbb{T}}\rightarrow(c,d)$ is
rd-continuous and $F:(c,d)\rightarrow\mathbb{R}$ is convex.
Moreover, let $h:[a,b]^\kappa_{\mathbb{T}}\rightarrow\mathbb{R}$
be rd-continuous with
$$\int_a^b|h(t)|\Delta t>0.$$
Then,
\begin{equation}
\label{ine1} \frac{\int_a^b |h(t)|F(f(t))\Delta
t}{\int_a^b|h(t)|\Delta t}\geq
F\left(\frac{\int_a^b|h(t)|f(t)\Delta t}{\int_a^b|h(t)|\Delta
t}\right).
\end{equation}
\end{theorem}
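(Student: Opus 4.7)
The plan is to mimic the classical proof of Jensen's inequality, using the support line characterization of convex functions together with the monotonicity property \eqref{desbasic} of the $\Delta$-integral.

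First I would set
\[
x_0 := \frac{\int_a^b |h(t)| f(t)\,\Delta t}{\int_a^b |h(t)|\,\Delta t},
\]
which is well defined since $\int_a^b |h(t)|\Delta t > 0$ by hypothesis. The first step is to verify that $x_0 \in (c,d)$, so that $F(x_0)$ makes sense. Since $f(t) \in (c,d)$ we have $c|h(t)| \le |h(t)| f(t) \le d|h(t)|$ on $[a,b]_{\mathbb{T}}^{\kappa}$; by \eqref{desbasic} this yields $c \le x_0 \le d$, and the strict inequalities follow from the fact that $f$ maps into the open interval $(c,d)$ and that $|h|$ has strictly positive integral (otherwise, on the set where $|h|$ is positive, one could push $f$ arbitrarily close to an endpoint and contradict its rd-continuity on a nonempty subset of $[a,b]_{\mathbb{T}}^{\kappa}$).

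Next I would invoke the support line property of convex functions: since $F:(c,d)\to\mathbb{R}$ is convex and $x_0\in(c,d)$, there exists $m\in\mathbb{R}$ (any element of the subdifferential of $F$ at $x_0$) such that
\[
F(u) \ge F(x_0) + m(u - x_0) \qquad \text{for all } u \in (c,d).
\]
Substituting $u = f(t)$ and multiplying by the nonnegative function $|h(t)|$ preserves the inequality pointwise:
\[
|h(t)| F(f(t)) \ge |h(t)| F(x_0) + m\,|h(t)|\bigl(f(t) - x_0\bigr), \qquad t\in[a,b]_{\mathbb{T}}^{\kappa}.
\]

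Finally, I would $\Delta$-integrate both sides from $a$ to $b$ using the monotonicity property \eqref{desbasic} and the linearity of the $\Delta$-integral (Theorem \ref{teorema1}). By the very definition of $x_0$, the term $\int_a^b |h(t)|(f(t)-x_0)\Delta t$ vanishes, leaving
\[
\int_a^b |h(t)| F(f(t))\,\Delta t \ge F(x_0)\int_a^b |h(t)|\,\Delta t.
\]
Dividing by the positive quantity $\int_a^b |h(t)|\,\Delta t$ gives \eqref{ine1}. The only delicate step is ensuring $x_0$ lies in the open interval $(c,d)$ so that one may apply the support line at $x_0$; the rest is a routine transfer of the classical argument to the time scale setting, made possible by the linearity and monotonicity of the $\Delta$-integral.
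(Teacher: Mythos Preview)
Your argument is correct and is the standard support-line proof of Jensen's inequality, transported to the $\Delta$-integral via linearity and monotonicity (Theorem~\ref{teorema1} and \eqref{desbasic}). Note that the paper does not actually supply a proof of Theorem~\ref{theor1}; it merely cites the result from \cite{wong}. The paper does, however, prove the $\Diamond_\alpha$ analogue (Theorem~\ref{thm4}) by precisely your method: one fixes the weighted average, invokes the existence of a subgradient $a_t$ so that $a_t(x-t)\le F(x)-F(t)$, multiplies by $|h|$, integrates, and observes that the linear term vanishes by the definition of the average. So your approach coincides with the one the paper adopts elsewhere for the same class of inequalities.
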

The next observation is crucial to solve variational problems.
Follows the statement and a proof.
\begin{prop}[cf. \cite{BFT}]
\label{prop0}If in Theorem~\ref{theor1} $F$ is strictly convex and
$h(t)\neq 0$ for all $t\in[a,b]^\kappa_\mathbb{T}$, then the
equality in (\ref{ine1}) holds if and only if $f$ is constant.
\end{prop}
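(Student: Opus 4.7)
The plan is to prove the nontrivial direction via the standard support-line characterization of convex functions, then rule out the equality case using strict convexity together with a ``vanishing-integral'' argument for nonnegative rd-continuous functions.

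The easy direction is immediate: if $f\equiv k$ on $[a,b]^\kappa_\mathbb{T}$ then both sides of \eqref{ine1} reduce to $F(k)$. For the converse, let me denote
$$M=\int_a^b|h(t)|\Delta t>0,\qquad x_0=\frac{1}{M}\int_a^b|h(t)|f(t)\Delta t.$$
Since $F$ is convex on the open interval $(c,d)$, there exists a subtangent slope $\alpha\in\mathbb{R}$ (any element of the subdifferential of $F$ at $x_0$) such that
$$F(x)\ge F(x_0)+\alpha(x-x_0)\qquad\text{for all }x\in(c,d),$$
and, because $F$ is \emph{strictly} convex, the inequality is strict whenever $x\ne x_0$. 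Substituting $x=f(t)$, multiplying by $|h(t)|\ge 0$, and $\Delta$-integrating from $a$ to $b$ yields
$$\int_a^b|h(t)|F(f(t))\Delta t\;\ge\;F(x_0)\,M+\alpha\int_a^b|h(t)|(f(t)-x_0)\Delta t.$$
By the very definition of $x_0$, the last $\Delta$-integral vanishes, so this reproduces \eqref{ine1}.

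Now assume equality holds in \eqref{ine1}. Then
$$\int_a^b|h(t)|\bigl[F(f(t))-F(x_0)-\alpha(f(t)-x_0)\bigr]\Delta t=0.$$
The integrand is rd-continuous (as $f$ is rd-continuous and $F$ is continuous on $(c,d)$) and nonnegative by the support-line inequality. The key observation is that for any nonnegative rd-continuous function $g$ on $[a,b]^\kappa_\mathbb{T}$ with $\int_a^b g(t)\Delta t=0$, one has $g\equiv 0$: at a right-dense point continuity forces $g$ to vanish in a neighborhood (else the integral over a small interval would be positive by \eqref{desbasic}), while at a right-scattered point $t$ formula \eqref{sigma} gives $\mu(t)g(t)\le\int_a^b g(s)\Delta s=0$, forcing $g(t)=0$. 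Applying this to our integrand, and using that $|h(t)|>0$ throughout, we conclude
$$F(f(t))-F(x_0)-\alpha(f(t)-x_0)=0\qquad\text{for all }t\in[a,b]^\kappa_\mathbb{T}.$$
By strict convexity this can only happen when $f(t)=x_0$ for every such $t$, i.e., $f$ is constant on $[a,b]^\kappa_\mathbb{T}$.

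The main delicate point is the ``vanishing-integral'' lemma in the time-scales setting, since one must handle right-dense and right-scattered points separately; this is the only place where one actually uses the hypothesis $h(t)\ne0$ (which upgrades $|h|\ge0$ to $|h|>0$ and thus lets one divide out). Everything else is a routine adaptation of the classical $\mathbb{R}$-proof of the equality case of Jensen's inequality.
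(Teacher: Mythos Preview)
Your proof is correct and follows essentially the same approach as the paper's: both rest on the support-line (subdifferential) inequality for a strictly convex $F$ at the point $x_0$, together with a case analysis of right-dense versus right-scattered points to extract strictness. The only cosmetic difference is that the paper argues the contrapositive (assume $f$ not constant, exhibit a single $t_0$ with $f(t_0)\ne x_0$, and show the integral inequality becomes strict there), whereas you assume equality and invoke an explicit ``nonnegative rd-continuous integrand with zero $\Delta$-integral must vanish'' lemma before dividing by $|h|$; the underlying computations at each type of point are the same.
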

\begin{proof}
Consider $x_0\in(c,d)$ defined by
$$x_0=\frac{\int_a^b|h(t)|f(t)\Delta t}{\int_a^b|h(t)|\Delta
t}.$$
From the definition of strictly convexity, there exists
$m\in\mathbb{R}$ such that
$$F(x)-F(x_0)>m(x-x_0),$$
for all $x\in (c,d)\backslash\{x_0\}$. Assume $f$ is not constant.
Then, $f(t_0)\neq x_0$ for some $t_0\in[a,b]^\kappa_{\mathbb{T}}$.
We split the proof in two cases. (i) Assume that $t_0$ is
right-dense. Then, since $f$ is rd-continuous, we have that
$f(t)\neq x_0$ on $[t_0,t_0+\delta)_\mathbb{T}$ for some
$\delta>0$. Hence,
\begin{align*}
\int_a^b |h(t)|F(f(t))\Delta t-\int_a^b |h(t)|\Delta t
F(x_0)&=\int_a^b |h(t)|[F(f(t))-F(x_0)]\Delta t\\
&>m\int_a^b |h(t)|[f(t)-x_0]\Delta t\\ &=0 \, .
\end{align*}
(ii) Assume now that $t_0$ is right-scattered. Then [note that
$\int_{t_0}^{\sigma(t_0)} f(t)\Delta t=\mu(t_0)f(t_0)$],
\begin{equation*}
\begin{split}
\int_a^b & |h(t)|F(f(t))\Delta t-\int_a^b |h(t)|\Delta t
F(x_0)\\
&= \int_a^b |h(t)|[F(f(t))-F(x_0)]\Delta t\\
&= \int_a^{t_0} |h(t)|[F(f(t))-F(x_0)]\Delta
t+\int_{t_0}^{\sigma(t_0)}
|h(t)|[F(f(t))-F(x_0)]\Delta t\\
& \qquad +\int_{\sigma(t_0)}^{b}
|h(t)|[F(f(t))-F(x_0)]\Delta t\\
&> \int_a^{t_0} |h(t)|[F(f(t))-F(x_0)]\Delta
t+m\int_{t_0}^{\sigma(t_0)} |h(t)|[f(t)-x_0]\Delta t\\
& \qquad +\int_{\sigma(t_0)}^{b} |h(t)|[F(f(t))-F(x_0)]\Delta t\\
&\geq m\left\{\int_{a}^{t_0} |h(t)|[f(t)-x_0]\Delta
t+\int_{t_0}^{\sigma(t_0)} |h(t)|[f(t)-x_0]\Delta t\right.\\
& \qquad \left.+\int_{\sigma(t_0)}^{b} |h(t)|[f(t)-x_0]\Delta t\right\}\\
&= m\int_a^b|h(t)|[f(t)-x_0]\Delta t =0 \, .
\end{split}
\end{equation*}
Finally, if $f$ is constant, it is obvious that the equality in
(\ref{ine1}) holds.
\end{proof}

\begin{remark}
If in Theorem~\ref{theor1} $F$ is a concave function, then the
inequality sign in (\ref{ine1}) must be reversed. Obviously,
Proposition~\ref{prop0} remains true if we let $F$ to be strictly
concave.
\end{remark}

Before proceeding, we state a particular case of
Theorem~\ref{theor1}.

\begin{cor}
Let $a=q^n$ and $b=q^m$ for some $n,m\in\mathbb{N}_0$ with $n<m$.
Define $f$ and $h$ on $[q^n,q^{m-1}]_{q^{\mathbb{N}_0}}$ and
assume $F:(c,d)\rightarrow\mathbb{R}$ is convex, where
$(c,d)\supset[f(q^n),f(q^{m-1})]_{q^{\mathbb{N}_0}}$. If
$$\sum_{k=n}^{m-1}(q-1)q^k|h(q^k)|>0,$$
then
\begin{equation*}
\frac{\sum_{k=n}^{m-1}q^k
|h(q^k)|F(f(q^k))}{\sum_{k=n}^{m-1}q^k|h(q^k)|}\geq
F\left(\frac{\sum_{k=n}^{m-1}q^k|h(q^k)|f(q^k)}{\sum_{k=n}^{m-1}q^k|h(q^k)|}\right).
\end{equation*}
\end{cor}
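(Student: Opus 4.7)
The plan is to derive this corollary as a direct specialization of Theorem~\ref{theor1} to the quantum time scale $\mathbb{T}=q^{\mathbb{N}_0}$ (with $q>1$), where every rd-continuity assumption is automatic because the time scale is isolated. First I would recall the explicit formulas for the relevant operators on this time scale: for $t = q^k$, one has $\sigma(t) = qt$ and $\mu(t) = (q-1)t$, so that for any rd-continuous $g$ defined on $[q^n,q^m)_{\mathbb{T}}$ the delta integral reduces to the finite sum
\begin{equation*}
\int_{q^n}^{q^m} g(t)\,\Delta t \;=\; \sum_{k=n}^{m-1} \mu(q^k)\, g(q^k) \;=\; \sum_{k=n}^{m-1} (q-1)q^k\, g(q^k),
\end{equation*}
which is a standard consequence of \eqref{sigma} together with item 4 of Theorem~\ref{teorema1}.

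Next I would apply Theorem~\ref{theor1} with $a=q^n$, $b=q^m$ and the given $f$, $h$, $F$. The positivity hypothesis $\sum_{k=n}^{m-1}(q-1)q^k|h(q^k)|>0$ is precisely the translation of $\int_a^b |h(t)|\,\Delta t > 0$, so Theorem~\ref{theor1} yields
\begin{equation*}
\frac{\int_{q^n}^{q^m} |h(t)|F(f(t))\,\Delta t}{\int_{q^n}^{q^m} |h(t)|\,\Delta t} \;\geq\; F\!\left(\frac{\int_{q^n}^{q^m} |h(t)|f(t)\,\Delta t}{\int_{q^n}^{q^m} |h(t)|\,\Delta t}\right).
\end{equation*}
Substituting the sum representation in each of the three integrals converts this inequality into
\begin{equation*}
\frac{\sum_{k=n}^{m-1}(q-1)q^k |h(q^k)|F(f(q^k))}{\sum_{k=n}^{m-1}(q-1)q^k |h(q^k)|} \;\geq\; F\!\left(\frac{\sum_{k=n}^{m-1}(q-1)q^k |h(q^k)|f(q^k)}{\sum_{k=n}^{m-1}(q-1)q^k |h(q^k)|}\right),
\end{equation*}
and the common factor $(q-1)$ cancels from numerator and denominator on both sides, giving exactly the displayed conclusion.

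There is really no obstacle here beyond bookkeeping; the only point that deserves a brief check is that $f$ takes values in $(c,d)$ so that $F\circ f$ is defined on all of $[q^n,q^{m-1}]_{\mathbb{T}}$, which follows from the assumed inclusion $(c,d)\supset[f(q^n),f(q^{m-1})]_{q^{\mathbb{N}_0}}$. I would also note that rd-continuity of $f$ and $h$ is automatic on the isolated time scale $q^{\mathbb{N}_0}$, so no regularity hypothesis is required beyond being well-defined on the finite set of nodes $\{q^n,q^{n+1},\ldots,q^{m-1}\}$.
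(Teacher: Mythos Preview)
Your proposal is correct and follows exactly the paper's approach, which is simply to choose $\mathbb{T}=q^{\mathbb{N}_0}$ with $q>1$ in Theorem~\ref{theor1}. Your write-up merely spells out the bookkeeping (the sum formula for the $\Delta$-integral and the cancellation of the common factor $q-1$) that the paper leaves implicit.
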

\begin{proof}
Choose $\mathbb{T}=q^{\mathbb{N}_0}=\{q^{k}:k\in\mathbb{N}_0\}$,
$q>1$, in Theorem~\ref{theor1}.
\end{proof}

We now present Jensen's inequality on time scales, complemented by
Proposition \ref{prop0}\index{Jensen's inequality on time scales}.

\begin{theorem}[cf. \cite{BFT}]
\label{theor0} Let $a,b\in\mathbb{T}$ and $c,d\in\mathbb{R}$.
Suppose $f:[a,b]^\kappa_{\mathbb{T}}\rightarrow(c,d)$ is
rd-continuous and $F:(c,d)\rightarrow\mathbb{R}$ is convex (resp.,
concave). Then,
\begin{equation}
\label{ine0} \frac{\int_a^b F(f(t))\Delta t}{b-a}\geq
F\left(\frac{\int_a^bf(t)\Delta t}{b-a}\right)
\end{equation}
(resp., the reverse inequality). Moreover, if $F$ is strictly
convex or strictly concave, then equality in (\ref{ine0}) holds if
and only if $f$ is constant.
\end{theorem}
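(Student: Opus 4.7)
The plan is to deduce Theorem~\ref{theor0} as the special case $h\equiv 1$ of Theorem~\ref{theor1}, combined with Proposition~\ref{prop0} for the equality statement. First I would verify that the hypotheses of Theorem~\ref{theor1} are met with this choice: the constant function $h(t)=1$ is certainly rd-continuous, satisfies $|h(t)|=1\neq 0$ for every $t\in[a,b]_\mathbb{T}^\kappa$, and (assuming $a<b$, which is the only nontrivial case) yields $\int_a^b|h(t)|\Delta t = b-a > 0$. Substituting into \eqref{ine1} then gives
\[
\frac{\int_a^b F(f(t))\Delta t}{b-a}\geq F\!\left(\frac{\int_a^b f(t)\Delta t}{b-a}\right),
\]
which is \eqref{ine0}.

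For the concave case, I would simply apply what has just been proved to the convex function $-F$ and multiply through by $-1$, reversing the inequality.

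For the equality clause, with $h\equiv 1$ and $F$ strictly convex (resp.\ strictly concave), Proposition~\ref{prop0} (resp.\ the remark following it) applies directly to show that equality in \eqref{ine0} is equivalent to $f$ being constant on $[a,b]_\mathbb{T}^\kappa$. Conversely, if $f\equiv x_0$ is constant, then both sides of \eqref{ine0} reduce to $F(x_0)$, so equality trivially holds. There is no real obstacle here; the theorem is essentially a corollary, and the only minor point to be careful about is noting that the concave case reduces to the convex one via $F\mapsto -F$ (preserving strict concavity in the equality clause), so that Proposition~\ref{prop0} can be invoked in both settings without a separate argument.
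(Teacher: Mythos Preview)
Your proposal is correct and matches the paper's own proof, which simply states that the result is the particular case of Theorem~\ref{theor1} and Proposition~\ref{prop0} with $h(t)=1$ for all $t\in[a,b]^\kappa_{\mathbb{T}}$. Your added detail on handling the concave case via $F\mapsto -F$ and the explicit verification of hypotheses is fine but not required beyond what the paper records.
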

\begin{proof}
Particular case of Theorem~\ref{theor1} and
Proposition~\ref{prop0} with $h(t)=1$ for all
$t\in[a,b]^\kappa_{\mathbb{T}}$.
\end{proof}

We now state and prove some consequences of Theorem~\ref{theor0}.

\begin{cor}[cf. \cite{BFT}]
\label{cor0} Let $a,b\in\mathbb{T}$ and $c,d\in\mathbb{R}$.
Suppose $f:[a,b]^\kappa_{\mathbb{T}}\rightarrow(c,d)$ is
rd-continuous and $F:(c,d)\rightarrow\mathbb{R}$ is such that
$F''\geq 0$ (resp., $F''\leq 0$). Then,
\begin{equation}
\label{ine2} \frac{\int_a^b F(f(t))\Delta t}{b-a}\geq
F\left(\frac{\int_a^bf(t)\Delta t}{b-a}\right)
\end{equation}
(resp., the reverse inequality). Furthermore, if $F''>0$ or
$F''<0$, equality in (\ref{ine2}) holds if and only if $f$ is
constant.
\end{cor}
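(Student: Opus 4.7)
The plan is to deduce Corollary \ref{cor0} as an immediate application of Theorem \ref{theor0}, using the standard characterization of convexity through the sign of the second derivative. The only real content is the reduction, and no time scales machinery beyond what Theorem \ref{theor0} already provides will be needed.

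First I would observe that the hypothesis $F''\geq 0$ on $(c,d)$ is a standard sufficient condition for $F$ to be convex on $(c,d)$: given $x,y\in(c,d)$ and $\lambda\in[0,1]$, Taylor's theorem applied to $F$ around the point $z=\lambda x+(1-\lambda)y$ yields $F(x)=F(z)+F'(z)(x-z)+\tfrac{1}{2}F''(\xi_1)(x-z)^2$ and a similar expression for $F(y)$, and forming the convex combination $\lambda F(x)+(1-\lambda)F(y)$ the linear terms cancel and the quadratic remainders are nonnegative, giving $\lambda F(x)+(1-\lambda)F(y)\geq F(z)$. Symmetrically, $F''\leq 0$ gives concavity. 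Therefore Theorem \ref{theor0} applies directly and produces \eqref{ine2} (or its reverse in the concave case).

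Next I would handle the equality case. If $F''>0$ strictly on $(c,d)$, then in the Taylor argument above the quadratic remainders are strictly positive whenever $x\neq z$ and $y\neq z$, so $F$ is strictly convex; analogously $F''<0$ gives strict concavity. The equality clause of Theorem \ref{theor0} then gives that equality in \eqref{ine2} holds if and only if $f$ is constant on $[a,b]^\kappa_{\mathbb{T}}$.

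I do not anticipate any real obstacle: the proof is essentially a one-line invocation of Theorem \ref{theor0} preceded by the classical second-derivative criterion for convexity, which is a result of ordinary real analysis and not specific to time scales. The only thing to be careful about is to state both the convex and concave versions, and to note that strict sign of $F''$ gives strict convexity (or concavity), which is precisely the hypothesis needed to invoke the equality characterization of Theorem \ref{theor0}.
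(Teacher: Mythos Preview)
Your proposal is correct and follows essentially the same approach as the paper: invoke Theorem~\ref{theor0} after observing that $F''\geq 0$ (resp.\ $>0$) implies convexity (resp.\ strict convexity), and similarly for concavity. The paper's proof is in fact just this one-line reduction, without spelling out the Taylor argument you include.
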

\begin{proof}
This follows immediately from Theorem \ref{theor0} and the facts
that a function $F$ with $F''\geq 0$ (resp., $F''\leq 0$) is
convex (resp., concave) and with $F''>0$ (resp., $F''\leq 0$) is
strictly convex (resp., strictly concave).
\end{proof}

\begin{cor}[cf. \cite{BFT}]
Let $a,b\in\mathbb{T}$ and $c,d\in\mathbb{R}$. Suppose
$f:[a,b]^\kappa_{\mathbb{T}}\rightarrow(c,d)$ is rd-continuous and
$\varphi,\psi:(c,d)\rightarrow\mathbb{R}$ are continuous functions
such that $\varphi^{-1}$ exists, $\psi$ is strictly increasing,
and $\psi\circ\varphi^{-1}$ is convex (resp., concave) on
\textup{Im}($\varphi$). Then,
\begin{equation*}
\psi^{-1}\left(\frac{\int_a^b \psi(f(t))\Delta t}{b-a}\right)\geq
\varphi^{-1}\left(\frac{\int_a^b \varphi(f(t))\Delta
t}{b-a}\right)
\end{equation*}
(resp., the reverse inequality). Furthermore, if
$\psi\circ\varphi^{-1}$ is strictly convex or strictly concave,
the equality holds if and only if $f$ is constant.
\end{cor}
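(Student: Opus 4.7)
The plan is to reduce this corollary directly to Theorem~\ref{theor0} (the general Jensen inequality with its equality case) by applying that theorem to the composition $\psi\circ\varphi^{-1}$ evaluated along the auxiliary function $g := \varphi\circ f$.

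First, I would verify the hypotheses needed to invoke Theorem~\ref{theor0} with integrand $g$ and convex function $F := \psi\circ\varphi^{-1}$. Since $\varphi$ is continuous on $(c,d)$ and $f$ is rd-continuous on $[a,b]^\kappa_\mathbb{T}$ with values in $(c,d)$, the composition $g = \varphi\circ f$ is rd-continuous with values in $\operatorname{Im}(\varphi)$; connectedness of $(c,d)$ and continuity of $\varphi$ ensure $\operatorname{Im}(\varphi)$ is an interval, which is the domain on which $\psi\circ\varphi^{-1}$ is convex (resp.\ concave). Therefore Theorem~\ref{theor0} applies and yields
\begin{equation*}
\frac{\int_a^b (\psi\circ\varphi^{-1})(g(t))\Delta t}{b-a}
\geq (\psi\circ\varphi^{-1})\!\left(\frac{\int_a^b g(t)\Delta t}{b-a}\right),
\end{equation*}
i.e.
\begin{equation*}
\frac{\int_a^b \psi(f(t))\Delta t}{b-a}
\geq (\psi\circ\varphi^{-1})\!\left(\frac{\int_a^b \varphi(f(t))\Delta t}{b-a}\right).
\end{equation*}

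Next, since $\psi$ is strictly increasing and continuous, $\psi^{-1}$ exists on $\operatorname{Im}(\psi)$ and is also strictly increasing. Applying $\psi^{-1}$ to both sides of the previous inequality preserves its direction and gives the desired
\begin{equation*}
\psi^{-1}\!\left(\frac{\int_a^b \psi(f(t))\Delta t}{b-a}\right)
\geq \varphi^{-1}\!\left(\frac{\int_a^b \varphi(f(t))\Delta t}{b-a}\right).
\end{equation*}
The concave case is entirely analogous, reversing the inequality in the Jensen step and noting that $\psi^{-1}$, still being strictly increasing, preserves the reversed direction.

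For the equality statement, assume $\psi\circ\varphi^{-1}$ is strictly convex (or strictly concave). By the sharpness part of Theorem~\ref{theor0}, equality in the Jensen step holds if and only if $g = \varphi\circ f$ is constant on $[a,b]^\kappa_\mathbb{T}$. The hypothesis that $\varphi^{-1}$ exists makes $\varphi$ injective, so $\varphi\circ f$ is constant if and only if $f$ is constant; applying the strictly monotone $\psi^{-1}$ to an equality preserves equality, so the overall equality in the corollary is equivalent to $f$ being constant. The main technical point — really the only subtle one — is the verification that the interval $\operatorname{Im}(\varphi)$ is a legitimate domain for applying Theorem~\ref{theor0}, which as noted above follows from continuity of $\varphi$; beyond that the argument is essentially a change of variable.
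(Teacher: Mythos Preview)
Your proof is correct and follows essentially the same approach as the paper: apply Theorem~\ref{theor0} with integrand $\varphi\circ f$ and convex function $\psi\circ\varphi^{-1}$, then use the strict monotonicity of $\psi$ to pass to $\psi^{-1}$, and finally use injectivity of $\varphi$ for the equality case. Your version is slightly more detailed in checking that $\operatorname{Im}(\varphi)$ is an interval, but otherwise the arguments coincide.
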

\begin{proof}
Since $\varphi$ is continuous and $\varphi\circ f$ is
rd-continuous, it follows from Theorem~\ref{theor0} with
$f=\varphi\circ f$ and $F=\psi\circ\varphi^{-1}$ that
$$\frac{\int_a^b (\psi\circ\varphi^{-1})((\varphi\circ
f)(t))\Delta t}{b-a}\geq
(\psi\circ\varphi^{-1})\left(\frac{\int_a^b (\varphi\circ
f)(t)\Delta t}{b-a}\right).$$ Since $\psi$ is strictly increasing,
we obtain
$$\psi^{-1}\left(\frac{\int_a^b \psi(f(t))\Delta
t}{b-a}\right)\geq \varphi^{-1}\left(\frac{\int_a^b
\varphi(f(t))\Delta t}{b-a}\right).$$ Finally, when
$\psi\circ\varphi^{-1}$ is strictly convex, the equality holds if
and only if $\varphi\circ f$ is constant, or equivalently (since
$\varphi$ is invertible), $f$ is constant. The case when
$\psi\circ\varphi^{-1}$ is concave is treated analogously.
\end{proof}

\begin{cor}[cf. \cite{BFT}]
\label{cor1} Assume
$f:[a,b]^\kappa_{\mathbb{T}}\rightarrow\mathbb{R}$ is
rd-continuous and positive. If $\alpha<0$ or $\alpha>1$, then
\begin{equation*}
\int_a^b (f(t))^\alpha\Delta t\geq(b-a)^{1-\alpha}\left(\int_a^b
f(t)\Delta t\right)^\alpha \, .
\end{equation*}
If $0<\alpha<1$, then
\begin{equation*}
\int_a^b (f(t))^\alpha\Delta t\leq(b-a)^{1-\alpha}\left(\int_a^b
f(t)\Delta t\right)^\alpha \, .
\end{equation*}
Furthermore, in both cases equality holds if and only if $f$ is
constant.
\end{cor}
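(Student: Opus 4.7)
The plan is to apply Theorem~\ref{theor0} (or equivalently Corollary~\ref{cor0}) with the choice $F(x)=x^{\alpha}$ on the interval $(0,\infty)$, which is admissible since $f$ takes values in $(0,\infty)$ and is rd-continuous, so $F\circ f$ is rd-continuous as well.

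First I would check the convexity/concavity of $F$. Computing $F''(x)=\alpha(\alpha-1)x^{\alpha-2}$ on $(0,\infty)$, we see that $F''>0$ whenever $\alpha<0$ or $\alpha>1$ (so $F$ is strictly convex), and $F''<0$ whenever $0<\alpha<1$ (so $F$ is strictly concave). Then by Corollary~\ref{cor0}, in the first case
\begin{equation*}
\frac{\int_a^b (f(t))^{\alpha}\Delta t}{b-a}\geq\left(\frac{\int_a^b f(t)\Delta t}{b-a}\right)^{\alpha},
\end{equation*}
and in the second case the reverse inequality holds. Multiplying through by $b-a>0$ and using $\left(\frac{1}{b-a}\right)^{\alpha}=(b-a)^{-\alpha}$ gives precisely the stated inequalities with the factor $(b-a)^{1-\alpha}$.

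Finally, the strict convexity (resp.\ strict concavity) of $F$ in both regimes, combined with the equality clause of Theorem~\ref{theor0}, yields that equality holds if and only if $f$ is constant on $[a,b]^\kappa_{\mathbb{T}}$. There is no real obstacle here: the whole argument is just a direct specialization of the generalized Jensen inequality, the only tiny point worth being careful about is to verify that the endpoint values and the sign of $b-a$ do not spoil the multiplication step, which is immediate since $a<b$ and $f>0$ guarantees both sides are well defined.
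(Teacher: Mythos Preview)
Your proof is correct and follows exactly the same approach as the paper: define $F(x)=x^{\alpha}$ on $(0,\infty)$, compute $F''(x)=\alpha(\alpha-1)x^{\alpha-2}$ to determine strict convexity or strict concavity according to the sign of $\alpha(\alpha-1)$, and then apply Corollary~\ref{cor0}. The paper's proof is slightly terser, but the argument is identical.
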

\begin{proof}
Define $F(x)=x^\alpha,\ x>0$. Then
$$F''(x)=\alpha(\alpha-1)x^{\alpha-2},\quad x>0.$$
Hence, when $\alpha<0$ or $\alpha>1$, $F''>0$, \textrm{i.e.}, $F$
is strictly convex. When $0<\alpha<1$, $F''<0$, \textrm{i.e.}, $F$
is strictly concave. Applying Corollary \ref{cor0} with this
function $F$, we obtain the above inequalities with equality if
and only if $f$ is constant.
\end{proof}

\begin{cor}[cf. \cite{BFT}]
Assume $f:[a,b]^\kappa_{\mathbb{T}}\rightarrow\mathbb{R}$ is
rd-continuous and positive. If $\alpha<-1$ or $\alpha>0$, then
\begin{equation*}
\left(\int_a^b \frac{1}{f(t)}\Delta t\right)^\alpha\int_a^b
(f(t))^\alpha\Delta t\geq(b-a)^{1+\alpha} \, .
\end{equation*}
If $-1<\alpha<0$, then
\begin{equation*}
\left(\int_a^b \frac{1}{f(t)}\Delta t\right)^\alpha\int_a^b
(f(t))^\alpha\Delta t\leq(b-a)^{1+\alpha}.
\end{equation*}
Furthermore, in both cases the equality holds if and only if $f$
is constant.
\end{cor}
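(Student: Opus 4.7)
The plan is to reduce this corollary to the previous one (Corollary~\ref{cor1}) by a simple substitution. Observe that the integrand $(f(t))^\alpha$ in the desired inequality is $(1/f(t))^{-\alpha}$, so I would apply Corollary~\ref{cor1} to the rd-continuous positive function $g := 1/f$ with exponent $\beta := -\alpha$.

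First I would check how the hypotheses translate. The cases $\alpha < -1$ or $\alpha > 0$ correspond to $\beta = -\alpha > 1$ or $\beta = -\alpha < 0$, which is exactly the range in which Corollary~\ref{cor1} gives the first (direct) inequality. Applying that corollary to $g$ yields
\[
\int_a^b (f(t))^\alpha \Delta t \;=\; \int_a^b (g(t))^{-\alpha}\Delta t \;\geq\; (b-a)^{1+\alpha}\left(\int_a^b g(t)\Delta t\right)^{-\alpha} \;=\; (b-a)^{1+\alpha}\left(\int_a^b \frac{1}{f(t)}\Delta t\right)^{-\alpha}.
\]
Since $f$ is positive and rd-continuous, $\int_a^b \frac{1}{f(t)}\Delta t>0$, so multiplying both sides by the positive quantity $\bigl(\int_a^b \frac{1}{f(t)}\Delta t\bigr)^{\alpha}$ preserves the inequality and produces the stated bound.

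The case $-1<\alpha<0$ corresponds to $0<\beta=-\alpha<1$, which is the second range of Corollary~\ref{cor1}; the same manipulation then yields the reverse inequality. For the equality statement, Corollary~\ref{cor1} asserts equality exactly when the function to which it is applied is constant, so equality holds here if and only if $g=1/f$ is constant, i.e., if and only if $f$ is constant. There is no substantial obstacle: the only thing to watch is keeping track of the sign of the exponent $\alpha$ when multiplying through, which is why splitting into the two cases via the trichotomy $\alpha<-1$, $-1<\alpha<0$, $\alpha>0$ cleanly matches the dichotomy in Corollary~\ref{cor1}.
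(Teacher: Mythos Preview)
Your proof is correct and follows exactly the same approach as the paper: apply Corollary~\ref{cor1} with $f$ replaced by $1/f$ and $\alpha$ replaced by $-\alpha$.
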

\begin{proof}
This follows from Corollary \ref{cor1} by replacing $f$ by $1/f$
and $\alpha$ by $-\alpha$.
\end{proof}

\begin{cor}[cf. \cite{BFT}]\label{cor3}
If $f:[a,b]^\kappa_{\mathbb{T}}\rightarrow\mathbb{R}$ is
rd-continuous, then
\begin{equation}\label{ine3}
\int_a^b e^{f(t)}\Delta
t\geq(b-a)e^{\frac{1}{b-a}\int_a^bf(t)\Delta t} \, .
\end{equation}
Moreover, the equality in (\ref{ine3}) holds if and only if $f$ is
constant.
\end{cor}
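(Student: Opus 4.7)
The plan is to view this corollary as an immediate specialization of Corollary~\ref{cor0} to the exponential function. Specifically, I would set $F(x)=e^x$ on $(c,d)=\mathbb{R}$, observe that $F''(x)=e^x>0$ for all $x\in\mathbb{R}$, so $F$ is (strictly) convex, and check that $f:[a,b]^\kappa_\mathbb{T}\to\mathbb{R}$ is rd-continuous as required by the hypotheses of Corollary~\ref{cor0}. These are the only verifications needed to invoke the earlier machinery.

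Next I would plug into inequality (\ref{ine2}) of Corollary~\ref{cor0} with this choice of $F$ and the given $f$. The left-hand side becomes $\frac{1}{b-a}\int_a^b e^{f(t)}\Delta t$ and the right-hand side becomes $\exp\!\left(\frac{1}{b-a}\int_a^b f(t)\Delta t\right)$. Multiplying through by $b-a>0$ yields exactly the inequality (\ref{ine3}). For the equality statement, since $F''(x)=e^x>0$ everywhere, Corollary~\ref{cor0} tells us that equality in (\ref{ine2}) occurs if and only if $f$ is constant on $[a,b]^\kappa_\mathbb{T}$, which transfers directly to equality in (\ref{ine3}).

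There is no real obstacle here: the corollary is purely a bookkeeping exercise once one recognizes it as the $F=\exp$ instance of the generalized Jensen inequality on time scales. The only minor point worth noting in the write-up is the sign condition $b-a>0$ so that clearing denominators preserves the inequality, and the observation that the codomain $(c,d)$ in Corollary~\ref{cor0} can be taken to be all of $\mathbb{R}$ since $e^x$ is defined and strictly convex on $\mathbb{R}$.
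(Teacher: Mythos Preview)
Your proposal is correct and matches the paper's own proof exactly: the paper simply writes ``Choose $F(x)=e^x$, $x\in\mathbb{R}$, in Corollary~\ref{cor0}.'' Your additional remarks about $F''>0$, clearing the factor $b-a>0$, and the equality case are precisely the details implicit in that one-line proof.
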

\begin{proof}
Choose $F(x)=e^x$, $x\in\mathbb{R}$, in Corollary~\ref{cor0}.
\end{proof}

\begin{cor}[cf. \cite{BFT}]\label{cor4}
If $f:[a,b]^\kappa_{\mathbb{T}}\rightarrow\mathbb{R}$ is
rd-continuous and positive, then
\begin{equation}\label{ine4}
\int_a^b \ln(f(t))\Delta t\leq(b-a)\ln\left({\frac{1}{b-a}\int_a^b
f(t)\Delta t}\right) \, .
\end{equation}
Moreover, the equality in (\ref{ine4}) holds if and only if $f$ is
constant.
\end{cor}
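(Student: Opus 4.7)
The plan is to invoke Corollary \ref{cor0} directly with the choice $F(x) = \ln(x)$ on the open interval $(0,\infty)$. Since $f$ is positive, its range lies in $(0,\infty)$, so taking $(c,d) = (0,\infty)$ the composition $F \circ f = \ln \circ f$ is well-defined and rd-continuous on $[a,b]^\kappa_\mathbb{T}$.

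Next I would verify the concavity hypothesis: $F(x) = \ln x$ is twice differentiable on $(0,\infty)$ with
$$F''(x) = -\frac{1}{x^2} < 0 \quad \text{for all } x > 0,$$
so $F'' < 0$ strictly on $(0,\infty)$. By the "furthermore" clause of Corollary \ref{cor0}, this gives not only the reverse of inequality (\ref{ine2}) but also the strict equality-iff-constant statement. Explicitly,
$$\frac{\int_a^b \ln(f(t))\,\Delta t}{b-a} \leq \ln\!\left(\frac{\int_a^b f(t)\,\Delta t}{b-a}\right),$$
with equality if and only if $f$ is constant on $[a,b]^\kappa_\mathbb{T}$.

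Finally, since $b-a > 0$ (the case $a=b$ is vacuous as both sides of (\ref{ine4}) are zero), I would multiply both sides of the displayed inequality by $b-a$ to obtain (\ref{ine4}) exactly as stated, and transfer the equality condition directly. There is no real obstacle here — the result is a one-step specialization of Corollary \ref{cor0}, entirely parallel to the proof of Corollary \ref{cor3} with the roles of $e^x$ and $\ln x$ interchanged (convexity becomes strict concavity, and the inequality reverses accordingly).
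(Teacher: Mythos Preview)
Your proposal is correct and follows exactly the same approach as the paper: the paper's proof is the one-liner ``Let $F(x)=\ln(x)$, $x>0$, in Corollary~\ref{cor0}'', and you have simply spelled out the details of that specialization.
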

\begin{proof}
Let $F(x)=\ln(x)$, $x>0$, in Corollary~\ref{cor0}.
\end{proof}

\begin{cor}[cf. \cite{BFT}]\label{cor5}
If $f:[a,b]^\kappa_{\mathbb{T}}\rightarrow\mathbb{R}$ is
rd-continuous and positive, then
\begin{equation}
\label{eq:cor5} \int_a^b f(t)\ln(f(t))\Delta t\geq\int_a^b
f(t)\Delta t\ln\left({\frac{1}{b-a}\int_a^b f(t)\Delta t}\right)
\, .
\end{equation}
Moreover, the equality in \eqref{eq:cor5} holds if and only if $f$
is constant.
\end{cor}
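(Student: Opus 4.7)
The plan is to apply Corollary \ref{cor0} to the function $F(x) = x\ln(x)$ on $(0,\infty)$. First I would check that $F$ satisfies the hypotheses: since $f$ is positive and rd-continuous on $[a,b]^{\kappa}_\mathbb{T}$, its image lies in some $(c,d) \subset (0,\infty)$, and a direct computation gives $F'(x) = \ln(x) + 1$ and $F''(x) = 1/x > 0$ for $x > 0$, so $F$ is strictly convex on $(0,\infty)$.

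Next, applying Corollary \ref{cor0} with this choice of $F$ to the rd-continuous function $f$, one obtains
\begin{equation*}
\frac{\int_a^b f(t)\ln(f(t))\Delta t}{b-a} \;\geq\; \left(\frac{\int_a^b f(t)\Delta t}{b-a}\right)\ln\!\left(\frac{\int_a^b f(t)\Delta t}{b-a}\right).
\end{equation*}
Multiplying both sides by the positive quantity $b-a$ yields precisely inequality~\eqref{eq:cor5}.

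For the equality case, since $F''(x) = 1/x > 0$ throughout $(0,\infty)$, the strict convexity clause of Corollary \ref{cor0} applies, and equality in \eqref{eq:cor5} holds if and only if $f$ is constant on $[a,b]^{\kappa}_\mathbb{T}$. There is no real obstacle here; the only minor point to verify is that $F(x) = x\ln x$ (extended by $0$ at $x=0$ if needed) is well-defined and strictly convex on the range of $f$, which is immediate from the positivity hypothesis. This makes the result a direct corollary, in the same spirit as Corollaries \ref{cor3} and \ref{cor4}.
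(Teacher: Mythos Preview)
Your proposal is correct and essentially identical to the paper's proof: both set $F(x)=x\ln x$, observe $F''(x)=1/x>0$, apply Corollary~\ref{cor0}, and multiply through by $b-a$. The paper's argument is slightly terser (it omits the computation of $F'$ and the remark about the range of $f$), but the substance is the same.
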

\begin{proof}
Let $F(x)=x\ln(x)$, $x>0$. Then, $F''(x)=1/x$, \textrm{i.e.},
$F''(x)>0$ for all $x>0$. By Corollary \ref{cor0}, we get
$$\frac{1}{b-a}\int_a^b f(t)\ln(f(t))\Delta
t\geq\frac{1}{b-a}\int_a^b f(t)\Delta
t\ln\left({\frac{1}{b-a}\int_a^b f(t)\Delta t}\right),$$ and the
result follows.
\end{proof}


\section{Applications to the Calculus of Variations}
\label{sec:app:CV}

We now show how the results obtained in Section~\ref{ineq} can be
applied to determine the minimum or maximum of certain problems of
calculus of variations and optimal control on time scales.

\begin{theorem}[cf. \cite{BFT}]\label{theor3}
Let $\mathbb{T}$ be a time scale, $a$, $b\in\mathbb{T}$ with $a <
b$, and $\varphi:\mathbb{R}\rightarrow\mathbb{R}$ be a positive
and continuous function. Consider the functional
$$F[y(\cdot)]=\int_a^b
\left[\left\{\int_0^1\varphi(y(t)+h\mu(t)y^\Delta(t))dh\right\}
y^\Delta(t)\right]^\alpha\Delta t \, , \quad
\alpha\in\mathbb{R}\backslash\{0,1\} \, ,$$ defined on all
$C_{\textup{rd}}^1$ functions
$y:[a,b]_{\mathbb{T}}\rightarrow\mathbb{R}$ satisfying
$y^\Delta(t)>0$ on $[a,b]^\kappa_{\mathbb{T}}$, $y(a)=0$, and
$y(b)=B$. Define a function $G(x)=\int_0^x\varphi(s)ds$, $x\geq
0$, and let $G^{-1}$ denote its inverse. Let
\begin{equation}
\label{equacao0} C=\frac{\int_0^B \varphi(s)ds}{b-a}.
\end{equation}
\begin{description}
\item[(i)] If $\alpha<0$ or $\alpha>1$, then the minimum of $F$
occurs when
$$y(t)=G^{-1}(C(t-a)),\quad t\in[a,b]_\mathbb{T},$$
and $F_{\min}=(b-a)C^\alpha$. \item[(ii)] If $0<\alpha<1$, then
the maximum of $F$ occurs when
$$y(t)=G^{-1}(C(t-a)),\quad t\in[a,b]_\mathbb{T},$$
and $F_{\max}=(b-a)C^\alpha$.
\end{description}
\end{theorem}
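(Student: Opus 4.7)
The plan is to recognize that the bracketed expression in the integrand is precisely the $\Delta$-derivative of $G \circ y$, and then to reduce the problem to a direct application of Corollary~\ref{cor1}.

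First I would apply the chain rule on time scales (Theorem~\ref{teor1}) to the composition $G \circ y$, where $G(x)=\int_0^x \varphi(s)\,ds$. Since $G'=\varphi$ is continuous and $y\in C^1_{\textrm{rd}}$ is $\Delta$-differentiable, Theorem~\ref{teor1} gives
\begin{equation*}
(G\circ y)^\Delta(t)=\left\{\int_0^1 \varphi\bigl(y(t)+h\mu(t)y^\Delta(t)\bigr)\,dh\right\}y^\Delta(t),\quad t\in[a,b]^\kappa_\mathbb{T}.
\end{equation*}
Setting $z(t):=G(y(t))$, the functional becomes $F[y(\cdot)]=\int_a^b [z^\Delta(t)]^\alpha\,\Delta t$. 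Because $\varphi>0$ and $y^\Delta>0$ on $[a,b]^\kappa_\mathbb{T}$, we have $z^\Delta(t)>0$, so $[z^\Delta(t)]^\alpha$ is well-defined for every real $\alpha$.

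Next I would translate the boundary data through $G$. Since $y(a)=0$ and $y(b)=B$, the definition of $G$ yields $z(a)=G(0)=0$ and $z(b)=G(B)=\int_0^B\varphi(s)\,ds=C(b-a)$. Consequently, by the fundamental theorem for $\Delta$-integrals,
\begin{equation*}
\int_a^b z^\Delta(t)\,\Delta t=z(b)-z(a)=C(b-a).
\end{equation*}

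Now I would invoke Corollary~\ref{cor1} applied to the positive rd-continuous function $f=z^\Delta$. In case (i), for $\alpha<0$ or $\alpha>1$,
\begin{equation*}
F[y(\cdot)]=\int_a^b [z^\Delta(t)]^\alpha\,\Delta t\geq(b-a)^{1-\alpha}\left(\int_a^b z^\Delta(t)\,\Delta t\right)^\alpha=(b-a)^{1-\alpha}\bigl(C(b-a)\bigr)^\alpha=(b-a)C^\alpha,
\end{equation*}
with equality if and only if $z^\Delta$ is constant, necessarily equal to $C$ in view of the boundary values; this forces $z(t)=C(t-a)$, i.e., $y(t)=G^{-1}(C(t-a))$. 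In case (ii), for $0<\alpha<1$, the reverse inequality in Corollary~\ref{cor1} delivers the same extremizer as a maximizer with value $(b-a)C^\alpha$.

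The only delicate point, and what I would flag as the main obstacle, is the very first step: spotting that the integrand is the chain-rule expression $[(G\circ y)^\Delta]^\alpha$. Once this substitution is made, the argument is purely a Jensen-type inequality for the power function, and the equality case of Corollary~\ref{cor1} (originating in Proposition~\ref{prop0}) gives global, not merely local, optimality. I would also briefly note that $G$ is strictly increasing on $[0,\infty)$ because $\varphi>0$, so $G^{-1}$ exists on $[0,C(b-a)]$ and the candidate $y(t)=G^{-1}(C(t-a))$ is well-defined, satisfies the boundary conditions, and lies in the admissible class.
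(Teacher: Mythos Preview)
Your proof is correct and follows essentially the same approach as the paper: both arguments recognize via the chain rule (Theorem~\ref{teor1}) that the integrand is $[(G\circ y)^\Delta]^\alpha$ and then apply Corollary~\ref{cor1}, with the equality case pinning down the extremal. The only cosmetic difference is that the paper applies Corollary~\ref{cor1} first and then invokes the chain rule to identify the equality condition, whereas you substitute $z=G\circ y$ at the outset; the content is the same.
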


\begin{remark}
Since $\varphi$ is continuous and positive, $G$ and $G^{-1}$ are
well defined.
\end{remark}
\begin{remark}
In cases $\alpha=0$ or $\alpha=1$ there is nothing to minimize or
maximize, \textrm{i.e.}, the problem of extremizing $F[y(\cdot)]$
is trivial. Indeed, if $\alpha=0$, then $F[y(\cdot)]=b-a$; if
$\alpha=1$, then it follows from Theorem \ref{teor1} that
\begin{align*}
F[y(\cdot)]&=\int_a^b
\left\{\int_0^1\varphi(y(t)+h\mu(t)y^\Delta(t))dh\right\}
y^\Delta(t)\Delta t\\ &=\int_a^b (G\circ y)^\Delta(t)\Delta t\\
&=G(B) \, .
\end{align*}
In both cases $F$ is a constant and does not depend on function
$y$.
\end{remark}

\begin{proof}[Proof of Theorem~\ref{theor3}]
Suppose that $\alpha<0$ or $\alpha>1$. Using Corollary~\ref{cor1}
we can write
\begin{multline*}
F[y(\cdot)]\geq
(b-a)^{1-\alpha}\left[\int_a^b\left\{\int_0^1\varphi(y(t)+h\mu(t)y^\Delta(t))dh\right\}
y^\Delta(t)\Delta
t\right]^\alpha\\=(b-a)^{1-\alpha}(G(y(b))-G(y(a)))^\alpha \, ,
\end{multline*}
where the equality holds if and only if
$$\left\{\int_0^1\varphi(y(t)+h\mu(t)y^\Delta(t))dh\right\}
y^\Delta(t)=c,\quad \mbox{for some}\ c\in\mathbb{R},\quad
t\in[a,b]^\kappa_\mathbb{T}.$$ Using Theorem \ref{teor1} we arrive
at
$$(G\circ y)^\Delta(t)=c.$$
$\Delta$-integrating from $a$ to $t$ yields (note that $y(a)=0$
and $G(0)=0$)
$$G(y(t))=c(t-a),$$
from which we get
$$y(t)=G^{-1}(c(t-a)).$$
The value of $c$ is obtained using the boundary condition
$y(b)=B$:
$$c=\frac{G(B)}{b-a}=\frac{\int_0^B \varphi(s)ds}{b-a}=C,$$
with $C$ as in (\ref{equacao0}). Finally, in this case
$$F_{\min}=\int_a^b C^\alpha\Delta t=(b-a)C^\alpha.$$
The proof of the second part of the theorem is done analogously
using the second part of Corollary~\ref{cor1}.
\end{proof}

\begin{remark}
We note that the optimal solution found in the proof of the
previous theorem satisfies $y^\Delta>0$. Indeed,
\begin{align*}
y^\Delta(t)&=\left(G^{-1}(C(t-a))\right)^\Delta\\
&=\int_0^1 (G^{-1})'[C(t-a)+h\mu(t)C]dh\ C\\ &>0,
\end{align*}
because $C>0$ and $(G^{-1})'(x)=\frac{1}{\varphi(G^{-1}(x))}>0$
for all $x\geq 0$.
\end{remark}

\begin{theorem}[cf. \cite{BFT}]
Let $\varphi:[a,b]^\kappa_\mathbb{T}\rightarrow\mathbb{R}$ be a
positive and rd-continuous function. Then, among all
$C_{\textup{rd}}^1$ functions
$y:[a,b]_{\mathbb{T}}\rightarrow\mathbb{R}$ with $y(a)=0$ and
$y(b)=B$, the functional
$$F[y(\cdot)]=\int_a^b\varphi(t)e^{y^\Delta(t)}\Delta t$$
has minimum value $F_{\min}=(b-a)e^C$ attained when
$$y(t)=-\int_a^t\ln(\varphi(s))\Delta s+C(t-a),\quad
t\in[a,b]_\mathbb{T},$$ where
\begin{equation}
\label{equa1} C=\frac{\int_a^b\ln(\varphi(t))\Delta t+B}{b-a} \, .
\end{equation}
\end{theorem}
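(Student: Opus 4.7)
The plan is to reduce the problem to Corollary~\ref{cor3} (the time-scales Jensen inequality for the exponential) by absorbing the weight $\varphi(t)$ into the exponent. Since $\varphi$ is positive and rd-continuous, we can write
\[
\varphi(t)e^{y^\Delta(t)}=e^{\ln\varphi(t)+y^\Delta(t)},
\]
and then apply Corollary~\ref{cor3} with $f(t)=\ln\varphi(t)+y^\Delta(t)$. This will immediately produce the lower bound, and the equality clause of that corollary will pin down the minimizer.

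Concretely, first I would observe that $\ln\varphi+y^\Delta$ is rd-continuous on $[a,b]_\mathbb{T}^\kappa$, so Corollary~\ref{cor3} applies to give
\[
F[y(\cdot)]=\int_a^b e^{\ln\varphi(t)+y^\Delta(t)}\Delta t
\geq (b-a)\,e^{\frac{1}{b-a}\int_a^b[\ln\varphi(t)+y^\Delta(t)]\Delta t}.
\]
Next I would evaluate the exponent: by the fundamental property of the $\Delta$-integral together with the boundary conditions,
\[
\int_a^b y^\Delta(t)\Delta t = y(b)-y(a)=B,
\]
so the exponent equals $\frac{\int_a^b\ln\varphi(t)\Delta t + B}{b-a}=C$, yielding $F[y(\cdot)]\geq (b-a)e^C$.

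For the attainment, Corollary~\ref{cor3} says equality holds if and only if $\ln\varphi(t)+y^\Delta(t)$ is constant on $[a,b]_\mathbb{T}^\kappa$, say equal to some $k\in\mathbb{R}$. Then $y^\Delta(t)=k-\ln\varphi(t)$, and $\Delta$-integrating from $a$ to $t$ using $y(a)=0$ gives
\[
y(t)=k(t-a)-\int_a^t\ln\varphi(s)\Delta s,\quad t\in[a,b]_\mathbb{T}.
\]
Imposing $y(b)=B$ forces $k=C$, producing exactly the claimed minimizer, and substituting back gives $F_{\min}=(b-a)e^C$. There is no real obstacle here: the whole argument is a one-line application of Corollary~\ref{cor3} after the logarithmic substitution, and the only routine check is that the candidate $y$ lies in $C^1_{\mathrm{rd}}$, which is immediate because $\ln\varphi$ is rd-continuous.
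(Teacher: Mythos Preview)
Your proof is correct and follows essentially the same approach as the paper: absorb $\varphi$ into the exponent via $\varphi(t)e^{y^\Delta(t)}=e^{\ln\varphi(t)+y^\Delta(t)}$, apply Corollary~\ref{cor3}, use the boundary data to evaluate the exponent, and then use the equality clause to identify the minimizer by $\Delta$-integrating the constant condition.
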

\begin{proof}
By Corollary \ref{cor3},
\begin{multline*}
F[y(\cdot)]=\int_a^b e^{\ln(\varphi(t))+y^\Delta(t)}\Delta t\\
\geq(b-a)e^{\frac{1}{b-a}\int_a^b
[\ln(\varphi(t))+y^\Delta(t)]\Delta
t}=(b-a)e^{\frac{1}{b-a}\left[\int_a^b \ln(\varphi(t))\Delta
t+B\right]} \, ,
\end{multline*}
with $F(y(\cdot))=(b-a)e^{\frac{1}{b-a}\left[\int_a^b
\ln(\varphi(t)) \Delta t+B\right]}$ if and only if
\begin{equation}
\label{eq:prf:d} \ln(\varphi(t))+y^\Delta(t)=c,\quad \mbox{for
some}\ c\in\mathbb{R},\quad t\in[a,b]^\kappa_\mathbb{T} \, .
\end{equation}
Integrating \eqref{eq:prf:d} from $a$ to $t$ (note that $y(a)=0$)
gives
$$
y(t)=-\int_a^t\ln(\varphi(s))\Delta s+c(t-a),\quad
t\in[a,b]_\mathbb{T} \, .
$$
Using the boundary condition $y(b)=B$ we have
$$c=\frac{\int_a^b\ln(\varphi(t))\Delta t+B}{b-a}=C,$$
with $C$ as in (\ref{equa1}). A simple calculation shows that
$F_{\min}=(b-a)e^C$.
\end{proof}

\begin{remark}
If we let $\mathbb{T}=\mathbb{R}$ in the previous theorem we get
\cite[Theorem~3.4]{wsc}.
\end{remark}

\begin{theorem}[cf. \cite{BFT}]
\label{thm7} Let
$\varphi:[a,b]^\kappa_\mathbb{T}\rightarrow\mathbb{R}$ be a
positive and rd-continuous function. Then, among all
$C_{\textup{rd}}^1$-functions
$y:[a,b]_{\mathbb{T}}\rightarrow\mathbb{R}$ satisfying
$y^\Delta>0$, $y(a)=0$, and $y(b)=B$, with
\begin{equation}
\label{ine5} \frac{B+\int_a^b\varphi(s)\Delta
s}{b-a}>\varphi(t),\quad t\in[a,b]_\mathbb{T}^\kappa \, ,
\end{equation}
the functional
$$F[y(\cdot)]=\int_a^b[\varphi(t)+y^\Delta(t)]\ln[\varphi(t)+y^\Delta(t)]\Delta
t$$ has minimum value $F_{\min}=(b-a)C\ln(C)$ attained when
$$y(t)=C(t-a)-\int_a^t\varphi(s)\Delta s,\quad
t\in[a,b]_\mathbb{T} \, ,
$$
where
\begin{equation}
\label{equa2} C=\frac{B+\int_a^b\varphi(s)\Delta s}{b-a} \, .
\end{equation}
\end{theorem}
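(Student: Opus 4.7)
The plan is to apply Corollary~\ref{cor5} directly to the composite function $f(t) := \varphi(t)+y^\Delta(t)$. By hypothesis $\varphi > 0$ and $y^\Delta > 0$ on $[a,b]_\mathbb{T}^\kappa$, and both are rd-continuous, so $f$ is rd-continuous and positive; the hypotheses of Corollary~\ref{cor5} are therefore satisfied.

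The key computation comes first: using item~3 of Theorem~\ref{teorema1} and the Fundamental Theorem of Calculus on time scales,
\begin{equation*}
\int_a^b[\varphi(t)+y^\Delta(t)]\Delta t
=\int_a^b\varphi(t)\Delta t+[y(b)-y(a)]
=\int_a^b\varphi(t)\Delta t+B=(b-a)C,
\end{equation*}
which is independent of the admissible function $y$. Inserting this into the conclusion of Corollary~\ref{cor5} with $f=\varphi+y^\Delta$ immediately gives
\begin{equation*}
F[y(\cdot)]=\int_a^b[\varphi(t)+y^\Delta(t)]\ln[\varphi(t)+y^\Delta(t)]\Delta t
\geq (b-a)C\ln(C),
\end{equation*}
and the equality clause of Corollary~\ref{cor5} tells us that equality holds if and only if $\varphi(t)+y^\Delta(t)$ is constant on $[a,b]_\mathbb{T}^\kappa$.

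Next I would identify the minimizer. If $\varphi(t)+y^\Delta(t)\equiv c$, then $y^\Delta(t)=c-\varphi(t)$, and $\Delta$-integrating from $a$ to $t$ using $y(a)=0$ yields $y(t)=c(t-a)-\int_a^t\varphi(s)\Delta s$. The endpoint condition $y(b)=B$ forces $c=C$ as in \eqref{equa2}, so the candidate is exactly the function claimed in the statement, and $F_{\min}=(b-a)C\ln C$.

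The only subtle point --- and this is where hypothesis \eqref{ine5} enters --- is to verify that the candidate belongs to the admissible class, i.e.\ that $y^\Delta(t)>0$ on $[a,b]_\mathbb{T}^\kappa$. But for the candidate, $y^\Delta(t)=C-\varphi(t)$, and \eqref{ine5} is exactly the statement $C>\varphi(t)$ on $[a,b]_\mathbb{T}^\kappa$. This also guarantees $C>0$, so $\ln C$ is well defined. No serious obstacle appears: the whole argument reduces to the correct choice $f=\varphi+y^\Delta$ in the already-established Corollary~\ref{cor5}.
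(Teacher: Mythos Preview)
Your proof is correct and follows essentially the same approach as the paper's: apply Corollary~\ref{cor5} with $f=\varphi+y^\Delta$, use the boundary data to evaluate $\int_a^b f\,\Delta t=(b-a)C$, identify the equality case as $\varphi+y^\Delta\equiv c$, integrate to recover $y$, and invoke \eqref{ine5} to verify the candidate is admissible. The only slip is the citation of item~3 of Theorem~\ref{teorema1} (which is $\int_a^b=-\int_b^a$) when what you actually use is item~1 (linearity) together with the Fundamental Theorem.
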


\begin{proof}
By Corollary~\ref{cor5},
\begin{multline*}
F[y(\cdot)]\geq\int_a^b [\varphi(t)+y^\Delta(t)]\Delta
t\ln\left({\frac{1}{b-a}\int_a^b [\varphi(t)+y^\Delta(t)]\Delta
t}\right)\\ =\left(\int_a^b \varphi(t)\Delta
t+B\right)\ln\left({\frac{\int_a^b \varphi(t)\Delta
t+B}{b-a}}\right)
\end{multline*}
with $F[y(\cdot)] = \left(\int_a^b \varphi(t)\Delta
t+B\right)\ln\left({\frac{\int_a^b \varphi(t)\Delta
t+B}{b-a}}\right)$ if and only if
$$\varphi(t)+y^\Delta(t)=c,\quad \mbox{for some}\
c\in\mathbb{R},\quad t\in[a,b]^\kappa_\mathbb{T}.$$ Upon
integration from $a$ to $t$ (note that $y(a)=0$),
$$y(t)=c(t-a)-\int_a^t\varphi(s)\Delta s,\
t\in[a,b]_\mathbb{T}.$$ Using the boundary condition $y(b)=B$, we
have
$$c=\frac{B+\int_a^b\varphi(s)\Delta s}{b-a}=C,$$
where $C$ is as in (\ref{equa2}). Note that with this choice of
$y$ we have, using (\ref{ine5}), that
$y^\Delta(t)=C-\varphi(t)>0$, $t\in[a,b]^\kappa_\mathbb{T}$. It
follows that $F_{\min}=(b-a)C\ln(C)$.
\end{proof}

In order to close this subject we would like to point out that
Theorem~3.6 in \cite{wsc} is not true. This is due to the fact
that the bound on the functional $I$ considered in the proof is
not constant. Let us quote the ``theorem":
\begin{quote}
Let $\varphi:\mathbb{R}\rightarrow\mathbb{R}$ be a positive and
continuous function and $a>0$. Then, among all $C^1$ functions
$y:[0,a]\rightarrow\mathbb{R}$ satisfying $y'>0$, $y(0)=0$, and
$y(a)=A$, the functional
$$I=\int_0^a\ln(\varphi(x)y'(x))dx$$
attains its maximum when
$$y=\frac{1}{C}\int_0^x\frac{1}{\varphi(s)}ds,$$
where
\begin{equation}\label{e2}
C=\frac{1}{A}\int_0^a\frac{1}{\varphi(s)}ds,
\end{equation}
and
\begin{equation}\label{e3}
I_{\max}=-a\ln(C).
\end{equation}
\end{quote}
Now we give a counterexample to \cite[Theorem~3.6]{wsc}. Let
$a=A=1$, $\varphi(x)=x+1$, and $\tilde{y}(x)=x$ for all
$x\in[0,1]$. Then, the hypotheses of the ``theorem" are satisfied.
Moreover,
$$I[\tilde{y}(x)]=\int_0^1\ln(\varphi(x)\tilde{y}'(x))dx
=\left[(x+1)(\ln(x+1)-1)\right]_{x=0}^{x=1}=2\ln(2)-1\approx
0.386.$$ According with \eqref{e2} and \eqref{e3} the maximum of
the functional $I$ is given by $I_{\max}=-\ln(C)$, where
$$C=\int_0^1\frac{1}{\varphi(s)}ds.$$
A simple calculation shows that $C=\ln(2)$, hence
$I_{\max}=-\ln(\ln(2))\approx 0.367$. Therefore,
$I[\tilde{y}(x)]>I_{\max}$, which proves our claim.


\subsection{An example} \label{sec:ex}

Let us consider $\mathbb{T}=\mathbb{Z}$, $a=0$, $b=5$, $B=25$ and
$\varphi(t)=2t+1$ in Theorem~\ref{thm7}:
\begin{cor}[cf. \cite{BFT}]
The functional
$$F[y(\cdot)]=\sum_{t=0}^4[(2t+1)+(y(t+1)-y(t))]\ln[(2t+1)+(y(t+1)-y(t))],$$
defined for all $y:[0,5]\cap\mathbb{Z}\rightarrow\mathbb{R}$ such
that $y(t+1)>y(t)$ for all $t\in[0,4]\cap\mathbb{T}$, attains its
minimum when
$$y(t)=10t-t^2,\quad t\in[0,5]\cap\mathbb{Z},$$
and $F_{\min}=50\ln(10)$.
\end{cor}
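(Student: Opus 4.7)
The plan is simply to verify that the hypotheses of Theorem~\ref{thm7} are met by the choice $\mathbb{T}=\mathbb{Z}$, $a=0$, $b=5$, $B=25$, $\varphi(t)=2t+1$, and then to read off the conclusion of that theorem in this concrete setting. First I would translate the delta-integral and delta-derivative to their $\mathbb{Z}$-forms using the remark after Theorem~\ref{teorema1} and Theorem~\ref{teorema0}, so that $y^\Delta(t)=y(t+1)-y(t)$ and $\int_0^5 f(t)\,\Delta t=\sum_{t=0}^{4}f(t)$, which matches the formulation of $F$ in the corollary. The hypothesis $y^\Delta>0$ reads $y(t+1)>y(t)$ for $t\in[0,4]\cap\mathbb{Z}$, as stated, and the function $\varphi(t)=2t+1$ is clearly positive and rd-continuous on $[0,5]_{\mathbb{T}}^\kappa$.

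Next I would compute the constant $C$ given by \eqref{equa2}. Since $\int_0^5(2s+1)\,\Delta s=\sum_{s=0}^{4}(2s+1)=1+3+5+7+9=25$, we obtain
\[
C=\frac{B+\int_0^5\varphi(s)\,\Delta s}{b-a}=\frac{25+25}{5}=10.
\]
With $C$ in hand, I would check condition \eqref{ine5}: we need $10>2t+1$ for every $t\in\{0,1,2,3,4\}$, which is immediate since the maximum of $2t+1$ on that set is $9$. Hence all the assumptions of Theorem~\ref{thm7} are satisfied.

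Finally I would identify the optimizer and the minimum value. By Theorem~\ref{thm7}, the minimum is attained at
\[
y(t)=C(t-a)-\int_a^t\varphi(s)\,\Delta s=10t-\sum_{s=0}^{t-1}(2s+1),
\]
and the well-known identity $\sum_{s=0}^{t-1}(2s+1)=t^2$ (which itself can be verified by induction or recognized as the telescoping sum of $(s+1)^2-s^2$) gives $y(t)=10t-t^2$ on $[0,5]\cap\mathbb{Z}$. The minimum value is then $F_{\min}=(b-a)C\ln(C)=5\cdot 10\cdot\ln(10)=50\ln(10)$. There is no real obstacle here; the only thing worth double-checking is the evaluation of the discrete integral $\sum_{s=0}^{t-1}(2s+1)=t^2$, which is the only non-mechanical step in specializing the general theorem.
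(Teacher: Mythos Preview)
Your proof is correct and follows essentially the same approach as the paper: verify the hypotheses of Theorem~\ref{thm7} with $\mathbb{T}=\mathbb{Z}$, $a=0$, $b=5$, $B=25$, $\varphi(t)=2t+1$, compute $C=10$, check \eqref{ine5}, and read off the minimizer and minimum value. The only cosmetic difference is that the paper phrases the identification of the minimizer via $(t^2)^\Delta=2t+1$ rather than the equivalent sum $\sum_{s=0}^{t-1}(2s+1)=t^2$.
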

\begin{proof}
First we note that $\max\{\varphi(t):t\in[0,4]\cap\mathbb{Z}\}=9$,
hence
$$\frac{B+\sum_{k=0}^4\varphi(k)}{b-a}=\frac{25+25}{5}=10>9\geq\varphi(t).$$
Observing that, when $\mathbb{T}=\mathbb{Z}$, $(t^2)^\Delta=2t+1$,
we just have to invoke Theorem~\ref{thm7} to get the desired
result.
\end{proof}

\section{State of the Art}

The results of this chapter are already published
\cite{BFT}. Direct methods are an important subject to the
calculus of variations theory and further research is in progress,
extending Leitmann's direct method to time scales
\cite{MalTorres3}.

\clearpage{\thispagestyle{empty}\cleardoublepage}

\chapter{Inequalities on Time Scales}
\label{sec:Prel}

This chapter is devoted to the development of some integral
inequalities as well to some of its applications. These
inequalities will be used to prove existence of solution(s) to
some dynamic equations and to estimate them, and this is shown in
Sections \ref{sec:mainResults} and \ref{ineconstan}. In Section
\ref{diamond} we prove H$\ddot{\mbox{o}}$lder, Cauchy--Schwarz,
Minkowski and Jensen's type inequalities in the more general
setting of $\Diamond_\alpha$-integrals. Finally, in Section \ref
{duasvar}, we obtain some Gronwall--Bellman--Bihari type
inequalities for functions depending on two time scales variables.

Throughout we use the notations $\mathbb{R}^+_0=[0,\infty)$ and
$\mathbb{R}^+=(0,\infty)$.


\section{Gronwall's type inequalities}
\label{sec:mainResults}

We start by proving a lemma which is essential in the proofs of
the next theorems.

\begin{lemma}[cf. \cite{Ferr4}]
\label{lemimp} Let $a,b\in\mathbb{T}$, and consider a function
$r\in C_{\textrm{rd}}^1([a,b]_{\mathbb{T}},\mathbb{R}^+)$ with
$r^\Delta(t)\geq 0$ on $[a,b]_{\mathbb{T}}^\kappa$. Suppose that a
function $g\in C(\mathbb{R}^+_0,\mathbb{R}^+_0)$ is positive and
nondecreasing on $\mathbb{R}^+$ and define,
\begin{equation*}
G(x)=\int_{x_0}^{x}\frac{ds}{g(s)},
\end{equation*}
where $x\geq 0$, $x_0\geq 0$ if $\int_{0}^x\frac{ds}{g(s)}<\infty$
and $x>0$, $x_0>0$ if $\int_{0}^x\frac{ds}{g(s)}=\infty$. Then,
for each $t\in[a,b]_{\mathbb{T}}$, we have
\begin{equation}
\label{seila7} G(r(t))\leq
G(r(a))+\int_{a}^{t}\frac{r^\Delta(\tau)}{g(r(\tau))}\Delta\tau.
\end{equation}
\end{lemma}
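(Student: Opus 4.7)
The plan is to reduce the inequality to a pointwise bound on the $\Delta$-derivative $(G\circ r)^\Delta(t)$ and then $\Delta$-integrate. First I would observe that under the hypotheses, $G$ is well defined and continuously differentiable on the range of $r$ (either $(0,\infty)$ or on an interval containing $\{r(t):t\in[a,b]_\mathbb{T}\}$), with $G'(x)=1/g(x)$, a positive continuous function there. Moreover $r:\mathbb{T}\to\mathbb{R}^+$ is $\Delta$-differentiable, so Theorem \ref{teor1} (the time scales chain rule) applies to the composition $G\circ r$.

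Applying Theorem \ref{teor1} gives, for $t\in[a,b]_\mathbb{T}^\kappa$,
\begin{equation*}
(G\circ r)^\Delta(t)=\left\{\int_0^1 \frac{dh}{g\bigl(r(t)+h\mu(t)r^\Delta(t)\bigr)}\right\} r^\Delta(t).
\end{equation*}
The key observation is now that $r^\Delta(t)\geq 0$ and $\mu(t)\geq 0$, so $r(t)+h\mu(t)r^\Delta(t)\geq r(t)>0$ for every $h\in[0,1]$. Since $g$ is nondecreasing on $\mathbb{R}^+$, this yields $g(r(t)+h\mu(t)r^\Delta(t))\geq g(r(t))$, and hence $1/g(r(t)+h\mu(t)r^\Delta(t))\leq 1/g(r(t))$. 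Integrating this bound over $h\in[0,1]$ and multiplying by the nonnegative factor $r^\Delta(t)$ produces
\begin{equation*}
(G\circ r)^\Delta(t)\leq \frac{r^\Delta(t)}{g(r(t))},\qquad t\in[a,b]_\mathbb{T}^\kappa.
\end{equation*}

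Finally I would $\Delta$-integrate this inequality from $a$ to $t$, using the monotonicity property \eqref{desbasic} and the fact that $G\circ r$ is an antiderivative of $(G\circ r)^\Delta$. This yields
\begin{equation*}
G(r(t))-G(r(a))\leq \int_a^t \frac{r^\Delta(\tau)}{g(r(\tau))}\Delta\tau,
\end{equation*}
which is exactly \eqref{seila7}. The only delicate step is verifying that the chain rule of Theorem \ref{teor1} can be invoked: this requires $G$ to be $C^1$ on a neighborhood of the range of $r$, which is guaranteed by the two case distinctions in the definition of $G$ (the case $\int_0^x ds/g(s)=\infty$ forces $x_0>0$ and $r$ positive so that one stays away from the problematic endpoint $0$); this is the only mildly subtle point, the rest being a direct application of the chain rule and monotonicity of $g$.
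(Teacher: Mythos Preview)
Your proof is correct and follows essentially the same approach as the paper: apply the chain rule of Theorem~\ref{teor1} to $G\circ r$, use $r^\Delta\geq 0$ and the monotonicity of $g$ to bound the integrand $1/g(r(t)+h\mu(t)r^\Delta(t))$ by $1/g(r(t))$, and then $\Delta$-integrate. The paper presents the chain of pointwise inequalities first and identifies the chain-rule expression afterward, whereas you start from the chain rule and then estimate; the content is the same.
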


\begin{proof}
Since $g$ is positive and nondecreasing on $(0,\infty)$, we have,
successively, that
\begin{equation}
\label{seila8}
\begin{gathered}
r(t) \leq r(t)+h\mu(t)r^\Delta(t) \, ,\\
g(r(t)) \leq g(r(t)+h\mu(t)r^\Delta(t)) \, ,\\
\frac{1}{g(r(t)+h\mu(t)r^\Delta(t))} \leq\frac{1}{g(r(t))} \, ,\\
\int_0^1\frac{1}{g(r(t)+h\mu(t)r^\Delta(t))}dh \leq\int_0^1\frac{1}{g(r(t))}dh=\frac{1}{g(r(t))} \, , \\
\left\{\int_0^1\frac{1}{g(r(t)+h\mu(t)r^\Delta(t))}dh\right\}
r^\Delta(t) \leq\frac{r^\Delta(t)}{g(r(t))} \, ,
\end{gathered}
\end{equation}
for all $t\in[a,b]_{\mathbb{T}}^\kappa$ and $h\in[0,1]$. By
$\Delta$-integrating the last inequality in (\ref{seila8}) from
$a$ to $t$ and having in mind that Theorem~\ref{teor1} guarantees
that
\begin{align}
(G\circ r)^\Delta(t)&=\left\{\int_0^1
G'(r(t)+h\mu(t)r^\Delta(t))dh\right\}r^\Delta(t)\nonumber\\
&=\left\{\int_0^1
\frac{1}{g(r(t)+h\mu(t)r^\Delta(t))}dh\right\}r^\Delta(t)\nonumber,
\end{align} we obtain the desired result [note that the case $t=b$ if $\rho(b)<b$ is also proved because of \eqref{desbasic}].
\end{proof}

\begin{theorem}[cf. \cite{Ferr4}]
\label{thma1} Let $u(t)$ and $f(t)$ be nonnegative rd-continuous
functions in the time scales interval
$\mathbb{T}_\ast:=[a,b]_{\mathbb{T}}$ and
$\mathbb{T}_\ast^\kappa$, respectively. Let $k(t,s)$ be defined as
in Theorem~\ref{teork} in such a way that $k(t,s)$ and
$k^{\Delta_1}(t,s)$ are nonnegative for every
$t,s\in\mathbb{T}_\ast$ with $s\leq t$ for which they are defined
(it is assumed that $k$ is not identically zero on
$\mathbb{T}_\ast^\kappa\times\mathbb{T}_\ast^{\kappa^2}$). Let
$\Phi\in C(\mathbb{R}^+_0,\mathbb{R}^+_0)$ be a nondecreasing,
subadditive and submultiplicative function, such that $\Phi(u)>0$
for $u>0$ and let $W\in C(\mathbb{R}^+_0,\mathbb{R}^+_0)$ be a
nondecreasing function such that for $u> 0$ we have $W(u)>0$.
Assume that $a(t)$ is a positive rd-continuous function and
nondecreasing for $t\in\mathbb{T}_\ast$. If
\begin{equation}
\label{equa0} u(t)\leq a(t)+\int_a^t f(s)u(s)\Delta
s+\int_a^tf(s)W\left(\int_a^s
k(s,\tau)\Phi(u(\tau))\Delta\tau\right)\Delta s,
\end{equation}
for $a\leq\tau\leq s\leq t\leq b$, $\tau, s, t\in\mathbb{T}_\ast$,
then for all $t\in\mathbb{T}_\ast$ satisfying
\begin{equation*}
\Psi(\zeta)+\int_a^{\rho(t)} k(\rho(t),s)\Phi(p(s))\Phi(\int_a^s
f(\tau)\Delta\tau)\Delta s\in \Dom(\Psi^{-1})
\end{equation*}
we have
\begin{multline}
\label{eq11}
u(t)\leq p(t) a(t) \\
+ p(t) \int_a^t f(s)W\left[\Psi^{-1} \left(\Psi(\zeta)
+\int_a^sk(s,\tau)\Phi(p(\tau)) \Phi\left(\int_a^\tau
f(\theta)\Delta\theta\right)\Delta\tau\right)\right]\Delta s \, ,
\end{multline}
where
\begin{gather}
\label{p} p(t) = 1+\int_a^t f(s)e_{f}(t,\sigma(s))\Delta s \, ,\\
\zeta = \int_a^{\rho(b)} k(\rho(b),s)\Phi(p(s)a(s))\Delta s \, , \nonumber \\
\label{p1}\Psi(x) = \int_{x_0}^x\frac{1}{\Phi(W(s))}ds,\ x>0,\
x_0>0 \, ,
\end{gather}
and, as usual, $\Psi^{-1}$ denotes the inverse of $\Psi$.
\end{theorem}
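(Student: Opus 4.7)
The strategy is a two-stage peeling: first handle the linear $u$-term with the time scales Gronwall inequality (Theorem \ref{gronw}), and then tame the remaining nonlinearity by converting the integral estimate on the nested term into a differential inequality to which the Bihari-type Lemma \ref{lemimp} applies. To this end I first set
\begin{equation*}
v(s) := \int_a^s k(s,\tau)\Phi(u(\tau))\Delta\tau, \qquad Z(t) := \int_a^t f(s)\, W(v(s))\Delta s,
\end{equation*}
so that \eqref{equa0} reads $u(t)\leq [a(t)+Z(t)] + \int_a^t f(s)u(s)\Delta s$. Since $f\geq 0$ is positively regressive, Theorem \ref{gronw} yields $u(t)\leq [a(t)+Z(t)] + \int_a^t f(s)[a(s)+Z(s)]\,e_f(t,\sigma(s))\Delta s$. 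Both $a$ and $Z$ are nondecreasing, so I can replace $a(s)+Z(s)$ by $a(t)+Z(t)$ inside the integral and factor it out; recognising $p(t)$ from \eqref{p} gives the half-way bound $u(t)\leq p(t)a(t)+p(t)Z(t)$, already matching the first summand of \eqref{eq11}.

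The task reduces to estimating $Z$. Writing $w:=W\circ v$ (nondecreasing as a composition of nondecreasing maps) and using the elementary bound $Z(\tau)\leq w(\tau)\int_a^\tau f(\theta)\Delta\theta$, the subadditivity and submultiplicativity of $\Phi$ combine to give
\begin{equation*}
\Phi(u(\tau))\leq \Phi(p(\tau)a(\tau)) + \Phi(p(\tau))\Phi(w(\tau))\Phi\Bigl(\int_a^\tau f(\theta)\Delta\theta\Bigr).
\end{equation*}
Integrating against $k(s,\tau)$ and using that $k(\cdot,\tau)$ is nondecreasing (since $k^{\Delta_1}\geq 0$), so $\int_a^s k(s,\tau)\Phi(p(\tau)a(\tau))\Delta\tau\leq \zeta$, I arrive at $v(s)\leq m(s)$ where
\begin{equation*}
m(s) := \zeta + \int_a^s k(s,\tau)\Phi(p(\tau))\Phi(w(\tau))\Phi\Bigl(\int_a^\tau f(\theta)\Delta\theta\Bigr)\Delta\tau.
\end{equation*}

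Differentiating $m$ via Theorem \ref{teork}, all contributions are nonnegative, so $m$ is nondecreasing and hence $w(s)=W(v(s))\leq W(m(s))$. The monotonicity of $\Phi\circ w$ lets me pull $\Phi(w(s))$ out of both the interior integral (involving $k^{\Delta_1}$) and the boundary term $k(\sigma(s),s)(\cdots)$; what remains is exactly $B^\Delta(s)$, where
\begin{equation*}
B(s):= \int_a^s k(s,\tau)\Phi(p(\tau))\Phi\Bigl(\int_a^\tau f(\theta)\Delta\theta\Bigr)\Delta\tau.
\end{equation*}
This gives the key inequality $m^\Delta(s)\leq \Phi(W(m(s)))\,B^\Delta(s)$. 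Taking $r=m$ and $g=\Phi\circ W$ (positive, nondecreasing) in Lemma \ref{lemimp} and using $m(a)=\zeta$, I obtain $\Psi(m(s))\leq \Psi(\zeta)+B(s)$, valid whenever the right-hand side lies in $\Dom(\Psi^{-1})$ — which is precisely what the standing hypothesis ensures for $s$ up to $\rho(t)$, by the monotonicity of $B$. Inverting $\Psi$, substituting $w(s)\leq W(\Psi^{-1}(\Psi(\zeta)+B(s)))$ into $Z(t)=\int_a^t f(s)w(s)\Delta s$, and combining with $u(t)\leq p(t)a(t)+p(t)Z(t)$ produces \eqref{eq11}.

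The principal obstacle I anticipate is the transition from the integral estimate for $v$ to the differential inequality for $m$: Theorem \ref{teork} must be applied to a $\tau$-integrand whose $s$-dependence lives entirely in $k(s,\tau)$, and the monotonicity of $\Phi\circ w$ has to be exploited to factor $\Phi(w(s))$ out of an integrand originally carrying $\Phi(w(\tau))$. This manoeuvre, together with the two algebraic properties of $\Phi$, is what collapses a genuinely nonlinear two-variable convolution-type inequality into the scalar Bihari form handled by Lemma \ref{lemimp}.
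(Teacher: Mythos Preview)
Your proposal is correct and follows essentially the same two-stage route as the paper: first apply the time-scales Gronwall inequality (Theorem~\ref{gronw}) to peel off the linear term and obtain $u(t)\le p(t)[a(t)+Z(t)]$, then define an auxiliary function dominating $v$, differentiate it via Theorem~\ref{teork}, factor out $\Phi(W(\cdot))$ to obtain a Bihari-type inequality, and close with Lemma~\ref{lemimp}. The only cosmetic difference is that you apply the submultiplicativity of $\Phi$ one step earlier (splitting $\Phi(p(\tau)\int_a^\tau f)$ into $\Phi(p(\tau))\Phi(\int_a^\tau f)$ before forming the auxiliary function $m$), whereas the paper keeps $\Phi\bigl(p(s)\int_a^s f(\tau)\Delta\tau\bigr)$ intact in its function $r$ and only splits at the very end; this makes your $m$ marginally larger than the paper's $r$ but leads to the identical final bound~\eqref{eq11}.
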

\begin{remark}
We are interested to study the situation when $k$ is not
identically zero on
$\mathbb{T}_\ast^\kappa\times\mathbb{T}_\ast^{\kappa^2}$. That
comprise the new cases, not considered previously in the
literature. The case $k(t,s) \equiv 0$ was studied in
\cite[Theorem~3.1]{Pach} and is not discussed here.
\end{remark}
\begin{proof}
Define the function $z(t)$ in $\mathbb{T}_\ast$ by
\begin{equation}
\label{equacao1linha} z(t)=a(t)+\int_a^tf(s)W\left(\int_a^s
k(s,\tau)\Phi(u(\tau))\Delta\tau\right)\Delta s \, .
\end{equation}
Then, (\ref{equa0}) can be rewritten as
\begin{equation*}
u(t)\leq z(t)+\int_a^t f(s)u(s)\Delta s.
\end{equation*}
Clearly, $z(t)$ is rd-continuous in $t\in\mathbb{T}_\ast$. Using
Theorem~\ref{gronw}, we get
\begin{equation*}
u(t)\leq z(t)+\int_{a}^t f(s)z(s)e_f(t,\sigma(s))\Delta s \, .
\end{equation*}
Moreover, it is easy to see that $z(t)$ is nondecreasing in
$t\in\mathbb{T}_\ast$. We get
\begin{equation}
\label{equacao2} u(t)\leq z(t)p(t),
\end{equation}
where $p(t)$ is defined by (\ref{p}). Define
$$v(t)=\int_a^t k(t,s)\Phi(u(s))\Delta s,\ t\in\mathbb{T}_\ast^\kappa.$$
From (\ref{equacao2}), and taking into account the properties of
$\Phi$, we get
\begin{equation*}
\begin{split}
v(t)&\leq\int_a^t
k(t,s)\Phi\left[p(s)\left(a(s)+\int_a^sf(\tau)W(v(\tau))
\Delta\tau\right)\right]\Delta s\\
&\leq\int_a^t k(t,s)\Phi(p(s)a(s))\Delta s+\int_a^t
k(t,s)\Phi\left(p(s)\int_a^sf(\tau)W(v(\tau))\Delta\tau\right)\Delta
s\\
&\leq\int_a^{\rho(b)} k(\rho(b),s)\Phi(p(s)a(s))\Delta s\\
& \qquad +\int_a^t
k(t,s)\Phi\left(p(s)\int_a^sf(\tau)\Delta\tau\right)
\Phi(W(v(s))\Delta
s\\
&=\zeta+\int_a^t
k(t,s)\Phi\left(p(s)\int_a^sf(\tau)\Delta\tau\right)
\Phi(W(v(s))\Delta s \, .
\end{split}
\end{equation*}
Define function $r(t)$ on $\mathbb{T}_\ast^\kappa$ by
$$r(t) = \zeta+\int_a^t
k(t,s)\Phi\left(p(s)\int_a^sf(\tau)\Delta\tau\right)
\Phi(W(v(s))\Delta s \, .$$ Since $p$ and $a$ are positive
functions, we have that $\Phi(a(s)p(s))>0$ for all
$s\in\mathbb{T}_\ast$. Since $k^{\Delta_1} \geq 0$, we must have
$\zeta>0$, hence $r(t)$ is a positive function on
$\mathbb{T}_\ast^\kappa$. In addition, $r(t)$ is
$\Delta$-differentiable on $\mathbb{T}_\ast^{\kappa^2}$ with
\begin{align}
r^\Delta(t)&=k(\sigma(t),t)\Phi\left(p(t)\int_a^t
f(\tau)\Delta\tau\right) \Phi(W(v(t))\nonumber\\
&\ \ \ +\int_a^t
k^{\Delta_1}(t,s)\Phi\left(p(s)\int_a^sf(\tau)\Delta\tau\right)
\Phi(W(v(s))\Delta s\nonumber\\
& \label{equacao4}\\
&\leq\Phi(W(r(t)))\left[k(\sigma(t),t)\Phi\left(p(t)\int_a^t
f(\tau)\Delta\tau\right)\right.\nonumber\\
&\ \ \ +\left.\int_a^t
k^{\Delta_1}(t,s)\Phi\left(p(s)\int_a^sf(\tau)\Delta\tau\right)\Delta
s\right].\nonumber
\end{align}
Dividing both sides of inequality (\ref{equacao4}) by
$\Phi(W(r(t)))$, we obtain
\begin{equation*}
\frac{r^\Delta(t)}{\Phi(W(r(t)))}\leq\left[\int_a^t
k(t,s)\Phi\left(p(s)\int_a^sf(\tau)\Delta\tau\right)\Delta
s\right]^\Delta.
\end{equation*}
Let us consider the function $\Psi$ defined by (\ref{p1}).
$\Delta$-integrating this last inequality from $a$ to $t$ and
using Lemma~\ref{lemimp}, we obtain
$$\Psi(r(t))\leq\Psi(r(a))+\int_a^t
k(t,s)\Phi\left(p(s)\int_a^sf(\tau)\Delta\tau\right)\Delta s,$$
from which it follows that
\begin{equation}
\label{seila5} r(t)\leq\Psi^{-1}\left(\Psi(\zeta)+\int_a^t
k(t,s)\Phi(p(s))\Phi(\int_a^s f(\tau)\Delta\tau)\Delta s\right),\
t\in\mathbb{T}_\ast^\kappa.
\end{equation}
Combining (\ref{seila5}), (\ref{equacao2}) and
(\ref{equacao1linha}), we obtain the desired inequality
(\ref{eq11}).
\end{proof}
If we let $\mathbb{T}=\mathbb{R}$ in Theorem~\ref{thma1}, we get
\cite[Theorem~2.1]{motivacao}. If in turn we consider
$\mathbb{T}=\mathbb{Z}$, then we obtain the following result:
\begin{cor}[cf. \cite{Ferr4}]
Let $u(t)$ and $f(t)$ be nonnegative functions in the time scales
interval $\mathbb{T}_\ast:=[a,b]_{\mathbb{Z}}$ and
$[a,b-1]_{\mathbb{Z}}$, respectively. Let $k(t,s)$ be defined as
in Theorem~\ref{teork} in such a way that $k(t,s)$ and
$k^{\Delta_1}(t,s)=k(\sigma(t),s)-k(t,s)$ are nonnegative for
every $t,s\in\mathbb{T}_\ast$ with $s\leq t$ for which they are
defined (it is assumed that $k$ is not identically zero on
$[a,b-1]_{\mathbb{T}_\ast}\times[a,b-2]_{\mathbb{T}_\ast}$). Let
$\Phi\in C(\mathbb{R}^+_0,\mathbb{R}^+_0)$ be a nondecreasing,
subadditive and submultiplicative function such that $\Phi(u)>0$
for $u>0$ and let $W\in C(\mathbb{R}^+_0,\mathbb{R}^+_0)$ be a
nondecreasing function such that for $u> 0$ we have $W(u)>0$.
Assume that $a(t)$ is a positive and nondecreasing  function for
$t\in\mathbb{T}_\ast$. If
\begin{equation*}
u(t)\leq a(t)+\sum_{s=a}^{t-1}
f(s)u(s)+\sum_{s=a}^{t-1}f(s)W\left(\sum_{\tau=a}^{s-1}
k(s,\tau)\Phi(u(\tau))\right),
\end{equation*}
for $a\leq\tau\leq s\leq t\leq b$, $\tau, s, t\in\mathbb{T}_\ast$,
then for all $t\in\mathbb{T}_\ast$ satisfying
\begin{equation*}
\Psi(\zeta)+\sum_{s=a}^{t-2}
k(t-1,s)\Phi(p(s))\Phi(\sum_{\tau=a}^{s-1} f(\tau))\in
\Dom(\Psi^{-1})
\end{equation*}
we have
\begin{equation*}
u(t)\leq p(t)\left\{a(t)+\sum_{s=a}^{t-1} f(s)W\left[\Psi^{-1}
\left(\Psi(\zeta)+\sum_{\tau=a}^{s-1}
k(s,\tau)\Phi(p(\tau))\Phi(\sum_{\theta=a}^{\tau-1}
f(\theta))\right)\right]\right\},
\end{equation*}
where
\begin{eqnarray*}
p(t)=1+\sum_{s=a}^{t-1} f(s)e_{f}(t,s+1) \, ,\\
\zeta=\sum_{s=a}^{b-1} k(b-1,s)\Phi(p(s)a(s)) \, ,\\
\Psi(x)=\int_{x_0}^x\frac{1}{\Phi(W(s))}ds,\ x>0,\ x_0>0 \, ,
\end{eqnarray*}
and $\Psi^{-1}$ is the inverse of $\Psi$.
\end{cor}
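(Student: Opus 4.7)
The plan is to obtain this corollary as an immediate specialization of Theorem~\ref{thma1} to the time scale $\mathbb{T}=\mathbb{Z}$. First I would verify that all the hypotheses of Theorem~\ref{thma1} are exactly the hypotheses of the corollary once one fixes $\mathbb{T}=\mathbb{Z}$: the rd-continuity requirements become vacuous (every function on $\mathbb{Z}$ is rd-continuous), the $\Delta$-differentiability and continuity assumptions on the kernel $k$ that feature in Theorem~\ref{teork} reduce to mere existence, and $k^{\Delta_1}(t,s)$ is just the forward difference $k(\sigma(t),s)-k(t,s)=k(t+1,s)-k(t,s)$, which is assumed nonnegative in the statement of the corollary.

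Next, I would translate the time-scale operators into their discrete counterparts: $\sigma(s)=s+1$, $\mu(s)=1$, and $\rho(b)=b-1$. Consequently, in the $\mathbb{Z}$ setting,
\begin{equation*}
\int_a^t f(s)\,\Delta s = \sum_{s=a}^{t-1} f(s),
\end{equation*}
so every $\Delta$-integral appearing in the hypothesis (\ref{equa0}) and in the conclusion (\ref{eq11}) of Theorem~\ref{thma1} becomes the corresponding finite sum, and every upper bound of the form $\rho(b)$ becomes $b-1$. The function $\Psi$ defined in (\ref{p1}) is independent of the time scale and carries over verbatim.

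Applying Theorem~\ref{thma1} with $\mathbb{T}=\mathbb{Z}$, the inequality (\ref{equa0}) reads precisely
\begin{equation*}
u(t)\leq a(t)+\sum_{s=a}^{t-1} f(s)u(s)+\sum_{s=a}^{t-1} f(s)W\!\left(\sum_{\tau=a}^{s-1} k(s,\tau)\Phi(u(\tau))\right),
\end{equation*}
and from (\ref{eq11}) we get the stated bound, with $\zeta$ and $p(t)$ obtained by simply substituting sums for $\Delta$-integrals in their defining formulas from Theorem~\ref{thma1}. The only point requiring a small check is that the exponential-function factor $e_{f}(t,\sigma(s))$ appearing in (\ref{p}) becomes $e_{f}(t,s+1)$, which matches the expression for $p(t)$ in the corollary.

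The main obstacle, such as it is, is purely notational: one must carefully carry out the substitutions $\sigma(s)\mapsto s+1$, $\rho(b)\mapsto b-1$, $\int_a^t\cdots\Delta s\mapsto\sum_{s=a}^{t-1}\cdots$ throughout (\ref{equa0}), (\ref{eq11}), (\ref{p}) and the definition of $\zeta$, and confirm that the resulting expressions coincide with those in the corollary. Since no additional analytical work is needed beyond this translation, the corollary follows directly from Theorem~\ref{thma1}.
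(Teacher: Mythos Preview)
Your proposal is correct and matches the paper's approach exactly: the corollary is stated immediately after Theorem~\ref{thma1} as its specialization to $\mathbb{T}=\mathbb{Z}$, with no separate proof given. Your careful notational translation ($\sigma(s)\mapsto s+1$, $\rho(b)\mapsto b-1$, $\Delta$-integrals to sums) is precisely what is implicitly required.
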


For the particular case $\mathbb{T}=\mathbb{R}$,
Theorem~\ref{thma2} generalizes the result obtained by Oguntuase
in \cite[Theorems ~2.3 and 2.9]{og}.

\begin{theorem}[cf. \cite{Ferr4}]
\label{thma2} Suppose that $u(t)$ is a nonnegative rd-continuous
function in the time scales interval
$\mathbb{T}_\ast=[a,b]_\mathbb{T}$ and that $h(t)$, $f(t)$ are
nonnegative rd-continuous functions in the time scales interval
$\mathbb{T}_\ast^\kappa$. Assume that $b(t)$ is a nonnegative
rd-continuous function and not identically zero on
$\mathbb{T}_\ast^{\kappa^2}$. Let $\Phi(u)$, $W(u)$, and $a(t)$ be
as defined in Theorem~\ref{thma1}. If
\begin{equation*}
u(t)\leq a(t)+\int_a^t f(s)u(s)\Delta
s+\int_a^tf(s)h(s)W\left(\int_a^s
b(\tau)\Phi(u(\tau))\Delta\tau\right)\Delta s,
\end{equation*}
for $a\leq\tau\leq s\leq t\leq b$, $\tau, s, t\in\mathbb{T}_\ast$,
then for all $t\in\mathbb{T}_\ast$ satisfying
$$
\Psi(\xi)+\int_a^{\rho(t)} b(\tau)\Phi(p(\tau))\Phi(\int_a^\tau
f(\theta)h(\theta)\Delta\theta)\Delta\tau\in \Dom(\Psi^{-1})
$$
we have
\begin{multline*}
u(t)\leq p(t) a(t)\\
+p(t) \int_a^t f(s)h(s)W\left[\Psi^{-1}
\left(\Psi(\xi)+\int_a^sb(\tau)\Phi(p(\tau))\Phi(\int_a^\tau
f(\theta)h(\theta)\Delta\theta)\Delta\tau\right)\right]\Delta s \,
,
\end{multline*}
where $p(t)$ is defined by (\ref{p}), $\Psi$ is defined by
(\ref{p1}), and
\begin{eqnarray*}
\xi=\int_a^{\rho(b)} b(s)\Phi(p(s)a(s))\Delta s.
\end{eqnarray*}
\end{theorem}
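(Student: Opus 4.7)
The plan is to mirror the proof of Theorem~\ref{thma1}, exploiting the fact that the kernel is now a function of a single variable $b(\tau)$ rather than $k(s,\tau)$, which trivialises the step where one differentiates under the integral sign. First I would set
\[
z(t)=a(t)+\int_a^t f(s)h(s)W\!\left(\int_a^s b(\tau)\Phi(u(\tau))\Delta\tau\right)\Delta s,
\]
so that the hypothesis becomes $u(t)\le z(t)+\int_a^t f(s)u(s)\Delta s$. Since $z$ is rd-continuous and nondecreasing on $\mathbb{T}_\ast$, Theorem~\ref{gronw} (Gronwall's inequality on time scales) gives
$u(t)\le z(t)p(t)$, with $p(t)$ as in \eqref{p}.

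Next I would introduce $v(t):=\int_a^t b(s)\Phi(u(s))\Delta s$ on $\mathbb{T}_\ast^\kappa$, which is nonnegative, rd-continuous and nondecreasing (because $v^\Delta=b\,\Phi(u)\ge 0$). Substituting $u\le pz$, using the subadditivity and submultiplicativity of $\Phi$, the monotonicity of $W$, and the fact that $v$ is nondecreasing, I would chain
\begin{align*}
v(t) &\le \int_a^t b(s)\Phi\!\left(p(s)a(s)\right)\Delta s
+\int_a^t b(s)\Phi\!\left(p(s)\int_a^s f(\tau)h(\tau)W(v(\tau))\Delta\tau\right)\Delta s\\
&\le \xi+\int_a^t b(s)\Phi(p(s))\Phi\!\left(\int_a^s f(\tau)h(\tau)\Delta\tau\right)\Phi(W(v(s)))\Delta s,
\end{align*}
where the first term is bounded by $\xi=\int_a^{\rho(b)} b(s)\Phi(p(s)a(s))\Delta s$ (independent of $t$), and in the second the factor $W(v(s))$ is pulled outside the inner $\Delta$-integral via the monotonicity of $v$ before applying submultiplicativity of $\Phi$.

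I would then define $r(t)$ to be the right-hand side of the last display; $r$ is positive on $\mathbb{T}_\ast^\kappa$ (since $\xi>0$ because $b\not\equiv 0$, $a>0$ and $p>0$), $\Delta$-differentiable, and satisfies $v(t)\le r(t)$. Since $W$ is nondecreasing,
\[
r^\Delta(t)=b(t)\Phi(p(t))\Phi\!\left(\int_a^t f(\tau)h(\tau)\Delta\tau\right)\Phi(W(v(t)))
\le \Phi(W(r(t)))\, b(t)\Phi(p(t))\Phi\!\left(\int_a^t f(\tau)h(\tau)\Delta\tau\right).
\]
Dividing by $\Phi(W(r(t)))>0$ and $\Delta$-integrating from $a$ to $t$, Lemma~\ref{lemimp} applied to $g=\Phi\circ W$ yields
\[
\Psi(r(t))\le \Psi(\xi)+\int_a^t b(s)\Phi(p(s))\Phi\!\left(\int_a^s f(\tau)h(\tau)\Delta\tau\right)\Delta s,
\]
so that, under the stated range condition for $\Psi^{-1}$, $r(t)$ is bounded by the $\Psi^{-1}$ expression appearing in the conclusion.

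Finally I would substitute $W(v(s))\le W(r(s))$ back into $u(t)\le p(t)z(t)$ and insert the bound for $r(s)$ to obtain the asserted estimate. The main technical obstacle, just as in Theorem~\ref{thma1}, is the careful bookkeeping to legitimately pull $W(v(s))$ and $\Phi$ through the nested $\Delta$-integrals; the replacement of $k(s,\tau)$ by $b(\tau)$ removes the need to invoke Theorem~\ref{teork} when differentiating $r$, so this proof is a slight simplification of the previous one rather than a genuinely new argument.
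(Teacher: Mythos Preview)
Your proposal is correct and follows exactly the route the paper intends: its own proof reads in full ``similar to the proof of Theorem~\ref{thma1}'', and what you have written is precisely that adaptation, with $f(s)h(s)$ replacing $f(s)$ in the outer integral and $b(\tau)$ replacing $k(s,\tau)$ in the inner one. Your observation that the single-variable kernel $b(\tau)$ makes the differentiation of $r$ elementary (obviating Theorem~\ref{teork}) is the only structural difference from the proof of Theorem~\ref{thma1}, and it is exactly the simplification one expects.
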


\begin{proof}
similar to the proof of Theorem~\ref{thma1}.
\end{proof}

For the remaining of this section, we use the following class of
$S$ functions.

\begin{definition}[$S$ function]
A nondecreasing continuous function
$g:\mathbb{R}^+_0\rightarrow\mathbb{R}^+_0$ is said to belong to
class $S$ if it satisfies the following conditions:
\begin{enumerate}
    \item $g(x)$ is positive for $x>
    0$;
    \item $(1/z)g(x)\leq g(x/z)$ for $x\geq 0$ and $z\geq 1$.
\end{enumerate}
\end{definition}

\begin{remark}
For a brief discussion about this class of $S$ functions, the
reader is invited to consult \cite[Section~4]{Beesack}.
\end{remark}

\begin{theorem}[cf. \cite{Ferr4}]
\label{thm:nr:3.6} Let $u(t)$, $f(t)$, $k(t,s)$, $\Phi$ and $W$ be
as defined in Theorem~\ref{thma1} and assume that $g\in S$.
Suppose that $a(t)$ is a positive, rd-continuous and nondecreasing
function. If
\begin{equation}
\label{eq8} u(t)\leq a(t)+\int_a^t f(s)g(u(s))\Delta
s+\int_a^tf(s)W\left(\int_a^s
k(s,\tau)\Phi(u(\tau))\Delta\tau\right)\Delta s,
\end{equation}
for $a\leq\tau\leq s\leq t\leq b$, $\tau, s, t\in\mathbb{T}_\ast$,
then for all $t\in\mathbb{T}_\ast$ satisfying
$$G(1)+\int_a^t f(\tau) \Delta\tau\in \Dom(G^{-1})$$
and
$$\Psi(\bar{\zeta})+\int_a^{\rho(t)} k(\rho(t),\tau)\Phi(q(\tau))\Phi(\int_a^\tau
f(\theta)\Delta\theta)\Delta\tau\in \Dom(\Psi^{-1}),$$ we have
\begin{multline*}
u(t)\leq q(t) \max\{a(t),1\}\\
+ q(t) \int_a^t f(s)W\left[\Psi^{-1}
\left(\Psi(\bar{\zeta})+\int_a^s
k(s,\tau)\Phi(q(\tau))\Phi(\int_a^\tau
f(\theta)\Delta\theta)\Delta\tau\right)\right]\Delta s \, ,
\end{multline*}
where $\Psi$ is defined by (\ref{p1}),
\begin{eqnarray}
G(x)=\int_{\delta}^{x}\frac{ds}{g(s)},\ x>0,\ \delta >0, \nonumber\\
\label{seila4} q(t)=G^{-1}\left(G(1)+\int_a^t f(\tau) \Delta\tau\right),\\
\bar{\zeta}=\int_a^{\rho(b)}
k(\rho(b),s)\Phi(q(s)\max\{a(s),1\})\Delta s, \nonumber
\end{eqnarray}
and $G^{-1}$ is the inverse function of $G$.
\end{theorem}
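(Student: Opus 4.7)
The plan is to follow the scheme of Theorem~\ref{thma1}, but first ``absorbing'' the nonlinear term $\int_a^t f(s) g(u(s)) \Delta s$ by a Bihari-type argument that exploits the $S$-function property of $g$. To this end I would introduce the auxiliary function
$$z(t) = a(t) + \int_a^t f(s) W\left(\int_a^s k(s,\tau) \Phi(u(\tau)) \Delta\tau\right) \Delta s,$$
so that hypothesis (\ref{eq8}) reads $u(t) \leq z(t) + \int_a^t f(s) g(u(s)) \Delta s$. Since $a$, $f$, $W$, $k$, $\Phi$ are nonnegative and $a$ is nondecreasing, $z$ is nondecreasing on $\mathbb{T}_\ast$.

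Next I would set $\tilde{z}(t) := \max\{z(t),1\}$. Because $\tilde{z}(t)\geq 1$ and $\tilde{z}$ is nondecreasing, dividing by $\tilde{z}(t)$ and using $z(t)/\tilde{z}(t)\leq 1$, the monotonicity of $1/\tilde{z}$, and property~(2) of class $S$ (applied with $z=\tilde{z}(s)\geq 1$), I obtain
$$\frac{u(t)}{\tilde{z}(t)} \leq 1 + \int_a^t f(s)\, g\!\left(\frac{u(s)}{\tilde{z}(s)}\right) \Delta s.$$
Setting $w(t):=u(t)/\tilde{z}(t)$ and $R(t):=1+\int_a^t f(s) g(w(s)) \Delta s$, one has $w(t)\leq R(t)$ and, by monotonicity of $g$, $R^\Delta(t)\leq f(t) g(R(t))$. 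Lemma~\ref{lemimp} then gives $R(t)\leq G^{-1}\bigl(G(1)+\int_a^t f(\tau)\Delta\tau\bigr)=q(t)$, hence $u(t)\leq q(t)\tilde{z}(t)$. A short case-analysis ($z(t)\geq 1$ or $z(t)<1$) shows $\tilde{z}(t)\leq \max\{a(t),1\}+\int_a^t f(s) W\bigl(\int_a^s k(s,\tau)\Phi(u(\tau))\Delta\tau\bigr) \Delta s$, so
$$u(t) \leq q(t)\max\{a(t),1\} + q(t) \int_a^t f(s) W(v(s))\Delta s,$$
where $v(s):=\int_a^s k(s,\tau)\Phi(u(\tau))\Delta\tau$.

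From here the argument parallels Theorem~\ref{thma1}, with $\max\{a(t),1\}$ and $q(t)$ playing the roles previously held by $a(t)$ and $p(t)$. Substituting the previous bound for $u$ into $v(t)$, invoking subadditivity and submultiplicativity of $\Phi$, the monotonicity $k^{\Delta_1}\geq 0$ (which both makes $k(t,s)\leq k(\rho(b),s)$ on the relevant range and ensures $v$ is nondecreasing, so $W(v(\tau))\leq W(v(s))$ for $\tau\leq s$), one reaches
$$v(t) \leq \bar{\zeta} + \int_a^t k(t,s)\Phi(q(s)) \Phi\!\left(\int_a^s f(\tau)\Delta\tau\right)\Phi(W(v(s)))\Delta s.$$
Denoting the right-hand side by $\bar{R}(t)$, I would compute $\bar{R}^\Delta$ via Theorem~\ref{teork}, bound it by $\Phi(W(\bar{R}(t)))$ times a nonnegative rd-continuous factor, divide by $\Phi(W(\bar{R}(t)))$, and apply Lemma~\ref{lemimp} with $\Psi$ in place of $G$, obtaining
$$v(t)\leq \Psi^{-1}\!\left(\Psi(\bar{\zeta}) + \int_a^t k(t,s)\Phi(q(s))\Phi\!\left(\int_a^s f(\tau)\Delta\tau\right)\Delta s\right).$$
Inserting this estimate back into the displayed bound for $u$ produces the claimed conclusion.

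The main obstacle is the initial Bihari-type reduction: I must justify the division by $\tilde{z}$ without spoiling the inequality, which is precisely where the defining property $(1/z)g(x)\leq g(x/z)$ of class $S$ and the nondecreasingness of $\tilde{z}$ are essential, and I must handle cleanly the case $z(t)<1$ when truncating by $\max\{\cdot,1\}$ in order to reach the bound $\tilde{z}(t)\leq \max\{a(t),1\}+\int f W$. Once this step is secured, the rest is a straightforward adaptation of the proof of Theorem~\ref{thma1}.
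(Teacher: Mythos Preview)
Your proposal is correct and follows essentially the same approach as the paper. The only cosmetic difference is where the truncation by $1$ is inserted: the paper defines directly
\[
z(t)=\max\{a(t),1\}+\int_a^t f(s)W\!\left(\int_a^s k(s,\tau)\Phi(u(\tau))\Delta\tau\right)\Delta s,
\]
which is automatically $\geq 1$ and nondecreasing, so the division step and the $S$-property apply immediately without your case-analysis on $\tilde z$; after obtaining $u(t)\leq q(t)z(t)$ the paper simply says ``following the same arguments as in the proof of Theorem~\ref{thma1}'' to finish, exactly as you do.
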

\begin{proof}
Define the function
$$z(t)=\max\{a(t),1\}+\int_a^tf(s)W\left(\int_a^s
k(s,\tau)\Phi(u(\tau))\Delta\tau\right)\Delta s.$$ Then, from
(\ref{eq8}) we have that
\begin{equation*}
u(t)\leq z(t)+\int_a^t f(s)g(u(s))\Delta s.
\end{equation*}
Clearly, $z(t)\geq 1$ is rd-continuous and nondecreasing. Since
$g\in S$, we have
\begin{equation*}
\frac{u(t)}{z(t)}\leq 1+\int_a^t
f(s)g\left(\frac{u(s)}{z(s)}\right)\Delta s,
\end{equation*}
or
\begin{equation}
\label{eq:li} x(t)\leq 1+\int_a^t f(s)g(x(s))\Delta s,
\end{equation}
with $x(t)=u(t)/z(t)$. If we define $v(t)$ as the right-hand side
of inequality (\ref{eq:li}), we have that $v(a)=1$,
$$v^\Delta(t)=f(t)g(x(t)),$$
and since $g$ is nondecreasing,
$$v^\Delta(t)\leq f(t)g(v(t)),$$
or
\begin{equation}
\label{eq9} \frac{v^\Delta(t)}{g(v(t))}\leq f(t).
\end{equation}
Being the case that $v^\Delta(t)\geq 0$, $\Delta$-integrating
(\ref{eq9}) from $a$ to $t$ and applying Lemma~\ref{lemimp}, we
obtain
$$G(v(t))\leq G(1)+\int_a^t f(\tau)\Delta\tau,$$
which implies that
$$v(t)\leq G^{-1}\left(G(1)+\int_a^t f(\tau)\Delta\tau\right).$$
We have just proved that $x(t)\leq q(t)$, which is equivalent to
\begin{equation*}
u(t)\leq q(t)z(t).
\end{equation*}
Following the same arguments as in the proof of
Theorem~\ref{thma1}, we obtain the desired inequality.
\end{proof}

If we consider the time scale
$\mathbb{T}=h\mathbb{Z}=\{hk:k\in\mathbb{Z}\}$, where $h>0$, then
we obtain the following result.

\begin{cor}[cf. \cite{Ferr4}]
Let $a,b\in h\mathbb{Z}$,  $h>0$. Suppose that $u(t)$, $f(t)$,
$k(t,s)$, $\Phi$ and $W$ are as defined in Theorem~\ref{thma1} and
assume that $g\in S$. Suppose that $a(t)$ is a positive and
nondecreasing function. If
\begin{equation*}
u(t)\leq a(t)+\sum_{s\in[a,t)_{\mathbb{T}_\ast}}
f(s)g(u(s))h+\sum_{s\in[a,t)_{\mathbb{T}_\ast}}
f(s)W\left(\sum_{\tau\in[a,s)_{\mathbb{T}_\ast}} k(s,\tau
)\Phi(u(\tau ))h\right)h,
\end{equation*}
for $a\leq\tau\leq s\leq t\leq b$, $\tau, s,
t\in\mathbb{T}_\ast=[a,b]_{h\mathbb{Z}}$, then for all
$t\in\mathbb{T}_\ast$ satisfying
$$G(1)+\sum_{\tau\in[a,t)_{\mathbb{T}_\ast}} f(\tau)h\in \Dom(G^{-1})$$
and
$$\Psi(\bar{\zeta})+\sum_{\tau\in[a,t-h)_{\mathbb{T}_\ast}} k(t-h,\tau )\Phi(q(\tau ))\Phi\left(\sum_{\theta\in[a,\tau)_{\mathbb{T}_\ast}}
f(\theta)h\right)h\in \Dom(\Psi^{-1}),$$ we have
\begin{multline*}
u(t)\leq q(t)\Biggl\{\max\{a(t),1\}\\
+\sum_{s\in[a,t)_{\mathbb{T}_\ast}} f(s)W\left[\Psi^{-1}
\left(\Psi(\bar{\zeta})+\sum_{\tau\in[a,s)_{\mathbb{T}_\ast}}
k(s,\tau )\Phi(q(\tau
))\Phi\left(\sum_{\theta\in[a,\tau)_{\mathbb{T}_\ast}} f(\theta
)h\right)h\right)\right]h\Biggr\},
\end{multline*}
where $\Psi$ is defined by (\ref{p1}),
\begin{eqnarray*}
G(x)=\int_{\delta}^{x}\frac{ds}{g(s)},\ x>0,\ \delta >0,\\
q(t)=G^{-1}\left(G(1)+\sum_{\tau\in[a,t)_{\mathbb{T}_\ast}} f(\tau)h\right),\\
\bar{\zeta}=\sum_{s\in[a,b-h){\mathbb{T}_\ast}}
k(b-h,s)\Phi(q(s)\max\{a(s),1\})h,
\end{eqnarray*}
and $G^{-1}$ is the inverse function of $G$.
\end{cor}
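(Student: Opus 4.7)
The plan is to obtain this corollary as a direct specialization of Theorem~\ref{thm:nr:3.6} to the time scale $\mathbb{T} = h\mathbb{Z}$. The structure of both the hypothesis and the conclusion is exactly the one in Theorem~\ref{thm:nr:3.6}; what changes is only the interpretation of $\sigma$, $\rho$, $\mu$ and, most importantly, of the $\Delta$-integral.

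First, I would collect the relevant facts about the time scale $\mathbb{T} = h\mathbb{Z}$: every point is both left- and right-scattered with $\sigma(t) = t+h$, $\rho(t) = t-h$ and $\mu(t) \equiv h$; every function on $\mathbb{T}$ is automatically rd-continuous, so the regularity hypotheses on $u$, $f$, $k$ reduce to the stated pointwise assumptions. Using $\int_t^{\sigma(t)} \varphi(\tau)\Delta\tau = \mu(t)\varphi(t) = h\varphi(t)$ and induction (cf.\ the discussion following Theorem~\ref{teorema1}), one has for any rd-continuous $\varphi$ and any $\alpha,\beta \in h\mathbb{Z}$ with $\alpha\le\beta$,
\begin{equation*}
\int_\alpha^\beta \varphi(s)\Delta s \;=\; \sum_{s\in[\alpha,\beta)_{h\mathbb{Z}}} h\,\varphi(s).
\end{equation*}
This is the dictionary that converts every $\Delta$-integral appearing in Theorem~\ref{thm:nr:3.6} into the sums that appear in the statement of the corollary.

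Second, I would verify that the hypothesis \eqref{eq8} of Theorem~\ref{thm:nr:3.6}, when written on $\mathbb{T} = h\mathbb{Z}$, coincides term by term with the assumed inequality of the corollary. The three $\Delta$-integrals $\int_a^t f(s)g(u(s))\Delta s$, $\int_a^t f(s)W(\cdot)\Delta s$ and the inner $\int_a^s k(s,\tau)\Phi(u(\tau))\Delta\tau$ translate respectively to $\sum_{s\in[a,t)} f(s)g(u(s))h$, $\sum_{s\in[a,t)} f(s)W(\cdot)h$ and $\sum_{\tau\in[a,s)} k(s,\tau)\Phi(u(\tau))h$, which is exactly the hypothesis written in the corollary.

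Third, I would translate the defining expressions from Theorem~\ref{thm:nr:3.6}. For $p(t)$ I note that on $h\mathbb{Z}$ the integral $\int_a^t f(\tau)\Delta\tau$ becomes $\sum_{\tau\in[a,t)} f(\tau)h$, so the function $q(t)$ of \eqref{seila4} takes the form written in the statement. Since $\rho(b) = b-h$ and $\rho(t) = t-h$ (as every point of $h\mathbb{Z}$ is left-scattered), the quantity $\bar\zeta$ and the admissibility condition involving $\rho(t)$ become, respectively, $\bar\zeta = \sum_{s\in[a,b-h)} k(b-h,s)\Phi(q(s)\max\{a(s),1\})h$ and the condition written with $k(t-h,\tau)$ inside the $\Psi$-domain hypothesis. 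The conclusion of Theorem~\ref{thm:nr:3.6} then becomes, after the same dictionary applied to its outer and inner $\Delta$-integrals, precisely the inequality stated in the corollary.

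There is no genuine obstacle here: the argument is entirely a specialization/translation. The only step requiring a little care is the bookkeeping of the summation ranges, in particular writing $[a,t-h)_{\mathbb{T}_\ast}$ in the domain condition to reflect that $\rho(t) = t-h$ on $h\mathbb{Z}$, and ensuring the factor $h$ appears in every place where $\mu(t)$ has been absorbed by the $\Delta$-integral. Once this bookkeeping is done, the conclusion follows immediately from Theorem~\ref{thm:nr:3.6}.
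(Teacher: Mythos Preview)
Your proposal is correct and matches the paper's approach exactly: the corollary is stated immediately after Theorem~\ref{thm:nr:3.6} with the one-line justification that it is the specialization of that theorem to $\mathbb{T}=h\mathbb{Z}$, and your write-up simply carries out this translation in detail.
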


\begin{theorem}[cf. \cite{Ferr4}]
Let $u(t)$, $f(t)$, $b(t)$, $h(t)$, $\Phi$ and $W$ be as defined
in Theorem~\ref{thma2} and assume that $g\in S$. Suppose that
$a(t)$ is a positive, rd-continuous and nondecreasing function. If
\begin{equation*}
u(t)\leq a(t)+\int_a^t f(s)g(u(s))\Delta
s+\int_a^tf(s)h(s)W\left(\int_a^s
b(\tau)\Phi(u(\tau))\Delta\tau\right)\Delta s,
\end{equation*}
for $a\leq\tau\leq s\leq t\leq b$, $\tau, s, t\in\mathbb{T}_\ast$,
then for all $t\in\mathbb{T}_\ast$ satisfying
$$\Psi(\bar{\xi})+\int_a^{\rho(t)} b(\tau)\Phi(q(\tau))\Phi(\int_a^\tau
f(\theta)h(\theta)\Delta\theta)\Delta\tau\in \Dom(\Psi^{-1}),$$ we
have
\begin{multline*}
u(t)\leq q(t) \max\{a(t),1\}\\
+ q(t) \int_a^t f(s)h(s)W\left[\Psi^{-1}
\left(\Psi(\bar{\xi})+\int_a^sb(\tau)\Phi(q(\tau))\Phi(\int_a^\tau
f(\theta)h(\theta)\Delta\theta)\Delta\tau\right)\right]\Delta s,
\end{multline*}
where $\Psi$ is defined by (\ref{p1}), $q(t)$ is defined by
(\ref{seila4}) and
$$\bar{\xi}=\int_a^{\rho(b)} b(s)\Phi(q(s)\max\{a(s),1\})\Delta s,$$
\end{theorem}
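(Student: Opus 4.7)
The plan is to merge the strategy of Theorem~\ref{thm:nr:3.6} (which handles the $g(u)$ nonlinearity via the $S$-class condition) with that of Theorem~\ref{thma2} (which handles the extra $h$ factor and the $\tau$-only kernel $b(\tau)$). First I would introduce the auxiliary function
$$
z(t) \;=\; \max\{a(t),1\} \;+\; \int_{a}^{t} f(s)\,h(s)\,W\!\left(\int_{a}^{s} b(\tau)\,\Phi(u(\tau))\,\Delta\tau\right)\Delta s,
$$
which is positive, rd-continuous, nondecreasing, and satisfies $z(t)\geq 1$. The hypothesis then reads $u(t)\leq z(t)+\int_a^t f(s)g(u(s))\Delta s$. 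Dividing by $z(t)$ and exploiting $z(t)\geq z(s)\geq 1$ together with the defining property of the class $S$ gives $x(t):=u(t)/z(t)\leq 1+\int_a^t f(s)g(x(s))\Delta s$. Setting $v(t)$ equal to the right-hand side, the monotonicity of $g$ and $x\leq v$ yield $v^{\Delta}(t)/g(v(t))\leq f(t)$, whence Lemma~\ref{lemimp} (applied with $G$) delivers $v(t)\leq q(t)$ and therefore $u(t)\leq q(t)z(t)$.

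Next I would bound $V(t):=\int_a^t b(\tau)\Phi(u(\tau))\Delta\tau$ on $\mathbb{T}_\ast^{\kappa}$. Using $u\leq qz$ and the subadditivity/submultiplicativity of $\Phi$, together with the fact that $W$ is nondecreasing (so that $W(V(s))\leq W(V(\tau))$ for $s\leq\tau$, letting $W(V(\tau))$ be pulled outside of the inner integral defining $z$), one arrives at
$$
V(t)\;\leq\;\bar{\xi}\;+\;\int_{a}^{t} b(\tau)\,\Phi(q(\tau))\,\Phi\!\left(\int_{a}^{\tau} f(\theta)h(\theta)\,\Delta\theta\right)\Phi(W(V(\tau)))\,\Delta\tau,
$$
valid for $t\leq \rho(b)$ after bounding $\int_a^\tau b\,\Phi(q\max\{a,1\})\,\Delta s$ by $\bar{\xi}$ exactly as in the proof of Theorem~\ref{thma1}. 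Denoting the right-hand side by $r(t)$, the fundamental theorem on time scales gives $r^{\Delta}(t)=b(t)\Phi(q(t))\Phi(\int_a^t f h\,\Delta\theta)\Phi(W(V(t)))$, and since $V(t)\leq r(t)$ together with monotonicity of $\Phi\circ W$,
$$
\frac{r^{\Delta}(t)}{\Phi(W(r(t)))}\;\leq\;b(t)\,\Phi(q(t))\,\Phi\!\left(\int_{a}^{t}f(\theta)h(\theta)\,\Delta\theta\right).
$$
Applying Lemma~\ref{lemimp} once more, this time with $\Psi$, yields $V(t)\leq \Psi^{-1}\bigl(\Psi(\bar{\xi})+\int_{a}^{t}b(\tau)\Phi(q(\tau))\Phi(\int_{a}^{\tau}f(\theta)h(\theta)\Delta\theta)\Delta\tau\bigr)$.

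Finally I would substitute this estimate for $V$ back into the defining expression for $z(t)$, then use $u(t)\leq q(t)z(t)$ to conclude the desired inequality. The main obstacle is the second step: converting $\Phi$ of the integral term occurring inside $z$ into a product of a $W$-free factor and $\Phi(W(V(\tau)))$, which requires a delicate combined use of subadditivity and submultiplicativity of $\Phi$ together with the monotonicity of $W$ (to freeze $W(V(\cdot))$ at the outer integration variable). The careful juggling between $t$ and $\rho(t)$, $\rho(b)$ needed so that $\bar{\xi}$ really bounds the $a$-dependent part of the estimate parallels the device already employed in the proof of Theorem~\ref{thma1}, and no new difficulty arises there.
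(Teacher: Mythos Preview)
Your proposal is correct and follows essentially the same route the paper indicates: the paper's proof is simply ``similar to the proof of Theorem~\ref{thm:nr:3.6}'', and what you have written is precisely that adaptation, replacing the kernel $k(t,s)$ of Theorem~\ref{thma1} by the separable structure $f(s)h(s)$, $b(\tau)$ from Theorem~\ref{thma2}. The auxiliary function $z$, the $S$-class reduction to $x(t)\leq 1+\int_a^t f(s)g(x(s))\Delta s$, the Bihari-type step via Lemma~\ref{lemimp} to obtain $u\leq qz$, and then the subadditive/submultiplicative estimate on $V$ followed by a second application of Lemma~\ref{lemimp} with $\Psi$, all mirror the paper's argument exactly.
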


\begin{proof}
similar to the proof of Theorem~\ref{thm:nr:3.6}.
\end{proof}


We now make use of Theorem~\ref{thma2} to the qualitative analysis
of a nonlinear dynamic equation. Let $a,b\in\mathbb{T}$ and
consider the initial value problem
\begin{equation}
u^\Delta(t)=F\left(t,u(t),\int_a^t K(t,u(s))\Delta s\right),\quad
t\in\mathbb{T}_\ast^\kappa,\quad u(a)=u_a,\label{ivp}
\end{equation}
where $\mathbb{T}_\ast=[a,b]_{\mathbb{T}}$, $u\in$
C$_{\textrm{rd}}^1(\mathbb{T}_\ast)$, $F\in$
C$_{\textrm{rd}}(\mathbb{T}_\ast\times\mathbb{R}\times\mathbb{R},\mathbb{R})$
and $K\in$
C$_{\textrm{rd}}(\mathbb{T}_\ast\times\mathbb{R},\mathbb{R})$.

In what follows, we shall assume that the IVP (\ref{ivp}) has a
unique solution, which we denote by $u_\ast(t)$.

\begin{theorem}[cf. \cite{Ferr4}]
Assume that the functions $F$ and $K$ in (\ref{ivp}) satisfy the
conditions
\begin{align}
|K(t,u)|&\leq h(t)\Phi(|u|)\label{cond1},\\
|F(t,u,v)|&\leq |u|+|v|\label{cond2},
\end{align}
where $h$ and $\Phi$ are as defined in Theorem~\ref{thma2}. Then,
for $t\in\mathbb{T}_\ast$ such that
$$\Psi(\xi)+\int_a^{\rho(t)} \Phi(p(\tau))\Phi(\int_a^\tau
h(\theta)\Delta\theta)\Delta\tau\in \Dom(\Psi^{-1}),$$ we have the
estimate
\begin{equation}
\label{seila3} |u_\ast(t)|\leq p(t)\left\{|u_a|+\int_a^t
h(s)\Psi^{-1} \left(\Psi(\xi)+\int_a^s
\Phi(p(\tau))\Phi(\int_a^\tau
h(\theta)\Delta\theta)\Delta\tau\right)\Delta s\right\},
\end{equation}
where
\begin{eqnarray*}
p(t)=1+\int_a^t e_{1}(t,\sigma(s))\Delta s,\\
\xi=\int_a^{\rho(b)} \Phi(p(s)|u_a|)\Delta s,\\
\Psi(x)=\int_{x_0}^x\frac{1}{\Phi(s)}ds,\ x>0,\ x_0>0 \, .
\end{eqnarray*}
\end{theorem}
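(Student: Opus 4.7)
The plan is to convert the initial value problem into an integral equation and reduce the resulting estimate to an application of Theorem~\ref{thma2}.

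First I would integrate \eqref{ivp} from $a$ to $t$ to obtain
$$u_\ast(t)=u_a+\int_a^t F\!\left(s,u_\ast(s),\int_a^s K(s,u_\ast(\tau))\Delta\tau\right)\Delta s,\qquad t\in\mathbb{T}_\ast.$$
Taking absolute values and invoking the subadditivity of $|\cdot|$ together with hypothesis \eqref{cond2} gives
$$|u_\ast(t)|\leq|u_a|+\int_a^t\left[|u_\ast(s)|+\int_a^s|K(s,u_\ast(\tau))|\Delta\tau\right]\Delta s,$$
and a further application of \eqref{cond1} yields
$$|u_\ast(t)|\leq|u_a|+\int_a^t|u_\ast(s)|\Delta s+\int_a^t h(s)\left(\int_a^s\Phi(|u_\ast(\tau)|)\Delta\tau\right)\Delta s.$$

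Next I would recognize this last inequality as the special case of the hypothesis of Theorem~\ref{thma2} corresponding to the constant data $a(t)\equiv|u_a|$, $f(s)\equiv 1$, $b(\tau)\equiv 1$, $W$ the identity on $\mathbb{R}^+_0$, and $h$ and $\Phi$ as given. All the monotonicity, positivity and multiplicativity requirements on $\Phi$, $W$, $a$ and $b$ transfer trivially. With these choices, the auxiliary quantities from Theorem~\ref{thma2} specialize exactly to
$$p(t)=1+\int_a^t e_{1}(t,\sigma(s))\Delta s,\qquad \xi=\int_a^{\rho(b)}\Phi(p(s)|u_a|)\Delta s,\qquad \Psi(x)=\int_{x_0}^x\frac{ds}{\Phi(s)},$$
matching the formulas in the statement. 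Substituting $f\equiv 1$, $b\equiv 1$, $W(x)=x$ into the conclusion of Theorem~\ref{thma2} then collapses its double-integral argument of $W$ into the expression appearing inside $\Psi^{-1}$ in \eqref{seila3}, and factoring out $p(t)$ from the two surviving terms produces precisely the estimate \eqref{seila3}.

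The only delicate point I anticipate is that Theorem~\ref{thma2} requires $a(t)$ to be strictly positive, whereas if $u_a=0$ our $a(t)$ vanishes. I would handle this either by replacing $|u_a|$ by $|u_a|+\varepsilon$ in the above computation, applying Theorem~\ref{thma2} to the perturbed inequality, and passing to the limit $\varepsilon\to 0^+$ using the continuity of $\Phi$ and $\Psi^{-1}$ on their domains, or by observing that the argument of Theorem~\ref{thma2} goes through verbatim when $a(t)$ is merely nonnegative and nondecreasing (the positivity of $\zeta$ or $\xi$ there was used only to guarantee that the auxiliary function $r$ stays in the domain of $\Psi$, which in our setting is ensured by the domain hypothesis stated in the present theorem). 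No other step should present difficulty, as the reduction is essentially bookkeeping once the right specialization of Theorem~\ref{thma2} is identified.
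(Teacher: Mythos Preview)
Your proposal is correct and follows essentially the same route as the paper: integrate the IVP, apply the growth conditions \eqref{cond1}--\eqref{cond2}, and recognize the resulting inequality as the instance of Theorem~\ref{thma2} with $a(t)=|u_a|$, $f\equiv b\equiv 1$, $W=\mathrm{id}$. Your discussion of the positivity of $a(t)$ when $u_a=0$ is a nice bit of extra care that the paper itself does not address.
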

\begin{proof}
Let $u_\ast(t)$ be the solution of the IVP (\ref{ivp}). Then, we
have
\begin{equation}
\label{seila11111} u_\ast(t)=u_a+\int_a^t
F\left(s,u_\ast(s),\int_a^s
K(s,u_\ast(\tau))\Delta\tau)\right)\Delta s.
\end{equation}
Using (\ref{cond1}) and (\ref{cond2}) in (\ref{seila11111}), we
have
\begin{align}
|u_\ast|&\leq|u_a|+\int_a^t
\left(|u_\ast(s)|+\int_a^s|K(s,u_\ast(\tau))|\Delta\tau\right)\Delta
s\nonumber\\
&\leq|u_a|+\int_a^t
\left(|u_\ast(s)|+h(s)\int_a^s\Phi(|u_\ast(\tau))|)\Delta\tau\right)\Delta
s\label{seila2}.
\end{align}
A suitable application of Theorem~\ref{thma2} to (\ref{seila2}),
with $a(t)=|u_a|$, $f(t)=b(t)=1$ and $W(u)=u$, yields
(\ref{seila3}).
\end{proof}

\section{Some more integral inequalities and applications}\label{ineconstan}

In this section we shall be concern with the existence of
solutions of the following integrodynamic equation
\begin{equation}
\label{ip1} x^\Delta(t)= F\left(t,x(t),\int_a^{t}
K[t,\tau,x(\tau)]\Delta \tau\right),\quad x(a)=A,\quad
t\in[a,b]^\kappa_{\mathbb{T}},
\end{equation}
where $a,b\in\mathbb{T}$,
$K:[a,b]^\kappa_{\mathbb{T}}\times[a,b]^{\kappa^2}_{\mathbb{T}}\times\mathbb{R}\rightarrow\mathbb{R}$
and
$F:[a,b]^\kappa_{\mathbb{T}}\times\mathbb{R}^2\rightarrow\mathbb{R}$
are continuous functions.

As is well known integrodifferential equations and their discrete
analogues find many applications in various mathematical problems.
Moreover, it appears to be advantageous to model certain processes
by employing a suitable combination of both differential equations
and difference equations at different stages in the process under
consideration (see \cite{tisdell} and references therein for more
details).

Some results concerning existence of a solution to some particular
cases of the integrodynamic equation in \eqref{ip1} were obtained
in \cite{Tad,Xing}. Here we will make use of the well-known in the
literature \emph{Topological Transversality Theorem} to prove the
existence of a solution to the above mentioned equation.

We first introduce some basic definitions and results on fixed
point theory. Let $\mathcal{B}$ be a Banach space and
$C\subset\mathcal{B}$ be convex. By a \emph{pair} $(X,A)$ in $C$
is meant an arbitrary subset $X$ of $C$ and an $A\subset X$ closed
in $X$. We call a homotopy $H:X\times[0,1]\rightarrow Y$
\emph{compact} if it is a compact map. If $X\subset Y$, the
homotopy $H$ is called \emph{fixed point free} on $A\subset X$ if
for each $\lambda\in[0,1]$, the map
$H|A\times\{\lambda\}:A\rightarrow Y$ has no fixed point. We
denote by $\mathcal{C}_A(X,C)$ the set of all compact maps
$F:X\rightarrow C$ such that the restriction $F|A:A\rightarrow C$
is fixed point free.

Two maps $F,G\in\mathcal{C}_A(X,C)$ are called \emph{homotopic},
written $F\simeq G$ in $\mathcal{C}_A(X,C)$, provided there is a
compact homotopy $H_\lambda:X\rightarrow C$ ($\lambda\in[0,1]$)
that is fixed point free on $A$ and such that $H_0=F$ and $H_1=G$.
\begin{definition}
Let $(X,A)$ be a pair in a convex $C\subset\mathcal{B}$. A map
$F\in\mathcal{C}_A(X,C)$ is called \emph{essential} provided every
$G\in\mathcal{C}_A(X,C)$ such that $F|A=G|A$ has a fixed point.
\end{definition}
\begin{theorem}[Topological Transversality \cite{livroFPT}]\label{topo}
Let $(X,A)$ be a pair in a convex $C\subset\mathcal{B}$, and let
$F,G$ be maps in $\mathcal{C}_A(X,C)$ such that $F\simeq G$ in
$\mathcal{C}_A(X,C)$. Then, $F$ is essential if and only if $G$ is
essential.
\end{theorem}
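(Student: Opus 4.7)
The plan is to prove that essentiality is a homotopy invariant inside $\mathcal{C}_A(X,C)$. Since the statement is an ``if and only if'', by symmetry it suffices to prove one direction: if $F$ is essential and $F\simeq G$ via a compact homotopy $H:X\times[0,1]\to C$ with $H_0=F$, $H_1=G$, and each $H_\lambda|A$ fixed-point free, then $G$ is essential.

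To do this, I would take an arbitrary $\tilde G\in\mathcal{C}_A(X,C)$ with $\tilde G|A=G|A$ and aim to produce a fixed point of $\tilde G$. The idea is to use $\tilde G$ together with the homotopy $H$ to construct a competitor $\tilde F\in\mathcal{C}_A(X,C)$ for $F$, satisfying $\tilde F|A=F|A$, in such a way that any fixed point of $\tilde F$ yields a fixed point of $\tilde G$. Essentiality of $F$ then forces $\tilde G$ to have a fixed point.

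The building block is a concatenated ``long'' homotopy $K:X\times[0,1]\to C$ defined by $K(x,\lambda)=H(x,2\lambda)$ for $\lambda\in[0,1/2]$ and, using convexity of $C$, by the convex combination $K(x,\lambda)=(2-2\lambda)G(x)+(2\lambda-1)\tilde G(x)$ for $\lambda\in[1/2,1]$. This is a well-defined compact map into $C$, it interpolates between $F$ and $\tilde G$, and along $A$ it is fixed-point free: the first half is a time-reparametrization of $H$, and the second half reduces to $G$ on $A$ (because $\tilde G|A=G|A$), which is fixed-point free as well. To collapse $K$ into a single-valued $\tilde F$, I would invoke a Urysohn function $\eta:X\to[0,1]$ with $\eta|A\equiv 0$ and $\eta\equiv 1$ on a suitably chosen closed set $B$ disjoint from $A$, and set $\tilde F(x)=K(x,\eta(x))$. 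Then $\tilde F|A=F|A$, so essentiality of $F$ delivers a fixed point $x^{\ast}$ of $\tilde F$.

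The hard part will be choosing the separating set $B$ so that the fixed point $x^{\ast}$ is forced to lie in $B$, whence $\eta(x^{\ast})=1$ and consequently $\tilde F(x^{\ast})=\tilde G(x^{\ast})=x^{\ast}$. The standard route is a contradiction argument: assuming $\tilde G$ has no fixed point, use compactness of $\overline{H(X\times[0,1])}$ and of $\overline{\tilde G(X)}$, together with fixed-point-freeness along $A$, to produce an open neighbourhood of $A$ on which no equation $K(x,\lambda)=x$ admits a solution; then take $B$ to be its complement. It is at this point that the metric (hence normal) structure of the Banach space $\mathcal{B}$ is crucially used, and this is essentially the only step where genuine analytical estimates enter — the rest is formal manipulation of homotopies and a direct appeal to the definition of essential map.
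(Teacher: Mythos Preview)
The paper does not prove this theorem. Theorem~\ref{topo} is quoted verbatim from the reference \cite{livroFPT} (Granas and Dugundji, \emph{Fixed Point Theory}) and is used only as a black-box tool in the proof of Theorem~\ref{teor}; no argument for it appears anywhere in the text. There is therefore nothing in the paper to compare your proposal against.

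For what it is worth, your sketch is broadly the standard Granas argument. One simplification: rather than building the long homotopy $K$ by concatenating $H$ with the straight-line homotopy from $G$ to $\tilde G$, it is cleaner to first reduce to the case $\tilde G=G$ (i.e.\ show directly that $G$ has a fixed point), and only then handle a general $\tilde G$ with $\tilde G|A=G|A$ by observing that the convex homotopy $(1-\lambda)G+\lambda\tilde G$ already lies in $\mathcal{C}_A(X,C)$, so $\tilde G\simeq G$ and one may apply the first step again. For the first step one takes $B=\{x\in X: x=H(x,\lambda)\text{ for some }\lambda\in[0,1]\}$, which is closed (by compactness of $H$) and disjoint from $A$; a Urysohn function $\eta$ with $\eta|A=0$, $\eta|B=1$ then gives $\tilde F(x)=H(x,\eta(x))$ with $\tilde F|A=F|A$, and any fixed point of $\tilde F$ automatically lies in $B$, hence is a fixed point of $G$. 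This avoids the contradiction argument you outline at the end.
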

The next theorem is very useful to the application of the
Topological Transversality Theorem. Its proof can be found in
\cite{livroFPT}.
\begin{theorem}
Let $U$ be an open subset of a convex set $C\subset\mathcal{B}$,
and let $(\bar{U},\partial U)$ be the pair consisting of the
closure of $U$ in $C$ and the boundary of $U$ in $C$. Then, for
any $u_0\in U$, the constant map $F|\bar{U}=u_0$ is essential in
$\mathcal{C}_{\partial U}(\bar{U},C)$.
\end{theorem}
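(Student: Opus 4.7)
The plan is to verify essentiality directly from the definition. Fix any $G\in\mathcal{C}_{\partial U}(\bar{U},C)$ with $G|_{\partial U}=F|_{\partial U}\equiv u_0$; the task is to produce a fixed point of $G$. Note that the fixed-point-free requirement on $\partial U$ is automatic, since $G(x)=u_0\in U$ while $\partial U\cap U=\emptyset$.

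The central idea is to extend $G$ continuously to all of $C$ by the constant $u_0$ and then appeal to Schauder's fixed point theorem. Specifically, I would set
\[
\tilde{G}(x)=\begin{cases} G(x), & x\in\bar{U},\\ u_0, & x\in C\setminus U.\end{cases}
\]
Since $\bar{U}$ and $C\setminus U$ are both closed in $C$, together cover $C$, and the two pieces agree on their intersection $\partial U$ (both equal $u_0$), the pasting lemma gives continuity of $\tilde{G}$ on $C$. Its image lies in $G(\bar{U})\cup\{u_0\}$, which is relatively compact because $G$ is compact; hence $\tilde{G}:C\to C$ is itself a compact map.

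Next I would form $K=\overline{\mathrm{co}}\,\tilde{G}(C)$, which by Mazur's theorem is compact and which, by convexity and closedness of $C$, lies inside $C$. Then $\tilde{G}(K)\subset\tilde{G}(C)\subset K$, so Schauder's fixed point theorem yields $x_\ast\in K$ with $\tilde{G}(x_\ast)=x_\ast$. The final step is to rule out $x_\ast\in C\setminus U$: in that case $\tilde{G}(x_\ast)=u_0$, forcing $x_\ast=u_0\in U$, a contradiction; and $x_\ast\in\partial U$ leads by the same reasoning to $x_\ast=u_0\in U$, again impossible. Hence $x_\ast\in U$, and consequently $G(x_\ast)=\tilde{G}(x_\ast)=x_\ast$, so $G$ has a fixed point and $F$ is essential.

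The main obstacle is the gluing step: one must check that the two partial definitions of $\tilde{G}$ combine into a single continuous compact self-map of $C$, which hinges on the fact that $G$ already takes the value $u_0$ on $\partial U$. Once this extension is secured, the rest is a textbook application of Schauder's theorem together with the simple topological observation that any fixed point of $\tilde{G}$ outside $U$ must coincide with the interior point $u_0$, which is absurd.
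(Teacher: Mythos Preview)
The paper does not give its own proof of this theorem; it simply cites Granas and Dugundji's \emph{Fixed Point Theory}. Your argument is precisely the standard one found in that reference: extend $G$ to all of $C$ by the constant $u_0$ via the pasting lemma, apply Schauder's theorem to the resulting compact self-map of $C$, and observe that any fixed point lying outside $U$ would be forced to equal $u_0\in U$, a contradiction.

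One small quibble: you invoke ``closedness of $C$'' to conclude $K=\overline{\mathrm{co}}\,\tilde{G}(C)\subset C$, but the paper (following Granas--Dugundji) only assumes $C$ is convex. In that framework a compact map into $C$ by definition has image with compact closure \emph{in} $C$, and the version of Schauder's theorem used there applies to arbitrary convex subsets of a normed space, so the argument still goes through without the extra hypothesis. In any case, in the paper's actual application $C=\{u\in C^1[a,b]_{\mathbb{T}}:u(a)=0\}$ is closed, so the point is moot there.
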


If more details are needed about fixed point theory we recommend
the books \cite{BR,livroFPT}.

Here, as usual, $C([a,b]_{\mathbb{T}},\mathbb{R})$ [sometimes we
will only write $C[a,b]_{\mathbb{T}}$] denotes the set of all real
valued continuous functions on $[a,b]_{\mathbb{T}}$ and
$C^1([a,b]_{\mathbb{T}},\mathbb{R})$ the set of all
$\Delta$-differentiable functions whose derivative is continuous
on $[a,b]^\kappa_{\mathbb{T}}$, and we equip the spaces
$C([a,b]_{\mathbb{T}},\mathbb{R})$,
$C^1([a,b]_{\mathbb{T}},\mathbb{R})$ with the norms
${\|u\|}_0=\sup_{t\in[a,b]_\mathbb{T}}|u(t)|$,
${\|u\|}_1=\sup_{t\in[a,b]_\mathbb{T}}|u(t)|+\sup_{t\in[a,b]_\mathbb{T}^\kappa}|u^\Delta(t)|$,
respectively.

To make use of the Topological Transversality Theorem it is
essential to obtain a priori bounds on the possible solutions of
the equation in study. To achieve that, the next lemma is
indispensable.
\begin{lemma}\label{lem0}
Let $f,g\in C([a,b]^\kappa_\mathbb{T},\mathbb{R}_0^+)$ and $u,p\in
C([a,b]_\mathbb{T},\mathbb{R}_0^+)$, being $p(t)$ positive and
nondecreasing on $[a,b]_\mathbb{T}$, $k\in
C([a,b]^\kappa_\mathbb{T}\times[a,b]^{\kappa^2}_\mathbb{T},\mathbb{R}_0^+)$,
and $c\in\mathbb{R}_0^+$. Moreover, let $w,\tilde{w}\in
C(\mathbb{R}_0^+,\mathbb{R}_0^+)$ be nondecreasing functions with
$\{w,\tilde{w}\}(x)>0$ for $x>0$. Let $B,D>0$ and $M=(b-a)D$.
Define a function $G$ by,
\begin{equation*}
G(x)=\int_1^x\frac{1}{w(\max\{s,M\tilde{w}(s)\})}ds,\quad x>0,
\end{equation*}
and assume that $\lim_{x\rightarrow\infty}G(x)=\infty$. If, for
all $t\in[a,b]_\mathbb{T}$, the following inequality holds
\begin{equation*}
u(t)\leq p(t)+B\int_a^{t}w\left(\max\left\{u(s)+c,D\int_a^{s}
\tilde{w}(u(\tau)+c)\Delta\tau\right\}\right)\Delta s,
\end{equation*}
then,
\begin{equation*}
u(t)\leq G^{-1}\left[G(p(t)+c)+B(t-a)\right]-c,\quad
t\in[a,b]_\mathbb{T}.
\end{equation*}
\end{lemma}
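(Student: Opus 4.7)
The plan is to reduce this to Lemma \ref{lemimp} by introducing a suitable auxiliary function and controlling the nested integral.

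First I would fix an arbitrary $T \in [a,b]_\mathbb{T}$ and, for $t \in [a,T]_\mathbb{T}$, define
$$v(t) = p(T) + c + B\int_a^{t} w\!\left(\max\!\left\{u(s)+c,\, D\int_a^{s}\tilde{w}(u(\tau)+c)\Delta\tau\right\}\right)\Delta s.$$
Since $p$ is nondecreasing, the hypothesis gives $u(t)+c \leq v(t)$ on $[a,T]_\mathbb{T}$, and $v$ itself is rd-continuously $\Delta$-differentiable with $v^\Delta \geq 0$, hence nondecreasing, with $v(a) = p(T)+c > 0$.

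Next I would collapse the nested expression. Because $v$ is nondecreasing and $\tilde{w}$ is nondecreasing, for $\tau \leq s \leq t$ we have $\tilde{w}(u(\tau)+c) \leq \tilde{w}(v(\tau)) \leq \tilde{w}(v(t))$, so
$$D\int_a^{s}\tilde{w}(u(\tau)+c)\Delta\tau \leq D(b-a)\tilde{w}(v(t)) = M\tilde{w}(v(t)),$$
while $u(s)+c \leq v(s) \leq v(t)$. Therefore
$$v^\Delta(t) \leq B\, w\!\left(\max\{v(t),\, M\tilde{w}(v(t))\}\right) = B\,\bar{g}(v(t)),$$
where $\bar{g}(x) := w(\max\{x, M\tilde{w}(x)\})$. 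The key observations are that $\bar{g}$ is continuous, strictly positive on $(0,\infty)$ (since $\max\{x,M\tilde{w}(x)\}>0$ when $x>0$ and $w$ is positive on $(0,\infty)$), and nondecreasing (as a composition of nondecreasing maps).

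At this point Lemma \ref{lemimp} applies directly with $r=v$, $g=\bar g$, $x_0 = 1$, yielding
$$G(v(t)) \leq G(v(a)) + \int_a^{t}\frac{v^\Delta(\tau)}{\bar g(v(\tau))}\Delta\tau \leq G(p(T)+c) + B(t-a).$$
The assumption $\lim_{x\to\infty}G(x)=\infty$ together with strict monotonicity of $G$ guarantees that $G^{-1}$ is defined on $[G(v(a)),\infty)$, so taking $t=T$ and applying $G^{-1}$ gives $v(T) \leq G^{-1}\!\left[G(p(T)+c)+B(T-a)\right]$. Since $u(T)+c \leq v(T)$ and $T\in[a,b]_\mathbb{T}$ was arbitrary, the desired estimate follows. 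The only delicate point is verifying that $\bar g$ satisfies the hypotheses of Lemma \ref{lemimp}, but this reduces to bookkeeping about monotonicity and positivity.
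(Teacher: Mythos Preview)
Your proof is correct and follows essentially the same route as the paper's: fix an arbitrary endpoint, freeze $p$ at that endpoint, introduce the nondecreasing majorant, bound the inner integral by $M\tilde w$ of the majorant, and then invoke Lemma~\ref{lemimp}. The only cosmetic difference is that you fold the constant $c$ into the definition of $v$ (so $u+c\le v$), whereas the paper defines $z$ without the $c$ and works with $z+c$ at the end; both lead to the same application of Lemma~\ref{lemimp}.
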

\begin{proof}
We start by noting that the result trivially holds for $t=a$.
Consider an arbitrary number $t_0\in(a,b]_\mathbb{T}$ and define a
positive and nondecreasing function $z(t)$ on $[a,t_0]_\mathbb{T}$
by
\begin{equation*}
z(t)=p(t_0)+B\int_a^{t}w\left(\max\left\{u(s)+c,D\int_a^{s}
\tilde{w}(u(\tau)+c)\Delta\tau\right\}\right)\Delta s.
\end{equation*}
Then, $z(a)=p(t_0)$, for all $t\in[a,t_0]_\mathbb{T}$ we have
$u(t)\leq z(t)$, and
\begin{align}
z^\Delta(t)&=Bw\left(\max\left\{u(t)+c,D\int_a^{t}
\tilde{w}(u(\tau)+c)\Delta\tau\right\}\right)\nonumber\\
&\leq Bw\left(\max\left\{u(t)+c,D(b-a)
\tilde{w}(z(t)+c)\right\}\right)\nonumber\\
&=Bw\left(\max\left\{u(t)+c,M
\tilde{w}(z(t)+c)\right\}\right),\nonumber
\end{align}
for all $t\in[a,t_0]_\mathbb{T}^\kappa$. Hence,
\begin{equation}
\frac{z^\Delta(t)}{w\left(\max\left\{u(t)+c,M
\tilde{w}(z(t)+c)\right\}\right)}\leq B\nonumber,
\end{equation}
and, after integrating from $a$ to $t$ and with the help of Lemma
\ref{lemimp}, we get
\begin{equation}
G(z(t)+c)\leq G(z(a)+c)+B(t-a),\quad
t\in[a,t_0]_{\mathbb{T}}\nonumber.
\end{equation}
In view of the hypothesis on function $G$, we may write
\begin{equation}
z(t)\leq G^{-1}\left[G(z(a)+c)+B(t-a)\right]-c.\nonumber
\end{equation}
Therefore,
\begin{equation}
u(t)\leq G^{-1}\left[G(p(t_0)+c)+B(t-a)\right]-c,\nonumber
\end{equation}
for all $t\in[a,t_0]_{\mathbb{T}}$. Setting $t=t_0$ in the above
inequality and having in mind that $t_0$ is arbitrary, we conclude
the proof.
\end{proof}

\begin{remark}
We note that by item 1 of Theorem \ref{teorema0} there is an
inclusion of $C^1[a,b]_{\mathbb{T}}$ into $C[a,b]_{\mathbb{T}}$.
\end{remark}

\begin{theorem}\label{compl}
The embedding $j:C^1[a,b]_{\mathbb{T}}\rightarrow
C[a,b]_{\mathbb{T}}$ is completely continuous.
\end{theorem}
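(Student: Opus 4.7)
The plan is to establish the two ingredients of complete continuity for a linear embedding: continuity and the property that bounded sets are mapped to relatively compact sets. The continuity part is immediate: for any $u \in C^1[a,b]_\mathbb{T}$ one has
$$\|j(u)\|_0 = \sup_{t\in[a,b]_\mathbb{T}}|u(t)| \leq \sup_{t\in[a,b]_\mathbb{T}}|u(t)| + \sup_{t\in[a,b]_\mathbb{T}^\kappa}|u^\Delta(t)| = \|u\|_1 \, ,$$
so $j$ is a bounded linear operator with $\|j\| \leq 1$, and in particular continuous.

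For compactness, I would take an arbitrary bounded sequence $\{u_n\} \subset C^1[a,b]_\mathbb{T}$ with $\|u_n\|_1 \leq M$ for some $M > 0$, and show that $\{u_n\}$ has a subsequence converging in the $\|\cdot\|_0$-norm. First, uniform boundedness of $\{u_n\}$ in $C[a,b]_\mathbb{T}$ is immediate since $\|u_n\|_0 \leq \|u_n\|_1 \leq M$. Second, equicontinuity follows from the mean value theorem on time scales (Theorem~\ref{meanvaluethm}): given $s,t \in [a,b]_\mathbb{T}$ with $s<t$, there exist $\xi_n, \tau_n \in [s,t)_\mathbb{T}$ such that
$$u_n^\Delta(\xi_n) \leq \frac{u_n(t)-u_n(s)}{t-s} \leq u_n^\Delta(\tau_n) \, ,$$
from which
$$|u_n(t)-u_n(s)| \leq (t-s)\sup_{r\in[a,b]_\mathbb{T}^\kappa}|u_n^\Delta(r)| \leq M(t-s) \, .$$
Thus $\{u_n\}$ is in fact uniformly Lipschitz with constant $M$, and so equicontinuous on $[a,b]_\mathbb{T}$.

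Finally, I would invoke the Arzelà--Ascoli theorem on time scales (the standard version for $C([a,b]_\mathbb{T},\mathbb{R})$, which follows from the classical statement once one notes that $[a,b]_\mathbb{T}$ is a compact metric space) to conclude that $\{u_n\}$ admits a subsequence $\{u_{n_k}\}$ converging uniformly on $[a,b]_\mathbb{T}$, i.e., in the $\|\cdot\|_0$-norm. This shows that $j$ maps the bounded set $\{\|u\|_1 \leq M\}$ to a relatively compact subset of $C[a,b]_\mathbb{T}$, which together with linearity and continuity yields complete continuity. The only nonroutine point is the appeal to Arzelà--Ascoli in the time scales setting; since $[a,b]_\mathbb{T}$ is a closed bounded subset of $\mathbb{R}$ with the inherited metric, this is a direct transcription of the classical result and requires no new machinery.
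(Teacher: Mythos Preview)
Your proof is correct and follows essentially the same approach as the paper: both establish uniform boundedness directly from the norm inequality, derive equicontinuity via the time-scales mean value theorem (Theorem~\ref{meanvaluethm}) to get the Lipschitz bound $|u(t)-u(s)| \leq M|t-s|$, and then invoke Arzel\`a--Ascoli on the compact set $[a,b]_\mathbb{T}$. The only minor differences are cosmetic: you phrase compactness in terms of sequences rather than bounded sets, and you explicitly verify continuity of $j$ (the paper omits this, presumably because for a linear map compactness already implies boundedness).
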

\begin{proof}
Let $B$ be a bounded set in $C^1[a,b]_{\mathbb{T}}$. Then, there
exists $M>0$ such that ${\|x\|}{_1}\leq M$ for all $x\in B$. By
the definition of the norm ${\|\cdot\|}{_1}$, we have that
$\sup_{t\in[a,b]_{\mathbb{T}}}|x(t)|\leq M$, hence $\|x\|_0\leq
M$, i.e., $B$ is bounded in $C[a,b]_{\mathbb{T}}$. Let now
$\varepsilon>0$ be given and put $\delta=\frac{\varepsilon}{M}$.
Then, it is easily seen that, for $x\in B$ we have
$|x^\Delta(t)|\leq M$ for all $t\in[a,b]^\kappa_\mathbb{T}$. For
arbitrary $t_1$, $t_2\in[a,b]_\mathbb{T}$ with $t_1\neq t_2$, we
use Theorem \ref{meanvaluethm} to get
$$\frac{|x(t_1)-x(t_2)|}{|t_1-t_2|}\leq M,$$
i.e., for $|t_1-t_2|<\delta$ we have $|x(t_1)-x(t_2)|<\varepsilon$
and this proves that $B$ is equicontinuous on $C[a,b]_\mathbb{T}$
(the case $t_1=t_2$ is obvious). By the Arzela--Ascoli Theorem,
$B$ is relatively compact and therefore $j$ is completely
continuous.
\end{proof}

\begin{theorem}\label{teor}
Assume that the functions $F$ and $K$ introduced in \eqref{ip1}
satisfy
\begin{align}
\label{in1}|F(t,x,y)|&\leq
Bw(\max\{|x|,|y|\})+C,\\
\label{in2}|K(t,s,x)|&\leq D\tilde{w}(|x|),
\end{align}
where $B$, $C$ and $D$ are positive constants and $w,\tilde{w}$
are defined as in Lemma \ref{lem0}.

Then, the integrodynamic equation in \eqref{ip1} has a solution
$x\in C^1([a,b]_{\mathbb{T}},\mathbb{R})$.
\end{theorem}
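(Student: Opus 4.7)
The plan is to recast the IVP \eqref{ip1} as a fixed-point equation and apply the Topological Transversality Theorem (Theorem \ref{topo}). Set $\mathcal{B}=C^1([a,b]_\mathbb{T},\mathbb{R})$ and define the operator
$$T:C^1([a,b]_\mathbb{T},\mathbb{R})\to C^1([a,b]_\mathbb{T},\mathbb{R}),\qquad T(x)(t)=A+\int_a^t F\!\left(s,x(s),\int_a^s K[s,\tau,x(\tau)]\Delta\tau\right)\!\Delta s.$$
A fixed point of $T$ is exactly a $C^1$-solution of \eqref{ip1}. To embed $T$ in a homotopy, I would consider $H_\lambda(x)(t)=A+\lambda\int_a^t F(\cdots)\Delta s$ for $\lambda\in[0,1]$, so that $H_0\equiv A$ is constant and $H_1=T$.

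The first substantive step is to obtain a priori bounds. Suppose $x=H_\lambda(x)$ for some $\lambda\in[0,1]$. Using \eqref{in1}, \eqref{in2}, and the monotonicity of $w$, one estimates
$$|x(t)|\leq |A|+C(b-a)+B\int_a^t w\!\left(\max\!\left\{|x(s)|,\,D\int_a^s\tilde w(|x(\tau)|)\Delta\tau\right\}\right)\!\Delta s.$$
This matches precisely the hypothesis of Lemma~\ref{lem0} with $u=|x|$, $p\equiv |A|+C(b-a)$ and $c=0$, yielding $\|x\|_0\leq G^{-1}\!\bigl[G(|A|+C(b-a))+B(b-a)\bigr]=:M_0$ (independent of $\lambda$). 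Consequently $|x^\Delta(t)|\leq Bw(\max\{M_0,\,D(b-a)\tilde w(M_0)\})+C=:M_1$, so $\|x\|_1\leq M_0+M_1=:R_0$. Choose $R>R_0$ and $U=\{x\in C^1([a,b]_\mathbb{T},\mathbb{R}):\|x\|_1<R\}$: this is an open subset of $C^1$ containing the constant function $A$, and by construction the homotopy $H_\lambda$ is fixed-point free on $\partial U$ for every $\lambda\in[0,1]$.

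Next I would verify that $H$ is a compact homotopy on $\bar U\times[0,1]$. The key is to factor $H_\lambda=N_\lambda\circ j$, where $j:C^1([a,b]_\mathbb{T},\mathbb{R})\to C([a,b]_\mathbb{T},\mathbb{R})$ is the embedding and $N_\lambda:C([a,b]_\mathbb{T},\mathbb{R})\to C^1([a,b]_\mathbb{T},\mathbb{R})$ is given by $N_\lambda(y)(t)=A+\lambda\int_a^t F(s,y(s),\int_a^s K[s,\tau,y(\tau)]\Delta\tau)\Delta s$. Continuity of $F$ and $K$ together with standard dominated-convergence-type arguments on $[a,b]_\mathbb{T}$ make $N_\lambda$ jointly continuous in $(\lambda,y)$, while $j$ is completely continuous by Theorem~\ref{compl}. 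Thus $H_\lambda$ maps bounded sets of $C^1$ into relatively compact sets of $C^1$ and $(x,\lambda)\mapsto H_\lambda(x)$ is a compact map on $\bar U\times[0,1]$.

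Finally, applying Theorem~\ref{topo} with the pair $(\bar U,\partial U)$ in the convex set $C^1([a,b]_\mathbb{T},\mathbb{R})$: the constant map $H_0\equiv A$ is essential in $\mathcal{C}_{\partial U}(\bar U,C^1)$ since $A\in U$, and $H_0\simeq H_1=T$ via the compact, boundary-fixed-point-free homotopy $H$. Hence $T$ is essential, and in particular has a fixed point in $\bar U$, producing the desired solution $x\in C^1([a,b]_\mathbb{T},\mathbb{R})$ of \eqref{ip1}. The main obstacle I anticipate is the a priori estimate: one must carefully align the constant term $C(b-a)$ with the auxiliary function $p(t)$ of Lemma~\ref{lem0} (rather than letting it sit under the integral) so that the conclusion of that lemma delivers a bound $M_0$ uniform in $\lambda$; the compactness step is then routine via Theorem~\ref{compl}.
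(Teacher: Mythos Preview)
Your proposal is correct and follows essentially the same strategy as the paper: a priori bounds via Lemma~\ref{lem0}, compactness via the embedding Theorem~\ref{compl}, and existence via the Topological Transversality Theorem~\ref{topo}. The only organizational differences are that the paper first shifts to $y=x-A$ so as to work in the convex subset $C=\{u\in C^1:u(a)=0\}$ and factors the homotopy as $H_\lambda=L^{-1}\mathcal{F}_\lambda j$ with $Lu=u^\Delta$, and it applies Lemma~\ref{lem0} with $p(t)=C(t-a)$ and $c=|A|$ rather than your constant $p\equiv |A|+C(b-a)$ and $c=0$; both choices are legitimate and lead to the same conclusion.
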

\begin{proof}
We start considering the following auxiliary problem,
\begin{equation}\label{seilaa1}
y^\Delta(t)=F\left(t,y(t)+A,\int_a^{t} K[t,s,y(s)+A]\Delta
s\right),\quad y(a)=0,\quad t\in[a,b]^\kappa_{\mathbb{T}},
\end{equation}
and showing that it has a solution. To do this, we first establish
a priori bounds to the (possible) solutions of the family of
problems
\begin{equation}\label{family}
y^\Delta(t)=\lambda F\left(t,y(t)+A,\int_a^{t} K[t,s,y(s)+A]\Delta
s\right),\quad y(a)=0,\quad t\in[a,b]^\kappa_{\mathbb{T}},
\end{equation}
independently of $\lambda\in[0,1]$.

Integrating both sides of the equation in \eqref{family} on
$[a,t]_{\mathbb{T}}$, we obtain,
$$y(t)=\lambda\int_a^t F\left(s,y(s)+A,\int_a^{s}
K[s,\tau,y(\tau)+A]\Delta\tau\right)\Delta s,$$ for all
$t\in[a,b]_{\mathbb{T}}$. Applying the modulus function to both
sides of the last equality we obtain, using the triangle
inequality and inequalities \eqref{in1} and \eqref{in2},
\begin{equation*}
|y(t)|\leq p(t)+B\int_a^{t}w\left(\max\left\{|y(s)+A|,D\int_a^{s}
\tilde{w}(|y(\tau)+c|)\Delta\tau\right\}\right)\Delta s,
\end{equation*}
with $p(t)=C(t-a)$. By Lemma \ref{lem0} (with $u(t)=|y(t)|$,
$c=|A|$), we deduce that (note also that
$|y(\cdot)+A|\leq|y(\cdot)|+|A|$)
\begin{equation}\label{in55555}
|y(t)|\leq G^{-1}\left[G(p(t)+|A|)+B(t-a)\right]-|A|,
\end{equation}
for all $t\in[a,b]_{\mathbb{T}}$. Denote the right-hand side of
\eqref{in55555} by $R(t)$.

Let $\mathcal{B}=C^1[a,b]_{\mathbb{T}}$ be the Banach space
equipped with the norm $\|\cdot\|_1$ and define a set $C=\{u\in
C^1[a,b]_{\mathbb{T}}:u(a)=0\}$. Observe that $C$ is a convex
subset of $\mathcal{B}$.

Define the linear operator $L:C\rightarrow C[a,b]_{\mathbb{T}}$ by
$Lu=u^\Delta$. It is clearly bijective with a continuous inverse
$L^{-1}:C[a,b]_{\mathbb{T}}\rightarrow C$. Let $M_0$ and $M_1$ be
defined by
\begin{align}
M_0&=R(b),\nonumber\\
M_1&=\sup_{t\in[a,b]_{\mathbb{T}},|y|\leq
M_0}\left|F\left(t,y,\int_a^{t} K[t,s,y]\Delta
s\right)\right|.\nonumber
\end{align}
Consider the family of maps
$\mathcal{F}_\lambda:C[a,b]_{\mathbb{T}}\rightarrow
C[a,b]_{\mathbb{T}}$, $0\leq\lambda\leq 1$, defined by
$$(\mathcal{F}_{\lambda}y)(t)=\lambda F\left(t,y(t)+A,\int_a^{t}
K[t,s,y(s)+A]\Delta s\right),$$ and the completely continuous
embedding $j:C\rightarrow C[a,b]_{\mathbb{T}}$ (it is easily seen
that the restriction here used of $j$ of Theorem \ref{compl} is
also completely continuous). Let us consider the set $Y=\{y\in
C:\|y\|_1\leq r\}$ with $r=1+M_0+M_1$. Then we can define a
homotopy $H_\lambda:Y\rightarrow C$ by
$H_\lambda=L^{-1}\mathcal{F}_\lambda j$. Since $L^{-1}$ and
$\mathcal{F}_\lambda$ are continuous and $j$ is completely
continuous, $H$ is a compact homotopy. Moreover, it is fixed point
free on the boundary of $Y$. Since $H_0$ is the zero map, it is
essential. Because $H_0\simeq H_1$, Theorem \ref{topo} implies
that $H_1$ is also essential. In particular, $H_1$ has a fixed
point which is a solution of \eqref{seilaa1}. To finish the proof,
let $y\in C^1[a,b]_{\mathbb{T}}$ be a solution of \eqref{seilaa1}
and define the function $x(t)=y(t)+A$, $t\in[a,b]_{\mathbb{T}}$.
Then, it is easily seen that $x(a)=A$ and
$$x^\Delta(t)=F\left(t,x(t),\int_a^{t}
K[t,s,x(s)]\Delta s\right),\quad t\in[a,b]^\kappa_{\mathbb{T}},$$
i.e., $x\in C^1[a,b]_{\mathbb{T}}$ is a solution of \eqref{ip1}.
\end{proof}

Theorem \ref{teor} was inspired by the work of Adrian Constantin
in \cite{contantin}. It provides a nontrivial generalization to
the one presented there, and as a consequence, Theorem \ref{teor}
seems to be new even if we let the time scale to be
$\mathbb{T}=\mathbb{R}$. Indeed, let
\[ F(t,x,y) =|y|+ \left\{ \begin{array}{ll}
(x+1)\ln(x+1) & \mbox{if $x \geq 0$};\\
0 & \mbox{if $x < 0$}.\end{array} \right. \] We will show that,
for all $f,h\in C([a,b]^\kappa_\mathbb{T},\mathbb{R}_0^+)$,
$|F(t,x,y)|>f(t)|x|+|y|+h(t)$ for some
$(t,x,y)\in[a,b]^\kappa_\mathbb{T}\times\mathbb{R}^2$ [in
\cite{contantin} the assumption on $F$ was that $|F(t,x,y)|\leq
f(t)|x|+|y|+h(t)$]. For that, suppose the contrary, i.e., assume
that there exist $f,h\in
C([a,b]^\kappa_\mathbb{T},\mathbb{R}_0^+)$ such that
$|F(t,x,y)|\leq f(t)|x|+|y|+h(t)$ for all
$(t,x,y)\in[a,b]^\kappa_\mathbb{T}\times\mathbb{R}^2$. In
particular, for arbitrary $x>0$, we have that,
$$|y|+(x+1)\ln(x+1)=|F(t,x,y)|\leq f(t)x+|y|+h(t),$$ which is
equivalent to
$$(x+1)\ln(x+1)\leq f(t)x+h(t),$$
or
\begin{equation}\label{in0}
x[\ln(x+1)-f(t)]+\ln(x+1)\leq h(t).
\end{equation}
Now, we fix $t\in[a,b]^\kappa_\mathbb{T}$ and let
$x\rightarrow\infty$ in both sides of \eqref{in0} getting a
contradiction.

Note that the function $\bar{w}(x)=(x+1)\ln(x+1)$, $x\geq 0$, is
nondecreasing and is such that
$\int_1^\infty\frac{1}{\bar{w}(s)}=\infty$. Moreover, for $F$ as
above, we have
\begin{align}
|F(t,x,y)|&\leq|y|+\bar{w}(|x|)\nonumber\\
&\leq\max\{|x|,|y|\}+\bar{w}(\max\{|x|,|y|\})\nonumber.
\end{align}
If we define $w(x)=\max\{x,\bar{w}(x)\}$, $x\geq 0$, then
$w(x)\leq x + \bar{w}(x)$ and (see \cite{constantin})
$\int_1^\infty\frac{1}{w(s)}ds=\infty$ which shows that we can
apply our result to such a function $F$.

\begin{cor}
Assume that $p\in C([a,b]^\kappa_\mathbb{T},\mathbb{R})$. Then,
the initial value problem
\begin{equation}\label{eq5}
x^\Delta(t)=p(t)x(t),\quad x(a)=A,
\end{equation}
has a unique solution $x\in C^1([a,b]_\mathbb{T},\mathbb{R})$.
\end{cor}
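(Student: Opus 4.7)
The plan is to obtain \emph{existence} from Theorem~\ref{teor} by viewing \eqref{eq5} as a (degenerate) instance of the integrodynamic problem \eqref{ip1}, and to establish \emph{uniqueness} via Gronwall's inequality on time scales (Theorem~\ref{gronw}).

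For existence, I would set $F(t,x,y) := p(t)x$ and $K(t,s,x) := 0$ in \eqref{ip1}, so that \eqref{ip1} collapses to \eqref{eq5}. Both $F$ and $K$ are continuous since $p$ is. Because $p$ is continuous on the compact time-scales interval $[a,b]^\kappa_\mathbb{T}$, the quantity $P := \sup_t |p(t)|$ is finite. Taking $w(u) = u$, $\tilde w(u) \equiv 1$, $B := \max\{P,1\}$, and any positive constants $C, D$, the bounds \eqref{in1}--\eqref{in2} hold because
\[
|F(t,x,y)| = |p(t)|\,|x| \leq B\max\{|x|,|y|\} + C \quad\text{and}\quad |K(t,s,x)| = 0 \leq D\,\tilde w(|x|).
\]
With these choices, the function $G$ appearing in Lemma~\ref{lem0} takes the form $G(x) = \int_1^x ds/\max\{s,c\}$ for a positive constant $c$ (namely $c = (b-a)D$), and therefore $\lim_{x\to\infty}G(x) = \infty$ (logarithmic divergence). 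All hypotheses of Theorem~\ref{teor} being met, it furnishes a solution $x \in C^1([a,b]_\mathbb{T},\mathbb{R})$ of \eqref{eq5}.

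For uniqueness, suppose $x_1, x_2 \in C^1([a,b]_\mathbb{T},\mathbb{R})$ both solve \eqref{eq5} and put $y := x_1 - x_2$. Then $y^\Delta = p\,y$ and $y(a) = 0$, so $\Delta$-integrating from $a$ to $t$ and taking moduli gives
\[
|y(t)| \leq \int_a^t |p(s)|\,|y(s)|\,\Delta s, \qquad t \in [a,b]_\mathbb{T}.
\]
Since $|p|$ is continuous, nonnegative, and $1 + \mu(t)|p(t)| \geq 1 > 0$, we have $|p| \in \mathcal{R}^+$. Gronwall's inequality (Theorem~\ref{gronw}) applied with additive term identically zero then forces $|y(t)| \leq 0$ on $[a,b]_\mathbb{T}$, whence $x_1 \equiv x_2$.

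The only point demanding any care is the verification that $\lim_{x\to\infty}G(x) = \infty$ in the existence step, but with the linear choice $w(u) = u$ this is immediate; no substantive obstacle is anticipated. The corollary is essentially a direct illustration of the two principal tools developed in this section, applied to the simplest linear dynamic equation.
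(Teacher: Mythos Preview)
Your proposal is correct and follows essentially the same route as the paper: existence via Theorem~\ref{teor} with $F(t,x,y)=p(t)x$ (the paper is terser and does not spell out the choices of $w,\tilde w,B,C,D$ or the divergence of $G$, but your verifications are exactly what is implicit there), and uniqueness via Gronwall's inequality applied to the difference of two solutions after noting $|p|\in\mathcal{R}^+$. The only cosmetic difference is that the paper integrates each solution first and then subtracts, whereas you subtract first and then integrate; the two are equivalent.
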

\begin{proof}
Define $F(t,x,y)=p(t)x$, for
$(t,x)\in[a,b]^\kappa_\mathbb{T}\times\mathbb{R}$. Then,
$|F(t,x,y)|=|p(t)||x|$ and the existence part follows by Theorem
\ref{teor}.

Suppose now that $x,y\in C^1([a,b]_\mathbb{T},\mathbb{R})$ both
satisfy \eqref{eq5}. An integration on $[a,t]_\mathbb{T}$ yields,
\begin{align}
x(t)&=A+\int_a^t p(s)x(s)\Delta s\nonumber,\\
y(t)&=A+\int_a^t p(s)y(s)\Delta s\nonumber.
\end{align}
Hence,
$$|x(t)-y(t)|\leq\int_a^t |p(s)||x(s)-y(s)|\Delta s.$$
Using Gronwall's inequality (cf. Theorem \ref{gronw}) [note that
$|p(s)|\in\mathcal{R}^+$] it follows that $|x(t)-y(t)|\leq 0$ and
finally that $x(t)=y(t)$ for all $t\in[a,b]_\mathbb{T}$. Hence,
the solution is unique.
\end{proof}
\begin{remark}
S. Hilger in his seminal work \cite{Hilger90} proved (not only but
also) the existence of a unique solution to equation \eqref{eq5}
with $p\in C_\textrm{rd}([a,b]^\kappa_\mathbb{T},\mathbb{R})$
being regressive. In this paper we are requiring $p(t)$ to be
continuous but not a regressive function.
\end{remark}
In view of the previous remark it is interesting to think of what
happens when the function $p$ is not regressive. This is shown in
the following result.
\begin{prop}
Suppose that $p\in C([a,b]^\kappa_\mathbb{T},\mathbb{R})$ is not
regressive. Then, there exists $t_0\in[a,b]^\kappa_\mathbb{T}$
such that the solution of \eqref{eq5} satisfies $x(t)=0$ for all
$t\in[\sigma(t_0),b]_\mathbb{T}$.
\end{prop}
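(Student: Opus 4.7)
The plan is to exploit non-regressivity pointwise via the formula for $\Delta$-derivatives at right-scattered points, and then invoke the uniqueness half of the previous corollary to propagate the zero forward.

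First I would pick $t_0 \in [a,b]^\kappa_\mathbb{T}$ witnessing non-regressivity, i.e., $1+\mu(t_0)p(t_0)=0$. Since the equality $1=0$ is impossible, we necessarily have $\mu(t_0)>0$, so $t_0$ is right-scattered and $\sigma(t_0)>t_0$ belongs to $[a,b]_\mathbb{T}$. Using item 2 of Theorem~\ref{teorema0}, the solution $x$ satisfies
$$x^\Delta(t_0)=\frac{x(\sigma(t_0))-x(t_0)}{\mu(t_0)},$$
which combined with $x^\Delta(t_0)=p(t_0)x(t_0)$ gives the key identity $x(\sigma(t_0))=(1+\mu(t_0)p(t_0))x(t_0)=0$.

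Next I would show $x\equiv 0$ on $[\sigma(t_0),b]_\mathbb{T}$. The function $x$, restricted to this subinterval, is a $C^1$ solution of the initial value problem $y^\Delta(t)=p(t)y(t)$ with $y(\sigma(t_0))=0$, and the zero function is obviously another such solution. The uniqueness portion of the previous corollary applied on $[\sigma(t_0),b]_\mathbb{T}$ (its proof goes through verbatim since $|p|$ is continuous and automatically lies in $\mathcal{R}^+$, because $1+\mu(t)|p(t)|\geq 1>0$) forces $x(t)=0$ for every $t\in[\sigma(t_0),b]_\mathbb{T}$.

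There is essentially no hard step: the whole argument is the observation that non-regressivity forces the one-step jump $x(\sigma(t_0))$ to vanish, after which uniqueness does all the work. The only point requiring mild care is verifying that the previous uniqueness result is applicable on the sub-time-scale $[\sigma(t_0),b]_\mathbb{T}$ with initial point $\sigma(t_0)$; this is immediate since both the existence/uniqueness corollary and Gronwall's inequality are stated for arbitrary $a\in\mathbb{T}$.
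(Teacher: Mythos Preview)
Your proof is correct and follows essentially the same approach as the paper: find a point $t_0$ where $1+\mu(t_0)p(t_0)=0$, use the jump relation to conclude $x(\sigma(t_0))=0$, then invoke uniqueness on $[\sigma(t_0),b]_\mathbb{T}$. The only cosmetic difference is that you compute $x(\sigma(t_0))=(1+\mu(t_0)p(t_0))x(t_0)=0$ directly from the jump formula, whereas the paper first rewrites the dynamic equation as $x^\Delta(t)[1+\mu(t)p(t)]=p(t)x(\sigma(t))$ (via \eqref{transfor}) and then uses $p(t_0)\neq 0$ to deduce $x(\sigma(t_0))=0$; these are equivalent manipulations of the same identity.
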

\begin{proof}
Since $p$ is not regressive, there exists a point
$t_0\in[a,b]^\kappa_\mathbb{T}$ such that $1+\mu(t_0)p(t_0)=0$.
This immediately implies that $\mu(t_0)\neq 0$ and $p(t_0)\neq 0$.
Let $x\in C^1([a,b]_\mathbb{T},\mathbb{R})$ be the unique solution
of \eqref{eq5}. Using formula \eqref{transfor} it is easy to
derive from \eqref{eq5} that
$x^\Delta(t)[1+\mu(t)p(t)]=p(t)x(\sigma(t))$ and in particular
that $x^\Delta(t_0)[1+\mu(t_0)p(t_0)]=p(t_0)x(\sigma(t_0))$. It
follows that $x(\sigma(t_0))=0$. Now note that, since $t_0$ is
right-scattered, we must have $\sigma(t_0)\neq a$. Moreover,
$\tilde{x}(t)=0$ satisfies $x^\Delta(t)=p(t)x(t)$ for all
$t\in[\sigma(t_0),b]_\mathbb{T}$. The uniqueness of the solution
completes the proof.
\end{proof}

\subsection{Some particular integrodynamic equations} \label{appl}

We now use Theorem \ref{teor} to prove existence of solution to an
integral equation describing some physical phenomena. We emphasize
that, for each time scale we get an equation, i.e., we can
construct various models in order to (hopefully) better describe
the phenomena in question.

\begin{theorem}\label{thm11111}
Let $0,b\in\mathbb{T}$ with $b>0$ and $L\in
C^1([0,b]_{\mathbb{T}},\mathbb{R}_0^+)$. Moreover, assume that a
function $M(t,s)\in
C([0,b]_{\mathbb{T}}\times[0,b]_{\mathbb{T}}^\kappa,\mathbb{R}_0^+)$
has continuous partial $\Delta$-derivative with respect to its
first variable [denoted by $M^{\Delta_t}(t,s)$] and
$M(\sigma(t),t)$ is continuous for all
$t\in[0,b]_{\mathbb{T}}^\kappa$. Suppose that the function
$\bar{w}\in C(\mathbb{R}_0^+,\mathbb{R}_0^+)$ is nondecreasing,
such that $\bar{w}(x)>0$ for all $x>0$ and
$\int_1^\infty\frac{1}{\bar{w}(s)}ds=\infty$. Then, the integral
equation,
\begin{equation}\label{eq0}
x(t)=L(t)+\int_0^{t}M(t,s)\bar{w}(x(s))\Delta s,\quad
t\in[0,b]_{\mathbb{T}},
\end{equation}
has a solution $x\in C^1([0,b]_{\mathbb{T}},\mathbb{R}_0^+)$.
\end{theorem}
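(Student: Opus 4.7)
The plan is to apply the Topological Transversality Theorem (Theorem~\ref{topo}) directly to the integral operator appearing on the right of \eqref{eq0}. Set $\mathcal{B}=C([0,b]_\mathbb{T},\mathbb{R})$ with the supremum norm $\|\cdot\|_0$, and define $T\colon\mathcal{B}\to\mathcal{B}$ by
$$T(x)(t)=L(t)+\int_0^t M(t,s)\,\bar{w}(|x(s)|)\,\Delta s.$$
Any fixed point of $T$ satisfies $x(t)\ge L(t)\ge 0$ because $L$, $M$, and $\bar{w}$ are all nonnegative, so the absolute value is superfluous and $x$ solves \eqref{eq0}; the hypotheses on $M$ together with Theorem~\ref{teork} further place $x=T(x)$ in $C^1([0,b]_\mathbb{T},\mathbb{R}_0^+)$, as required. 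Compactness of $T$ follows from the factorization $T=j\circ T'$, where $T'\colon\mathcal{B}\to C^1$ is continuous (using uniform continuity of $\bar{w}$ on bounded subsets of $\mathbb{R}_0^+$ and uniform boundedness of $M$, $M^{\Delta_t}$, and $M(\sigma(\cdot),\cdot)$ on their respective compact domains), and $j\colon C^1\to\mathcal{B}$ is the completely continuous embedding of Theorem~\ref{compl}.

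Next I would derive a uniform a priori bound on all solutions of the family $x=\lambda T(x)$, $\lambda\in[0,1]$. Every such $x$ is automatically nonnegative, so the shifted function
$$z(t)=\|L\|_0+1+\|M\|_0\int_0^t\bar{w}(x(s))\,\Delta s$$
is positive and nondecreasing with $x(t)\le z(t)$, and by the monotonicity of $\bar{w}$ it satisfies $z^\Delta(t)=\|M\|_0\bar{w}(x(t))\le\|M\|_0\bar{w}(z(t))$. Lemma~\ref{lemimp}, applied with $r=z$, $g=\bar{w}$, and base point $x_0=\|L\|_0+1$, yields $G(z(t))\le G(z(0))+\|M\|_0 b=\|M\|_0 b$, where $G(u)=\int_{x_0}^{u} ds/\bar{w}(s)$. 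The hypothesis $\int_1^\infty ds/\bar{w}(s)=\infty$ forces $G(u)\to\infty$ as $u\to\infty$, so inversion gives $z(t)\le R:=G^{-1}(\|M\|_0 b)<\infty$, and hence $\|x\|_0\le R$ uniformly in $\lambda$.

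With the bound in hand, set $U=\{x\in\mathcal{B}\colon\|x\|_0<R+1\}$ and consider the compact homotopy $H_\lambda=\lambda T$ on $\bar U$. The a priori estimate guarantees that $H_\lambda$ has no fixed point on $\partial U$ for any $\lambda\in[0,1]$, so $H_\lambda\in\mathcal{C}_{\partial U}(\bar U,\mathcal{B})$; since $H_0\equiv 0$ is the constant map at $0\in U$, it is essential, and Theorem~\ref{topo} transfers essentiality to $H_1=T$, producing the desired fixed point. The main obstacle is preparing the ground for Lemma~\ref{lemimp}: since $L$ may vanish identically, one cannot feed the more natural quantity $\|L\|_0+\|M\|_0\int_0^t\bar{w}(x(s))\Delta s$ into the lemma, which requires a strictly positive $r$ and positive base point. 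The harmless additive shift by $1$ built into $z$ cures this edge case at no cost, preserving both $x\le z$ and the monotone comparison $z^\Delta\le\|M\|_0\bar{w}(z)$ while making $z(0)>0$ available.
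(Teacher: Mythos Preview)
Your argument is correct, and it follows a genuinely different route from the paper. The paper does not apply topological transversality to the integral operator $T$; instead it \emph{differentiates} equation~\eqref{eq0} to obtain an integrodynamic equation of the form $x^\Delta(t)=F\bigl(t,x(t),\int_0^t K(t,s,x(s))\Delta s\bigr)$ with $F(t,x,y)=L^\Delta(t)+M(\sigma(t),t)\bar{w}(|x|)+y$ and $K(t,s,x)=M^{\Delta_t}(t,s)\bar{w}(|x|)$, invokes the already-proved Theorem~\ref{teor} (which packages the transversality machinery), and then integrates back using Theorem~\ref{teork} to recover a $C^1$ solution of the original integral equation. The a priori estimate there comes from Lemma~\ref{lem0}, with $w(x)=\max\{x,\bar w(x)\}$ playing the role of the comparison function.

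Your direct approach is cleaner for this particular statement: you avoid the detour through the derivative equation, and your a priori bound via Lemma~\ref{lemimp} (with the harmless additive shift to keep $z(0)>0$) is simpler than the two-variable comparison function used in the paper. The paper's approach, on the other hand, buys economy of exposition in context: Theorem~\ref{thm11111} is presented as an application of Theorem~\ref{teor}, so reducing to it makes the result a short corollary of machinery already built, and simultaneously illustrates the scope of that earlier theorem.
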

\begin{proof}
Let us define
\begin{align}
F(t,x,y)&=L^\Delta(t)+M(\sigma(t),t)\bar{w}(|x|)+y,\quad (t,x,y)\in[0,b]_{\mathbb{T}}^\kappa\times\mathbb{R}^2,\nonumber\\
K(t,s,x)&=M^{\Delta_t}(t,s)\bar{w}(|x|),\quad
(t,s,x)\in[0,b]_{\mathbb{T}}^\kappa\times[0,b]_{\mathbb{T}}^{\kappa^2}\times\mathbb{R}.\nonumber
\end{align}
Then, we have
\begin{align}
|F(t,x,y)|&\leq |L^\Delta(t)|+M(\sigma(t),t)\bar{w}(|x|)+|y|,\nonumber\\
|K(t,s,x)|&\leq |M^{\Delta_t}(t,s)|\bar{w}(|x|).\nonumber
\end{align}
Using Theorem \ref{teor} with
\begin{align}
B&>\max_{t\in[0,b]^\kappa_\mathbb{T}}M(\sigma(t),t),\nonumber\\
C&>\max_{t\in[0,b]^\kappa_\mathbb{T}}|L^\Delta(t)|,\nonumber\\
D&>\max_{(t,s)\in[0,b]^\kappa_\mathbb{T}\times[0,b]^{\kappa^2}_\mathbb{T}}|M^{\Delta_t}(t,s)|,\nonumber
\end{align}
$\tilde{w}(x)=\bar{w}(x)$ and $w(x)=\max\{x,\bar{w}(x)\}$ we
conclude that the equation
$$x^\Delta(t)=L^\Delta(t)+M(\sigma(t),t)\bar{w}(|x(t)|)+\int_0^{t}M^{\Delta_t}(t,s)\bar{w}(|x(s)|)\Delta s,$$
with initial value $x(0)=L(0)$ has a solution $x\in
C^1([0,b]_{\mathbb{T}},\mathbb{R})$. Now, an integration on
$[0,t]_{\mathbb{T}}$ gives, using Lemma \ref{teork},
$$x(t)=L(t)+\int_0^{t}M(t,s)\bar{w}(|x(s)|)\Delta s,\quad t\in[0,b]_{\mathbb{T}}.$$
By the assumptions on functions $L$, $M$ and $\bar{w}$ we conclude
that $x(t)\geq 0$ for all $t\in[0,b]_{\mathbb{T}}$, hence $x$ is a
nonnegative solution of \eqref{eq0}.
\end{proof}

\begin{cor}\label{coro0}
If in equation \eqref{eq0}, $L(t)>0$ on $[0,b]_{\mathbb{T}}$ and
$w=\bar{w}$ is continuously differentiable on $(0,\infty)$, then
the solution obtained by Theorem \ref{thm11111} is unique.
\end{cor}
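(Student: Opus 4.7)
The plan is to prove uniqueness via the standard Gronwall argument, with the key preliminary observation being that all nonnegative solutions stay strictly positive so we may invoke the $C^1$-regularity of $\bar w$ away from the origin. Suppose $x,y\in C^1([0,b]_{\mathbb{T}},\mathbb{R}_0^+)$ both satisfy \eqref{eq0}. Since $M(t,s)\geq 0$ and $\bar w\geq 0$, the integral term is nonnegative, and thus $x(t),y(t)\geq L(t)\geq m>0$ on $[0,b]_{\mathbb{T}}$, where $m:=\min_{t\in[0,b]_{\mathbb{T}}} L(t)>0$ exists by continuity of $L$ on the compact set $[0,b]_{\mathbb{T}}$. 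By continuity, $x$ and $y$ are also bounded above by some constant $N>0$.

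Next I would exploit the hypothesis that $\bar w\in C^1(0,\infty)$: on the compact subinterval $[m,N]\subset(0,\infty)$, $\bar w'$ is continuous and therefore bounded by some $K\geq 0$. The classical mean value theorem (applied pointwise in $s$, since $\bar w$ is a real function of a real variable) then yields
\begin{equation*}
|\bar w(x(s))-\bar w(y(s))|\leq K|x(s)-y(s)|,\quad s\in[0,b]_{\mathbb{T}}.
\end{equation*}
Let $M_0:=\max_{(t,s)}M(t,s)$, which is finite by continuity of $M$ on the compact set $[0,b]_{\mathbb{T}}\times[0,b]_{\mathbb{T}}^\kappa$, and set $u(t):=|x(t)-y(t)|$. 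Subtracting the two integral equations and applying the triangle inequality together with the Lipschitz estimate gives
\begin{equation*}
u(t)\leq \int_0^t M(t,s)\,|\bar w(x(s))-\bar w(y(s))|\,\Delta s\leq M_0 K\int_0^t u(s)\,\Delta s,\quad t\in[0,b]_{\mathbb{T}}.
\end{equation*}

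Finally, since the constant function $t\mapsto M_0K$ is nonnegative and trivially belongs to $\mathcal{R}^+$ (because $1+\mu(t)M_0K>0$), Gronwall's inequality on time scales (Theorem~\ref{gronw}) with $a(t)\equiv 0$ yields $u(t)\leq 0$, hence $u(t)=0$, i.e. $x\equiv y$ on $[0,b]_{\mathbb{T}}$. The main (mild) obstacle is ensuring the Lipschitz-type bound for $\bar w$ is available, which is precisely why the positivity of $L$ is assumed: without $L>0$ the solution could touch $0$, where $\bar w'$ is not assumed to exist, and the linearization step would break down.
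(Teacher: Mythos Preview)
Your proof is correct and follows essentially the same route as the paper: both arguments subtract the two integral equations, use the classical mean value theorem on a compact subinterval of $(0,\infty)$ (after bounding the solutions away from $0$ and above) to obtain a Lipschitz estimate for $\bar w$, and then apply Gronwall's inequality on time scales with $a(t)\equiv 0$. Your version is in fact slightly more explicit than the paper's in justifying why the solutions stay strictly positive (via $x,y\geq L\geq m>0$), which is exactly the point where the hypothesis $L>0$ enters.
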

\begin{proof}
Assume that there exist two positive solutions $x,y$ on
$[0,b]_{\mathbb{T}}$ to \eqref{eq0}. Then,
\begin{equation}
x(t)-y(t)=\int_0^{t}M(t,s)[w(x(s))-w(y(s))]\Delta s,\quad
t\in[0,b]_{\mathbb{T}},\nonumber
\end{equation}
hence,
\begin{equation}
|x(t)-y(t)|\leq\int_0^{t}M(t,s)|w(x(s))-w(y(s))|\Delta s,\quad
t\in[0,b]_{\mathbb{T}}.\nonumber
\end{equation}
Let us now define the following numbers:
\begin{align}
\gamma_1&=\min_{s\in[0,b]_{\mathbb{T}}}\{x(s),y(s)\},\nonumber\\
\gamma_2&=\max_{s\in[0,b]_{\mathbb{T}}}\{x(s),y(s)\},\nonumber\\
\nu&=\max_{x\in[\gamma_1,\gamma_2]}w'(x),\nonumber\\
\mu&=\max_{(t,s)\in[0,b]_{\mathbb{T}}\times[0,b]_\mathbb{T}^\kappa}M(t,s).\nonumber
\end{align}
The mean value theorem guarantees that
$$|w(x(t))-w(y(t))|\leq\nu|(x(t)-y(t))|,\quad t\in[0,b]_\mathbb{T},$$
hence,
\begin{equation}
|x(t)-y(t)|\leq\int_0^{t}\mu\nu|x(s)-y(s)|\Delta s,\quad
t\in[0,b]_\mathbb{T}.\nonumber
\end{equation}
Using Theorem \ref{gronw}, we conclude that $|x(t)-y(t)|\leq 0$ on
$[0,b]_\mathbb{T}$ and this implies that $x(t)=y(t)$ on
$[0,b]_\mathbb{T}$.
\end{proof}

Now we want to mention some particular cases of equation
\eqref{eq0}. Define on $\mathbb{R}_0^+$ the function
$\bar{w}(x)=x^r$, $0\leq r\leq 1$. Then, the equation
\begin{equation}\label{eq2}
x(t)=L(t)+\int_0^{t}M(t,s)[x(s)]^r \Delta s,\quad
t\in[0,b]_\mathbb{T},
\end{equation}
has a unique positive solution if $L(t)>0$ for all
$t\in[0,b]_\mathbb{T}$. This type of equation appears in many
applications such as nonlinear diffusion, cellular mass change
dynamics, or studies concerning the shape of liquid drops
\cite{Okr0}. Some results concerning existence and uniqueness of
solutions to \eqref{eq2} [the time scale being
$\mathbb{T}=\mathbb{R}$] were previously obtained (consult
\cite{Okr0} and references therein).

If we define $u(t)=[x(t)]^r$, it follows from \eqref{eq2} that,
\begin{equation}\label{eq3}
[u(t)]^{\frac{1}{r}}=L(t)+\int_0^{t}M(t,s)u(s)\Delta s,\quad
t\in[0,b]_\mathbb{T}.
\end{equation}
It is easily seen that $u(t)$ is the unique solution of
\eqref{eq3} and, if we let $r=\frac{1}{2}$ and $M(t,s)=K(t-s)$ for
$K\in C^1(\mathbb{R}_0^+,\mathbb{R}_0^+)$, it follows that
\begin{equation}\label{eq4}
[u(t)]^{2}=L(t)+\int_0^{t}K(t-s)u(s)\Delta s,\quad
t\in[0,b]_\mathbb{T}.
\end{equation}
This equation appears in the mathematical theory of the
infiltration of a fluid from a cylindrical reservoir into an
isotropic homogeneous porous medium \cite{Okr1,Okr2}. Some
existence and uniqueness theorems regarding solutions of
\eqref{eq4} were obtained in
\cite{contantin,const1,const2,Okr1,Okr2}.

\begin{remark}
Corollary \ref{coro0} proves uniqueness of a solution to the
integral equation \eqref{eq4} under different assumptions on the
function $L$ than those in
\cite{contantin,const1,const2,Okr1,Okr2} (obviously considering
$\mathbb{T}=\mathbb{R}$). For example, (to prove uniqueness) in
\cite{const1}, the author considered $L\in
C^1(\mathbb{R}_0^+,\mathbb{R}_0^+)$ such that $L'(0)\neq 0$.
Therefore, if we, e.g., let $L(t)=t^2+1,\ t\in[0,b]$, we see that
$L'(0)=0$, hence we cannot use the results obtained in
\cite{const1}. If we let $L(t)=t$, Corollary \ref{coro0} cannot be
applied.
\end{remark}

We close this section with an example of what the integrodynamic
equation in \eqref{ip1} could be on a particular time scale
different from $\mathbb{R}$, namely, in
$\mathbb{T}=h\mathbb{Z}=\{hk:k\in\mathbb{Z}\}$ for some $h>0$. Let
$a=A=0$ and $b=hm$ for some $m\in\mathbb{N}$. Then, on this time
scale, \eqref{ip1} becomes
\begin{equation}
\frac{x(t+h)-x(t)}{h}= F\left(t,x(t),\sum_{k=0}^{m-1}h
K[t,kh,x(kh)]\right),\quad x(0)=0,\quad
t\in[a,b]^\kappa_{h\mathbb{Z}}.\nonumber
\end{equation}

\section{$\Diamond_\alpha$-integral inequalities}\label{diamond}

In this section we propose to prove inequalities of Jensen's,
Minkowski's and Holder's type using the $\Diamond_\alpha$--
integral (cf. definition in \eqref{diamint} below).

Based on the $\Delta$ and $\nabla$ dynamic derivatives, a combined
dynamic derivative, so called $\Diamond_{\alpha}$-- derivative,
was introduced as a linear combination of $\Delta$ and $\nabla$
dynamic derivatives on time scales \cite{diam1}, i.e, for
$0\leq\alpha\leq 1$,
\begin{equation}
\label{eq:defSmp} f^{\Diamond_{\alpha}}(t)= \alpha
f^{\Delta}(t)+(1-\alpha)f^{\nabla}(t),\quad
t\in\mathbb{T}^\kappa_\kappa,
\end{equation}
provided $f^\Delta$ and $f^\nabla$ exist. The
$\Diamond_\alpha$-derivative reduces to the $\Delta$-derivative
for $\alpha =1$ and to the $\nabla$-derivative for $\alpha =0$. On
the other hand, it represents a ``weighted dynamic derivative'' on
any uniformly discrete time scale when $\alpha =\frac{1}{2}$. A
motivation for study such derivatives is for designing more
balanced adaptive algorithms on nonuniform grids with reduced
spuriosity \cite{{diam1}}.

Let $a, t \in \mathbb{T}$, and $h: \mathbb{T} \rightarrow
\mathbb{R}$. Then, the $\Diamond_\alpha$-integral of $h$ from $a$
to $t$ is defined by
\begin{equation}
\int_{a}^{t}h(\tau) \Diamond_{\alpha} \tau = \alpha
\int_{a}^{t}h(\tau) \Delta \tau +(1- \alpha) \int_{a}^{t}h(\tau)
\nabla \tau, \quad 0 \leq \alpha \leq 1,\label{diamint}
\end{equation}
provided that there exist $\Delta$ and $\nabla$ integrals of $h$
on $\mathbb{T}$. It is clear that the diamond-$\alpha$ integral of
$h$ exists when $h$ is a continuous function.  We refer the reader
to \cite{MalTorres,diam,diam1} for an account of the calculus
associated with the $\Diamond_\alpha$-derivatives and integrals.

The next lemma provides a straightforward but useful result for
what follows.

\begin{lemma}[cf. \cite{rchid}]\label{lem1}
Assume that $f$ and $g$ are continuous functions on
$[a,b]_{\mathbb{T}}$. If $f(t)\leq g(t)$ for all
$t\in[a,b]_{\mathbb{T}}$, then $\int_a^b f(t)\Diamond_\alpha
t\leq\int_a^b g(t)\Diamond_\alpha t$.
\end{lemma}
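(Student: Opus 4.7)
The plan is to reduce the claim directly to the monotonicity already established for the $\Delta$- and $\nabla$-integrals, using the very definition \eqref{diamint} of the $\Diamond_\alpha$-integral as a convex combination of the two.

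First I would observe that, since $f$ and $g$ are continuous on $[a,b]_\mathbb{T}$, they are in particular rd-continuous (and ld-continuous), so both $\int_a^b f(t)\Delta t$, $\int_a^b g(t)\Delta t$ and the corresponding $\nabla$-integrals exist; consequently $\int_a^b f(t)\Diamond_\alpha t$ and $\int_a^b g(t)\Diamond_\alpha t$ are well defined. Next, from $f(t)\le g(t)$ on $[a,b]_\mathbb{T}$, the basic monotonicity of the $\Delta$-integral recorded in \eqref{desbasic} (which follows from item~6 of Theorem~\ref{teorema1}) gives
\[
\int_a^b f(t)\Delta t \;\le\; \int_a^b g(t)\Delta t,
\]
and the same inequality with $\nabla$ in place of $\Delta$ holds by the analogous property for $\nabla$-integrals (cf.\ the remark after Theorem~\ref{teorema1} and \cite{livro2}).

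Finally, since $0\le\alpha\le 1$, both $\alpha\ge 0$ and $1-\alpha\ge 0$, so multiplying the two displayed inequalities by $\alpha$ and $1-\alpha$ respectively and adding them yields
\[
\alpha\int_a^b f(t)\Delta t+(1-\alpha)\int_a^b f(t)\nabla t \;\le\; \alpha\int_a^b g(t)\Delta t+(1-\alpha)\int_a^b g(t)\nabla t,
\]
which by \eqref{diamint} is exactly $\int_a^b f(t)\Diamond_\alpha t\le\int_a^b g(t)\Diamond_\alpha t$.

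There is really no obstacle here: the only thing one must be careful about is that both $\alpha$ and $1-\alpha$ are nonnegative, so that multiplying the two monotonicity inequalities by them preserves their direction; the degenerate cases $\alpha=0$ and $\alpha=1$ are handled automatically. The proof is therefore short and follows immediately from the definition, and no further machinery from the preceding sections is needed.
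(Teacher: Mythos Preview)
Your proof is correct and follows essentially the same approach as the paper's own proof: use the monotonicity \eqref{desbasic} for the $\Delta$-integral, the analogous monotonicity for the $\nabla$-integral, and combine via the definition \eqref{diamint} using $\alpha\in[0,1]$. The paper's version is just more terse, omitting the explicit remark about well-definedness and the degenerate cases.
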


\begin{proof}
Let $f(t)$ and $g(t)$ be continuous functions on
$[a,b]_{\mathbb{T}}$. Using \eqref{desbasic} and the analogous
inequality for the $\nabla$-integral we conclude that $\int_a^b
f(t)\Delta t\leq\int_a^b g(t)\Delta t$ and $\int_a^b f(t)\nabla
t\leq\int_a^b g(t)\nabla t$. Since $\alpha\in[0,1]$ the result
follows.
\end{proof}

We now obtain Jensen's type $\Diamond_\alpha$-integral
inequalities.

\begin{theorem}[cf. \cite{rchid}]
\label{thm3} Let $\mathbb{T}$ be a time scale, $a$, $b \in
\mathbb{T}$ with $a < b$, and $c$, $d \in \mathbb{R}$. If $ g \in
C([a, b]_{\mathbb{T}}, (c, d))$ and $f \in C((c, d), \mathbb{R} )
$ is convex, then
\begin{equation}
\label{eq:JI:mr} f \left( \frac{\int_{a}^{b}g(s) \Diamond_{\alpha}
s}{b-a}\right ) \leq \frac{\int_{a}^{b}f(g(s)) \Diamond_{\alpha}
s}{b-a}.
\end{equation}
\end{theorem}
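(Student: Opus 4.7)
The plan is to reduce the $\Diamond_\alpha$-Jensen inequality to the already-available $\Delta$- and $\nabla$-Jensen inequalities by exploiting the fact that, by definition, $\int_a^b\,\Diamond_\alpha s$ is a convex combination of $\int_a^b\,\Delta s$ and $\int_a^b\,\nabla s$, and then invoking convexity of $f$ one more time to glue the two halves together.

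More precisely, I would first set
\[
A_\Delta := \frac{1}{b-a}\int_a^b g(s)\,\Delta s, \qquad
A_\nabla := \frac{1}{b-a}\int_a^b g(s)\,\nabla s,
\]
observe that both quantities lie in $(c,d)$ (being weighted means of values of $g$ which, by hypothesis, take values in $(c,d)$; this is the same implicit hypothesis already used in Theorem~\ref{theor1}), and then apply Theorem~\ref{theor1} with $h\equiv 1$ (which gives the standard $\Delta$-Jensen inequality, Theorem~\ref{thm1}-style) to obtain
\[
f(A_\Delta)\le \frac{1}{b-a}\int_a^b f(g(s))\,\Delta s,
\]
together with its $\nabla$ analogue (which follows either by Caputo's duality as remarked after Definition~\ref{defnabla}, or by the same proof applied to $\nabla$)
\[
f(A_\nabla)\le \frac{1}{b-a}\int_a^b f(g(s))\,\nabla s.
\]

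Next I would rewrite the $\Diamond_\alpha$-average on the left-hand side of \eqref{eq:JI:mr} using \eqref{diamint} as
\[
\frac{1}{b-a}\int_a^b g(s)\,\Diamond_\alpha s
=\alpha A_\Delta+(1-\alpha)A_\nabla,
\]
and apply convexity of $f$ on $(c,d)$ at the point $\alpha A_\Delta+(1-\alpha)A_\nabla$ to get
\[
f\!\left(\alpha A_\Delta+(1-\alpha)A_\nabla\right)
\le \alpha f(A_\Delta)+(1-\alpha)f(A_\nabla).
\]
Combining this with the two $\Delta$- and $\nabla$-Jensen bounds above, and using \eqref{diamint} once more (now on the integrand $f\circ g$), yields
\[
f\!\left(\frac{\int_a^b g(s)\,\Diamond_\alpha s}{b-a}\right)
\le \frac{\alpha\int_a^b f(g(s))\,\Delta s+(1-\alpha)\int_a^b f(g(s))\,\nabla s}{b-a}
= \frac{\int_a^b f(g(s))\,\Diamond_\alpha s}{b-a},
\]
which is exactly \eqref{eq:JI:mr}.

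I do not foresee a serious obstacle: the argument is essentially a two-line assembly of known facts. The only mildly delicate point is making sure that $A_\Delta,A_\nabla\in(c,d)$ so that both $f(A_\Delta)$ and $f(A_\nabla)$ are defined; this is handled by the same convention already used in Theorem~\ref{theor1}, noting that $g$ is continuous and $[a,b]_\mathbb{T}$ is compact, so the $\Delta$- and $\nabla$-averages lie within the range of $g$ (and hence in $(c,d)$) by the mean value theorem for time-scale integrals. The concavity (reverse) case is not required here, but would follow by applying the convex case to $-f$.
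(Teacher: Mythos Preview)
Your proposal is correct and follows essentially the same approach as the paper: first use convexity of $f$ to split $f(\alpha A_\Delta+(1-\alpha)A_\nabla)\le \alpha f(A_\Delta)+(1-\alpha)f(A_\nabla)$, then apply the separate $\Delta$- and $\nabla$-Jensen inequalities (Theorem~\ref{theor0} and its nabla counterpart), and recombine via \eqref{diamint}. The order of the two steps is reversed relative to the paper's presentation, but the argument is identical.
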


\begin{remark}
In the particular case $\alpha=1$, inequality \eqref{eq:JI:mr}
reduces to that of Theorem~\ref{theor0}. If
$\mathbb{T}=\mathbb{R}$, then Theorem~\ref{thm3} gives the
classical Jensen inequality, \textrm{i.e.}, Theorem~\ref{thm1}.
However, if $\mathbb{T}=\mathbb{Z}$ and $f(x)=-\ln(x)$, then one
gets the well-known arithmetic-mean geometric-mean inequality
(\ref{eq:am:gm:i}).
\end{remark}

\begin{proof}
Since $f$ is convex we have
\begin{equation*}
\begin{split}
f \left( \frac{\int_{a}^{b}g(s) \Diamond_{\alpha} s}{b-a}\right )&
= f \left(\frac{\alpha}{b-a} \int_{a}^{b} g(s) \Delta s+
\frac{1-\alpha}{b-a} \int_{a}^{b} g(s) \nabla s \right)\\
& \leq \alpha f \left(\frac{1}{b-a} \int_{a}^{b} g(s) \Delta s
\right)+ (1-\alpha)f\left( \frac{1}{b-a} \int_{a}^{b} g(s) \nabla
s \right).
\end{split}
\end{equation*}
Using now Jensen's inequality on time scales (cf.
Theorem~\ref{theor0}), we get
\begin{equation*}
\begin{split}
f \left( \frac{\int_{a}^{b}g(s) \Diamond_{\alpha} s}{b-a}\right )&
\leq \frac{\alpha}{b-a} \int_{a}^{b} f(g(s)) \Delta s +
\frac{1-\alpha}{b-a} \int_{a}^{b} f(g(s)) \nabla s \\
& = \frac{1}{b-a} \left( \alpha \int_{a}^{b} f(g(s)) \Delta s
+ (1-\alpha) \int_{a}^{b} f(g(s)) \nabla s \right) \\
& = \frac{1}{b-a} \int_{a}^{b}f(g(s)) \Diamond_{\alpha} s.
\end{split}
\end{equation*}
\end{proof}

Now we give an extended Jensen's inequality (cf. \cite{rchid}).

\begin{theorem}[Generalized Jensen's inequality]
\label{thm4} Let $\mathbb{T}$ be a time scale, $a$, $b \in
\mathbb{T}$ with $a < b$, $c$, $d \in \mathbb{R}$, $g \in C([a,
b]_{\mathbb{T}}, (c, d))$ and $h\in C([a, b]_{\mathbb{T}},
\mathbb{R} )$ with
$$ \int_{a}^{b} |h(s)| \Diamond_{\alpha} s > 0 \, . $$
If $f \in C((c, d), \mathbb{R}) $ is convex, then
\begin{equation}
\label{eq:GJI} f \left( \frac{\int_{a}^{b} |h(s)|g(s)
\Diamond_{\alpha} s}{\int_{a}^{b} |h(s)| \Diamond_{\alpha}s}\right
) \leq \frac{\int_{a}^{b} |h(s)|f(g(s)) \Diamond_{\alpha}
s}{\int_{a}^{b} |h(s)| \Diamond_{\alpha}s}.
\end{equation}
\end{theorem}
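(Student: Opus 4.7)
My plan is to mimic the classical support-line proof of Jensen's inequality, using only Lemma~\ref{lem1} (monotonicity of the $\Diamond_\alpha$-integral) and the linearity of the $\Diamond_\alpha$-integral, rather than splitting into $\Delta$ and $\nabla$ pieces and invoking Theorem~\ref{theor1}. This bypasses the mild bookkeeping one would otherwise need to distribute the weight $|h|$ across the $\Delta$ and $\nabla$ integrals (whose individual denominators need not be positive at the same time).

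First I will set $I_h := \int_a^b |h(s)|\,\Diamond_\alpha s > 0$ and define
$$x_0 := \frac{1}{I_h}\int_a^b |h(s)|g(s)\,\Diamond_\alpha s.$$
The first step is to verify that $x_0 \in (c,d)$, so that $f(x_0)$ is meaningful. Since $g$ is continuous on the compact set $[a,b]_\mathbb{T}$, it attains its extrema; set $m = \min_{[a,b]_\mathbb{T}} g$ and $M = \max_{[a,b]_\mathbb{T}} g$, so that $c < m \leq M < d$. Then $m|h(s)| \leq |h(s)|g(s) \leq M|h(s)|$ on $[a,b]_\mathbb{T}$, and Lemma~\ref{lem1} (applied to both inequalities) gives $mI_h \leq \int_a^b |h(s)|g(s)\,\Diamond_\alpha s \leq MI_h$. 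Dividing by $I_h > 0$ yields $x_0 \in [m,M] \subset (c,d)$.

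Next I invoke the standard support-line characterisation of convex functions: since $f$ is convex on $(c,d)$ and $x_0 \in (c,d)$, there exists a constant $k \in \mathbb{R}$ (any subgradient at $x_0$) such that
$$f(y) - f(x_0) \geq k(y - x_0) \quad \text{for all } y \in (c,d).$$
Substituting $y = g(s)$ and multiplying through by $|h(s)| \geq 0$ preserves the inequality:
$$|h(s)|f(g(s)) - |h(s)|f(x_0) \geq k\bigl(|h(s)|g(s) - x_0 |h(s)|\bigr), \quad s \in [a,b]_\mathbb{T}.$$

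Finally I apply Lemma~\ref{lem1} to $\Diamond_\alpha$-integrate this inequality over $[a,b]_\mathbb{T}$, using linearity of the $\Diamond_\alpha$-integral (which follows immediately from \eqref{diamint} and the corresponding properties for $\Delta$- and $\nabla$-integrals). This yields
$$\int_a^b |h(s)|f(g(s))\,\Diamond_\alpha s - f(x_0)\,I_h \geq k\left(\int_a^b |h(s)|g(s)\,\Diamond_\alpha s - x_0 I_h\right) = 0,$$
where the right-hand side vanishes by the very definition of $x_0$. Dividing by $I_h > 0$ gives \eqref{eq:GJI}. No step presents a serious obstacle; the only item requiring a moment of care is confirming $x_0 \in (c,d)$ when $(c,d)$ is a possibly unbounded interval, which is handled by the compactness argument above.
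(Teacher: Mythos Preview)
Your proof is correct and is essentially the same support-line argument the paper uses: the paper also chooses $t = x_0$, invokes the subgradient inequality $a_t(x - t) \le f(x) - f(t)$, multiplies by $|h(s)|$, and integrates via Lemma~\ref{lem1}. Your write-up is slightly more careful in that you explicitly verify $x_0 \in (c,d)$ by compactness, a point the paper leaves implicit.
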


\begin{remark}
In the particular case $h=1$, Theorem~\ref{thm4} reduces to
Theorem~\ref{thm3}.
\end{remark}

\begin{remark}
Similar result to Theorem~\ref{thm4} holds if one changes the
condition ``$f$ is convex'' to ``$f$ is concave'', by replacing
the inequality sign ``$\leq$'' in (\ref{eq:GJI}) by ``$\geq$''.
\end{remark}

\begin{proof}
Since $f$ is convex, it follows, for example from \cite[Exercise
3.42C]{fol}, that for $t \in (c, d)$ there exists $a_{t} \in
\mathbb{R} $ such that
\begin{equation}\label{eq1}
a_{t}(x-t) \leq f(x)-f(t) \mbox{ for all } x \in (c, d).
\end{equation}
Setting $$ t= \frac{\int_{a}^{b} |h(s)|g(s) \Diamond_{\alpha}
s}{\int_{a}^{b} |h(s)| \Diamond_{\alpha}s} \, ,
$$
then using \eqref{eq1} and Lemma~\ref{lem1}, we get
\begin{equation*}
\begin{split}
\int_{a}^{b} & |h(s)|f(g(s)) \Diamond_{\alpha} s - \left (
\int_{a}^{b} |h(s)| \Diamond_{\alpha} s \right) f \left(
\frac{\int_{a}^{b} |h(s)|g(s) \Diamond_{\alpha} s}{\int_{a}^{b}
|h(s)| \Diamond_{\alpha}s}\right ) \\
 = & \int_{a}^{b} |h(s)|f(g(s)) \Diamond_{\alpha} s - \left (
\int_{a}^{b} |h(s)| \Diamond_{\alpha} s \right) f(t) =
\int_{a}^{b} |h(s)| \left (f(g(s))-f(t) \right)
\Diamond_{\alpha} s \\
\geq & a_{t} \left( \int_{a}^{b} |h(s)| (g(s)-t ) \right)
\Diamond_{\alpha} s =  a_{t} \left( \int_{a}^{b} |h(s)|g(s)
\Diamond_{\alpha}
s - t \int_{a}^{b} |h(s)| \Diamond_{\alpha} s \right)\\
\\ = & a_{t} \left( \int_{a}^{b} |h(s)|g(s) \Diamond_{\alpha}
s- \int_{a}^{b} |h(s)|g(s) \Diamond_{\alpha} s \right) = 0 \, .
\end{split}
\end{equation*}
This leads to the desired inequality.
\end{proof}
Now we sate two (well-known in the literature) corollaries of the
previous theorem.

\begin{cor}$(\mathbb{T}=\mathbb{R})$
Let $g, h: [a, b]\rightarrow \mathbb{R}$ be continuous functions
with $g([a, b]) \subseteq (c, d)$ and $\int_{a}^{b}|h(x)| dx >0$.
If $f \in C((c, d), \mathbb{R})$ is convex, then
$$
f \left( \frac{\int_{a}^{b} |h(x)|g(x) dx}{\int_{a}^{b} |h(x)|
dx}\right ) \leq \frac{\int_{a}^{b} |h(x)|f(g(x)) dx}{\int_{a}^{b}
|h(x)| dx}.
$$
\end{cor}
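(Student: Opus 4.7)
The plan is to obtain this corollary as a direct specialization of Theorem~\ref{thm4} (the generalized Jensen's inequality on time scales) to the particular time scale $\mathbb{T}=\mathbb{R}$. So I would first observe that when $\mathbb{T}=\mathbb{R}$, every point is both right-dense and left-dense, the forward and backward jump operators satisfy $\sigma(t)=\rho(t)=t$, and the graininess functions vanish. Consequently, by the results recalled in Chapter~\ref{chap1} (in particular \cite[Theorem~1.79]{livro} and its $\nabla$-analogue), both $\int_a^b \varphi(s)\Delta s$ and $\int_a^b \varphi(s)\nabla s$ coincide with the classical Riemann integral $\int_a^b \varphi(s)\,ds$ whenever $\varphi$ is continuous on $[a,b]$.

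Next, I would apply the definition \eqref{diamint} of the $\Diamond_\alpha$-integral to conclude that, for any continuous $\varphi:[a,b]\to\mathbb{R}$,
\[
\int_a^b \varphi(s)\,\Diamond_\alpha s \;=\; \alpha\int_a^b \varphi(s)\,ds + (1-\alpha)\int_a^b \varphi(s)\,ds \;=\; \int_a^b \varphi(s)\,ds,
\]
independently of the choice of $\alpha\in[0,1]$. The hypotheses of Theorem~\ref{thm4} are verified in our setting: $g,h\in C([a,b]_\mathbb{R},\mathbb{R})$ with $g([a,b])\subseteq(c,d)$, $f\in C((c,d),\mathbb{R})$ is convex, and the assumption $\int_a^b |h(x)|\,dx>0$ translates exactly to $\int_a^b |h(s)|\,\Diamond_\alpha s>0$ by the identity just established.

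Finally, I would substitute these identifications into the conclusion \eqref{eq:GJI} of Theorem~\ref{thm4}. Every $\Diamond_\alpha$-integral appearing in \eqref{eq:GJI} becomes the corresponding Riemann integral, and the stated inequality follows immediately. There is no real obstacle: the entire content of the corollary is the verification that the specialization $\mathbb{T}=\mathbb{R}$ recovers the classical weighted Jensen inequality, and this amounts only to recognizing that the $\Delta$-, $\nabla$-, and $\Diamond_\alpha$-integrals of continuous functions collapse to the Riemann integral on $\mathbb{R}$.
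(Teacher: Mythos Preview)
Your proposal is correct and matches the paper's approach: the corollary is stated immediately after Theorem~\ref{thm4} with the label ``$(\mathbb{T}=\mathbb{R})$'' and no separate proof, precisely because it is the specialization you describe. Your explicit verification that the $\Diamond_\alpha$-integral collapses to the Riemann integral on $\mathbb{R}$ is exactly what is needed.
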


\begin{cor}$(\mathbb{T}=\mathbb{Z})$
\label{cor:E:SMC} Given a convex function $f$, we have for any
$x_{1}, \ldots ,x_{n} \in \mathbb{R}$ and $c_{1}, \ldots ,c_{n}
\in \mathbb{R}$ with $\sum_{k=1}^{n}|c_{k}|
>0$:
\begin{equation}
\label{eq:des:pr} f \left
(\frac{\sum_{k=1}^{n}|c_{k}|x_{k}}{\sum_{k=1}^{n}|c_{k}|}\right)
\leq \frac{\sum_{k=1}^{n}|c_{k}|f(x_{k})}{\sum_{k=1}^{n}|c_{k}|}.
\end{equation}
\end{cor}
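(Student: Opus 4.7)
The plan is to deduce the corollary by specializing Theorem~\ref{thm4} to the time scale $\mathbb{T}=\mathbb{Z}$, choosing the parameter $\alpha=1$ so that the $\Diamond_{\alpha}$-integral collapses to a $\Delta$-integral and hence to a finite sum. Concretely, I would take $a=1$, $b=n+1$ (so $a,b\in\mathbb{Z}$ with $a<b$), and define $g,h:[1,n+1]_{\mathbb{Z}}\to\mathbb{R}$ by $g(k)=x_k$ and $h(k)=c_k$ for $k\in\{1,\dots,n\}$ (the values at $n+1$ are irrelevant, since with $\alpha=1$ the integral on $[1,n+1]_{\mathbb{Z}}$ never samples the endpoint $n+1$; if desired, set $g(n+1)$ to be any point of the convex domain of $f$ and $h(n+1)=0$).

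Next I would invoke the identity $\int_{a}^{b}\varphi(t)\Delta t=\sum_{t=a}^{b-1}\varphi(t)$ valid on $\mathbb{T}=\mathbb{Z}$, together with the definition \eqref{diamint} applied with $\alpha=1$, to obtain
\[
\int_{1}^{n+1}|h(s)|\,\Diamond_{1}s=\sum_{k=1}^{n}|c_k|,\qquad
\int_{1}^{n+1}|h(s)|g(s)\,\Diamond_{1}s=\sum_{k=1}^{n}|c_k|x_k,
\]
and, analogously,
\[
\int_{1}^{n+1}|h(s)|f(g(s))\,\Diamond_{1}s=\sum_{k=1}^{n}|c_k|f(x_k).
\]
The hypothesis $\int_{a}^{b}|h(s)|\Diamond_{\alpha}s>0$ of Theorem~\ref{thm4} then reduces exactly to the assumption $\sum_{k=1}^{n}|c_k|>0$, while the continuity conditions on $g$ and $h$ hold automatically because every function on a discrete time scale is continuous (every point is isolated).

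Finally, plugging these identifications into the inequality \eqref{eq:GJI} provided by Theorem~\ref{thm4} yields exactly \eqref{eq:des:pr}, completing the proof. There is no substantial obstacle: the entire argument is bookkeeping, translating the general $\Diamond_{\alpha}$-integral statement into the familiar finite-sum form. The only point worth a brief remark is that one should verify that $g$ takes values in the open interval on which $f$ is convex, which can be arranged by taking $(c,d)$ to contain the finite set $\{x_1,\dots,x_n\}$ (and choosing $g(n+1)$ there as well, though this value is immaterial).
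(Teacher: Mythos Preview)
Your proposal is correct and matches the paper's approach: the corollary is stated without proof in the paper, being an immediate specialization of Theorem~\ref{thm4} to $\mathbb{T}=\mathbb{Z}$, and your choice of $\alpha=1$, $a=1$, $b=n+1$, $g(k)=x_k$, $h(k)=c_k$ is exactly the bookkeeping needed to carry this out. The remarks about continuity being automatic on a discrete time scale and about choosing $(c,d)$ to contain $\{x_1,\dots,x_n\}$ are apt and complete the argument.
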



\subsection*{Particular Cases}

\begin{itemize}
\item[(i)] Let $g(t) > 0$ on $[a, b]_{\mathbb{T}}$ and $f(t)=
t^{\beta}$ on $(0, +\infty)$. One can see that $f$ is convex on
$(0, +\infty)$ for $\beta < 0$ or $\beta >1$, and $f$ is concave
on $(0, +\infty)$ for $\beta \in (0, 1)$. Therefore,
$$
 \left( \frac{\int_{a}^{b} |h(s)|g(s) \diamondsuit_{\alpha}
s}{\int_{a}^{b} |h(s)| \diamondsuit_{\alpha}s}\right )^{\beta}
\leq \frac{\int_{a}^{b} |h(s)|g^{\beta}(s) \diamondsuit_{\alpha}
s}{\int_{a}^{b} |h(s)| \diamondsuit_{\alpha}s}, \mbox{ if } \beta
< 0 \mbox{ or } \beta >1;
$$

$$
 \left( \frac{\int_{a}^{b} |h(s)|g(s) \diamondsuit_{\alpha}
s}{\int_{a}^{b} |h(s)| \diamondsuit_{\alpha}s}\right )^{\beta}
\geq \frac{\int_{a}^{b} |h(s)|g^{\beta}(s) \diamondsuit_{\alpha}
s}{\int_{a}^{b} |h(s)| \diamondsuit_{\alpha}s}, \mbox{ if } \beta
\in (0, 1).
$$

\item[(ii)] Let $g(t) > 0$ on $[a, b]_{\mathbb{T}}$ and $f(t)=
\ln(t)$ on $(0, +\infty)$. One can also see that $f$ is concave on
$(0, +\infty)$. It follows that

$$
 \ln \left( \frac{\int_{a}^{b} |h(s)|g(s) \Diamond_{\alpha}
s}{\int_{a}^{b} |h(s)| \Diamond_{\alpha}s}\right ) \geq
\frac{\int_{a}^{b} |h(s)|\ln (g(s)) \Diamond_{\alpha}
s}{\int_{a}^{b} |h(s)| \Diamond_{\alpha}s}.
$$

\item[(iii)]Let $h=1$, then
$$
 \ln \left(\frac{\int_{a}^{b} g(s) \diamondsuit_{\alpha}
s}{b - a}\right) \geq \frac{\int_{a}^{b} \ln (g(s))
\diamondsuit_{\alpha} s}{b - a} .
$$

\item[(iv)] Let $\mathbb{T}=\mathbb{R}$, $g: [0, 1]\rightarrow (0,
\infty)$ and $h(t)=1$. Applying Theorem~\ref{thm4} with the convex
and continuous function $f=-\ln$ on $(0, \infty)$, $a=0$ and
$b=1$, we get:
$$
 \ln \int_{0}^{1} g(s)ds \geq \int_{0}^{1}\ln( g(s))ds.
$$
Therefore,
$$
\int_{0}^{1} g(s)ds \geq \exp \left(\int_{0}^{1}\ln( g(s))ds
\right).
$$
\item[(v)] Let $\mathbb{T}=\mathbb{Z}$ and $n\in\mathbb{N}$. Fix
$a=1$, $b=n+1$ and consider a function
$g:\{1,\ldots,n+1\}\rightarrow(0,\infty)$. Obviously, $f=-\ln$ is
convex and continuous on $(0,\infty)$, so we may apply Jensen's
inequality to obtain
\end{itemize}
\begin{equation*}
\begin{split}
\ln\Biggl[ & \frac{1}{n}\left(\alpha\sum_{t=1}^n
g(t)+(1-\alpha)\sum_{t=2}^{n+1}g(t)\right)\Biggr] =
\ln\left[\frac{1}{n}\int_1^{n+1}g(t)\Diamond_\alpha
t\right]\\
&\geq\frac{1}{n}\int_1^{n+1}\ln(g(t))\Diamond_\alpha t\\
&=\frac{1}{n}\left[\alpha\sum_{t=1}^n
\ln(g(t))+(1-\alpha)\sum_{t=2}^{n+1}\ln(g(t))\right]\\
&=\ln\left\{\prod_{t=1}^n
g(t)\right\}^{\frac{\alpha}{n}}+\ln\left\{\prod_{t=2}^{n+1}
g(t)\right\}^{\frac{1-\alpha}{n}} \, ,
\end{split}
\end{equation*}
and hence
$$\frac{1}{n}\left(\alpha\sum_{t=1}^n
g(t)+(1-\alpha)\sum_{t=2}^{n+1}g(t)\right)\geq\left\{\prod_{t=1}^n
g(t)\right\}^{\frac{\alpha}{n}}\left\{\prod_{t=2}^{n+1}
g(t)\right\}^{\frac{1-\alpha}{n}}.$$ When $\alpha=1$, we obtain
the well-known arithmetic-mean geometric-mean inequality:
\begin{equation}
\label{eq:am:gm:i} \frac{1}{n}\sum_{t=1}^n
g(t)\geq\left\{\prod_{t=1}^n g(t)\right\}^{\frac{1}{n}}.
\end{equation}
When $\alpha=0$, we also have $$\frac{1}{n}\sum_{t=2}^{n+1}
g(t)\geq\left\{\prod_{t=2}^{n+1} g(t)\right\}^{\frac{1}{n}}.$$

\begin{itemize}

\item[(vi)] Let $\mathbb{T}= 2^{\mathbb{N}_{0}}$ and $N \in
\mathbb{N}$. We can apply Theorem~\ref{thm4} with $a=1, b=2^{N}$
and $g: \{ 2^{k}: 0 \leq k \leq N \} \rightarrow (0, \infty)$.
Then, we get:
\end{itemize}

\begin{equation*}
\begin{split}
\ln \left \{
\frac{\int_{1}^{2^{N}}g(t)\Diamond_{\alpha}t}{2^{N}-1} \right \}&=
\ln \left \{ \alpha \frac{\int_{1}^{2^{N}}g(t)\Delta
t}{2^{N}-1}+(1-\alpha) \frac{\int_{1}^{2^{N}}g(t)\nabla
t}{2^{N}-1}
\right \}\\
&= \ln \left \{ \frac{\alpha
\sum_{k=0}^{N-1}2^{k}g(2^{k})}{2^{N}-1}
+ \frac{(1-\alpha) \sum_{k=1}^{N}2^{k}g(2^{k})}{2^{N}-1} \right \}\\
& \geq \frac{\int_{1}^{2^{N}} \ln
(g(t))\Diamond_{\alpha}t}{2^{N}-1}
\end{split}
\end{equation*}

\begin{equation*}
\begin{split}
&= \alpha \frac{\int_{1}^{2^{N}} \ln (g(t))\Delta t}{2^{N}-1}+
(1 - \alpha) \frac{\int_{1}^{2^{N}} \ln (g(t))\nabla t}{2^{N}-1}\\
&= \frac{\alpha \sum_{k=0}^{N-1}2^{k}\ln(g(2^{k})) }{2^{N}-1} +
\frac{(1-\alpha) \sum_{k=1}^{N}2^{k}\ln(g(2^{k})) }{2^{N}-1}\\
&= \frac{ \sum_{k=0}^{N-1}\ln(g(2^{k}))^{\alpha 2^{k}} }{2^{N}-1}
+
\frac{ \sum_{k=1}^{N}\ln(g(2^{k}))^{(1-\alpha)2^{k}} }{2^{N}-1}\\
&=\frac{ \ln \prod_{k=0}^{N-1}(g(2^{k}))^{\alpha 2^{k}} }{2^{N}-1}
+ \frac{ \ln(\prod_{k=1}^{N}g(2^{k}))^{(1-\alpha)2^{k}}
}{2^{N}-1}\\
&= \ln \left \{\prod_{k=0}^{N-1}(g(2^{k}))^{\alpha 2^{k}} \right
\}^{\frac{1}{2^{N}-1}} + \ln \left
\{\prod_{k=1}^{N}(g(2^{k}))^{(1-\alpha) 2^{k}} \right
\}^{\frac{1}{2^{N}-1}}\\
&= \ln \left ( \left \{\prod_{k=0}^{N-1}(g(2^{k}))^{\alpha 2^{k}}
\right \}^{\frac{1}{2^{N}-1}} \left
\{\prod_{k=1}^{N}(g(2^{k}))^{(1-\alpha) 2^{k}} \right
\}^{\frac{1}{2^{N}-1}} \right) \, .
\end{split}
\end{equation*}
We conclude that
\begin{multline*}
  \ln \left \{ \frac{\alpha
\sum_{k=0}^{N-1}2^{k}g(2^{k})+(1-\alpha)
\sum_{k=1}^{N}2^{k}g(2^{k})}{2^{N}-1} \right \}\\
\geq \ln \left ( \left \{\prod_{k=0}^{N-1}(g(2^{k}))^{\alpha
2^{k}} \right \}^{\frac{1}{2^{N}-1}} \left
\{\prod_{k=1}^{N}(g(2^{k}))^{(1-\alpha) 2^{k}} \right
\}^{\frac{1}{2^{N}-1}} \right).
\end{multline*}
On the other hand,
$$
\alpha \sum_{k=0}^{N-1}2^{k}g(2^{k})+(1-\alpha)
\sum_{k=1}^{N}2^{k}g(2^{k})= \sum_{k=1}^{N-1}2^{k}g(2^{k})+\alpha
g(1)+(1-\alpha) 2^{N}g(2^{N}).
$$
It follows that
\begin{multline*}
\frac{\sum_{k=1}^{N-1}2^{k}g(2^{k})+\alpha g(1)+(1-\alpha)
2^{N}g(2^{N})}{2^{N}-1} \\
\geq \left\{\prod_{k=0}^{N-1}(g(2^{k}))^{\alpha 2^{k}} \right
\}^{\frac{1}{2^{N}-1}} \left
\{\prod_{k=1}^{N}(g(2^{k}))^{(1-\alpha) 2^{k}} \right
\}^{\frac{1}{2^{N}-1}}.
\end{multline*}
In the particular case when $\alpha =1$ we have
$$
\frac{\sum_{k=0}^{N-1}2^{k}g(2^{k})}{2^{N}-1} \geq \left
\{\prod_{k=0}^{N-1}(g(2^{k}))^{ 2^{k}} \right
\}^{\frac{1}{2^{N}-1}},
$$
and when $\alpha =0$ we get the inequality
$$
\frac{\sum_{k=1}^{N}2^{k}g(2^{k})}{2^{N}-1} \geq \left
\{\prod_{k=1}^{N}(g(2^{k}))^{ 2^{k}} \right
\}^{\frac{1}{2^{N}-1}}\, .
$$


\subsection{Related $\Diamond_\alpha$-integral inequalities}
\label{sec:app}

The usual proof of H\"{o}lder's inequality use the basic Young
inequality $x^{\frac{1}{p}}y^{\frac{1}{q}} \leq \frac{x}{p}+
\frac{y}{q}$ for nonnegative $x$ and $y$. Here we present a proof
based on the application of Jensen's inequality (cf.
Theorem~\ref{thm4}).

\begin{theorem}[H\"{o}lder's inequality]
\label{app:th:hi} Let $\mathbb{T}$ be a time scale, $a$, $b \in
\mathbb{T}$ with $a < b$, and $f$, $g$, $h \in C([a,
b]_{\mathbb{T}}, [0, \infty))$ with
$\int_{a}^{b}h(x)g^{q}(x)\Diamond_{\alpha} x
>0$, where $q$ is the H\"{o}lder conjugate number of $p$,
\textrm{i.e.} $\frac{1}{p}+\frac{1}{q}=1$ with $1<p$. Then, we
have:
\begin{equation}
\label{app:eq:hi} \int_{a}^{b}h(x)f(x)g(x)\Diamond_{\alpha} x \leq
\left(\int_{a}^{b}h(x)f^{p}(x)\Diamond_{\alpha}
x\right)^{\frac{1}{p}}
\left(\int_{a}^{b}h(x)g^{q}(x)\Diamond_{\alpha}
x\right)^{\frac{1}{q}} \, .
\end{equation}
\end{theorem}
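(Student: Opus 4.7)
The plan is to derive H\"older's inequality as a direct corollary of the generalized Jensen inequality on $\Diamond_\alpha$-integrals (Theorem~\ref{thm4}), in the same spirit as the classical proof of H\"older that avoids Young's inequality. The convex function to pick is $\Phi(t) = t^p$ on $(0,\infty)$, which is convex because $p>1$; the weighting function in Jensen will be chosen so that the powers of $g$ arrange themselves nicely via the conjugate relation $p+q = pq$.

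Concretely, I would first reduce to the case $g(x)>0$ on $[a,b]_{\mathbb{T}}$. On the subset where $g(x)=0$ the integrand $hg^q$ vanishes, so such points contribute nothing to $\int_a^b h(x)g^q(x)\Diamond_\alpha x$; the standard maneuver is to replace $g$ by $g+\varepsilon$, apply the inequality, and let $\varepsilon\to 0^+$ using continuity of the $\Diamond_\alpha$-integral in the integrand. Once $g>0$, I apply Theorem~\ref{thm4} with weight $|H(x)| := h(x)g^{q}(x)$ (which is continuous, nonnegative, and has strictly positive $\Diamond_\alpha$-integral by hypothesis), argument function $G(x) := f(x)g^{1-q}(x)$, and convex function $\Phi(t)=t^p$. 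This gives
\begin{equation*}
\left(\frac{\int_a^b h(x)g^{q}(x)\,f(x)g^{1-q}(x)\,\Diamond_\alpha x}{\int_a^b h(x)g^{q}(x)\,\Diamond_\alpha x}\right)^{p}
\le \frac{\int_a^b h(x)g^{q}(x)\bigl(f(x)g^{1-q}(x)\bigr)^{p}\,\Diamond_\alpha x}{\int_a^b h(x)g^{q}(x)\,\Diamond_\alpha x}.
\end{equation*}
The numerator on the left collapses to $\int_a^b h(x)f(x)g(x)\,\Diamond_\alpha x$, and the integrand on the right becomes $h(x)f^p(x)g^{q+(1-q)p}(x)$; the exponent of $g$ is $q+p-qp$, which vanishes because $\tfrac1p+\tfrac1q=1$ is equivalent to $p+q=pq$. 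So the right-hand numerator is just $\int_a^b h(x)f^p(x)\,\Diamond_\alpha x$.

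Taking $p$-th roots and multiplying both sides by $\int_a^b h(x)g^q(x)\,\Diamond_\alpha x$ yields
\begin{equation*}
\int_a^b h(x)f(x)g(x)\,\Diamond_\alpha x
\le \Bigl(\int_a^b h(x)f^p(x)\,\Diamond_\alpha x\Bigr)^{1/p}
\Bigl(\int_a^b h(x)g^q(x)\,\Diamond_\alpha x\Bigr)^{1-1/p},
\end{equation*}
and $1-\tfrac1p=\tfrac1q$ finishes the inequality \eqref{app:eq:hi}. The main obstacle is the technical nuisance of zeros of $g$ (so that $g^{1-q}$ is well defined), which I would handle via the $g+\varepsilon$ perturbation and a passage to the limit; once that is dispatched, everything else is bookkeeping about the exponents, guaranteed to work by the conjugacy identity $p+q=pq$.
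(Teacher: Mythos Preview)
Your proof is correct and follows essentially the same route as the paper: apply the generalized Jensen inequality (Theorem~\ref{thm4}) with convex function $t\mapsto t^p$, weight $h(x)g^q(x)$, and argument $f(x)g^{1-q}(x)$ (which equals the paper's $f g^{-q/p}$ since $1-q=-q/p$), then simplify exponents via $p+q=pq$. The only minor difference is your treatment of the zero set of $g$ via a $g+\varepsilon$ perturbation and passage to the limit, whereas the paper instead splits the domain into $G=\{x:g(x)=0\}$ and its complement and integrates separately; both devices are standard and equally valid.
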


\begin{proof}
Choosing $f(x)=x^{p}$ in Theorem~\ref{thm4}, which for $p>1$ is
obviously a convex function on $[0, \infty)$, we have
\begin{equation}
\label{eq:R} \left( \frac{\int_{a}^{b} |h(s)|g(s)
\Diamond_{\alpha} s}{\int_{a}^{b} |h(s)| \Diamond_{\alpha}s}\right
)^p \leq \frac{\int_{a}^{b} |h(s)|(g(s))^p \Diamond_{\alpha}
s}{\int_{a}^{b} |h(s)| \Diamond_{\alpha}s}.
\end{equation}
Inequality \eqref{app:eq:hi} is trivially true in the case when
$g$ is identically zero. We consider two cases: (i) $g(x) > 0$ for
all $x \in [a, b]_{\mathbb{T}}$; (ii) there exists at least one $t
\in [a, b]_{\mathbb{T}}$ such that $g(x) = 0$. We begin with
situation (i). Replacing $g$ by $fg^{\frac{-q}{p}}$ and $|h(x)|$
by $hg^{q}$ in inequality (\ref{eq:R}), we get:
$$
\left(
\frac{\int_{a}^{b}h(x)g^{q}(x)f(x)g^{\frac{-q}{p}}(x)\Diamond_{\alpha}
x}{\int_{a}^{b}h(x)g^{q}(x)\Diamond_{\alpha}x} \right)^{p} \leq
\frac{\int_{a}^{b}h(x)g^{q}(x)(f(x)g^{\frac{-q}{p}}(x))^{p}\Diamond_{\alpha}
x}{\int_{a}^{b}h(x)g^{q}(x)\Diamond_{\alpha}x}.
$$
Using the fact that $\frac{1}{p}+\frac{1}{q}=1$, we obtain that
\begin{equation}
\label{eq:parti} \int_{a}^{b}h(x)f(x)g(x)\Diamond_{\alpha} x \leq
\left(\int_{a}^{b}h(x)f^{p}(x)\Diamond_{\alpha}
x\right)^{\frac{1}{p}}
\left(\int_{a}^{b}h(x)g^{q}(x)\Diamond_{\alpha}
x\right)^{\frac{1}{q}} \, .
\end{equation}
We now consider situation (ii). Let $G= \left\{x \in [a,
b]_{\mathbb{T}} \, | \, g(x) =0 \right\}$. Then,
\begin{gather*}
\int_{a}^{b}h(x) f(x) g(x) \Diamond_{\alpha} x = \int_{[a,
b]_{\mathbb{T}}-G} h(x) f(x) g(x) \Diamond_{\alpha} x +
\int_{G} h(x) f(x) g(x) \Diamond_{\alpha} x\\
= \int_{[a, b]_{\mathbb{T}}-G} h(x) f(x) g(x) \Diamond_{\alpha} x
\end{gather*}
because $\int_{G} h(x) f(x) g(x) \Diamond_{\alpha} x =0$. For the
set $[a, b]_{\mathbb{T}}-G$ we are in case (i), \textrm{i.e.}
$g(x) > 0$, and it follows from \eqref{eq:parti} that
\begin{equation*}
\begin{split}
\int_{a}^{b}h(x) f(x) g(x) \Diamond_{\alpha} x &= \int_{[a,
b]_{\mathbb{T}}-G}h(x) f(x) g(x) \Diamond_{\alpha} x \\
&\leq \left(\int_{[a, b]_{\mathbb{T}}-G} h(x) f^{p}(x)
\Diamond_{\alpha} x \right )^{\frac{1}{p}} \quad \left (\int_{[a,
b]_{\mathbb{T}}-G} h(x)
g^{q}(x) \Diamond_{\alpha} x \right )^{\frac{1}{q}}\\
&\leq \left (\int_a^b h(x) f^{p}(x) \Diamond_{\alpha} x \right
)^{\frac{1}{p}} \quad \left (\int_a^b h(x) g^{q}(x)
\Diamond_{\alpha} x \right )^{\frac{1}{q}} \, .
\end{split}
\end{equation*}
\end{proof}

\begin{remark}
In the particular case $h=1$, Theorem~\ref{app:th:hi} gives the
$\Diamond_\alpha$ version of classical H\"{o}lder's inequality:
$$
\int_{a}^{b}|f(x)g(x)|\Diamond_{\alpha} x \leq
\left(\int_{a}^{b}|f|^{p}(x)\Diamond_{\alpha}
x\right)^{\frac{1}{p}}
\left(\int_{a}^{b}|g|^{q}(x)\Diamond_{\alpha}
x\right)^{\frac{1}{q}},
$$
where $p>1$ and $q=\frac{p}{p-1}$.
\end{remark}

\begin{remark}
In the special case $p=q=2$, (\ref{app:eq:hi}) reduces to the
following $\Diamond_\alpha$ Cauchy--Schwarz integral inequality on
time scales:
$$
\int_{a}^{b}|f(x)g(x)|\Diamond_{\alpha} x \leq \sqrt{
\left(\int_{a}^{b}f^{2}(x)\Diamond_{\alpha} x\right)
 \left(\int_{a}^{b}g^{2}(x)\Diamond_{\alpha}
x\ \right)} \, .
$$
\end{remark}

We are now in position to prove a Minkowski inequality using
H\"{o}lder's inequality (\ref{app:eq:hi}).

\begin{theorem}[Minkowski's inequality]
Let $\mathbb{T}$ be a time scale, $a$, $b \in \mathbb{T}$ with $a
< b$, and $p>1$. For continuous functions $f, g: [a,
b]_{\mathbb{T}} \rightarrow \mathbb{R}$ we have
$$
\left (\int_{a}^{b}|(f+g)(x)|^{p}\Diamond_{\alpha} x
\right)^{\frac{1}{p}} \leq
\left(\int_{a}^{b}|f(x)|^{p}\Diamond_{\alpha}
x\right)^{\frac{1}{p}} +
\left(\int_{a}^{b}|g(x)|^{p}\Diamond_{\alpha}
x\right)^{\frac{1}{p}}.
$$
\end{theorem}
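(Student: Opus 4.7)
The plan is to mimic the classical proof of Minkowski's inequality, adapted to the $\Diamond_\alpha$-integral, using the Hölder inequality just established in Theorem~\ref{app:th:hi}. First I would handle the trivial case in which $\int_a^b |(f+g)(x)|^p \Diamond_\alpha x = 0$, where both sides vanish and there is nothing to prove. So assume this integral is positive.

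Next I would use the pointwise bound coming from the triangle inequality,
\begin{equation*}
|(f+g)(x)|^p = |(f+g)(x)| \cdot |(f+g)(x)|^{p-1}
\leq |f(x)| \cdot |(f+g)(x)|^{p-1} + |g(x)| \cdot |(f+g)(x)|^{p-1},
\end{equation*}
and then $\Diamond_\alpha$-integrate both sides on $[a,b]_\mathbb{T}$. By Lemma~\ref{lem1} (which gives monotonicity of the $\Diamond_\alpha$-integral on continuous functions), one obtains
\begin{equation*}
\int_a^b |(f+g)(x)|^p \Diamond_\alpha x
\leq \int_a^b |f(x)| \cdot |(f+g)(x)|^{p-1} \Diamond_\alpha x
+ \int_a^b |g(x)| \cdot |(f+g)(x)|^{p-1} \Diamond_\alpha x.
\end{equation*}

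Now I would apply Hölder's inequality from Theorem~\ref{app:th:hi} with $h \equiv 1$ to each of the two integrals on the right, taking the Hölder conjugate $q = p/(p-1)$ so that $(p-1)q = p$. This yields
\begin{equation*}
\int_a^b |f(x)| |(f+g)(x)|^{p-1} \Diamond_\alpha x
\leq \Bigl(\int_a^b |f(x)|^p \Diamond_\alpha x\Bigr)^{1/p}
\Bigl(\int_a^b |(f+g)(x)|^p \Diamond_\alpha x\Bigr)^{1/q},
\end{equation*}
and analogously with $g$ in place of $f$. Substituting these two bounds into the previous display and factoring out the common term $\bigl(\int_a^b |(f+g)|^p \Diamond_\alpha x\bigr)^{1/q}$ on the right gives
\begin{equation*}
\int_a^b |(f+g)(x)|^p \Diamond_\alpha x
\leq \Bigl(\int_a^b |(f+g)|^p \Diamond_\alpha x\Bigr)^{1/q}
\left[\Bigl(\int_a^b |f|^p \Diamond_\alpha x\Bigr)^{1/p}
+ \Bigl(\int_a^b |g|^p \Diamond_\alpha x\Bigr)^{1/p}\right].
\end{equation*}

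Finally, I would divide both sides by $\bigl(\int_a^b |(f+g)|^p \Diamond_\alpha x\bigr)^{1/q}$ (permissible by the assumption that this quantity is positive) and use the identity $1 - 1/q = 1/p$ to obtain the claimed Minkowski inequality. The only mild subtlety here is the division step: that is why I would begin by separating off the trivial case where $|f+g|^p$ integrates to zero. Everything else is a mechanical transcription of the continuous-case proof, relying on the monotonicity of $\Diamond_\alpha$ (Lemma~\ref{lem1}) and the Hölder inequality just proved.
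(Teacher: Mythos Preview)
Your proof is correct and follows essentially the same route as the paper: split $|f+g|^p$ via the triangle inequality, apply the $\Diamond_\alpha$-H\"older inequality (Theorem~\ref{app:th:hi}) with $q=p/(p-1)$ to each summand, then divide through by $\bigl(\int_a^b |f+g|^p \Diamond_\alpha x\bigr)^{1/q}$. Your explicit treatment of the degenerate case $\int_a^b |f+g|^p \Diamond_\alpha x = 0$ is a small refinement over the paper, which performs the final division without isolating that case.
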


\begin{proof}
We have, by the triangle inequality, that
\begin{multline}
\label{mink1} \int_a^b |f(x) + g(x)|^p\Diamond_{\alpha} x
=\int_a^b |f(x)+g(x)|^{p-1}|f(x)+g(x)|\Diamond_\alpha x\\
\leq \int_a^b|f(x)||f(x)+g(x)|^{p-1}\Diamond_\alpha
x+\int_a^b|g(x)||f(x)+g(x)|^{p-1}\Diamond_\alpha x.
\end{multline}
Applying now H\"{o}lder's inequality with $q=p/(p-1)$ to
\eqref{mink1}, we obtain:
\begin{multline*}
\int_a^b |f(x)+g(x)|^p\Diamond_{\alpha} x
\leq\left[\int_a^b|f(x)|^p\Diamond_\alpha
x\right]^{\frac{1}{p}}\left[\int_a^b
|f(x)+g(x)|^{(p-1)q}\Diamond_\alpha
x\right]^{\frac{1}{q}}\\
 +\left[\int_a^b|g(x)|^p\Diamond_\alpha
x\right]^{\frac{1}{p}}\left[\int_a^b
|f(x)+g(x)|^{(p-1)q}\Diamond_\alpha
x\right]^{\frac{1}{q}}\\
=\left\{\left[\int_a^b|f(x)|^p\Diamond_\alpha
x\right]^{\frac{1}{p}}+\left[\int_a^b|g(x)|^p\Diamond_\alpha
x\right]^{\frac{1}{p}}\right\}\left[\int_a^b|f(x)+g(x)|^p\Diamond_\alpha
x\right]^{\frac{1}{q}}.
\end{multline*}
Dividing both sides of the last inequality by
$$\left[\int_a^b|f(x)+g(x)|^p\Diamond_\alpha
x\right]^{\frac{1}{q}},$$ we get the desired conclusion.
\end{proof}

\section{Some integral inequalities with two independent
variables}\label{duasvar}

We establish some linear and nonlinear integral inequalities of
Gronwall--Bellman--Bihari type for functions with two independent
variables. Some particular time scales are considered as examples
and we use one of our results to estimate solutions of a partial
delta dynamic equation (cf. \eqref{diff1} below).

As was mentioned earlier, several integral inequalities of
Gronwall-Bellman-Bihari type on time scales but for functions of a
single variable were obtained in the papers \cite{inesurvey},
\cite{Pach}, \cite{Bihary} and \cite{Gronwall}. However, to the
best of the author's knowledge, no such a paper exists on the
literature when functions of two independent variables are
considered. Therefore we aim to give a first insight on such a
type of inequalities.


\subsection{Linear inequalities} \label{sec:mainResults2}

Throughout we let
$\mathbb{\tilde{T}}_1=[a_1,\infty)_{\mathbb{T}_1}$ and
$\mathbb{\tilde{T}}_2=[a_2,\infty)_{\mathbb{T}_2}$, for
$a_1\in\mathbb{T}_1$, $a_2\in\mathbb{T}_2$ being $\mathbb{T}_1$
and $\mathbb{T}_2$ time scales.

\begin{theorem}[cf. \cite{Ferr6}]
\label{teoo1} Let $u(t_1,t_2), a(t_1,t_2), f(t_1,t_2)\in
C(\tilde{\mathbb{T}}_1\times\tilde{\mathbb{T}}_2,\mathbb{R}_0^+)$
with $a(t_1,t_2)$ nondecreasing in each of the variables. If
\begin{equation}
\label{in010} u(t_1,t_2)\leq
a(t_1,t_2)+\int_{a_1}^{t_1}\int_{a_2}^{t_2}
f(s_1,s_2)u(s_1,s_2)\Delta_1 s_1\Delta_2 s_2,
\end{equation}
for $(t_1,t_2)\in \tilde{\mathbb{T}}_1\times\tilde{\mathbb{T}}_2$,
then
\begin{equation}
\label{in020} u(t_1,t_2)\leq
a(t_1,t_2)e_{\int_{a_2}^{t_2}f(t_1,s_2)\Delta_2
s_2}(t_1,a_1),\quad (t_1,t_2)\in
\tilde{\mathbb{T}}_1\times\tilde{\mathbb{T}}_2.
\end{equation}
\end{theorem}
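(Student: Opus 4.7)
The plan is to reduce the two-variable estimate to the one-variable Gronwall inequality on time scales (Theorem~\ref{teo3}). First, to handle a possibly vanishing $a$, I would replace $a$ by $a_\varepsilon(t_1,t_2):=a(t_1,t_2)+\varepsilon$ with $\varepsilon>0$; the hypothesis \eqref{in010} still holds with $a$ replaced by $a_\varepsilon$, and $a_\varepsilon$ remains nondecreasing in both variables, so at the end I will let $\varepsilon\to 0^+$.

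Next, I would divide by $a_\varepsilon(t_1,t_2)$ to normalize. Setting $v(t_1,t_2):=u(t_1,t_2)/a_\varepsilon(t_1,t_2)$ and exploiting the fact that $a_\varepsilon(s_1,s_2)\le a_\varepsilon(t_1,t_2)$ whenever $s_1\le t_1$ and $s_2\le t_2$, the assumption gives
\begin{equation*}
v(t_1,t_2)\le 1+\int_{a_1}^{t_1}\!\!\int_{a_2}^{t_2} f(s_1,s_2)\,v(s_1,s_2)\,\Delta_1 s_1\,\Delta_2 s_2.
\end{equation*}
Now I would fix $t_2\in\tilde{\mathbb{T}}_2$ and freeze it. Define
\begin{equation*}
\phi(t_1):=\int_{a_1}^{t_1}\!\!\int_{a_2}^{t_2} f(s_1,s_2)\,v(s_1,s_2)\,\Delta_1 s_1\,\Delta_2 s_2,\qquad b(t_1):=\int_{a_2}^{t_2}f(t_1,s_2)\,\Delta_2 s_2.
\end{equation*}
For every $s_2\in[a_2,t_2]_{\mathbb{T}_2}$ the inequality on $v$ together with the nonnegativity of $fv$ yields $v(t_1,s_2)\le 1+\phi(t_1)$. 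Differentiating $\phi$ (its inner integral has upper limit $t_2$, so $\Delta_1$-differentiation only hits the outer variable) gives
\begin{equation*}
\phi^{\Delta_1}(t_1)=\int_{a_2}^{t_2}f(t_1,s_2)\,v(t_1,s_2)\,\Delta_2 s_2\le (1+\phi(t_1))\,b(t_1).
\end{equation*}

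Setting $\psi:=1+\phi$, one has $\psi^{\Delta_1}\le b\,\psi$ with $\psi(a_1)=1$. Since $f\ge 0$ implies $b\ge 0$, whence $1+\mu(t_1)b(t_1)\ge 1>0$, we have $b\in\mathcal{R}^+$, so Theorem~\ref{teo3} (applied with the inhomogeneous term equal to zero) gives $\psi(t_1)\le e_b(t_1,a_1)$. Consequently $v(t_1,t_2)\le \psi(t_1)\le e_b(t_1,a_1)$, i.e.
\begin{equation*}
u(t_1,t_2)\le a_\varepsilon(t_1,t_2)\,e_{\int_{a_2}^{t_2}f(t_1,s_2)\Delta_2 s_2}(t_1,a_1).
\end{equation*}
Letting $\varepsilon\to 0^+$ yields \eqref{in020}. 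The only delicate point is step two, namely justifying that after dividing by $a_\varepsilon(t_1,t_2)$ one can replace $a_\varepsilon(t_1,t_2)$ inside the integrand by $a_\varepsilon(s_1,s_2)$; this uses the monotonicity assumption on $a$ crucially. The remaining steps are the standard freeze-a-variable-and-apply-1D-Gronwall technique.
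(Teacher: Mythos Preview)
Your proof is correct, and it follows a genuinely different (and somewhat more elementary) route than the paper. Both arguments begin the same way: add $\varepsilon$ to $a$, divide through, and use the monotonicity of $a$ to obtain the normalized inequality $v\le 1+\int\!\!\int f v$. From there the paper defines $v$ as the right-hand side, takes the \emph{mixed} second $\Delta$-derivative $\partial_{\Delta_2}\partial_{\Delta_1}v = f r \le f v$, and then uses a Wendroff-type quotient trick: dividing by $v(t_1,t_2)v(t_1,\sigma_2(t_2))$ and adding the nonnegative term $\frac{\partial_1 v\,\partial_2 v}{v\,v^{\sigma_2}}$ allows one to recognize the left side as $\partial_{\Delta_2}\!\bigl(\partial_{\Delta_1}v/v\bigr)$, after which one integrates in $t_2$ and applies Theorem~\ref{teo3} in $t_1$.

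Your approach bypasses the mixed derivative and the quotient-rule identity entirely: you simply observe that $v(t_1,s_2)\le 1+\phi(t_1)$ for every $s_2\le t_2$ (by enlarging the $s_2$-domain of integration), which immediately bounds $\phi^{\Delta_1}$ by $b(t_1)(1+\phi(t_1))$, and Theorem~\ref{teo3} finishes. This is shorter and avoids the somewhat delicate time-scales quotient rule, at the cost of being a bit less ``symmetric'' in the two variables. The paper's approach, on the other hand, makes the mechanism behind the alternative bound \eqref{in6} (obtained by interchanging the roles of $t_1$ and $t_2$) more transparent, and generalizes more directly to the setting of Theorem~\ref{teorema2} where $f$ depends on all four variables.
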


\begin{proof}
Since $a(t_1,t_2)$ is nondecreasing on
$(t_1,t_2)\in\tilde{\mathbb{T}}_1\times\tilde{\mathbb{T}}_2$,
inequality (\ref{in010}) implies that, for an arbitrary
$\varepsilon>0$,
$$r(t_1,t_2)\leq 1+\int_{a_1}^{t_1}\int_{a_2}^{t_2}
f(s_1,s_2)r(s_1,s_2)\Delta_1 s_1\Delta_2 s_2,$$ where
$r(t_1,t_2)=\frac{u(t_1,t_2)}{a(t_1,t_2)+\varepsilon}$. Define
$v(t_1,t_2)$ by the right-hand side of the last inequality. Then
\begin{equation}
\label{in1110} \frac{\partial}{\Delta_2 t_2}\left(\frac{\partial
v(t_1,t_2)}{\Delta_1 t_1}\right)=f(t_1,t_2)r(t_1,t_2)\leq
f(t_1,t_2)v(t_1,t_2),\quad
(t_1,t_2)\in\tilde{\mathbb{T}}_1^k\times\tilde{\mathbb{T}}_2^k.
\end{equation}
From \eqref{in1110} and taking into account that $v(t_1,t_2)$ is
positive and nondecreasing, we obtain
$$\frac{v(t_1,t_2)\frac{\partial}{\Delta_2 t_2}\left(\frac{\partial
v(t_1,t_2)}{\Delta_1
t_1}\right)}{v(t_1,t_2)v(t_1,\sigma_2(t_2))}\leq f(t_1,t_2),$$
from which it follows that
$$\frac{v(t_1,t_2)\frac{\partial}{\Delta_2 t_2}\left(\frac{\partial
v(t_1,t_2)}{\Delta_1
t_1}\right)}{v(t_1,t_2)v(t_1,\sigma_2(t_2))}\leq
f(t_1,t_2)+\frac{\frac{\partial v(t_1,t_2)}{\Delta_1
t_1}\frac{\partial v(t_1,t_2)}{\Delta_2
t_2}}{v(t_1,t_2)v(t_1,\sigma_2(t_2))}.$$ The previous inequality
can be rewritten as
$$\frac{\partial}{\Delta_2 t_2}\left(\frac{\frac{\partial v(t_1,t_2)}{\Delta_1 t_1}}{v(t_1,t_2)}\right)\leq f(t_1,t_2).$$
Integrating with respect to the second variable from $a_2$ to
$t_2$ and noting that $\frac{\partial v(t_1,t_2)}{\Delta_1
t_1}\mid_{(t_1,a_2)}=0$, we have
$$\frac{\frac{\partial v(t_1,t_2)}{\Delta_1 t_1}}{v(t_1,t_2)}\leq\int_{a_2}^{t_2}f(t_1,s_2)\Delta_2 s_2,$$
that is,
$$\frac{\partial v(t_1,t_2)}{\Delta_1 t_1}\leq\int_{a_2}^{t_2}f(t_1,s_2)\Delta_2 s_2 v(t_1,t_2).$$
Fixing $t_2\in\tilde{\mathbb{T}}_2$ arbitrarily, we have that
$p(t_1):=\int_{a_2}^{t_2}f(t_1,s_2)\Delta_2 s_2\in \mathcal{R}^+$
and by Theorem \ref{teo3}
$$v(t_1,t_2)\leq e_p(t_1,a_1),$$
since $v(a_1,t_2)=1$. Now, inequality (\ref{in020}) follows from
the inequality
$$u(t_1,t_2)\leq [a(t_1,t_2)+\varepsilon]v(t_1,t_2),$$
and the arbitrariness of $\varepsilon$.
\end{proof}

\begin{cor}[cf. Lemma 2.1 of \cite{khellaf}]
\label{corolario1} Let $\mathbb{T}_1=\mathbb{T}_2=\mathbb{R}$ and
assume that the functions $u(x,y), a(x,y), f(x,y)\in
C([x_0,\infty)\times[y_0,\infty),\mathbb{R}_0^+)$ with $a(x,y)$
nondecreasing in its variables. If
\begin{equation}
u(x,y)\leq a(x,y)+\int_{x_0}^{x}\int_{y_0}^{y} f(t,s)u(t,s)dt ds,
\end{equation}
for $(x,y)\in [x_0,\infty)\times[y_0,\infty)$, then
\begin{equation}
u(x,y)\leq a(x,y)\exp\left(\int_{x_0}^{x}\int_{y_0}^y f(t,s)dt
ds\right),
\end{equation}
for $(x,y)\in [x_0,\infty)\times[y_0,\infty)$.
\end{cor}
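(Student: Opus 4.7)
The plan is to derive Corollary~\ref{corolario1} as a direct specialization of Theorem~\ref{teoo1} to the time scale $\mathbb{T}_1=\mathbb{T}_2=\mathbb{R}$, so essentially no new work is required beyond identifying the objects appearing in \eqref{in020} in the continuous setting.

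First I would set $a_1=x_0$, $a_2=y_0$, $t_1=x$, $t_2=y$, and observe that in this case the $\Delta_1$- and $\Delta_2$-integrals coincide with ordinary Riemann integrals, so the hypothesis \eqref{in010} of Theorem~\ref{teoo1} matches exactly the hypothesis of the corollary. Moreover, the functions $u$, $a$, $f$ are continuous nonnegative functions with $a$ nondecreasing in each variable, so all assumptions of Theorem~\ref{teoo1} are met.

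Next I would unpack the conclusion \eqref{in020}. The key observation is that when $\mathbb{T}_1=\mathbb{R}$, the time scale exponential $e_p(t_1,a_1)$ for a continuous function $p$ is given by
\begin{equation*}
e_p(t_1,a_1)=\exp\left(\int_{a_1}^{t_1}p(\tau)\,d\tau\right),
\end{equation*}
as recalled in the remark following the definition of $e_p(\cdot,t_0)$. Applying this with $p(t_1)=\int_{y_0}^{y}f(t_1,s)\,ds$, which is continuous in $t_1$ and clearly positively regressive (any continuous $p$ is automatically in $\mathcal{R}^+$ when $\mu\equiv 0$), yields
\begin{equation*}
e_{\int_{y_0}^{y}f(t_1,s)\Delta_2 s}(x,x_0)=\exp\left(\int_{x_0}^{x}\int_{y_0}^{y}f(\tau,s)\,ds\,d\tau\right).
\end{equation*}
Substituting into \eqref{in020} gives the desired estimate
\begin{equation*}
u(x,y)\leq a(x,y)\exp\left(\int_{x_0}^{x}\int_{y_0}^{y}f(t,s)\,dt\,ds\right).
\end{equation*}

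Since the corollary is a pure specialization, there is no real obstacle; the only mildly delicate point is the identification of the time scale exponential with the ordinary exponential in the $\mathbb{R}$ case, which is however standard and already invoked in the preliminaries. Thus the proof is essentially a one-line reference to Theorem~\ref{teoo1} combined with the explicit form of $e_p$ for $\mathbb{T}=\mathbb{R}$.
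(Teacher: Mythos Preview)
Your proposal is correct and matches the paper's approach: the corollary is stated immediately after Theorem~\ref{teoo1} without an explicit proof, precisely because it is the direct specialization to $\mathbb{T}_1=\mathbb{T}_2=\mathbb{R}$ together with the standard identification $e_p(t_1,a_1)=\exp\bigl(\int_{a_1}^{t_1}p(\tau)\,d\tau\bigr)$ already recalled in the preliminaries.
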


\begin{cor}[cf. Theorem 2.1 of \cite{saldisc}]
Let $\mathbb{T}_1=\mathbb{T}_2=\mathbb{Z}$ and assume that the
functions $u(m,n), a(m,n), f(m,n)$ are nonnegative and that
$a(m,n)$ is nondecreasing for $m\in[m_0,\infty)\cap\mathbb{Z}$ and
$n\in[n_0,\infty)\cap\mathbb{Z}$, $m_0,n_0\in\mathbb{Z}$. If
\begin{equation}
u(m,n)\leq a(m,n)+\sum_{s=m_0}^{m-1}\sum_{t=n_0}^{n-1}
f(s,t)u(s,t),
\end{equation}
for all $(m,n)\in
[m_0,\infty)\cap\mathbb{Z}\times[n_0,\infty)\cap\mathbb{Z}$, then
\begin{equation}
u(m,n)\leq
a(m,n)\prod_{s=m_0}^{m-1}\left[1+\sum_{t=n_0}^{n-1}f(s,t)\right],
\end{equation}
for all $(m,n)\in
[m_0,\infty)\cap\mathbb{Z}\times[n_0,\infty)\cap\mathbb{Z}$.
\end{cor}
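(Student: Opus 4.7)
The plan is to derive the corollary as a direct specialization of Theorem~\ref{teoo1} to the time scale $\mathbb{T}_1=\mathbb{T}_2=\mathbb{Z}$, using the concrete formulas for the $\Delta$-integral and the exponential function recorded earlier in the excerpt.

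First I would verify that the hypotheses transfer. On $\mathbb{T}=\mathbb{Z}$ every function is rd-continuous, so the nonnegativity and monotonicity assumptions on $u$, $a$, $f$ in the corollary match those required by Theorem~\ref{teoo1}. Using the identity $\int_a^b g(t)\Delta t=\sum_{t=a}^{b-1}g(t)$ valid on $\mathbb{Z}$, the hypothesis
\[
u(m,n)\leq a(m,n)+\sum_{s=m_0}^{m-1}\sum_{t=n_0}^{n-1}f(s,t)u(s,t)
\]
is precisely the rewriting of \eqref{in010} in this setting.

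Next I would apply the conclusion \eqref{in020}, which yields
\[
u(m,n)\leq a(m,n)\,e_{p_n}(m,m_0),\qquad p_n(s):=\sum_{t=n_0}^{n-1}f(s,t).
\]
Since $f\geq 0$ we have $p_n(s)\geq 0$, so $1+\mu(s)p_n(s)=1+p_n(s)>0$, that is $p_n\in\mathcal{R}^+$, which is the regressivity needed when invoking Theorem~\ref{teo3} inside the proof of Theorem~\ref{teoo1}. Now I would use the formula for the exponential function on $\mathbb{Z}$ (with the convention making $e_p(m_0,m_0)=1$, i.e.\ $e_p(m,m_0)=\prod_{\tau=m_0}^{m-1}[1+p(\tau)]$) to obtain
\[
e_{p_n}(m,m_0)=\prod_{s=m_0}^{m-1}\left[1+\sum_{t=n_0}^{n-1}f(s,t)\right],
\]
which, when inserted into the previous bound, gives exactly the claimed estimate.

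There is essentially no hard step: the argument is a bookkeeping exercise translating the general time scales statement into discrete notation. The only mild subtlety is making sure the indexing convention for the $\mathbb{Z}$-exponential matches the product $\prod_{s=m_0}^{m-1}$ appearing in the target inequality (which is why I reconcile the upper limit to $m-1$ via the initial condition $e_{p_n}(m_0,m_0)=1$), and confirming the positive regressivity of $p_n$ so that Theorem~\ref{teoo1} genuinely applies.
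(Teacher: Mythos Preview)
Your proposal is correct and matches the paper's approach: the corollary is stated immediately after Theorem~\ref{teoo1} as a direct specialization to $\mathbb{T}_1=\mathbb{T}_2=\mathbb{Z}$, with no separate proof given, precisely because the argument reduces to the bookkeeping you describe (rewriting $\Delta$-integrals as sums and the time-scale exponential as the product $\prod_{s=m_0}^{m-1}[1+p_n(s)]$). Your attention to the positive regressivity of $p_n$ and the indexing convention for $e_{p_n}(m,m_0)$ is appropriate and fills in the only details the paper leaves implicit.
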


\begin{remark}
We note that, following the same steps of the proof of Theorem
\ref{teoo1}, it can be obtained other bound on the function $u$,
namely
\begin{equation}
\label{in6} u(t_1,t_2)\leq
a(t_1,t_2)e_{\int_{a_1}^{t_1}f(s_1,t_2)\Delta_1 s_1}(t_2,a_2).
\end{equation}
When $\mathbb{T}_1=\mathbb{T}_2=\mathbb{R}$, then the bounds in
(\ref{in6}) and (\ref{in020}) coincide (see Corollary
\ref{corolario1}). But if, for example, we let
$\mathbb{T}_1=\mathbb{T}_2=\mathbb{Z}$, the bounds obtained can be
different; moreover, at different points one can be sharper than
the other and vice-versa. Note up the following example:

Let $f(t_1,t_2)$ be a function defined by $f(0,0)=1/4$,
$f(1,0)=1/5$, $f(2,0)=1$, $f(0,1)=1/2$, $f(1,1)=0$ and $f(2,1)=5$.
Set $a_1=a_2=0$.

Then, from (\ref{in020}) we get
\begin{align}
u(2,1)&\leq a(2,1)\frac{3}{2},\nonumber\\
u(3,2)&\leq a(3,2)\frac{147}{10},\nonumber
\end{align}
while from (\ref{in6}) we get
\begin{align}
u(2,1)&\leq a(2,1)\frac{29}{20},\nonumber\\
u(3,2)&\leq a(3,2)\frac{637}{40}.\nonumber
\end{align}
\end{remark}

We now present the particular case of Theorem \ref{teoo1} when
$\mathbb{T}_1=\mathbb{Z}$ and $\mathbb{T}_2=\mathbb{R}$.

\begin{cor}[cf. \cite{Ferr6}]
Let $\mathbb{T}_1=\mathbb{Z}$ and $\mathbb{T}_2=\mathbb{R}$.
Assume that the functions $u(t,x),\ a(t,x)$ and $f(t,x)$ satisfy
the hypothesis of Theorem \ref{teoo1} for all
$(t,x)\in\tilde{\mathbb{T}}_1\times\tilde{\mathbb{T}}_2$, with
$a_1=a_2=0$. If
\begin{equation}
u(t,x)\leq a(t,x)+\sum_{k=0}^{t-1}\int_{0}^{x}
f(k,\tau)u(k,\tau)d\tau,
\end{equation}
for all $(t,x)\in\tilde{\mathbb{T}}_1\times\tilde{\mathbb{T}}_2$,
then
\begin{equation}
u(t,x)\leq
a(t,x)\prod_{k=0}^{t-1}\left[1+\int_{0}^{x}f(k,\tau)d\tau\right],
\end{equation}
for all $(t,x)\in\tilde{\mathbb{T}}_1\times\tilde{\mathbb{T}}_2$.
\end{cor}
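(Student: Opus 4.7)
The plan is to derive this corollary as a direct specialization of Theorem~\ref{teoo1}. First I would observe that the hypothesis and the premise of the corollary match those of Theorem~\ref{teoo1} after the identifications $\mathbb{T}_1=\mathbb{Z}$, $\mathbb{T}_2=\mathbb{R}$, $a_1=a_2=0$, $t_1=t$, $t_2=x$; the double integral $\int_{a_1}^{t_1}\int_{a_2}^{t_2}f(s_1,s_2)u(s_1,s_2)\Delta_1 s_1\Delta_2 s_2$ becomes the iterated sum--integral $\sum_{k=0}^{t-1}\int_0^x f(k,\tau)u(k,\tau)\,d\tau$, by the standard translation of the delta integral on $\mathbb{Z}$ (namely $\int_0^t g(s)\Delta s=\sum_{s=0}^{t-1}g(s)$) and on $\mathbb{R}$ (where it is the Riemann integral).

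Next I would invoke the conclusion \eqref{in020} of Theorem~\ref{teoo1}, which gives
\[
u(t,x)\leq a(t,x)\,e_{p}(t,0),\qquad p(t):=\int_{0}^{x}f(t,\tau)\,d\tau,
\]
where $e_p(\cdot,0)$ denotes the exponential function on the time scale $\mathbb{T}_1=\mathbb{Z}$, with $p$ regarded as a function of the first (discrete) variable and $x$ fixed. At this point I would use the explicit representation of the exponential function on $\mathbb{Z}$ recalled in the Remark following the definition of $e_p$: for $p\in\mathcal{R}$ on $\mathbb{Z}$, the unique solution of $y^\Delta=p(t)y$, $y(0)=1$, is $y(t)=\prod_{k=0}^{t-1}[1+p(k)]$ (as follows immediately from $y(k+1)=[1+p(k)]y(k)$). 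Since the coefficient $p(k)=\int_0^x f(k,\tau)\,d\tau\geq 0$, positive regressivity is automatic, so the formula applies.

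Substituting the explicit product back into the bound yields
\[
u(t,x)\leq a(t,x)\prod_{k=0}^{t-1}\Bigl[1+\int_{0}^{x}f(k,\tau)\,d\tau\Bigr],
\]
which is exactly the claimed inequality. There is no genuine obstacle here: the entire content of the corollary is the translation of the abstract time scale exponential $e_p(t,0)$ into its concrete $\mathbb{Z}$-form as a product, and the translation of the $\Delta$-integrals into a sum and a Riemann integral. If anything, the only subtle point worth double-checking is the index range on the product (it should be $k=0,\ldots,t-1$, not $k=0,\ldots,t$), which is fixed by the derivation $y(t)=\prod_{k=0}^{t-1}[1+p(k)]$ from $y(0)=1$ and the recursion.
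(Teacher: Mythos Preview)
Your proposal is correct and follows exactly the intended route: the paper states this result as an immediate corollary of Theorem~\ref{teoo1} without writing out a proof, and your argument—specializing the $\Delta$-integrals to a sum and a Riemann integral, then reading off the $\mathbb{Z}$-exponential as the product $\prod_{k=0}^{t-1}[1+p(k)]$—is precisely the unpacking the paper leaves implicit. Your check on the index range of the product is also correct.
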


We now generalize Theorem \ref{teoo1}. If in Theorem
\ref{teorema2} $f$ does not depend on the first two variables,
then we obtain Theorem \ref{teoo1}.

\begin{theorem}[cf. \cite{Ferr6}]\label{teorema2}
Let $u(t_1,t_2), a(t_1,t_2)\in
C(\tilde{\mathbb{T}}_1\times\tilde{\mathbb{T}}_2,\mathbb{R}_0^+)$,
with $a(t_1,t_2)$ nondecreasing in each of the variables and
$f(t_1,t_2,s_1,s_2)\in C(S,\mathbb{R}_0^+)$ be nondecreasing in
$t_1$ and $t_2$, where
$S=\{(t_1,t_2,s_1,s_2)\in\tilde{\mathbb{T}}_1\times\tilde{\mathbb{T}}_2\times\tilde{\mathbb{T}}_1\times\tilde{\mathbb{T}}_2:a_1\leq
s_1\leq t_1,a_2\leq s_2\leq t_2\}$. If
\begin{equation}
u(t_1,t_2)\leq a(t_1,t_2)+\int_{a_1}^{t_1}\int_{a_2}^{t_2}
f(t_1,t_2,s_1,s_2)u(s_1,s_2)\Delta_1 s_1\Delta_2 s_2,
\end{equation}
for $(t_1,t_2)\in \tilde{\mathbb{T}}_1\times\tilde{\mathbb{T}}_2$,
then
\begin{equation}
\label{in5} u(t_1,t_2)\leq
a(t_1,t_2)e_{\int_{a_2}^{t_2}f(t_1,t_2,t_1,s_2)\Delta_2
s_2}(t_1,a_1),\ (t_1,t_2)\in
\tilde{\mathbb{T}}_1\times\tilde{\mathbb{T}}_2.
\end{equation}
\end{theorem}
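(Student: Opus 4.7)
The plan is to reduce Theorem~\ref{teorema2} to the already-proved Theorem~\ref{teoo1} via a standard ``freeze the endpoint'' argument. First I would fix an arbitrary point $(T_1,T_2)\in\tilde{\mathbb{T}}_1\times\tilde{\mathbb{T}}_2$ and restrict attention to $(t_1,t_2)$ with $a_1\leq t_1\leq T_1$ and $a_2\leq t_2\leq T_2$. The monotonicity of $f$ in its first two variables gives $f(t_1,t_2,s_1,s_2)\leq f(T_1,T_2,s_1,s_2)$ on the relevant domain, and the monotonicity of $a$ in each variable gives $a(t_1,t_2)\leq a(T_1,T_2)$. Combining these in the hypothesized inequality yields
\begin{equation*}
u(t_1,t_2)\leq a(T_1,T_2)+\int_{a_1}^{t_1}\int_{a_2}^{t_2} \tilde{f}(s_1,s_2)\,u(s_1,s_2)\,\Delta_1 s_1 \Delta_2 s_2,
\end{equation*}
where $\tilde{f}(s_1,s_2):=f(T_1,T_2,s_1,s_2)$ is continuous and nonnegative on $[a_1,T_1]_{\mathbb{T}_1}\times[a_2,T_2]_{\mathbb{T}_2}$, and the constant $a(T_1,T_2)$ plays the role of the (trivially nondecreasing) function $a$ in Theorem~\ref{teoo1}.

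Next I would apply Theorem~\ref{teoo1} directly to this inequality on $[a_1,T_1]_{\mathbb{T}_1}\times[a_2,T_2]_{\mathbb{T}_2}$, obtaining
\begin{equation*}
u(t_1,t_2)\leq a(T_1,T_2)\,e_{\int_{a_2}^{t_2}\tilde{f}(t_1,s_2)\Delta_2 s_2}(t_1,a_1),\quad (t_1,t_2)\in[a_1,T_1]_{\mathbb{T}_1}\times[a_2,T_2]_{\mathbb{T}_2}.
\end{equation*}
Finally, I would specialize to the corner point by setting $t_1=T_1$ and $t_2=T_2$, which gives
\begin{equation*}
u(T_1,T_2)\leq a(T_1,T_2)\,e_{\int_{a_2}^{T_2}f(T_1,T_2,T_1,s_2)\Delta_2 s_2}(T_1,a_1).
\end{equation*}
Since $(T_1,T_2)$ was arbitrary in $\tilde{\mathbb{T}}_1\times\tilde{\mathbb{T}}_2$, relabeling $(T_1,T_2)$ as $(t_1,t_2)$ yields the desired estimate~(\ref{in5}).

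The only delicate point — the ``main obstacle'' in this otherwise routine reduction — is checking that the exponential function $e_{p}(\cdot,a_1)$ used by Theorem~\ref{teoo1} is well defined with $p(t_1):=\int_{a_2}^{T_2}\tilde{f}(t_1,s_2)\Delta_2 s_2$, i.e.\ that this $p$ lies in $\mathcal{R}^+$. Since $f\geq 0$ one has $p\geq 0$, so $1+\mu_1(t_1)p(t_1)\geq 1>0$ on $[a_1,T_1]_{\mathbb{T}_1}^\kappa$, and the rd-continuity of $p$ in $t_1$ follows from the continuity of $f$; hence $p\in\mathcal{R}^+$ and Theorem~\ref{teo3} (which underlies Theorem~\ref{teoo1}) applies. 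No other step requires more than the monotonicity hypotheses already imposed on $a$ and $f$.
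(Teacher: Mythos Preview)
Your proposal is correct and follows essentially the same strategy as the paper: fix an arbitrary point $(t_1^\ast,t_2^\ast)$, use the monotonicity of $a$ and of $f$ in its first two arguments to freeze those arguments at $(t_1^\ast,t_2^\ast)$, and then reduce to the setting of Theorem~\ref{teoo1}. The only cosmetic difference is that the paper repeats the differential computation from the proof of Theorem~\ref{teoo1} inline (with the $\varepsilon$-perturbation and the quotient/derivative manipulation), whereas you invoke Theorem~\ref{teoo1} as a black box; both arrive at the same bound after specializing to the frozen point.
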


\begin{proof}
We start by fixing arbitrary numbers
$t_1^\ast\in\tilde{\mathbb{T}}_1$ and
$t_2^\ast\in\tilde{\mathbb{T}}_2$ and consider the function
defined on
$[a_1,t_1^\ast]\cap\tilde{\mathbb{T}}_1\times[a_2,t_2^\ast]\cap\tilde{\mathbb{T}}_2$
by
\begin{equation}
v(t_1,t_2)=a(t_1^\ast,t_2^\ast)+\varepsilon+\int_{a_1}^{t_1}\int_{a_2}^{t_2}
f(t_1^\ast,t_2^\ast,s_1,s_2)u(s_1,s_2)\Delta_1 s_1\Delta_2 s_2,
\end{equation}
for an arbitrary $\varepsilon>0$. From our hypothesis we see that
\begin{equation}
u(t_1,t_2)\leq v(t_1,t_2),\ \mbox{for all}\ (t_1,t_2)\in
[a_1,t_1^\ast]\cap\tilde{\mathbb{T}}_1\times[a_2,t_2^\ast]\cap\tilde{\mathbb{T}}_2.
\end{equation}
Moreover, $\Delta$-differentiating with respect to the first
variable and then with respect to the second, we obtain
\begin{align}
\frac{\partial}{\Delta_2 t_2}\left(\frac{\partial
v(t_1,t_2)}{\Delta_1
t_1}\right)&=f(t_1^\ast,t_2^\ast,t_1,t_2)u(t_1,t_2)\nonumber\\
&\leq f(t_1^\ast,t_2^\ast,t_1,t_2)v(t_1,t_2),\nonumber
\end{align}
for all $(t_1,t_2)\in
[a_1,t_1^\ast]^\kappa\cap\tilde{\mathbb{T}}_1\times[a_2,t_2^\ast]^\kappa\cap\tilde{\mathbb{T}}_2$.
From this last inequality, we can write
$$\frac{v(t_1,t_2)\frac{\partial}{\Delta_2 t_2}\left(\frac{\partial
v(t_1,t_2)}{\Delta_1
t_1}\right)}{v(t_1,t_2)v(t_1,\sigma_2(t_2))}\leq
f(t_1^\ast,t_2^\ast,t_1,t_2),$$ hence,
$$\frac{v(t_1,t_2)\frac{\partial}{\Delta_2 t_2}\left(\frac{\partial
v(t_1,t_2)}{\Delta_1
t_1}\right)}{v(t_1,t_2)v(t_1,\sigma_2(t_2))}\leq
f(t_1^\ast,t_2^\ast,t_1,t_2)+\frac{\frac{\partial
v(t_1,t_2)}{\Delta_1 t_1}\frac{\partial v(t_1,t_2)}{\Delta_2
t_2}}{v(t_1,t_2)v(t_1,\sigma_2(t_2))}.$$ The previous inequality
can be rewritten as
$$\frac{\partial}{\Delta_2 t_2}\left(\frac{\frac{\partial v(t_1,t_2)}{\Delta_1 t_1}}{v(t_1,t_2)}\right)\leq f(t_1^\ast,t_2^\ast,t_1,t_2).$$
$\Delta$-integrating with respect to the second variable from
$a_2$ to $t_2$ and noting that $\frac{\partial
v(t_1,t_2)}{\Delta_1 t_1}\mid_{(t_1,a_2)}=0$, we have
$$\frac{\frac{\partial v(t_1,t_2)}{\Delta_1 t_1}}{v(t_1,t_2)}\leq\int_{a_2}^{t_2}f(t_1^\ast,t_2^\ast,t_1,s_2)\Delta_2 s_2,$$
that is,
$$\frac{\partial v(t_1,t_2)}{\Delta_1 t_1}\leq\int_{a_2}^{t_2}f(t_1^\ast,t_2^\ast,t_1,s_2)\Delta_2 s_2 v(t_1,t_2).$$
Fix $t_2=t_2^\ast$ and put
$p(t_1):=\int_{a_2}^{t_2^\ast}f(t_1^\ast,t_2^\ast,t_1,s_2)\Delta_2
s_2\in \mathcal{R}^+$. By Theorem \ref{teo3}
$$v(t_1,t_2^\ast)\leq(a(t_1^\ast,t_2^\ast)+\varepsilon) e_p(t_1,a_1).$$
Letting $t_1=t_1^\ast$ in the above inequality, and remembering
that $t_1^\ast$, $t_2^\ast$ and $\varepsilon$ are arbitrary,
inequality (\ref{in5}) follows.
\end{proof}

\subsection{Nonlinear inequalities}

\begin{theorem}[cf. \cite{Ferr6}]
\label{teo2} Let $u(t_1,t_2), f(t_1,t_2)\in
C(\tilde{\mathbb{T}}_1\times\tilde{\mathbb{T}}_2,\mathbb{R}_0^+)$.
Moreover, let $a(t_1,t_2)\in
C(\tilde{\mathbb{T}}_1\times\tilde{\mathbb{T}}_2,\mathbb{R}^+)$
and be a nondecreasing function in each of the variables. If $p$
and $q$ are two positive real numbers such that $p\geq q$ and if
\begin{equation}
\label{in3} u^p(t_1,t_2)\leq
a(t_1,t_2)+\int_{a_1}^{t_1}\int_{a_2}^{t_2}
f(s_1,s_2)u^q(s_1,s_2)\Delta_1 s_1\Delta_2 s_2,
\end{equation}
for $(t_1,t_2)\in \tilde{\mathbb{T}}_1\times\tilde{\mathbb{T}}_2$,
then
\begin{equation}
\label{in4} u(t_1,t_2)\leq
a^{\frac{1}{p}}(t_1,t_2)\left[e_{\int_{a_2}^{t_2}f(t_1,s_2)a^{\frac{q}{p}-1}(t_1,s_2)\Delta_2
s_2}(t_1,a_1)\right]^{\frac{1}{p}},\quad (t_1,t_2)\in
\tilde{\mathbb{T}}_1\times\tilde{\mathbb{T}}_2.
\end{equation}
\end{theorem}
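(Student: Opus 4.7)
The plan is to reduce \eqref{in3} to the linear Gronwall-type inequality of Theorem~\ref{teoo1} via a normalization and a Bihari-style linearization. First, since $a$ is strictly positive, I would divide \eqref{in3} by $a(t_1,t_2)$; using the hypothesis that $a$ is nondecreasing in each variable (so $a(t_1,t_2)\geq a(s_1,s_2)$ whenever $s_i\leq t_i$), I replace $a(t_1,t_2)$ by the smaller $a(s_1,s_2)$ in the denominator of the integrand to obtain
\[
\frac{u^p(t_1,t_2)}{a(t_1,t_2)}\leq 1+\int_{a_1}^{t_1}\!\!\int_{a_2}^{t_2}\frac{f(s_1,s_2)\,u^q(s_1,s_2)}{a(s_1,s_2)}\,\Delta_1 s_1\Delta_2 s_2 .
\]

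Next, using the algebraic identity $u^q/a = a^{q/p-1}(u^p/a)^{q/p}$ and setting $V(t_1,t_2):=u^p(t_1,t_2)/a(t_1,t_2)$, the inequality becomes
\[
V(t_1,t_2)\leq 1+\int_{a_1}^{t_1}\!\!\int_{a_2}^{t_2} f(s_1,s_2)\,a^{q/p-1}(s_1,s_2)\,V^{q/p}(s_1,s_2)\,\Delta_1 s_1\Delta_2 s_2 .
\]

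To linearize, I fix $\varepsilon>0$ and define $z(t_1,t_2)$ to be the right-hand side above with the leading $1$ replaced by $1+\varepsilon$; then $V\leq z$ and $z\geq 1+\varepsilon>1$. Because the hypothesis $p\geq q$ gives $0<q/p\leq 1$, the pointwise bound $z\geq 1$ forces $z^{q/p}\leq z$, so $V^{q/p}\leq z^{q/p}\leq z$. Substituting this estimate back into the definition of $z$ produces the linear inequality
\[
z(t_1,t_2)\leq (1+\varepsilon)+\int_{a_1}^{t_1}\!\!\int_{a_2}^{t_2} f(s_1,s_2)\,a^{q/p-1}(s_1,s_2)\,z(s_1,s_2)\,\Delta_1 s_1\Delta_2 s_2 ,
\]
to which Theorem~\ref{teoo1} applies directly, with $1+\varepsilon$ in the role of the nondecreasing term and $f\cdot a^{q/p-1}$ in the role of the kernel. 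Using $V\leq z$ and then letting $\varepsilon\to 0^+$ yields
\[
V(t_1,t_2)\leq e_{\int_{a_2}^{t_2} f(t_1,s_2)\,a^{q/p-1}(t_1,s_2)\,\Delta_2 s_2}(t_1,a_1) ,
\]
and extracting the $p$-th root after multiplying by $a(t_1,t_2)$ gives \eqref{in4}.

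The delicate step I expect to be the main obstacle is the reduction $V^{q/p}\leq z$: it hinges essentially on the pointwise lower bound $z\geq 1$, which is why the additive $\varepsilon>0$ is introduced — without it, $z$ could equal the boundary value $1$ and the monotonicity argument $z^{q/p}\leq z$ would still hold but would not be genuinely strict, leading to no gain. Everything else is direct: the algebraic identity $u^q/a=a^{q/p-1}V^{q/p}$, the monotonicity of $a$, and a single application of the linear bound from Theorem~\ref{teoo1}.
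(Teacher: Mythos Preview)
Your proof is correct and follows essentially the same route as the paper: normalize by the positive nondecreasing $a$, observe that the resulting auxiliary function is $\geq 1$ so that the exponent $q/p\leq 1$ linearizes the inequality via $z^{q/p}\leq z$, and then reduce to the linear two-variable Gronwall bound. The only differences are cosmetic: the paper re-runs the differentiation argument from the proof of Theorem~\ref{teoo1} instead of citing it as a black box, and your $\varepsilon$-perturbation is in fact unnecessary since $z\geq 1$ already holds (equality at $z=1$ causes no trouble for $z^{q/p}\leq z$).
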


\begin{proof}
Since $a(t_1,t_2)$ is positive and nondecreasing on
$(t_1,t_2)\in\tilde{\mathbb{T}}_1\times\tilde{\mathbb{T}}_2$,
inequality (\ref{in3}) implies that,
$$u^p(t_1,t_2)\leq a(t_1,t_2)\left(1+\int_{a_1}^{t_1}\int_{a_2}^{t_2}
f(s_1,s_2)\frac{u^q(s_1,s_2)}{a(s_1,s_2)}\Delta_1 s_1\Delta_2
s_2\right).$$  Define $v(t_1,t_2)$ on
$\tilde{\mathbb{T}}_1\times\tilde{\mathbb{T}}_2$ by

$$v(t_1,t_2)=1+\int_{a_1}^{t_1}\int_{a_2}^{t_2}
f(s_1,s_2)\frac{u^q(s_1,s_2)}{a(s_1,s_2)}\Delta_1 s_1\Delta_2
s_2.$$ Then,
\begin{equation}
\frac{\partial}{\Delta_2 t_2}\left(\frac{\partial
v(t_1,t_2)}{\Delta_1
t_1}\right)=f(t_1,t_2)\frac{u^q(t_1,t_2)}{a(t_1,t_2)}\leq
f(t_1,t_2)a^{\frac{q}{p}-1}(t_1,t_2)v^{\frac{q}{p}}(t_1,t_2).\nonumber
\end{equation}
Now, note that $v^{\frac{q}{p}}(t_1,t_2)\leq v(t_1,t_2)$ and
therefore
$$\frac{\partial}{\Delta_2 t_2}\left(\frac{\partial
v(t_1,t_2)}{\Delta_1 t_1}\right)\leq
f(t_1,t_2)a^{\frac{q}{p}-1}(t_1,t_2)v(t_1,t_2).$$ From here, we
can follow the same procedure as in the proof of Theorem
\ref{teoo1} to obtain
$$v(t_1,t_2)\leq e_p(t_1,a_1),$$
where
$p(t_1)=\int_{a_2}^{t_2}f(t_1,s_2)a^{\frac{q}{p}-1}(t_1,s_2)\Delta_2
s_2$. Noting that $$u(t_1,t_2)\leq
a^{\frac{1}{p}}(t_1,t_2)v^{\frac{1}{p}}(t_1,t_2),$$ we obtain the
desired inequality (\ref{in4}).
\end{proof}

\begin{theorem}[cf. \cite{Ferr6}]
Let $u(t_1,t_2), a(t_1,t_2)\in
C(\tilde{\mathbb{T}}_1\times\tilde{\mathbb{T}}_2,\mathbb{R}_0^+)$,
with $a(t_1,t_2)$ nondecreasing in each of the variables and
$f(t_1,t_2,s_1,s_2)\in C(S,\mathbb{R}_0^+)$, where
$S=\{(t_1,t_2,s_1,s_2)\in\tilde{\mathbb{T}}_1\times\tilde{\mathbb{T}}_2\times\tilde{\mathbb{T}}_1\times\tilde{\mathbb{T}}_2:a_1\leq
s_1\leq t_1,a_2\leq s_2\leq t_2\}$. If $p$ and $q$ are two
positive real numbers such that $p\geq q$ and if
\begin{equation}\label{ineq1}
u^p(t_1,t_2)\leq a(t_1,t_2)+\int_{a_1}^{t_1}\int_{a_2}^{t_2}
f(t_1,t_2,s_1,s_2)u^q(s_1,s_2)\Delta_1 s_1\Delta_2 s_2,
\end{equation}
for all $(t_1,t_2)\in
\tilde{\mathbb{T}}_1\times\tilde{\mathbb{T}}_2$, then
\begin{equation}
u(t_1,t_2)\leq
a^{\frac{1}{p}}(t_1,t_2)\left[e_{\int_{a_2}^{t_2}a^{\frac{q}{p}-1}(t_1,s_2)f(t_1,t_2,t_1,s_2)\Delta_2
s_2}(t_1,a_1)\right]^{\frac{1}{p}},\nonumber
\end{equation}
for all $(t_1,t_2)\in
\tilde{\mathbb{T}}_1\times\tilde{\mathbb{T}}_2$.
\end{theorem}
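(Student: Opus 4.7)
The plan is to fuse the freezing trick from the proof of Theorem~\ref{teorema2} with the nonlinearity-reduction device from the proof of Theorem~\ref{teo2}. First I would fix arbitrary points $t_1^\ast\in\tilde{\mathbb{T}}_1$ and $t_2^\ast\in\tilde{\mathbb{T}}_2$. For $(t_1,t_2)\in[a_1,t_1^\ast]_{\tilde{\mathbb{T}}_1}\times[a_2,t_2^\ast]_{\tilde{\mathbb{T}}_2}$, the monotonicity of $a$ in each variable together with the (implicit, as in Theorem~\ref{teorema2}) nondecreasing assumption on $f$ in its first two slots converts \eqref{ineq1} into
$$u^p(t_1,t_2)\leq a(t_1,t_2)+\int_{a_1}^{t_1}\int_{a_2}^{t_2} f(t_1^\ast,t_2^\ast,s_1,s_2)u^q(s_1,s_2)\Delta_1 s_1\Delta_2 s_2.$$
Using $a(s_1,s_2)\leq a(t_1,t_2)$ on the integration domain (if $a$ can vanish, one first replaces $a$ by $a+\varepsilon$ and lets $\varepsilon\to 0^+$ at the end, as in Theorem~\ref{teoo1}) this factorises as
$$u^p(t_1,t_2)\leq a(t_1,t_2)\left(1+\int_{a_1}^{t_1}\int_{a_2}^{t_2} f(t_1^\ast,t_2^\ast,s_1,s_2)\frac{u^q(s_1,s_2)}{a(s_1,s_2)}\Delta_1 s_1\Delta_2 s_2\right).$$

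Next I would let $v(t_1,t_2)$ denote the expression in parentheses, so that $u\leq a^{1/p}v^{1/p}$ and $v\geq 1$. A direct mixed $\Delta$-differentiation yields
$$\frac{\partial}{\Delta_2 t_2}\left(\frac{\partial v(t_1,t_2)}{\Delta_1 t_1}\right)=f(t_1^\ast,t_2^\ast,t_1,t_2)\frac{u^q(t_1,t_2)}{a(t_1,t_2)}\leq f(t_1^\ast,t_2^\ast,t_1,t_2)\,a^{q/p-1}(t_1,t_2)\,v^{q/p}(t_1,t_2).$$
Since $v\geq 1$ and $0<q/p\leq 1$, one has $v^{q/p}\leq v$, which linearises the mixed $\Delta$-inequality into exactly the shape handled inside the proof of Theorem~\ref{teoo1}.

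From this point I would replay verbatim the manipulation used there: divide both sides by $v(t_1,t_2)\,v(t_1,\sigma_2(t_2))$, absorb the nonnegative cross term $(\partial v/\Delta_1 t_1)(\partial v/\Delta_2 t_2)/(v\,v^{\sigma_2})$ into the right-hand side, and recognise the left-hand side as the $\Delta_2$-derivative of $(\partial v/\Delta_1 t_1)/v$. Integrating in $t_2$ from $a_2$ to $t_2^\ast$ (using $\partial v/\Delta_1 t_1|_{t_2=a_2}=0$) and then freezing $t_2=t_2^\ast$ reduces everything to a one-dimensional dynamic inequality in $t_1$ with nonnegative (hence positively regressive) coefficient
$$P(t_1)=\int_{a_2}^{t_2^\ast} a^{q/p-1}(t_1,s_2)\,f(t_1^\ast,t_2^\ast,t_1,s_2)\,\Delta_2 s_2.$$
Theorem~\ref{teo3}, applied with the initial value $v(a_1,t_2^\ast)=1$, then produces $v(t_1,t_2^\ast)\leq e_P(t_1,a_1)$.

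Setting $t_1=t_1^\ast$, extracting $p$th roots in $u(t_1^\ast,t_2^\ast)\leq a^{1/p}(t_1^\ast,t_2^\ast)\,v^{1/p}(t_1^\ast,t_2^\ast)$, and recalling that $(t_1^\ast,t_2^\ast)$ was arbitrary yields the claimed bound. The main technical obstacle is the coupled freezing: one must track carefully the \emph{two} places where the outer pair $(t_1^\ast,t_2^\ast)$ enters $f(t_1^\ast,t_2^\ast,t_1,s_2)$, so that after substituting $t_1=t_1^\ast$ the coefficient becomes $f(t_1^\ast,t_2^\ast,t_1^\ast,s_2)$, matching the $f(t_1,t_2,t_1,s_2)$ appearing in the exponent of the stated conclusion. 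Everything else is bookkeeping inherited from the proofs of Theorems~\ref{teorema2} and \ref{teo2}.
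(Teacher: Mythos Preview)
Your proposal is correct and follows essentially the same route as the paper: factor $a$ outside, freeze the outer arguments of $f$ at $(t_1^\ast,t_2^\ast)$, define $v$, linearise via $v^{q/p}\le v$, then replay the Theorem~\ref{teoo1} manipulation and apply Theorem~\ref{teo3}. You even catch two points the paper glosses over --- the implicit nondecreasing hypothesis on $f$ in its first two slots (present in Theorem~\ref{teorema2} but omitted from this statement, yet needed for $u^p\le a\cdot v$ on the frozen rectangle) and the $a+\varepsilon$ perturbation to handle $a\in\mathbb{R}_0^+$ rather than $\mathbb{R}^+$.
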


\begin{proof}
Since $a(t_1,t_2)$ is positive and nondecreasing on
$(t_1,t_2)\in\tilde{\mathbb{T}}_1\times\tilde{\mathbb{T}}_2$,
inequality \eqref{ineq1} implies that,
$$u^p(t_1,t_2)\leq a(t_1,t_2)\left(1+\int_{a_1}^{t_1}\int_{a_2}^{t_2}
f(t_1,t_2,s_1,s_2)\frac{u^q(s_1,s_2)}{a(s_1,s_2)}\Delta_1
s_1\Delta_2 s_2\right).$$  Fix $t_1^\ast\in\tilde{\mathbb{T}}_1$
and $t_2^\ast\in\tilde{\mathbb{T}}_2$ arbitrarily and define a
function $v(t_1,t_2)$ on
$[a_1,t_1^\ast]\cap\tilde{\mathbb{T}}_1\times[a_2,t_2^\ast]\cap\tilde{\mathbb{T}}_2$
by
$$v(t_1,t_2)=1+\int_{a_1}^{t_1}\int_{a_2}^{t_2}
f(t_1^\ast,t_2^\ast,s_1,s_2)\frac{u^q(s_1,s_2)}{a(s_1,s_2)}\Delta_1
s_1\Delta_2 s_2.$$ Then,
\begin{align}
\frac{\partial}{\Delta_2 t_2}\left(\frac{\partial
v(t_1,t_2)}{\Delta_1
t_1}\right)&=f(t_1^\ast,t_2^\ast,t_1,t_2)\frac{u^q(t_1,t_2)}{a(t_1,t_2)}\nonumber\\
&\leq
f(t_1^\ast,t_2^\ast,t_1,t_2)a^{\frac{q}{p}-1}(t_1,t_2)v^{\frac{q}{p}}(t_1,t_2).\nonumber
\end{align}
Since $v^{\frac{q}{p}}(t_1,t_2)\leq v(t_1,t_2)$, we have that
$$\frac{\partial}{\Delta_2 t_2}\left(\frac{\partial
v(t_1,t_2)}{\Delta_1 t_1}\right)\leq
f(t_1^\ast,t_2^\ast,t_1,t_2)a^{\frac{q}{p}-1}(t_1,t_2)v(t_1,t_2).$$
We can follow the same steps as before to reach the inequality
$$\frac{\partial v(t_1,t_2)}{\Delta_1 t_1}\leq\int_{a_2}^{t_2}f(t_1^\ast,t_2^\ast)g(t_1^\ast,t_2^\ast,t_1,s_2)a^{\frac{q}{p}-1}(t_1,s_2)\Delta_2 s_2 v(t_1,t_2).$$
Fix $t_2=t_2^\ast$ and put
$p(t_1):=\int_{a_2}^{t_2^\ast}f(t_1^\ast,t_2^\ast)g(t_1^\ast,t_2^\ast,t_1,s_2)a^{\frac{q}{p}-1}(t_1,s_2)\Delta_2
s_2\in \mathcal{R}^+$. Again, an application of Theorem \ref{teo3}
gives
$$v(t_1,t_2^\ast)\leq e_p(t_1,a_1),$$
and putting $t_1=t_1^\ast$ we obtain the desired inequality.
\end{proof}

We consider now the following time scale: let
$\{\alpha_k\}_{k\in\mathbb{N}}$ be a sequence of positive numbers
and let $t_0^\alpha\in\mathbb{R}$. Let
$$t_k^\alpha=t_0^\alpha+\sum_{n=1}^k\alpha_n,\ k\in\mathbb{N},$$
and assume that $\lim_{k\rightarrow\infty}t_k^\alpha=\infty$. Then
we can define the following time scale
$\mathbb{T}^\alpha=\{t_k^\alpha:k\in\mathbb{N}_0\}$. Further, for
$p\in\mathcal{R}$ (cf. \cite[Example 4.6]{Agarwal}),
\begin{equation}
\label{e1} e_p(t_k^\alpha,t_0^\alpha)=\prod_{n=1}^k(1+\alpha_n
p(t_{n-1})),\quad \mbox{for all}\quad k\in\mathbb{N}_0.
\end{equation}
Now, for two sequences $\{\alpha_k,\beta_k\}_{k\in\mathbb{N}}$ and
two numbers $t_0^\alpha,t_0^\beta\in\mathbb{R}$ as above, we
define two time scales
$\mathbb{T}^\alpha=\{t_k^\alpha:k\in\mathbb{N}_0\}$ and
$\mathbb{T}^\beta=\{t_k^\beta:k\in\mathbb{N}_0\}$. We can
therefore state the following result:

\begin{cor}[cf. \cite{Ferr6}]
Let $u(t,s), a(t,s)$ be defined on
$\mathbb{T}^\alpha\times\mathbb{T}^\beta$, be nonnegative with $a$
nondecreasing in its variables. Further, let $f(t,s,\tau,\xi)$,
where
$(t,s,\tau,\xi)\in\mathbb{T}^\alpha\times\mathbb{T}^\beta\times\mathbb{T}^\alpha\times\mathbb{T}^\beta$
with $\tau\leq t$ and $\xi\leq s,$ be a nonnegative function and
nondecreasing in its first two variables. If $p$ and $q$ are two
positive real numbers such that $p\geq q$ and if
\begin{equation}
u^p(t,s)\leq
a(t,s)+\sum_{\tau\in[t_0^\alpha,t)_{\mathbb{T}^\alpha}}\sum_{\xi\in[t_0^\beta,s)_{\mathbb{T}^\beta}}
\mu^\alpha(\tau)\mu^\beta(\xi)f(t,s,\tau,\xi)u^q(\tau,\xi),\nonumber
\end{equation}
for all $(t,s)\in\mathbb{T}^\alpha\times\mathbb{T}^\beta$, where
$\mu^\alpha$ and $\mu^\beta$ are the graininess functions of
$\mathbb{T}^\alpha$ and $\mathbb{T}^\beta$, respectively, then
\begin{equation}
u(t,s)\leq
a^{\frac{1}{p}}(t,s)\left[e_{\int_{t_0^\beta}^{s}f(t,s)a^{\frac{q}{p}-1}(t,\xi)g(t,s,t,\xi)\Delta^\beta
\xi}(t,t_0^\alpha)\right]^{\frac{1}{p}},\nonumber
\end{equation}
for all $(t,s)\in\mathbb{T}^\alpha\times\mathbb{T}^\beta$, where
$e$ is given by (\ref{e1}).
\end{cor}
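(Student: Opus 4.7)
The plan is to recognize this corollary as a direct specialization of the preceding nonlinear theorem (the one with $u^p\le a+\iint f u^q$) to the particular discrete time scales $\mathbb{T}^\alpha$ and $\mathbb{T}^\beta$. The work is essentially translational: convert $\Delta$-integrals into weighted sums via the graininess, verify the hypotheses, invoke the theorem, and finally rewrite the resulting exponential using the explicit product formula \eqref{e1}.

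First I would observe that since every point of $\mathbb{T}^\alpha$ (and of $\mathbb{T}^\beta$) is right-scattered with jump $\mu^\alpha(t_k^\alpha)=\alpha_{k+1}$, and since an isolated point is automatically right-dense-free, any function on these time scales is rd-continuous. Moreover, a standard induction using the identity $\int_t^{\sigma(t)}\phi(\tau)\Delta\tau=\mu(t)\phi(t)$ (formula \eqref{sigma}) together with additivity of $\int$ gives
\begin{equation*}
\int_{t_0^\alpha}^{t}\phi(\tau)\Delta^\alpha\tau
=\sum_{\tau\in[t_0^\alpha,t)_{\mathbb{T}^\alpha}}\mu^\alpha(\tau)\phi(\tau),
\end{equation*}
and analogously for $\mathbb{T}^\beta$. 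Applying this to the double integral in the hypothesis of the previous theorem reproduces, term by term, the double sum appearing in the corollary.

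Next I would check the remaining hypotheses of the previous theorem with the identifications $\mathbb{T}_1=\mathbb{T}^\alpha$, $\mathbb{T}_2=\mathbb{T}^\beta$, $a_1=t_0^\alpha$, $a_2=t_0^\beta$: $u,a$ are nonnegative with $a$ nondecreasing, $f$ is nonnegative and nondecreasing in its first two variables, and $p\ge q>0$. All conditions are satisfied, so the theorem yields
\begin{equation*}
u(t,s)\le a^{1/p}(t,s)\bigl[e_{P(\cdot)}(t,t_0^\alpha)\bigr]^{1/p},
\qquad
P(\tau):=\int_{t_0^\beta}^{s} a^{q/p-1}(\tau,\xi)\,f(t,s,\tau,\xi)\,\Delta^\beta\xi.
\end{equation*}

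Finally I would specialize the exponential to the time scale $\mathbb{T}^\alpha$. Because $P$ is an integral of nonnegative functions and $\alpha_n>0$, every factor $1+\alpha_n P(t_{n-1}^\alpha)$ is positive, so $P\in\mathcal{R}^+$ and the product expansion \eqref{e1} applies, giving
\begin{equation*}
e_P(t_k^\alpha,t_0^\alpha)=\prod_{n=1}^{k}\bigl(1+\alpha_n P(t_{n-1}^\alpha)\bigr),
\end{equation*}
which is exactly the conclusion of the corollary once the integral $P$ is substituted back. There is no genuine obstacle in this argument; the only point deserving care is the regressivity verification needed to legitimize \eqref{e1}, which as noted is immediate from nonnegativity of the integrand.
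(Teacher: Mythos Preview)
Your proposal is correct and matches the paper's intent: the corollary is stated without proof in the paper, being an immediate specialization of the immediately preceding nonlinear theorem (the one with $f(t_1,t_2,s_1,s_2)$ depending on four variables) to the isolated time scales $\mathbb{T}^\alpha,\mathbb{T}^\beta$. Your translation of $\Delta$-integrals into graininess-weighted sums via \eqref{sigma}, verification of the hypotheses, and invocation of the product formula \eqref{e1} for the exponential are exactly what is required.
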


We end this section showing how Theorem \ref{teo2} can be applied
to estimate the solutions of the following partial delta dynamic
equation:

\begin{equation}\label{diff1}\frac{\partial}{\Delta_2 t_2}\left(\frac{\partial
u^2(t_1,t_2)}{\Delta_1
t_1}\right)=F(t_1,t_2,u(t_1,t_2)),
\end{equation}
with the given initial boundary conditions (we assume that
$a_1=a_2=0$)
\begin{equation}\label{IBVP0}
u^2(t_1,0)=g(t_1),\quad u^2(0,t_2)=h(t_2),\quad g(0)=0,\quad
h(0)=0,
\end{equation}
where $F\in
C(\tilde{\mathbb{T}}_1\times\tilde{\mathbb{T}}_2\times\mathbb{R}_0^+,\mathbb{R}_0^+)$,
$g\in C(\tilde{\mathbb{T}}_1,\mathbb{R}_0^+)$, $h\in
C(\tilde{\mathbb{T}}_2,\mathbb{R}^+_0)$, with $g$ and $h$
nondecreasing functions and positive on their domains except at
$a_1$ and $a_2$, respectively.

\begin{theorem}
Assume that on its domain, $F$ satisfies $$F(t_1,t_2,u)\leq
t_2u.$$ If $u(t_1,t_2)$ is a solution of the IBVP above for
$(t_1,t_2)\in\tilde{\mathbb{T}}_1\times\tilde{\mathbb{T}}_2$, then
\begin{equation}
\label{in7} u(t_1,t_2)\leq
\sqrt{(g(t_1)+h(t_2))}\left[e_{\int_{0}^{t_2}s_2
(g(t_1)+h(s_2))^{-\frac{1}{2}}\Delta_2
s_2}(t_1,0)\right]^{\frac{1}{2}},
\end{equation}
for $(t_1,t_2)\in\tilde{\mathbb{T}}_1\times\tilde{\mathbb{T}}_2$,
except at the point $(0,0)$.
\end{theorem}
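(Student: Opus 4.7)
The plan is to reduce the partial delta dynamic equation to an integral inequality of the form handled by Theorem~\ref{teo2}, and then invoke it directly with $p=2$, $q=1$.

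First I would integrate the dynamic equation twice to convert it into an integral equation. Integrating $\frac{\partial}{\Delta_2 t_2}\left(\frac{\partial u^2(t_1,t_2)}{\Delta_1 t_1}\right)=F(t_1,t_2,u(t_1,t_2))$ with respect to $t_2$ from $0$ to $t_2$ and using $u^2(t_1,0)=g(t_1)$ gives
\begin{equation*}
\frac{\partial u^2(t_1,t_2)}{\Delta_1 t_1}=g^\Delta(t_1)+\int_0^{t_2}F(t_1,s_2,u(t_1,s_2))\Delta_2 s_2.
\end{equation*}
Then integrating with respect to $t_1$ from $0$ to $t_1$ and using $u^2(0,t_2)=h(t_2)$ together with $g(0)=0$ produces
\begin{equation*}
u^2(t_1,t_2)=g(t_1)+h(t_2)+\int_0^{t_1}\int_0^{t_2}F(s_1,s_2,u(s_1,s_2))\Delta_2 s_2\,\Delta_1 s_1.
\end{equation*}

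Next I would apply the growth bound $F(t_1,t_2,u)\leq t_2 u$ to obtain
\begin{equation*}
u^2(t_1,t_2)\leq g(t_1)+h(t_2)+\int_0^{t_1}\int_0^{t_2}s_2\, u(s_1,s_2)\Delta_2 s_2\,\Delta_1 s_1.
\end{equation*}
At this point I would set $a(t_1,t_2)=g(t_1)+h(t_2)$ and $f(s_1,s_2)=s_2$, and verify the hypotheses of Theorem~\ref{teo2}: since $g$ and $h$ are nondecreasing and nonnegative, so is $a$; moreover, by hypothesis $g$ and $h$ are positive away from $a_1=0$ and $a_2=0$ respectively, so $a(t_1,t_2)>0$ for every $(t_1,t_2)\in\tilde{\mathbb{T}}_1\times\tilde{\mathbb{T}}_2$ except $(0,0)$, which is precisely the point excluded in the statement. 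With $p=2$ and $q=1$ (so $p\geq q$), Theorem~\ref{teo2} yields
\begin{equation*}
u(t_1,t_2)\leq a^{1/2}(t_1,t_2)\left[e_{\int_0^{t_2}f(t_1,s_2)\,a^{-1/2}(t_1,s_2)\Delta_2 s_2}(t_1,0)\right]^{1/2},
\end{equation*}
which is exactly the claimed estimate \eqref{in7}.

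The routine issue to keep an eye on is that the application of Theorem~\ref{teo2} requires $a$ to be strictly positive on the domain (so that $a^{q/p-1}=a^{-1/2}$ makes sense), which is the only reason the theorem must exclude the corner $(0,0)$; everything else is a clean two-step integration followed by a direct specialization of the nonlinear Gronwall-type inequality already proved.
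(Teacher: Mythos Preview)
Your proof is correct and follows essentially the same approach as the paper: convert the partial delta dynamic equation into the integral equation $u^2(t_1,t_2)=g(t_1)+h(t_2)+\int_0^{t_1}\int_0^{t_2}F(s_1,s_2,u(s_1,s_2))\Delta_1 s_1\Delta_2 s_2$, apply the hypothesis $F(t_1,t_2,u)\le t_2 u$, and then invoke Theorem~\ref{teo2} with $a(t_1,t_2)=g(t_1)+h(t_2)$, $f(t_1,t_2)=t_2$, $p=2$, $q=1$. You have simply spelled out the two successive integrations and the positivity check on $a$ in more detail than the paper does.
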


\begin{proof}
Let $u(t_1,t_2)$ be a solution of the given IBVP
\eqref{diff1}--\eqref{IBVP0}. Then it satisfies the following
delta integral equation
$$u^2(t_1,t_2)=g(t_1)+h(t_2)+\int_0^{t_1}\int_0^{t_2}F(s_1,s_2,u(s_1,s_2))\Delta_1 s_1\Delta_2 s_2.$$
The hypothesis on $F$ imply that
$$u^2(t_1,t_2)\leq g(t_1)+h(t_2)+\int_0^{t_1}\int_0^{t_2}s_2 u(s_1,s_2)\Delta_1 s_1\Delta_2 s_2.$$
An application of Theorem \ref{teo2} with
$a(t_1,t_2)=g(t_1)+h(t_2)$ and $f(t_1,t_2)=t_2$ gives (\ref{in7}).
\end{proof}

\section{State of the Art}

The results of this chapter are published or accepted for
publication in \cite{rchid,Ferr8,Ferr4,Ferr6}. Moreover the three
papers \cite{Ferr0,Ferr1,Ferr5} are already available in the
literature and contain related results. Some partial results of
\cite{Ferr0} were presented in the \emph{Time Scales seminar} at
the Missouri University of Science and Technology.

The study of integral inequalities on time scales (with both
$\Delta$-integrals or $\Diamond_\alpha$-integrals) and its
applications is under strong current research. We provide here
some more (recent) references within this subject:
\cite{AT1,AT,BDum,dinu0,dinu,Liu}.

\clearpage{\thispagestyle{empty}\cleardoublepage}

\chapter{Discrete Fractional Calculus}\label{disfrac}

We introduce a discrete-time fractional calculus of
variations\index{Discrete Fractional Calculus} on the time scale
$h\mathbb{Z}$, $h
> 0$. First and second order necessary optimality conditions are
established. Examples illustrating the use of the new
Euler--Lagrange and Legendre type conditions are given. They show
that solutions to the considered fractional problems become the
classical discrete-time solutions when the fractional order of the
discrete-derivatives are integer values, and that they converge to
the fractional continuous-time solutions when $h$ tends to zero.
Our Legendre type condition is useful to eliminate false
candidates identified via the Euler--Lagrange fractional equation.

\section{Introduction}
\label{int}

The \emph{Fractional Calculus} is an important research field in
several different areas: physics (including classical and quantum
mechanics as well as thermodynamics), chemistry, biology,
economics, and control theory \cite{Miller1,TenreiroMachado}. It
has its origin more than 300 years ago when L'Hopital asked
Leibniz what should be the meaning of a derivative of non-integer
order. After that episode several more famous mathematicians
contributed to the development of Fractional Calculus: Abel,
Fourier, Liouville, Riemann, Riesz, just to mention a few names.

In \cite{Miller} Miller and Ross define a fractional sum of order
$\nu>0$ \emph{via} the solution of a linear difference equation.
They introduce it as
\begin{equation}
\label{naosei8}
\Delta^{-\nu}f(t)=\frac{1}{\Gamma(\nu)}\sum_{s=a}^{t-\nu}(t-\sigma(s))^{(\nu-1)}f(s).
\end{equation}
Definition \eqref{naosei8} is the discrete analogue of the
Riemann-Liouville fractional integral
$$
_a\mathbf{D}_x^{-\nu}f(x)=\frac{1}{\Gamma(\nu)}\int_{a}^{x}(x-s)^{\nu-1}f(s)ds
$$
of order $\nu>0$, which can be obtained \emph{via} the solution of
a linear differential equation \cite{Miller,Miller1}. Basic
properties of the operator $\Delta^{-\nu}$ in (\ref{naosei8}) were
obtained in \cite{Miller}. More recently, Atici and Eloe
introduced the fractional difference of order $\alpha>0$ by
$\Delta^\alpha f(t)=\Delta(\Delta^{-(1-\alpha)}f(t))$, and
developed some of its properties that allow to obtain solutions of
certain fractional difference equations \cite{Atici0,Atici}.

The fractional differential calculus has been widely developed in
the past few decades due mainly to its demonstrated applications
in various fields of science and engineering
\cite{book:Kilbas,Miller1,Podlubny}. The study of necessary
optimality conditions for fractional problems of the calculus of
variations and optimal control is a fairly recent issue attracting
an increasing attention -- see
\cite{agr0,RicDel,El-Nabulsi1,El-Nabulsi2,gastao:delfim,gasta1}
and references therein -- but available results address only the
continuous-time case. Therefore, it is pertinent to start a
fractional discrete-time theory of the calculus of variations,
namely, using the time scale $\mathbb{T}=h\mathbb{Z}$, $h > 0$.
Computer simulations show that this time scale is particularly
interesting because when $h$ tends to zero one recovers previous
fractional continuous-time results.

Our objective is two-fold. On one hand we proceed to develop the
theory of \emph{fractional difference calculus}, e.g., introducing
the concept of left and right fractional sum/difference
(\textrm{cf.} Definitions~\ref{def0} and \ref{def1}). On the other
hand, we believe that the results herein presented will potentiate
research not only in the fractional calculus of variations but
also in solving fractional difference equations, specifically,
fractional equations in which left and right fractional
differences appear [see the Euler--Lagrange Equation \eqref{EL}].

Main results so far obtained by us contain a fractional formula of
$h$-summation by parts (Theorem~\ref{teorr1}), and necessary
optimality conditions of first and second order
(Theorems~\ref{teorem0} and \ref{teorem1}, respectively) for the
proposed $h$-fractional problem of the calculus of variations
\eqref{naosei7}. We also provide some illustrative examples.

We remark that it remains a challenge how to generalize the
present results to an arbitrary time scale $\mathbb{T}$. This is a
difficult open problem since an explicit formula for the Cauchy
function (cf. Definition \ref{cauchy}) of the linear dynamic
equation $y^{\Delta^n}=0$ on an arbitrary time scale is not known.


\section{Preliminaries}
\label{sec0}

It is known that $y(t,s):=H_{n-1}(t,\sigma(s))$ is the Cauchy
function (cf. Definition \ref{cauchy}) for $y^{\Delta^n}=0$
\cite[Example~5.115]{livro}, where $H_{n-1}$ is the time scale
generalized polynomial $h_{n-1}$ of Definition \ref{polinomios}
(we use here $H$ instead of $h$ in order to avoid confuse with the
parameter $h$ of the time scale $\mathbb{T}=h\mathbb{Z}$).

From now on we restrict ourselves to the time scale $\mathbb{T} =
h\mathbb{Z}$, $h > 0$.

If we have a function $f$ of two variables, $f(t,s)$, its partial
forward $h$-differences will be denoted by $\Delta_{t,h}$ and
$\Delta_{s,h}$, respectively. We will make use of the standard
conventions $\sum_{t=c}^{c-1}f(t)=0$, $c\in\mathbb{Z}$, and
$\prod_{i=0}^{-1}f(i)=1$.

Before giving an explicit formula for the generalized polynomials
$H_{k}$ on $\mathbb{T}=h\mathbb{Z}$ we introduce the following
definition:
\begin{definition}[cf. \cite{BAFT}]
For arbitrary $x,y\in\mathbb{R}$ the $h$-factorial function is
defined by
\begin{equation*}
x_h^{(y)}:=h^y\frac{\Gamma(\frac{x}{h}+1)}{\Gamma(\frac{x}{h}+1-y)}\,
,
\end{equation*}
where $\Gamma$ is the well-known Euler gamma function, and we use
the convention that division at a pole yields zero.
\end{definition}

\begin{remark}
For $h = 1$, and in accordance with the previous literature [see
formula \eqref{naosei8}], we write $x^{(y)}$ to denote
$x_h^{(y)}$.
\end{remark}

\begin{prop}
\label{prop:d} For the time-scale $\mathbb{T}$ one has
\begin{equation}
\label{hn} H_{k}(t,s):=\frac{(t-s)_h^{(k)}}{k!}\quad\mbox{for
all}\quad s,t\in h\mathbb{Z} \text{ and } k\in \mathbb{N}_0 \, .
\end{equation}
\end{prop}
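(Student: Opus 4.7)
My plan is to prove the formula by induction on $k$, using the recursive definition of $H_k$ and a ``power rule'' for the $h$-factorial function that plays the role of $(t-s)^k$ in the continuous setting.

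For the base case $k=0$, both sides equal $1$: the definition gives $H_0(t,s)=1$, while
$$\frac{(t-s)_h^{(0)}}{0!}=\frac{h^{0}\,\Gamma\!\bigl(\tfrac{t-s}{h}+1\bigr)}{\Gamma\!\bigl(\tfrac{t-s}{h}+1\bigr)}=1.$$
For the inductive step I would like to show that $P_{k+1}(t,s):=(t-s)_h^{(k+1)}/(k+1)!$ satisfies the defining relation $H_{k+1}(t,s)=\int_s^t H_k(\tau,s)\,\Delta\tau$. Since on $h\mathbb{Z}$ the $\Delta$-integral is just a finite sum, the antiderivative characterization reduces to checking two things: that $P_{k+1}(s,s)=0$, and that $\Delta_{t,h}P_{k+1}(t,s)=P_k(t,s)$. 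The vanishing at $t=s$ is immediate from the convention ``division at a pole yields zero'' in Definition of $x_h^{(y)}$, because
$$(s-s)_h^{(k+1)}=\frac{h^{k+1}\,\Gamma(1)}{\Gamma(-k)}=0\qquad(k\in\mathbb{N}_0),$$
since $\Gamma$ has poles at nonpositive integers.

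The key technical step, and what I expect to be the main computation, is establishing the power rule
$$\Delta_{t,h}\bigl[(t-s)_h^{(\alpha)}\bigr]=\alpha\,(t-s)_h^{(\alpha-1)}.$$
To prove this, I would set $u=(t-s)/h$ and compute directly from the definition of $\Delta_{t,h}$:
$$\Delta_{t,h}\bigl[(t-s)_h^{(\alpha)}\bigr]=\frac{1}{h}\bigl[(t+h-s)_h^{(\alpha)}-(t-s)_h^{(\alpha)}\bigr]=\frac{h^{\alpha}}{h}\left[\frac{\Gamma(u+2)}{\Gamma(u+2-\alpha)}-\frac{\Gamma(u+1)}{\Gamma(u+1-\alpha)}\right].$$
Using $\Gamma(u+2)=(u+1)\Gamma(u+1)$ and $\Gamma(u+2-\alpha)=(u+1-\alpha)\Gamma(u+1-\alpha)$, the bracket collapses to $\alpha\,\Gamma(u+1)/\bigl[(u+1-\alpha)\Gamma(u+1-\alpha)\bigr]$, and a short rearrangement identifies the result with $\alpha\,(t-s)_h^{(\alpha-1)}$, as desired. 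Applying this with $\alpha=k+1$ yields $\Delta_{t,h}P_{k+1}(t,s)=(t-s)_h^{(k)}/k!=P_k(t,s)$, which by the inductive hypothesis equals $H_k(t,s)$.

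Finally, since $H_{k+1}$ and $P_{k+1}$ share the same $\Delta_{t,h}$-derivative in $t$ and both vanish at $t=s$, and since on $h\mathbb{Z}$ any function is uniquely determined by its value at one point together with its forward $h$-differences (the integral definition of $H_{k+1}$ is literally the telescoping sum $H_{k+1}(t,s)=\sum_{\tau} hH_k(\tau,s)$), we conclude $H_{k+1}(t,s)=P_{k+1}(t,s)$, completing the induction. The only subtlety to handle carefully is the convention for $t<s$ (where the sum defining $\int_s^t$ is interpreted via $\int_s^t=-\int_t^s$), but the Gamma-function formula on the right-hand side automatically encodes the correct sign, so the argument extends uniformly to all $s,t\in h\mathbb{Z}$.
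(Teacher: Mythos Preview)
Your proof is correct and follows essentially the same approach as the paper: induction on $k$, with the key step being the power rule $\Delta_{t,h}\bigl[(t-s)_h^{(k+1)}/(k+1)!\bigr]=(t-s)_h^{(k)}/k!$, which the paper isolates as Lemma~\ref{lem:tl} and proves by the same Gamma-function manipulation you outline. Your explicit check that $P_{k+1}(s,s)=0$ and your remark on the $t<s$ case are small clarifications not spelled out in the paper, but the core argument is identical.
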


To prove \eqref{hn} we use the following technical lemma. We
remind the reader the basic property $\Gamma(x+1)=x\Gamma(x)$ of
the gamma function.

\begin{lemma}
\label{lem:tl} Let $s \in \mathbb{T}$. Then, for all $t \in
\mathbb{T}^\kappa$ one has
\begin{equation*}
\Delta_{t,h} \left\{\frac{(t-s)_h^{(k+1)}}{(k+1)!}\right\} =
\frac{(t-s)_h^{(k)}}{k!} \, .
\end{equation*}
\end{lemma}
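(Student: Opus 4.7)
The plan is to unravel the forward $h$-difference on the left-hand side using its definition on $\mathbb{T}=h\mathbb{Z}$, namely $\Delta_{t,h} f(t) = \frac{f(t+h)-f(t)}{h}$, and then to reduce the resulting expression to the claimed form via the functional equation $\Gamma(x+1) = x\Gamma(x)$ for the gamma function. Concretely, I would begin by writing
\begin{equation*}
\Delta_{t,h}\left\{\frac{(t-s)_h^{(k+1)}}{(k+1)!}\right\}
= \frac{1}{h(k+1)!}\Bigl[(t+h-s)_h^{(k+1)} - (t-s)_h^{(k+1)}\Bigr].
\end{equation*}

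Next I would substitute the definition of the $h$-factorial function. Setting $u:=\tfrac{t-s}{h}$ for brevity, one has
\begin{equation*}
(t-s)_h^{(k+1)} = h^{k+1}\frac{\Gamma(u+1)}{\Gamma(u-k)}, \qquad
(t+h-s)_h^{(k+1)} = h^{k+1}\frac{\Gamma(u+2)}{\Gamma(u-k+1)}.
\end{equation*}
Using $\Gamma(u+2)=(u+1)\Gamma(u+1)$ and $\Gamma(u-k+1)=(u-k)\Gamma(u-k)$, the bracket becomes
\begin{equation*}
h^{k+1}\frac{\Gamma(u+1)}{\Gamma(u-k)}\left[\frac{u+1}{u-k}-1\right]
= h^{k+1}\frac{(k+1)\Gamma(u+1)}{(u-k)\Gamma(u-k)}
= h^{k+1}\frac{(k+1)\Gamma(u+1)}{\Gamma(u-k+1)}.
\end{equation*}
Dividing by $h(k+1)!$ yields $\dfrac{h^k}{k!}\cdot\dfrac{\Gamma(u+1)}{\Gamma(u+1-k)}$, which is precisely $\dfrac{(t-s)_h^{(k)}}{k!}$, as required.

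The computation is essentially routine; the only delicate point is the convention that division at a pole of $\Gamma$ yields zero. This matters when $u-k$ or $u+1-k$ happens to be a non-positive integer (i.e.\ when $\tfrac{t-s}{h}$ is a small enough integer), so the telescoping via $\Gamma(x+1)=x\Gamma(x)$ could formally manipulate symbols $\Gamma(0)$, $\Gamma(-1)$, etc. I would handle this by verifying separately that, in those degenerate cases, both sides of the identity vanish under the stated convention, so the identity persists. This is the main obstacle, but a short case check at the pole configurations suffices, after which the general gamma-function identity gives the result.
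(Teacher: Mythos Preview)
Your proof is correct and follows essentially the same approach as the paper: expand the forward $h$-difference, substitute the definition of the $h$-factorial in terms of the gamma function, and simplify using $\Gamma(x+1)=x\Gamma(x)$. The paper's computation is virtually identical (without your shorthand $u$), and it does not separately address the pole cases you mention, relying implicitly on the stated convention.
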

\begin{proof}
The equality follows by direct computations:
\begin{equation*}
\begin{split}
\Delta_{t,h} &\left\{\frac{(t-s)_h^{(k+1)}}{(k+1)!}\right\}
=\frac{1}{h}\left\{\frac{(\sigma(t)-s)_h^{(k+1)}}{(k+1)!}-\frac{(t-s)_h^{(k+1)}}{(k+1)!}\right\}\\
&=\frac{h^{k+1}}{h(k+1)!}\left\{\frac{\Gamma((t+h-s)/h+1)}{\Gamma((t+h-s)/h+1-(k+1))}-\frac{\Gamma((t-s)/h+1)}{\Gamma((t-s)/h+1-(k+1))}\right\}\\
&=\frac{h^k}{(k+1)!}\left\{\frac{\Gamma((t-s)/h+2)}{\Gamma((t-s)/h+1-k)}-\frac{\Gamma((t-s)/h+1)}{\Gamma((t-s)/h-k)}\right\}\\
&=\frac{h^k}{(k+1)!}\left\{\frac{((t-s)/h+1)\Gamma((t-s)/h+1)}{((t-s)/h-k)\Gamma((t-s)/h-k)}-\frac{\Gamma((t-s)/h+1)}{\Gamma((t-s)/h-k)}\right\}\\
&=\frac{h^k}{(k+1)!}\left\{\frac{(k+1)\Gamma((t-s)/h+1)}{((t-s)/h-k)\Gamma((t-s)/h-k)}\right\}
=\frac{h^k}{k!}\left\{\frac{\Gamma((t-s)/h+1)}{\Gamma((t-s)/h+1-k)}\right\}\\
&=\frac{(t-s)_h^{(k)}}{k!} \, .
\end{split}
\end{equation*}
\end{proof}
\begin{proof}(of Proposition~\ref{prop:d})
We proceed by mathematical induction. For $k=0$
$$
H_0(t,s)=\frac{1}{0!}h^0\frac{\Gamma(\frac{t-s}{h}+1)}{\Gamma(\frac{t-s}{h}+1-0)}
=\frac{\Gamma(\frac{t-s}{h}+1)}{\Gamma(\frac{t-s}{h}+1)}=1 \, .
$$
Assume that \eqref{hn} holds for $k$ replaced by $m$. Then, by
Lemma~\ref{lem:tl}
\begin{eqnarray*}
H_{m+1}(t,s) &=& \int_s^t H_m(\tau,s)\Delta\tau = \int_s^t
\frac{(\tau-s)_h^{(m)}}{m!} \Delta\tau =
\frac{(t-s)_h^{(m+1)}}{(m+1)!},
\end{eqnarray*}
which is \eqref{hn} with $k$ replaced by $m+1$.
\end{proof}

Let $y_1(t),\ldots,y_n(t)$ be $n$ linearly independent solutions
of the linear homogeneous dynamic equation $y^{\Delta^n}=0$. From
Theorem~\ref{eqsol} we know that the solution of \eqref{IVP} (with
$L=\Delta^n$ and $t_0=a$) is
\begin{equation*}
y(t) = \Delta^{-n} f(t)=\int_a^t
\frac{(t-\sigma(s))_h^{(n-1)}}{\Gamma(n)}f(s)\Delta s\\
=\frac{1}{\Gamma(n)}\sum_{k=a/h}^{t/h-1} (t-\sigma(kh))_h^{(n-1)}
f(kh) h \, .
\end{equation*}
Since $y^{\Delta_i}(a)=0$, $i = 0,\ldots,n-1$, then we can write
\begin{equation}
\label{eq:derDh:int}
\begin{split}
\Delta^{-n} f(t) &= \frac{1}{\Gamma(n)}\sum_{k=a/h}^{t/h-n}
(t-\sigma(kh))_h^{(n-1)} f(kh) h \\
&=
\frac{1}{\Gamma(n)}\int_a^{\sigma(t-nh)}(t-\sigma(s))_h^{(n-1)}f(s)
\Delta s \, .
\end{split}
\end{equation}
Note that function $t \rightarrow (\Delta^{-n} f)(t)$ is defined
for $t=a+n h \mbox{ mod}(h)$ while function $t \rightarrow f(t)$
is defined for $t=a \mbox{ mod}(h)$. Extending
\eqref{eq:derDh:int} to any positive real value $\nu$, and having
as an analogy the continuous left and right fractional derivatives
\cite{Miller1}, we define the left fractional $h$-sum and the
right fractional $h$-sum as follows. We denote by
$\mathcal{F}_\mathbb{T}$ the set of all real valued functions
defined on the time scale $\mathbb{T}$.

\begin{definition}\label{def0}
Let $f\in\mathcal{F}_\mathbb{T}$. The left and right
fractional\index{Fractional sum operators} $h$-sum of order
$\nu>0$ are, respectively, the operators $_a\Delta_h^{-\nu} :
\mathcal{F}_\mathbb{T} \rightarrow
\mathcal{F}_{\tilde{\mathbb{T}}_\nu^+}$ and $_h\Delta_b^{-\nu} :
\mathcal{F}_\mathbb{T} \rightarrow
\mathcal{F}_{\tilde{\mathbb{T}}_\nu^-}$,
$\tilde{\mathbb{T}}_\nu^\pm = \{a \pm \nu h : a \in \mathbb{T}\}$,
defined by
\begin{equation*}
\begin{split}
_a\Delta_h^{-\nu}f(t) &=
\frac{1}{\Gamma(\nu)}\int_{a}^{\sigma(t-\nu
h)}(t-\sigma(s))_h^{(\nu-1)}f(s)\Delta s
=\frac{1}{\Gamma(\nu)}\sum_{k=\frac{a}{h}}^{\frac{t}{h}-\nu}(t-\sigma(kh))_h^{(\nu-1)}f(kh)h\\
_h\Delta_b^{-\nu}f(t) &=
\frac{1}{\Gamma(\nu)}\int_{t}^{\sigma(b)}(s-\sigma(t))_h^{(\nu-1)}f(s)\Delta
s
=\frac{1}{\Gamma(\nu)}\sum_{k=\frac{t}{h}+\nu}^{\frac{b}{h}}(kh-\sigma(t))_h^{(\nu-1)}f(kh)h.
\end{split}
\end{equation*}
\end{definition}

\begin{lemma}
Let $\nu>0$ be an arbitrary positive real number. For any $t \in
\mathbb{T}$ we have: (i) $\lim_{\nu\rightarrow
0}{_a}\Delta_h^{-\nu}f(t+\nu h)=f(t)$; (ii) $\lim_{\nu\rightarrow
0}{_h}\Delta_b^{-\nu}f(t-\nu h)=f(t)$.
\end{lemma}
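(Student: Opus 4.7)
The plan is to evaluate the sum in the definition of $_a\Delta_h^{-\nu}f(t+\nu h)$ explicitly, show that one distinguished summand tends to $f(t)$, and that every other summand vanishes as $\nu\to 0$. First I would substitute $t\mapsto t+\nu h$ into the formula of Definition~\ref{def0}, noting that the upper limit of the sum then becomes exactly $t/h$, so the sum runs over a finite, $\nu$-independent set of indices $k\in\{a/h,a/h+1,\dots,t/h\}$. Reindexing by $j=t/h-k$, the difference $t+\nu h-\sigma(kh)$ becomes $(j+\nu-1)h$, and a direct computation using $x_h^{(y)}=h^y\Gamma(x/h+1)/\Gamma(x/h+1-y)$ gives
\begin{equation*}
\bigl((j+\nu-1)h\bigr)_h^{(\nu-1)}
=h^{\nu-1}\,\frac{\Gamma(j+\nu)}{\Gamma(j+1)}\, ,
\end{equation*}
so that
\begin{equation*}
{_a}\Delta_h^{-\nu}f(t+\nu h)
=\sum_{j=0}^{(t-a)/h} h^{\nu}\,
\frac{\Gamma(j+\nu)}{\Gamma(\nu)\,\Gamma(j+1)}\, f(t-jh)\, .
\end{equation*}

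The main step is then to isolate and handle each term. For $j=0$ the coefficient collapses to $1$, producing $h^{\nu}f(t)\to f(t)$ as $\nu\to 0$. For each fixed $j\geq 1$, I would rewrite $\Gamma(j+\nu)/\Gamma(\nu)=\nu(\nu+1)\cdots(\nu+j-1)$ using the functional equation of $\Gamma$; the explicit factor $\nu$ then forces the whole summand to $0$ as $\nu\to 0$. Since the sum has only finitely many terms and that number does not depend on $\nu$, the pointwise limits add, giving $\lim_{\nu\to 0}{_a}\Delta_h^{-\nu}f(t+\nu h)=f(t)$.

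For the right fractional $h$-sum the argument is strictly parallel: substituting $t\mapsto t-\nu h$ in Definition~\ref{def0} makes the lower summation index equal to $t/h$, reindexing by $j=k-t/h$ yields $(kh-\sigma(t-\nu h))_h^{(\nu-1)}=h^{\nu-1}\Gamma(j+\nu)/\Gamma(j+1)$, and the same split between the $j=0$ term and the $j\geq 1$ terms applies. I would therefore write the proof of~(i) in detail and indicate that~(ii) follows \emph{mutatis mutandis}.

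The only genuinely delicate point is the computation of the $h$-factorial $((j+\nu-1)h)_h^{(\nu-1)}$ at the boundary values (in particular correctly cancelling the $\Gamma(\nu)$ denominator against the $j=0$ numerator to identify the dominant term); once this algebraic identity is in place, the limit follows by elementary continuity of $\Gamma$ away from its poles, so no subtle uniformity or convergence arguments are required.
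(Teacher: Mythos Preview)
Your proposal is correct and follows essentially the same approach as the paper: both substitute $t\mapsto t+\nu h$ so that the summation range becomes $\nu$-independent, isolate the $j=0$ (equivalently $k=t/h$) term $h^{\nu}f(t)\to f(t)$, and show the remaining terms carry a factor of $\nu$ and hence vanish. The only cosmetic difference is that the paper extracts this factor globally via $1/\Gamma(\nu)=\nu/\Gamma(\nu+1)$ applied to the whole tail sum, whereas you do it term-by-term through the Pochhammer product $\Gamma(j+\nu)/\Gamma(\nu)=\nu(\nu+1)\cdots(\nu+j-1)$; the underlying mechanism is identical.
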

\begin{proof}
Since
\begin{align*}
{_a}\Delta_h^{-\nu}f(t+\nu
h)&=\frac{1}{\Gamma(\nu)}\int_{a}^{\sigma(t)}(t+\nu
h-\sigma(s))_h^{(\nu-1)}f(s)\Delta s\\
&=\frac{1}{\Gamma(\nu)}\sum_{k=\frac{a}{h}}^{\frac{t}{h}}(t+\nu h-\sigma(kh))_h^{(\nu-1)}f(kh)h\\
&=h^{\nu}f(t)+\frac{\nu}{\Gamma(\nu+1)}\sum_{k=\frac{a}{h}}^{\frac{\rho(t)}{h}}(t+\nu
h-\sigma(kh))_h^{(\nu-1)}f(kh)h\, ,
\end{align*}
it follows that $\lim_{\nu\rightarrow 0}{_a}\Delta_h^{-\nu}f(t+\nu
h)=f(t)$. To prove (ii) we use a similar method:
\begin{align*}
{_h}\Delta_b^{-\nu}f(t-\nu h)&=\frac{1}{\Gamma(\nu)}\int_{t}^{\sigma(b)}(s+\nu h-\sigma(t))_h^{(\nu-1)}f(s)\Delta s\\
&=h^{\nu}f(t)+\frac{\nu}{\Gamma(\nu+1)}\sum_{k=\frac{\sigma(t)}{h}}^{\frac{b}{h}}(kh+\nu
h-\sigma(t))_h^{(\nu-1)}f(kh)h\, ,
\end{align*}
and therefore $\lim_{\nu\rightarrow 0}{_h}\Delta_b^{-\nu}f(t-\nu
h)=f(t)$.
\end{proof}

For any $t\in\mathbb{T}$ and for any $\nu\geq 0$ we define
$_a\Delta_h^{0}f(t) := {_h}\Delta_b^{0}f(t) := f(t)$ and write
\begin{equation}
\label{seila1}
\begin{gathered}
{_a}\Delta_h^{-\nu}f(t+\nu h) = h^\nu f(t)
+\frac{\nu}{\Gamma(\nu+1)}\int_{a}^{t}(t+\nu
h-\sigma(s))_h^{(\nu-1)}f(s)\Delta s\, , \\
{_h}\Delta_b^{-\nu}f(t)=h^\nu f(t-\nu h) +
\frac{\nu}{\Gamma(\nu+1)}\int_{\sigma(t)}^{\sigma(b)}(s+\nu
h-\sigma(t))_h^{(\nu-1)}f(s)\Delta s \, .
\end{gathered}
\end{equation}

\begin{theorem}\label{teorem2}
Let $f\in\mathcal{F}_\mathbb{T}$ and $\nu\geq0$. For all
$t\in\mathbb{T}^\kappa$ we have
\begin{equation}
\label{naosei1} {_a}\Delta_{h}^{-\nu} f^{\Delta}(t+\nu
h)=(_a\Delta_h^{-\nu}f(t+\nu h))^{\Delta} -\frac{\nu}{\Gamma(\nu +
1)}(t+\nu h-a)_h^{(\nu-1)}f(a) \, .
\end{equation}
\end{theorem}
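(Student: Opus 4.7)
The plan is to expand both sides of \eqref{naosei1} using the alternative representation \eqref{seila1} and then, on one side, apply the time-scale integration by parts formula \eqref{partes1}, while on the other side, apply the differentiation under the integral sign Theorem~\ref{teork}, so that the two resulting expressions can be compared directly.

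First I would compute $\bigl({_a}\Delta_h^{-\nu}f(t+\nu h)\bigr)^{\Delta}$. Starting from \eqref{seila1}, an application of Theorem~\ref{teork} to the integral gives a boundary contribution $k(\sigma(t),t) = (\nu h)_h^{(\nu-1)} f(t) = \nu h^{\nu-1}\Gamma(\nu)\,f(t)$ together with an integral of $k^{\Delta_1}$. The required partial derivative in $t$ of the kernel, namely
\[
\Delta_{t,h}\bigl[(t+\nu h-\sigma(s))_h^{(\nu-1)}\bigr]=(\nu-1)(t+\nu h-\sigma(s))_h^{(\nu-2)},
\]
is obtained by unfolding the $h$-factorial via $\Gamma(x+1)=x\Gamma(x)$, in the same spirit as Lemma~\ref{lem:tl}. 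Combining this with $(h^\nu f(t))^\Delta = h^\nu f^\Delta(t)$ and simplifying $\nu^2 h^{\nu-1}\Gamma(\nu)/\Gamma(\nu+1) = \nu h^{\nu-1}$ and $\nu(\nu-1)/\Gamma(\nu+1)=1/\Gamma(\nu-1)$ yields
\[
\bigl({_a}\Delta_h^{-\nu}f(t+\nu h)\bigr)^{\Delta}= h^\nu f^\Delta(t)+\nu h^{\nu-1}f(t)+\frac{1}{\Gamma(\nu-1)}\int_a^{t}(t+\nu h-\sigma(s))_h^{(\nu-2)}f(s)\Delta s.
\]

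Next I would evaluate ${_a}\Delta_h^{-\nu}f^\Delta(t+\nu h)$, again via \eqref{seila1}. Setting $F(s)=(t+\nu h-s)_h^{(\nu-1)}$, so that $F^{\sigma}(s)=(t+\nu h-\sigma(s))_h^{(\nu-1)}$, the integration by parts formula \eqref{partes1} applied with $g=f$ gives
\[
\int_a^{t}(t+\nu h-\sigma(s))_h^{(\nu-1)}f^\Delta(s)\Delta s=\bigl[F(s)f(s)\bigr]_{s=a}^{s=t}-\int_a^{t}F^\Delta(s)f(s)\Delta s.
\]
Using $F(t)=(\nu h)_h^{(\nu-1)}=\nu h^{\nu-1}\Gamma(\nu)$, $F(a)=(t+\nu h-a)_h^{(\nu-1)}$, and the companion identity $F^\Delta(s)=-(\nu-1)(t+\nu h-\sigma(s))_h^{(\nu-2)}$ (again a routine gamma-function computation), substituting back into \eqref{seila1} produces
\[
{_a}\Delta_h^{-\nu}f^\Delta(t+\nu h)= h^\nu f^\Delta(t)+\nu h^{\nu-1}f(t)-\frac{\nu}{\Gamma(\nu+1)}(t+\nu h-a)_h^{(\nu-1)}f(a)+\frac{1}{\Gamma(\nu-1)}\int_a^{t}(t+\nu h-\sigma(s))_h^{(\nu-2)}f(s)\Delta s.
\]
Comparing the two displayed expressions yields \eqref{naosei1} immediately.

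The main obstacle, which is purely computational, is the gamma-function bookkeeping needed to establish the two kernel-derivative identities $\Delta_{t,h}[(t+\nu h-\sigma(s))_h^{(\nu-1)}]=(\nu-1)(t+\nu h-\sigma(s))_h^{(\nu-2)}$ and $\Delta_{s,h}[(t+\nu h-s)_h^{(\nu-1)}]=-(\nu-1)(t+\nu h-\sigma(s))_h^{(\nu-2)}$ for arbitrary real $\nu>0$, since Lemma~\ref{lem:tl} is stated only for integer orders. Once those two auxiliary formulas are in hand, the rest of the argument consists only of arithmetic with the factor $\nu/\Gamma(\nu+1)=1/\Gamma(\nu)$ and the relation $\Gamma(\nu+1)=\nu\Gamma(\nu)=\nu(\nu-1)\Gamma(\nu-1)$.
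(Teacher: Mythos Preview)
Your proposal is correct and follows essentially the same route as the paper: compute ${_a}\Delta_h^{-\nu}f^\Delta(t+\nu h)$ by integration by parts and $(_a\Delta_h^{-\nu}f(t+\nu h))^\Delta$ by differentiating the integral representation \eqref{seila1}, then match the two expressions. The only differences are cosmetic---the paper expands the difference quotient by hand instead of invoking Theorem~\ref{teork}, and it isolates your $s$-derivative identity as a separate Lemma~\ref{lemma:tl} valid for arbitrary real $\nu$ (so your ``main obstacle'' is in fact already handled there).
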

To prove Theorem~\ref{teorem2} we make use of the following
\begin{lemma}
\label{lemma:tl} Let $t\in\mathbb{T}^\kappa$. The following
equality holds for all $s\in\mathbb{T}^\kappa$:
\begin{multline}
\label{proddiff}
\Delta_{s,h}\left((t+\nu h-s)_h^{(\nu-1)}f(s))\right)\\
=(t+\nu h-\sigma(s))_h^{(\nu-1)}f^{\Delta}(s) -(v-1)(t+\nu
h-\sigma(s))_h^{(\nu-2)}f(s) \, .
\end{multline}
\end{lemma}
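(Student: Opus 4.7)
The plan is to apply the product rule for the $\Delta$-derivative on time scales, equation \eqref{produto}, to the product $u(s)f(s)$ with $u(s)=(t+\nu h-s)_h^{(\nu-1)}$, and then compute $u^\Delta(s)$ directly from the definition of the $h$-factorial function. In the form $(fg)^\Delta(s)=f^\sigma(s)g^\Delta(s)+f^\Delta(s)g(s)$, choosing $f=u$ and $g=$ our $f$, I get
\[
\Delta_{s,h}\left(u(s)f(s)\right)=u(\sigma(s))\,f^\Delta(s)+u^\Delta(s)\,f(s),
\]
and $u(\sigma(s))=(t+\nu h-\sigma(s))_h^{(\nu-1)}$ is already the first term of the claim. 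So everything reduces to showing that $u^\Delta(s)=-(\nu-1)(t+\nu h-\sigma(s))_h^{(\nu-2)}$; that is, to establish a power-rule type identity for $\Delta_{s,h}(c-s)_h^{(\alpha)}$ with $c=t+\nu h$ and $\alpha=\nu-1$.

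For this power rule I would compute directly from the definition $(c-s)_h^{(\alpha)}=h^\alpha\Gamma((c-s)/h+1)/\Gamma((c-s)/h+1-\alpha)$. Writing $u=(c-s)/h$ and using $\Gamma(u+1)=u\Gamma(u)$ and $\Gamma(u+1-\alpha)=(u-\alpha)\Gamma(u-\alpha)$,
\[
\Delta_{s,h}(c-s)_h^{(\alpha)}=\frac{h^{\alpha-1}\Gamma(u)}{\Gamma(u-\alpha)}\left[1-\frac{u}{u-\alpha}\right]=-\alpha\,h^{\alpha-1}\frac{\Gamma(u)}{\Gamma(u+1-\alpha)}.
\]
A direct rewrite of the right-hand side in terms of $(c-\sigma(s))_h^{(\alpha-1)}=h^{\alpha-1}\Gamma(u)/\Gamma(u+1-\alpha)$ (which one verifies from the definition, using $(c-\sigma(s))/h=u-1$) then yields $\Delta_{s,h}(c-s)_h^{(\alpha)}=-\alpha(c-\sigma(s))_h^{(\alpha-1)}$. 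Substituting $\alpha=\nu-1$ gives exactly the needed expression for $u^\Delta(s)$, and plugging back into the product-rule identity produces \eqref{proddiff}.

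The calculation is essentially the same kind of gamma-function bookkeeping already carried out in Lemma~\ref{lem:tl}, only differentiating in the lower argument rather than the upper one; the sign $-\alpha$ arises precisely because moving $s$ to $\sigma(s)=s+h$ shifts $u$ down by $1$ rather than up. The only mild subtlety — and the one point where I would be careful — is keeping track of which form of the Leibniz rule in \eqref{produto} to use: using $(fg)^\Delta=f^\sigma g^\Delta+f^\Delta g$ places $\sigma$ on the factorial factor of the first summand, which is exactly what appears in \eqref{proddiff}; the alternative form would lead to a $\sigma$ on $f$ and require an extra rewriting step. No deeper obstacle is expected, since everything reduces to the gamma-function identity already exploited in the proof of Proposition~\ref{prop:d}.
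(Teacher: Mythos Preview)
Your proposal is correct and follows essentially the same approach as the paper: apply the product rule \eqref{produto} in the form $(uf)^\Delta=u^\sigma f^\Delta+u^\Delta f$, and then compute $\Delta_{s,h}(t+\nu h-s)_h^{(\nu-1)}$ directly via gamma-function identities. The paper carries out the gamma bookkeeping in-line for the specific exponent rather than first isolating the general power rule $\Delta_{s,h}(c-s)_h^{(\alpha)}=-\alpha(c-\sigma(s))_h^{(\alpha-1)}$, but the substance of the computation is identical.
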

\begin{proof}
Direct calculations give the intended result:
\begin{equation*}
\begin{split}
\Delta&_{s,h} \left((t+\nu h-s)_h^{(\nu-1)}f(s)\right)\\
&=\Delta_{s,h}\left((t+\nu h-s)_h^{(\nu-1)}\right)f(s)+\left(t+\nu
h
-\sigma(s)\right)_h^{(\nu-1)}f^{\Delta}(s)\\
&=\frac{\left(t+\nu h-\sigma(s)\right)_h^{(\nu-1)}-\left(t+\nu
h-s\right)_h^{(\nu-1)}}{h}f(s)+\left(t+\nu h -
\sigma(s)\right)_h^{(\nu-1)}\\
&=\frac{f(s)}{h}\left[h^{\nu-1}\frac{\Gamma\left(\frac{t+\nu
h-\sigma(s)}{h}+1\right)}{\Gamma\left(\frac{t+\nu
h-\sigma(s)}{h}+1-(\nu-1)\right)}-h^{\nu-1}\frac{\Gamma\left(\frac{t+\nu
h-s}{h}+1\right)}{\Gamma\left(\frac{t+\nu
h-s}{h}+1-(\nu-1)\right)}\right]\\
&\qquad +\left(t+\nu h -
\sigma(s)\right)_h^{(\nu-1)}f^{\Delta}(s)\\
&=f(s)\left[h^{\nu-2}\left[\frac{\Gamma(\frac{t+\nu
h-s}{h})}{\Gamma(\frac{t-s}{h}+1)}-\frac{\Gamma(\frac{t+\nu
h-s}{h}+1)}{\Gamma(\frac{t-s}{h}+2)}\right]\right]+\left(t+\nu h -
\sigma(s)\right)_h^{(\nu-1)}f^{\Delta}(s)\\
&=f(s)h^{\nu-2}\left[\frac{\Gamma(\frac{t+\nu
h-s}{h})}{\Gamma(\frac{t-s}{h}+2)}\left(\frac{t-s}{h}+1-\frac{t+\nu
h-s}{h}\right)\right] +\left(t+\nu h -
\sigma(s)\right)_h^{(\nu-1)}f^{\Delta}(s)\\
&=f(s)h^{\nu-2}\frac{\Gamma(\frac{t+\nu
h-s-h}{h}+1)}{\Gamma(\frac{t-s+\nu h-h}{h}+1-(\nu-2))}(-(\nu-1))+
\left(t+\nu h - \sigma(s)\right)_h^{(\nu-1)}f^{\Delta}(s)\\
&=-(\nu-1)(t+\nu h -\sigma(s))_h^{(\nu-2)}f(s)+\left(t+\nu h -
\sigma(s)\right)_h^{(\nu-1)}f^{\Delta}(s) \, ,
\end{split}
\end{equation*}
where the first equality follows directly from \eqref{produto}.
\end{proof}

\begin{remark}
Given an arbitrary $t\in\mathbb{T}^\kappa$ it is easy to prove, in
a similar way as in the proof of Lemma~\ref{lemma:tl}, the
following equality analogous to \eqref{proddiff}: for all
$s\in\mathbb{T}^\kappa$
\begin{multline}
\label{eq:semlhante}
\Delta_{s,h}\left((s+\nu h-\sigma(t))_h^{(\nu-1)}f(s))\right)\\
=(\nu-1)(s+\nu h-\sigma(t))_h^{(\nu-2)}f^\sigma(s) + (s+\nu
h-\sigma(t))_h^{(\nu-1)}f^{\Delta}(s) \, .
\end{multline}
\end{remark}
\begin{proof}(of Theorem~\ref{teorem2})
From Lemma~\ref{lemma:tl} we obtain that
\begin{equation}
\label{naosei}
\begin{split}
{_a}\Delta_{h}^{-\nu} & f^{\Delta}(t+\nu h) = h^\nu
f^\Delta(t)+\frac{\nu}{\Gamma(\nu+1)}\int_{a}^{t}(t+\nu
h-\sigma(s))_h^{(\nu-1)}f^{\Delta}(s)\Delta s\\
&=h^\nu f^\Delta(t)+\frac{\nu}{\Gamma(\nu+1)}\left[(t+\nu
h-s)_h^{(\nu-1)}f(s)\right]_{s=a}^{s=t}\\
&\qquad
+\frac{\nu}{\Gamma(\nu+1)}\int_{a}^{\sigma(t)}(\nu-1)(t+\nu
h-\sigma(s))_h^{(\nu-2)}
f(s)\Delta s\\
&=-\frac{\nu(t+\nu h-a)_h^{(\nu-1)}}{\Gamma(\nu+1)}f(a)
+h^{\nu}f^\Delta(t)+\nu h^{\nu-1}f(t)\\
&\qquad +\frac{\nu}{\Gamma(\nu+1)}\int_{a}^{t}(\nu-1)(t+\nu
h-\sigma(s))_h^{(\nu-2)} f(s)\Delta s.
\end{split}
\end{equation}
We now show that $(_a\Delta_h^{-\nu}f(t+\nu h))^{\Delta}$ equals
\eqref{naosei}:
\begin{equation*}
\begin{split}
(_a\Delta_h^{-\nu} & f(t+\nu h))^\Delta = \frac{1}{h}\left[h^\nu
f(\sigma(t))+\frac{\nu}{\Gamma(\nu+1)}\int_{a}^{\sigma(t)}(\sigma(t)+\nu
h-\sigma(s))_h^{(\nu-1)}
f(s)\Delta s\right.\\
&\qquad \left.-h^\nu f(t)-\frac{\nu}{\Gamma(\nu+1)}\int_{a}^{t}(t+\nu h-\sigma(s))_h^{(\nu-1)} f(s)\Delta s\right]\\
&=h^\nu
f^\Delta(t)+\frac{\nu}{h\Gamma(\nu+1)}\left[\int_{a}^{t}(\sigma(t)+\nu
h-\sigma(s))_h^{(\nu-1)}
f(s)\Delta s\right.\\
&\qquad\left.-\int_{a}^{t}(t+\nu h-\sigma(s))_h^{(\nu-1)}
f(s)\Delta s\right]+h^{\nu-1}\nu f(t)\\
&=h^\nu
f^\Delta(t)+\frac{\nu}{\Gamma(\nu+1)}\int_{a}^{t}\Delta_{t,h}\left((t+\nu
h -\sigma(s))_h^{(\nu-1)}
\right)f(s)\Delta s+h^{\nu-1}\nu f(t)\\
&=h^\nu
f^\Delta(t)+\frac{\nu}{\Gamma(\nu+1)}\int_{a}^{t}(\nu-1)(t+\nu
h-\sigma(s))_h^{(\nu-2)} f(s)\Delta s+\nu h^{\nu-1}f(t) \, .
\end{split}
\end{equation*}
\end{proof}

Follows the counterpart of Theorem~\ref{teorem2} for the right
fractional $h$-sum:
\begin{theorem}
\label{teorem3} Let $f\in\mathcal{F}_\mathbb{T}$ and $\nu\geq 0$.
For all $t\in\mathbb{T}^\kappa$ we have
\begin{equation}
\label{naosei12} {_h}\Delta_{\rho(b)}^{-\nu} f^{\Delta}(t-\nu
h)=\frac{\nu}{\Gamma(\nu+1)}(b+\nu
h-\sigma(t))_h^{(\nu-1)}f(b)+(_h\Delta_b^{-\nu}f(t-\nu
h))^{\Delta} \, .
\end{equation}
\end{theorem}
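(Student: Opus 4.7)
The plan is to mirror the proof of Theorem~\ref{teorem2}, swapping the role of \eqref{proddiff} for its right-sided analogue \eqref{eq:semlhante}. First, I would expand $_h\Delta_{\rho(b)}^{-\nu}f^{\Delta}(t-\nu h)$ by the second formula in \eqref{seila1} applied to $f^\Delta$ with $b$ replaced by $\rho(b)$, using $\sigma(\rho(b))=b$, to obtain
\begin{equation*}
{_h}\Delta_{\rho(b)}^{-\nu}f^{\Delta}(t-\nu h)
=h^\nu f^{\Delta}(t)+\frac{\nu}{\Gamma(\nu+1)}\int_{\sigma(t)}^{b}(s+\nu h-\sigma(t))_h^{(\nu-1)}f^{\Delta}(s)\Delta s.
\end{equation*}

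The second step is to apply summation by parts to the above integral, using \eqref{eq:semlhante} to solve for $(s+\nu h-\sigma(t))_h^{(\nu-1)}f^{\Delta}(s)$ and integrating from $\sigma(t)$ to $b$. The boundary term at $s=b$ produces $(b+\nu h-\sigma(t))_h^{(\nu-1)}f(b)$, while the boundary term at $s=\sigma(t)$ becomes $(\nu h)_h^{(\nu-1)}f(\sigma(t))=h^{\nu-1}\Gamma(\nu+1)f(\sigma(t))$; what remains is $-(\nu-1)\int_{\sigma(t)}^{b}(s+\nu h-\sigma(t))_h^{(\nu-2)}f^{\sigma}(s)\Delta s$.

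The third step is to compute $(_h\Delta_b^{-\nu}f(t-\nu h))^{\Delta}$ directly from the representation \eqref{seila1}, exactly as was done in the concluding calculation of the proof of Theorem~\ref{teorem2}. Writing the difference quotient $\frac{1}{h}[\,\cdot\,(\sigma(t))-\,\cdot\,(t)]$ yields $h^\nu f^{\Delta}(t)$ from the isolated term, then $-\nu h^{\nu-1}f(\sigma(t))$ from the single-point part $\int_{\sigma(t)}^{\sigma^2(t)}$ evaluated via \eqref{sigma}, and finally an integral on $[\sigma^2(t),\sigma(b)]_{\mathbb{T}}$ whose integrand is $\Delta_{t,h}\!\left((s+\nu h-\sigma(t))_h^{(\nu-1)}\right)f(s)$. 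A gamma-function manipulation parallel to the one carried out inside Lemma~\ref{lemma:tl} gives
\begin{equation*}
\Delta_{t,h}\!\left((s+\nu h-\sigma(t))_h^{(\nu-1)}\right)=-(\nu-1)(s+\nu h-\sigma^2(t))_h^{(\nu-2)}.
\end{equation*}

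To finish, I would subtract $(_h\Delta_b^{-\nu}f(t-\nu h))^{\Delta}$ from the expression for $_h\Delta_{\rho(b)}^{-\nu}f^{\Delta}(t-\nu h)$ obtained in step two. The terms $h^\nu f^{\Delta}(t)$ and $-\nu h^{\nu-1}f(\sigma(t))$ cancel pairwise, and the two remaining integrals are reconciled by the index shift $s\mapsto\sigma(s)$ (equivalently $k\mapsto k-1$ in the underlying sum), which identifies $\int_{\sigma(t)}^{b}(s+\nu h-\sigma(t))_h^{(\nu-2)}f^{\sigma}(s)\Delta s$ with $\int_{\sigma^2(t)}^{\sigma(b)}(s+\nu h-\sigma^2(t))_h^{(\nu-2)}f(s)\Delta s$. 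The sole surviving contribution is the boundary term $\tfrac{\nu}{\Gamma(\nu+1)}(b+\nu h-\sigma(t))_h^{(\nu-1)}f(b)$, which is precisely \eqref{naosei12}. The main obstacle is purely bookkeeping: one has to be meticulous about the endpoints of integration (note the asymmetry $\rho(b)$ on the left vs.\ $b$ on the right in the statement), the $\sigma$ versus $\sigma^{2}$ shifts produced by summation by parts, and the ``right-sided'' analogue of the gamma-function identity used in Lemma~\ref{lemma:tl}.
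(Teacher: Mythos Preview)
Your proposal is correct and follows essentially the same approach as the paper's proof: expand ${_h}\Delta_{\rho(b)}^{-\nu}f^{\Delta}(t-\nu h)$ via \eqref{seila1}, integrate by parts using \eqref{eq:semlhante}, then compute $(_h\Delta_b^{-\nu}f(t-\nu h))^{\Delta}$ directly from the difference quotient and match the two expressions. The index shift you mention in the final step is exactly what the paper uses (implicitly) to pass from the integral over $[\sigma^2(t),\sigma(b)]$ to one over $[\sigma(t),b]$ with $f^{\sigma}$ in place of $f$.
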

\begin{proof}
From \eqref{eq:semlhante} we obtain that
\begin{equation}
\label{naosei99}
\begin{split}
{_h}\Delta_{\rho(b)}^{-\nu} & f^{\Delta}(t-\nu h) =h^\nu
f^\Delta(t)+\frac{\nu}{\Gamma(\nu+1)}\int_{\sigma(t)}^{b}(s+\nu
h-\sigma(t))_h^{(\nu-1)}
f^{\Delta}(s)\Delta s\\
&=h^\nu f^\Delta(t)+\left[\frac{\nu(s+\nu h
-\sigma(t))_h^{(\nu-1)}}{\Gamma(\nu+1)}f(s)\right]_{s=\sigma(t)}^{s=b}\\
&\qquad -
\frac{\nu}{\Gamma(\nu+1)}\int_{\sigma(t)}^{b}(\nu-1)(s+\nu
h-\sigma(t))_h^{(\nu-2)}
f^\sigma(s)\Delta s\\
&=\frac{\nu(b+\nu h-\sigma(t))_h^{(\nu-1)}}{\Gamma(\nu+1)}f(b)
+ h^\nu f^\Delta(t) -\nu h^{\nu-1}f(\sigma(t))\\
&\qquad
-\frac{\nu}{\Gamma(\nu+1)}\int_{\sigma(t)}^{b}(\nu-1)(s+\nu
h-\sigma(t))_h^{(\nu-2)} f^\sigma(s)\Delta s.
\end{split}
\end{equation}
We show that $(_h\Delta_b^{-\nu}f(t-\nu h))^{\Delta}$ equals
\eqref{naosei99}:
\begin{equation*}
\begin{split}
(_h&\Delta_b^{-\nu} f(t-\nu h))^{\Delta}\\
&=\frac{1}{h}\left[h^{\nu} f(\sigma(t))-h^\nu
f(t)+\frac{\nu}{\Gamma(\nu+1)}\int_{\sigma^2(t)}^{\sigma(b)}(s+\nu
h-\sigma^2(t)))_h^{(\nu-1)}
f(s)\Delta s\right.\\
&\qquad
\left.-\frac{\nu}{\Gamma(\nu+1)}\int_{\sigma(t)}^{\sigma(b)}(s+\nu
h-\sigma(t))_h^{(\nu-1)} f(s)\Delta s\right]
\end{split}
\end{equation*}
\begin{equation*}
\begin{split}
&=h^{\nu}f^\Delta(t)+\frac{\nu}{h\Gamma(\nu+1)}\left[\int_{\sigma^2(t)}^{\sigma(b)}(s+\nu
h-\sigma^2(t)))_h^{(\nu-1)}
f(s)\Delta s\right.\\
&\qquad \left.-\int_{\sigma^2(t)}^{\sigma(b)}(s+\nu
h-\sigma(t))_h^{(\nu-1)}
f(s)\Delta s\right]-\nu h^{\nu-1} f(\sigma(t))\\
&=h^{\nu}f^\Delta(t)+\frac{\nu}{\Gamma(\nu+1)}\int_{\sigma^2(t)}^{\sigma(b)}\Delta_{t,h}\left((s+\nu
h-\sigma(t))_h^{(\nu-1)}\right)
f(s)\Delta s-\nu h^{\nu-1} f(\sigma(t))\\
&=h^{\nu}f^\Delta(t)-\frac{\nu}{\Gamma(\nu+1)}\int_{\sigma^2(t)}^{\sigma(b)}(\nu-1)(s+\nu
h-\sigma^2(t))_h^{(\nu-2)}
f(s)\Delta s-\nu h^{\nu-1} f(\sigma(t))\\
&=h^{\nu}f^\Delta(t)-\frac{\nu}{\Gamma(\nu+1)}\int_{\sigma(t)}^{b}(\nu-1)(s+\nu
h-\sigma(t))_h^{(\nu-2)} f(s)\Delta s-\nu h^{\nu-1} f(\sigma(t)).
\end{split}
\end{equation*}
\end{proof}

\begin{definition}
\label{def1} Let $0<\alpha\leq 1$ and set $\gamma := 1-\alpha$.
The \emph{left fractional difference}\index{Fractional difference
operators} $_a\Delta_h^\alpha f(t)$ and the \emph{right fractional
difference} $_h\Delta_b^\alpha f(t)$ of order $\alpha$ of a
function $f\in\mathcal{F}_\mathbb{T}$ are defined as
\begin{equation*}
_a\Delta_h^\alpha f(t) := (_a\Delta_h^{-\gamma}f(t+\gamma
h))^{\Delta}\ \text{ and } \ _h\Delta_b^\alpha
f(t):=-(_h\Delta_b^{-\gamma}f(t-\gamma h))^{\Delta},
\end{equation*}
for all $t\in\mathbb{T}^\kappa$.
\end{definition}


\section{Main results}
\label{sec1}

Our aim is to introduce the $h$-fractional problem of the calculus
of variations and to prove corresponding necessary optimality
conditions. In order to obtain an Euler--Lagrange type equation
(\textrm{cf.} Theorem~\ref{teorem0}) we first prove a fractional
formula of $h$-summation by parts.


\subsection{Fractional $h$-summation by parts}

A big challenge was to discover a fractional $h$-summation by
parts formula within this time scale setting. Indeed, there is no
clue of what such a formula should be. We found it eventually,
making use of the following lemma.

\begin{lemma}
\label{lemaa1} Let $f$ and $k$ be two functions defined on
$\mathbb{T}^\kappa$ and $\mathbb{T}^{\kappa^2}$, respectively, and
$g$ a function defined on
$\mathbb{T}^\kappa\times\mathbb{T}^{\kappa^2}$. Then, the
following equality holds:
\begin{equation*}
\int_{a}^{b}f(t)\left[\int_{a}^{t}g(t,s)k(s)\Delta s\right]\Delta
t=\int_{a}^{\rho(b)}k(t)\left[\int_{\sigma(t)}^{b}g(s,t)f(s)\Delta
s\right]\Delta t \, .
\end{equation*}
\end{lemma}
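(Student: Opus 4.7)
The plan is to observe that on $\mathbb{T}=h\mathbb{Z}$ both $\Delta$-integrals in the statement are in fact finite sums, so the identity reduces to a Fubini-type interchange of summation over a triangular index set. First, I would write the left-hand side using $\int_a^b F(t)\Delta t = \sum_{k=a/h}^{b/h-1} F(kh)\, h$, obtaining
\begin{equation*}
\text{LHS} = \sum_{t=a/h}^{b/h-1} f(th)\left[\sum_{s=a/h}^{t-1} g(th,sh)\,k(sh)\, h\right] h,
\end{equation*}
which is a double sum over the triangular index set $\{(t,s):a/h\leq s\leq t-1,\ a/h\leq t\leq b/h-1\}$, equivalently $\{(t,s)\in\mathbb{T}\times\mathbb{T}: a\leq s\leq t-h,\ a\leq t\leq \rho(b)\}$.

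Next, I would interchange the order of summation: for each fixed $s$, the index $t$ now runs from $s+h=\sigma(s)$ up to $\rho(b)=b-h$, while $s$ itself runs from $a$ up to $\rho(b)-h=\rho^{2}(b)$, that is $s\in\{a/h,\ldots,b/h-2\}$. This produces
\begin{equation*}
\sum_{s=a/h}^{b/h-2} k(sh)\left[\sum_{t=s+1}^{b/h-1} g(th,sh)\,f(th)\, h\right] h.
\end{equation*}
Finally, I would convert back into $\Delta$-integral notation: the outer sum corresponds to $\int_{a}^{\rho(b)}\cdot\,\Delta t$ (since the upper summation index $b/h-2$ gives upper limit $\rho(b)$), and the inner sum corresponds to $\int_{\sigma(s)}^{b}\cdot\,\Delta t$ (since the lower summation index $s+1$ gives lower limit $\sigma(s)$). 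Renaming the outer variable $s\to t$ and the inner variable $t\to s$ delivers the right-hand side.

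The main thing to be careful about — not really an obstacle so much as bookkeeping — is matching the new summation endpoints to the correct $\sigma$ and $\rho$ in the $\Delta$-integral notation; in particular, the outer upper limit must be $\rho(b)$ rather than $b$, and the inner lower limit must be $\sigma(s)$ rather than $s$. Because on $h\mathbb{Z}$ the integrals are genuine finite sums, no issue of interchange of limits or measurability arises, and the proof is purely a discrete Fubini argument. This is all the more reason why the formulation via the forward jump $\sigma$ and the backward jump $\rho$ — rather than via $s+h$ and $b-h$ — is the natural one to state the result in a form that is suggestive of the more general time scales setting.
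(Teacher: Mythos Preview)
Your proof is correct and takes essentially the same approach as the paper: both arguments write the left-hand side as a double sum over the triangular index set $\{(t,s):a\le s<t\le \rho(b)\}$ and then reverse the order of summation. The only cosmetic difference is that the paper organizes this interchange via a row vector $R=[f(a+h),\ldots,f(b-h)]$ and column vectors $C_1,\ldots$ (decomposing the inner-sum column $C_1$ as $k(a)C_2+k(a+h)C_3+\cdots$), whereas you state the Fubini step directly on the indices; the underlying computation and the endpoint bookkeeping (outer limit $\rho(b)$, inner lower limit $\sigma(s)$) are identical.
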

\begin{proof}
Consider the matrices $R = \left[ f(a+h), f(a+2h), \cdots, f(b-h)
\right]$,
\begin{equation*}
C_1 = \left[
\begin{array}{c}
g(a+h,a)k(a) \\
g(a+2h,a)k(a)+g(a+2h,a+h)k(a+h) \\
\vdots \\
g(b-h,a)k(a)+g(b-h,a+h)k(a+h)+\cdots+ g(b-h,b-2h)k(b-2h)
\end{array}
\right]
\end{equation*}
\begin{gather*}
C_2 = \left[
\begin{array}{c}
g(a+h,a) \\
g(a+2h,a) \\
\vdots \\
g(b-h,a)  \end{array} \right], \ \  C_3 = \left[
\begin{array}{c}
0 \\
g(a+2h,a+h) \\
\vdots \\
g(b-h,a+h)  \end{array} \right], \ \ C_4 = \left[
\begin{array}{c}
0 \\
0 \\
\vdots \\
g(b-h,b-2h)
\end{array}
\right] .
\end{gather*}
Direct calculations show that
\begin{equation*}
\begin{split}
\int_{a}^{b}&f(t)\left[\int_{a}^{t}g(t,s)k(s)\Delta s\right]\Delta
t
=h^2\sum_{i=a/h}^{b/h-1} f(ih)\sum_{j=a/h}^{i-1}g(ih,jh)k(jh) = h^2 R \cdot C_1\\
&=h^2 R \cdot \left[k(a) C_2 + k(a+h)C_3 +\cdots +k(b-2h) C_4 \right]\\
&=h^2\left[k(a)\sum_{j=a/h+1}^{b/h-1}g(jh,a)f(jh)+k(a+h)\sum_{j=a/h+2}^{b/h-1}g(jh,a+h)f(jh)\right.\\
&\left.\qquad \qquad +\cdots+k(b-2h)\sum_{j=b/h-1}^{b/h-1}g(jh,b-2h)f(jh)\right]\\
&=\sum_{i=a/h}^{b/h-2}k(ih)h\sum_{j=\sigma(ih)/h}^{b/h-1}g(jh,ih)f(jh)
h =\int_a^{\rho(b)}k(t)\left[\int_{\sigma(t)}^b g(s,t)f(s)\Delta
s\right]\Delta t.
\end{split}
\end{equation*}
\end{proof}

\begin{theorem}[fractional $h$-summation by parts]\label{teorr1}
Let $f$ and $g$ be real valued functions defined on
$\mathbb{T}^\kappa$ and $\mathbb{T}$, respectively. Fix
$0<\alpha\leq 1$ and put $\gamma := 1-\alpha$. Then,
\begin{multline}
\label{delf:sumPart} \int_{a}^{b}f(t)_a\Delta_h^\alpha g(t)\Delta
t=h^\gamma f(\rho(b))g(b)-h^\gamma
f(a)g(a)+\int_{a}^{\rho(b)}{_h\Delta_{\rho(b)}^\alpha
f(t)g^\sigma(t)}\Delta t\\
+\frac{\gamma}{\Gamma(\gamma+1)}g(a)\left(\int_{a}^{b}(t+\gamma
h-a)_h^{(\gamma-1)}f(t)\Delta t -\int_{\sigma(a)}^{b}(t+\gamma
h-\sigma(a))_h^{(\gamma-1)}f(t)\Delta t\right).
\end{multline}
\end{theorem}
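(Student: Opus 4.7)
The plan is to assemble the identity from five ingredients: Definition~\ref{def1}, Theorem~\ref{teorem2}, the expansion in \eqref{seila1}, the Fubini-type interchange in Lemma~\ref{lemaa1}, and the time scales integration by parts \eqref{partes2}, together with the identification of the right fractional operator in the final step.

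First I would unfold the left fractional difference. By Definition~\ref{def1}, $_a\Delta_h^\alpha g(t)=({_a}\Delta_h^{-\gamma}g(t+\gamma h))^\Delta$, and applying Theorem~\ref{teorem2} with $\nu=\gamma$ to $g$ gives
\begin{equation*}
{_a}\Delta_h^{\alpha}g(t)={_a}\Delta_h^{-\gamma}g^{\Delta}(t+\gamma h)+\frac{\gamma}{\Gamma(\gamma+1)}(t+\gamma h-a)_h^{(\gamma-1)}g(a).
\end{equation*}
Multiplying by $f(t)$ and $\Delta$-integrating from $a$ to $b$, the term carrying $g(a)$ produces verbatim the first integral inside the parenthesis on the RHS of \eqref{delf:sumPart}, so from here on I only need to handle $\int_{a}^{b}f(t)\,{_a}\Delta_h^{-\gamma}g^{\Delta}(t+\gamma h)\Delta t$.

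Second, I expand this remaining term using \eqref{seila1}, splitting it as $h^{\gamma}\int_a^b f(t)g^\Delta(t)\Delta t$ plus a double $\Delta$-integral of $(t+\gamma h-\sigma(s))_h^{(\gamma-1)}g^\Delta(s)$ against $f(t)$. Lemma~\ref{lemaa1} swaps the order of summation and converts the latter into $\int_a^{\rho(b)} g^\Delta(t)\bigl[\frac{\gamma}{\Gamma(\gamma+1)}\int_{\sigma(t)}^b(s+\gamma h-\sigma(t))_h^{(\gamma-1)}f(s)\Delta s\bigr]\Delta t$. Using the right-fractional-sum instance of \eqref{seila1} with $b$ replaced by $\rho(b)$, the inner bracket is precisely ${_h}\Delta_{\rho(b)}^{-\gamma}f(t-\gamma h)-h^{\gamma}f(t)$. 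The $-h^{\gamma}f(t)g^\Delta(t)$ piece combines with the $h^\gamma \int_a^b f g^\Delta\Delta t$ and, using $\int_a^b=\int_a^{\rho(b)}+\mu(\rho(b))[\,\cdot\,]_{\rho(b)}$, collapses to the boundary contribution $h^\gamma f(\rho(b))(g(b)-g(\rho(b)))$.

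Third, I apply \eqref{partes2} to $\int_a^{\rho(b)}g^\Delta(t)\,{_h}\Delta_{\rho(b)}^{-\gamma}f(t-\gamma h)\Delta t$. By Definition~\ref{def1} the emerging $\Delta$-derivative is exactly $-{_h}\Delta_{\rho(b)}^{\alpha}f(t)$, yielding the integral $\int_a^{\rho(b)}{_h}\Delta_{\rho(b)}^{\alpha}f(t)g^\sigma(t)\Delta t$ that appears in \eqref{delf:sumPart}. The two endpoint contributions are evaluated via \eqref{seila1}: at $t=\rho(b)$ the integral portion of ${_h}\Delta_{\rho(b)}^{-\gamma}f(\rho(b)-\gamma h)$ is empty, so the value is $h^{\gamma}f(\rho(b))$, which combines with the $-h^{\gamma}f(\rho(b))g(\rho(b))$ left over from step two to produce the clean $h^{\gamma}f(\rho(b))g(b)$ in \eqref{delf:sumPart}; at $t=a$ one gets $h^{\gamma}f(a)g(a)$ plus the second integral inside the parenthesis of \eqref{delf:sumPart} (with the sign expected). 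Collecting everything gives the stated identity.

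The main obstacle is purely combinatorial bookkeeping: the shifted arguments $t\pm\gamma h$, the four relevant endpoints $a,\sigma(a),\rho(b),b$, and two sources of boundary correction (the $\mu(\rho(b))$-correction when passing from $\int_a^b$ to $\int_a^{\rho(b)}$, and the $[\cdot]_a^{\rho(b)}$ arising from \eqref{partes2}) must be lined up so that the four $g$-dependent boundary terms contract into just $h^\gamma f(\rho(b))g(b)-h^\gamma f(a)g(a)$. Conceptually, the heart of the argument is Lemma~\ref{lemaa1}, which is the mechanism that transfers the left fractional operator off of $g$ and onto $f$ as a right fractional operator.
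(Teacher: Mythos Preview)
Your proposal is correct and follows essentially the same route as the paper: unfold $_a\Delta_h^\alpha g$ via Theorem~\ref{teorem2}, expand the fractional sum by \eqref{seila1}, swap summations with Lemma~\ref{lemaa1} to recognize ${_h}\Delta_{\rho(b)}^{-\gamma}f(t-\gamma h)$, then integrate by parts with \eqref{partes2} and identify the resulting $\Delta$-derivative as $-{_h}\Delta_{\rho(b)}^{\alpha}f$ via Definition~\ref{def1}. Your handling of the four boundary contributions (the $\mu(\rho(b))$-correction and the endpoints from \eqref{partes2} at $t=a$ and $t=\rho(b)$) matches the paper's computation exactly, and your identification of Lemma~\ref{lemaa1} as the conceptual core is on the mark.
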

\begin{proof}
By \eqref{naosei1} we can write
\begin{equation}
\label{rui0}
\begin{split}
\int_{a}^{b} &f(t)_a\Delta_h^\alpha g(t)\Delta t
=\int_{a}^{b}f(t)(_a\Delta_h^{-\gamma} g(t+\gamma h))^{\Delta}\Delta t\\
&=\int_{a}^{b}f(t)\left[_a\Delta_h^{-\gamma}
g^{\Delta}(t+\gamma h)+\frac{\gamma}{\Gamma(\gamma+1)}(t+\gamma h-a)_h^{(\gamma-1)}g(a)\right]\Delta t\\
&=\int_{a}^{b}f(t)_a\Delta_h^{-\gamma}g^{\Delta}(t+\gamma h)\Delta
t +\int_{a}^{b}\frac{\gamma}{\Gamma(\gamma+1)}(t+\gamma
h-a)_h^{(\gamma-1)}f(t)g(a)\Delta t.
\end{split}
\end{equation}
Using \eqref{seila1} we get
\begin{equation*}
\begin{split}
\int_{a}^{b} &f(t)_a\Delta_h^{-\gamma} g^{\Delta}(t+\gamma h) \Delta t\\
&=\int_{a}^{b}f(t)\left[h^\gamma g^{\Delta}(t) +
\frac{\gamma}{\Gamma(\gamma+1)}\int_{a}^{t}(t+\gamma h
-\sigma(s))_h^{(\gamma-1)} g^{\Delta}(s)\Delta s\right]\Delta t\\
&=h^\gamma\int_{a}^{b}f(t)g^{\Delta}(t)\Delta
t+\frac{\gamma}{\Gamma(\gamma+1)}\int_{a}^{b}f(t)\int_{a}^{t}(t+\gamma
h-\sigma(s))_h^{(\gamma-1)}
g^{\Delta}(s)\Delta s \Delta t\\
&=h^\gamma\int_{a}^{b}f(t)g^{\Delta}(t)\Delta
t+\frac{\gamma}{\Gamma(\gamma+1)}\int_{a}^{\rho(b)}
g^{\Delta}(t)\int_{\sigma(t)}^{b}(s+\gamma h-\sigma(t))_h^{(\gamma-1)}f(s)\Delta s \Delta t\\
&=h^\gamma f(\rho(b))[g(b)-g(\rho(b))]+\int_{a}^{\rho(b)}
g^{\Delta}(t)_h\Delta_{\rho(b)}^{-\gamma} f(t-\gamma h)\Delta t,
\end{split}
\end{equation*}
where the third equality follows by Lemma~\ref{lemaa1}. We proceed
to develop the right-hand side of the last equality as follows:
\begin{equation*}
\begin{split}
h^\gamma & f(\rho(b))[g(b)-g(\rho(b))]+\int_{a}^{\rho(b)}
g^{\Delta}(t)_h\Delta_{\rho(b)}^{-\gamma} f(t-\gamma h)\Delta t\\
&=h^\gamma f(\rho(b))[g(b)-g(\rho(b))]
+\left[g(t)_h\Delta_{\rho(b)}^{-\gamma}
f(t-\gamma h)\right]_{t=a}^{t=\rho(b)}\\
&\quad -\int_{a}^{\rho(b)} g^\sigma(t)(_h\Delta_{\rho(b)}^{-\gamma} f(t-\gamma h))^{\Delta}\Delta t\\
&=h^\gamma f(\rho(b))g(b)-h^\gamma f(a)g(a)\\
&\quad
-\frac{\gamma}{\Gamma(\gamma+1)}g(a)\int_{\sigma(a)}^{b}(s+\gamma
h-\sigma(a))_h^{(\gamma-1)}f(s)\Delta s
+\int_{a}^{\rho(b)}{\left(_h\Delta_{\rho(b)}^\alpha
f(t)\right)g^\sigma(t)}\Delta t,
\end{split}
\end{equation*}
where the first equality follows from \eqref{partes2}. Putting
this into (\ref{rui0}) we get \eqref{delf:sumPart}.
\end{proof}

Let us prove that Theorem \ref{teorr1} indeed generalizes the
usual summation by parts formula
\begin{cor}
Suppose that $h=\alpha=1$ in Theorem \ref{teorr1}. Then, the next
formula holds:
\begin{equation}\label{sumpartes}
\int_a^b f(t)\Delta g(t)\Delta
t=f(\rho(b))g(b)-f(a)g(a)-\int_a^{\rho(b)}\Delta
f(t)g^\sigma(t)\Delta t.
\end{equation}
\end{cor}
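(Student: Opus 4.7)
The plan is a direct specialization of Theorem~\ref{teorr1} to the values $h=\alpha=1$, which corresponds to $\gamma := 1-\alpha = 0$. First I would identify how each ingredient of \eqref{delf:sumPart} collapses under this choice. The prefactor $h^\gamma$ becomes $1^0=1$, so the boundary term $h^\gamma f(\rho(b))g(b)-h^\gamma f(a)g(a)$ reduces at once to $f(\rho(b))g(b)-f(a)g(a)$. The long bracketed correction term carries the scalar $\gamma/\Gamma(\gamma+1)$, and since $\gamma=0$ while $\Gamma(1)=1$, this whole contribution vanishes, irrespective of the values of the two $h$-factorial integrals inside the parentheses.

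Next I would handle the two fractional differences that appear in the identity. By Definition~\ref{def1}, $_a\Delta_h^\alpha g(t) = (_a\Delta_h^{-\gamma}g(t+\gamma h))^\Delta$, and the convention $_a\Delta_h^{0}g(t)=g(t)$ stated just before \eqref{seila1} gives, at $\gamma=0$, simply $_a\Delta_1^1 g(t) = g^\Delta(t)$. Analogously, $_h\Delta_{\rho(b)}^\alpha f(t) = -(_h\Delta_h^{-\gamma}f(t-\gamma h))^\Delta$ reduces to $-f^\Delta(t)$, so the remaining integral on the right-hand side of \eqref{delf:sumPart} becomes $-\int_a^{\rho(b)} f^\Delta(t) g^\sigma(t)\Delta t$.

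Assembling these three simplifications into \eqref{delf:sumPart} yields exactly \eqref{sumpartes}. There is no real obstacle here beyond bookkeeping; the only point requiring a brief justification is that the $\gamma/\Gamma(\gamma+1)$ prefactor kills the bracketed correction term even though the individual $h$-factorials $(t+\gamma h-a)_h^{(\gamma-1)}$ evaluated at $\gamma=0$ are perfectly finite on the discrete time scale $\mathbb{Z}$ (and the integrals over $[a,b]_{\mathbb{Z}}$ and $[\sigma(a),b]_{\mathbb{Z}}$ are finite sums), so multiplication by $0$ is legitimate term-by-term. Hence the classical $h$-summation by parts formula \eqref{sumpartes} is recovered as the integer-order limit of the fractional version, confirming the consistency of Theorem~\ref{teorr1}.
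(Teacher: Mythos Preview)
Your proposal is correct and follows exactly the approach the paper intends: the corollary is stated as an immediate specialization of Theorem~\ref{teorr1} with $h=\alpha=1$ (hence $\gamma=0$), and the paper offers no separate proof beyond this implicit substitution. Your detailed verification that $h^\gamma=1$, that the $\gamma/\Gamma(\gamma+1)$ prefactor annihilates the bracketed correction term, and that the fractional differences collapse via Definition~\ref{def1} and the convention $_a\Delta_h^{0}=\mathrm{id}$ to the ordinary forward differences, is precisely the bookkeeping the paper leaves to the reader.
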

\begin{remark}
Since $h=1$, then \eqref{sumpartes} becomes
$$\sum_{t=a}^{b-1} f(t)[g(t+1)-g(t)]=f(b-1)g(b)-f(a)g(a)-\sum_{t=a}^{b-2}[f(t+1)-f(t)]g(t+1).$$
If we allow $f$ to be defined at $t=b$, then we have
\begin{align}
\sum_{t=a}^{b-1}
f(t)[g(t+1)-g(t)]&=f(b-1)g(b)-f(a)g(a)-\sum_{t=a}^{b-2}[f(t+1)-f(t)]g(t+1)\nonumber\\
&=f(b)g(b)-f(a)g(a)-[f(b)-f(b-1)]g(b)\nonumber\\
&\hspace{3cm}-\sum_{t=a}^{b-2}[f(t+1)-f(t)]g(t+1)\nonumber\\
&=f(b)g(b)-f(a)g(a)-\sum_{t=a}^{b-1}[f(t+1)-f(t)]g(t+1)\nonumber.
\end{align}
This is the usual summation by parts formula.
\end{remark}


\subsection{Necessary optimality conditions}

We begin to fix two arbitrary real numbers $\alpha$ and $\beta$
such that $\alpha,\beta\in(0,1]$. Further, we put $\gamma :=
1-\alpha$ and $\nu :=1-\beta$.

Let a function
$L(t,u,v,w):\mathbb{T}^\kappa\times\mathbb{R}\times\mathbb{R}\times\mathbb{R}\rightarrow\mathbb{R}$
be given. We consider the problem of minimizing (or maximizing) a
functional
$\mathcal{L}:\mathcal{F}_\mathbb{T}\rightarrow\mathbb{R}$ subject
to given boundary conditions:
\begin{equation}
\label{naosei7}
\mathcal{L}[y(\cdot)]=\int_{a}^{b}L(t,y^{\sigma}(t),{_a}\Delta_h^\alpha
y(t),{_h}\Delta_b^\beta y(t))\Delta t \longrightarrow \min, \
y(a)=A, \ y(b)=B \, .
\end{equation}
Our main aim is to derive necessary optimality conditions for
problem \eqref{naosei7}.
\begin{definition}
For $f\in\mathcal{F}_\mathbb{T}$ we define the norm
$$\|f\|=\max_{t\in\mathbb{T}^\kappa}|f^\sigma(t)|+\max_{t\in\mathbb{T}^\kappa}|_a\Delta_h^\alpha
f(t)|+\max_{t\in\mathbb{T}^\kappa}|_h\Delta_b^\beta f(t)|.$$ A
function $\hat{y}\in\mathcal{F}_\mathbb{T}$ with $\hat{y}(a)=A$
and $\hat{y}(b)=B$ is called a local minimum for problem
\eqref{naosei7} provided there exists $\delta>0$ such that
$\mathcal{L}(\hat{y})\leq\mathcal{L}(y)$ for all
$y\in\mathcal{F}_\mathbb{T}$ with $y(a)=A$ and $y(b)=B$ and
$\|y-\hat{y}\|<\delta$.
\end{definition}

\begin{definition}
A function $\eta\in\mathcal{F}_\mathbb{T}$ is called an admissible
variation provided $\eta \neq 0$ and $\eta(a)=\eta(b)=0$.
\end{definition}

From now on we assume that the second-order partial derivatives
$L_{uu}$, $L_{uv}$, $L_{uw}$, $L_{vw}$, $L_{vv}$, and $L_{ww}$
exist and are continuous.


\subsubsection{First order optimality condition}

The next theorem gives a first order necessary condition for
problem \eqref{naosei7}, \textrm{i.e.}, an Euler--Lagrange type
equation for the fractional $h$-difference
setting\index{Euler--Lagrange equation!discrete fractional case}.
\begin{theorem}[The $h$-fractional Euler-Lagrange equation for problem \eqref{naosei7}]
\label{teorem0} If $\hat{y}\in\mathcal{F}_\mathbb{T}$ is a local
minimum for problem \eqref{naosei7}, then the equality
\begin{equation}
\label{EL} L_u[\hat{y}](t) +{_h}\Delta_{\rho(b)}^\alpha
L_v[\hat{y}](t)+{_a}\Delta_h^\beta L_w[\hat{y}](t)=0
\end{equation}
holds for all $t\in\mathbb{T}^{\kappa^2}$ with operator $[\cdot]$
defined by $[y](s) =(s,y^{\sigma}(s),{_a}\Delta_s^\alpha
y(s),{_s}\Delta_b^\beta y(s))$.
\end{theorem}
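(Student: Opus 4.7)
The proof proceeds via the classical variational argument, adapted to the discrete fractional setting. The plan is to take an admissible variation $\eta \in \mathcal{F}_\mathbb{T}$ (so $\eta(a) = \eta(b) = 0$) and form the real function $\Phi(\varepsilon) = \mathcal{L}[\hat{y} + \varepsilon \eta]$. Because both ${}_a\Delta_h^\alpha$ and ${}_h\Delta_b^\beta$ are $\mathbb{R}$-linear in their argument (each is a finite weighted sum via Definition \ref{def0}--\ref{def1}), the integrand of $\Phi$ is a smooth function of $\varepsilon$ and differentiation under the finite $\Delta$-integral is immediate. Since $\hat{y}$ is a local minimum, $\Phi'(0) = 0$, which yields
$$
\int_{a}^{b}\Bigl\{L_u[\hat{y}](t)\eta^\sigma(t) + L_v[\hat{y}](t)\,{}_a\Delta_h^\alpha \eta(t) + L_w[\hat{y}](t)\,{}_h\Delta_b^\beta \eta(t)\Bigr\}\Delta t = 0.
$$

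Next I would apply the fractional $h$-summation by parts formula (Theorem \ref{teorr1}) to the middle summand, with $f = L_v[\hat{y}]$ and $g = \eta$. Since $\eta(a) = \eta(b) = 0$, the boundary piece $h^\gamma f(\rho(b))g(b) - h^\gamma f(a)g(a)$ and the two correction integrals in the last line of \eqref{delf:sumPart} (both of which are multiplied by $g(a)$) vanish outright, leaving cleanly
$$
\int_{a}^{b} L_v[\hat{y}](t)\,{}_a\Delta_h^\alpha \eta(t)\,\Delta t = \int_{a}^{\rho(b)} \bigl({}_h\Delta_{\rho(b)}^\alpha L_v[\hat{y}]\bigr)(t)\,\eta^\sigma(t)\,\Delta t.
$$
For the third summand I would first establish the symmetric summation by parts formula for the right operator, mirroring the proof of Theorem \ref{teorr1} but starting from the identity ${}_h\Delta_b^\beta \eta = -({}_h\Delta_b^{-\nu}\eta(\cdot - \nu h))^\Delta$ of Definition \ref{def1}, invoking Theorem \ref{teorem3} in place of Theorem \ref{teorem2}, applying Lemma \ref{lemaa1} with the roles of the two triangular sums interchanged, and using \eqref{partes1} for the ordinary $\Delta$-integration by parts step. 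Under $\eta(a) = \eta(b) = 0$ all the analogous endpoint contributions cancel, producing
$$
\int_{a}^{b} L_w[\hat{y}](t)\,{}_h\Delta_b^\beta \eta(t)\,\Delta t = \int_{a}^{\rho(b)} \bigl({}_a\Delta_h^\beta L_w[\hat{y}]\bigr)(t)\,\eta^\sigma(t)\,\Delta t.
$$
For the first summand, note that $\eta^\sigma(\rho(b)) = \eta(b) = 0$, so formula \eqref{sigma} lets us trim $\int_{a}^{b} L_u[\hat{y}](t)\eta^\sigma(t)\Delta t$ to $\int_{a}^{\rho(b)} L_u[\hat{y}](t)\eta^\sigma(t)\Delta t$.

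Collecting the three pieces, the stationarity condition becomes
$$
\int_{a}^{\rho(b)}\Bigl\{L_u[\hat{y}](t) + {}_h\Delta_{\rho(b)}^\alpha L_v[\hat{y}](t) + {}_a\Delta_h^\beta L_w[\hat{y}](t)\Bigr\}\eta^\sigma(t)\,\Delta t = 0
$$
for every admissible $\eta$. I would finish by the discrete fundamental lemma on $h\mathbb{Z}$: prescribing $\eta^\sigma(t)$ for $t \in \mathbb{T}^{\kappa^2}$ amounts to prescribing $\eta$ freely on the interior grid points $\{a+h, a+2h, \dots, b-h\}$ (subject to $\eta(a) = \eta(b) = 0$), so one may choose $\eta$ supported at a single interior grid point, forcing the bracketed integrand to vanish at every $t \in \mathbb{T}^{\kappa^2}$. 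This is precisely \eqref{EL}.

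The main obstacle will be the bookkeeping needed to derive the right-operator counterpart of Theorem \ref{teorr1}: keeping track of the $h^\nu$ endpoint contributions, the correction integral carrying the factor $\eta(b)$, and the $\pm\nu h$ shifts in arguments, and verifying that every one of these boundary-type terms is annihilated by $\eta(a) = \eta(b) = 0$. The remaining ingredients --- linearity of the fractional operators, differentiation under a finite sum, trimming the outer integration range to $[a,\rho(b)]_\mathbb{T}$, and the localization of $\eta$ at an interior grid point --- are entirely routine on the time scale $h\mathbb{Z}$.
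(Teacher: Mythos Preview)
Your proposal is correct and follows essentially the same route as the paper: the paper also applies Theorem~\ref{teorr1} directly to the $L_v$ term, and for the $L_w$ term it carries out inline exactly the sequence you describe (Definition~\ref{def1}, then Theorem~\ref{teorem3}, then Lemma~\ref{lemaa1} to swap the triangular sums, then ordinary integration by parts via~\eqref{partes2}), rather than stating a separate right-operator summation-by-parts formula. The only cosmetic difference is that you package the $L_w$ computation as a mirror theorem while the paper performs those steps directly in the proof.
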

\begin{proof}
Suppose that $\hat{y}(\cdot)$ is a local minimum of
$\mathcal{L}[\cdot]$. Let $\eta(\cdot)$ be an arbitrarily fixed
admissible variation and define a function
$\Phi:\left(-\frac{\delta}{\|\eta(\cdot)\|},\frac{\delta}{\|\eta(\cdot)\|}\right)\rightarrow\mathbb{R}$
by
\begin{equation}
\label{fi}
\Phi(\varepsilon)=\mathcal{L}[\hat{y}(\cdot)+\varepsilon\eta(\cdot)].
\end{equation}
This function has a minimum at $\varepsilon=0$, so we must have
$\Phi'(0)=0$, i.e.,
$$\int_{a}^{b}\left[L_u[\hat{y}](t)\eta^\sigma(t)
+L_v[\hat{y}](t){_a}\Delta_h^\alpha\eta(t)
+L_w[\hat{y}](t){_h}\Delta_b^\beta\eta(t)\right]\Delta t=0,$$
which we may write equivalently as
\begin{multline}
\label{rui33} h
L_u[\hat{y}](t)\eta^\sigma(t)|_{t=\rho(b)}+\int_{a}^{\rho(b)}L_u[\hat{y}](t)\eta^\sigma(t)\Delta
t +\int_{a}^{b}L_v[\hat{y}](t){_a}\Delta_h^\alpha\eta(t)\Delta
t\\+\int_{a}^{b}L_w[\hat{y}](t){_h}\Delta_b^\beta\eta(t)\Delta
t=0.
\end{multline}
Using Theorem~\ref{teorr1} and the fact that $\eta(a)=\eta(b)=0$,
we get
\begin{equation}
\label{naosei5}
\int_{a}^{b}L_v[\hat{y}](t){_a}\Delta_h^\alpha\eta(t)\Delta
t=\int_{a}^{\rho(b)}\left({_h}\Delta_{\rho(b)}^\alpha
\left(L_v[\hat{y}]\right)(t)\right)\eta^\sigma(t)\Delta t
\end{equation}
for the third term in \eqref{rui33}. Using \eqref{naosei12} it
follows that
\begin{equation}
\label{naosei4}
\begin{split}
\int_{a}^{b} & L_w[\hat{y}](t){_h}\Delta_b^\beta\eta(t)\Delta t\\=&-\int_{a}^{b}L_w[\hat{y}](t)({_h}\Delta_b^{-\nu}\eta(t-\nu h))^{\Delta}\Delta t\\
=&-\int_{a}^{b}L_w[\hat{y}](t)\left[{_h}\Delta_{\rho(b)}^{-\nu}
\eta^{\Delta}(t-\nu h)-\frac{\nu}{\Gamma(\nu+1)}(b+\nu h-\sigma(t))_h^{(\nu-1)}\eta(b)\right]\Delta t\\
=&-\int_{a}^{b}L_w[\hat{y}](t){_h}\Delta_{\rho(b)}^{-\nu}
\eta^{\Delta}(t-\nu h)\Delta t
+\frac{\nu\eta(b)}{\Gamma(\nu+1)}\int_{a}^{b}(b+\nu
h-\sigma(t))_h^{(\nu-1)}L_w[\hat{y}](t)\Delta t .
\end{split}
\end{equation}
We now use Lemma~\ref{lemaa1} to get
\begin{equation}
\label{naosei2}
\begin{split}
\int_{a}^{b} &L_w[\hat{y}](t){_h}\Delta_{\rho(b)}^{-\nu} \eta^{\Delta}(t-\nu h)\Delta t\\
&=\int_{a}^{b}L_w[\hat{y}](t)\left[h^\nu\eta^{\Delta}(t)+\frac{\nu}{\Gamma(\nu+1)}\int_{\sigma(t)}^{b}(s+\nu
h-\sigma(t))_h^{(\nu-1)} \eta^{\Delta}(s)\Delta s\right]\Delta t\\
&=\int_{a}^{b}h^\nu L_w[\hat{y}](t)\eta^{\Delta}(t)\Delta t\\
&\qquad
+\frac{\nu}{\Gamma(\nu+1)}\int_{a}^{\rho(b)}\left[L_w[\hat{y}](t)\int_{\sigma(t)}^{b}(s+\nu
h-\sigma(t))_h^{(\nu-1)} \eta^{\Delta}(s)\Delta s\right]\Delta t\\
&=\int_{a}^{b}h^\nu L_w[\hat{y}](t)\eta^{\Delta}(t)\Delta t\\
&\qquad
+\frac{\nu}{\Gamma(\nu+1)}\int_{a}^{b}\left[\eta^{\Delta}(t)\int_{a}^{t}(t+\nu
h
-\sigma(s))_h^{(\nu-1)}L_w[\hat{y}](s)\Delta s\right]\Delta t\\
&=\int_{a}^{b}\eta^{\Delta}(t){_a}\Delta^{-\nu}_h
\left(L_w[\hat{y}]\right)(t+\nu h)\Delta t.
\end{split}
\end{equation}
We apply again the time scale integration by parts formula
\eqref{partes2}, this time to \eqref{naosei2}, to obtain,
\begin{equation}
\label{naosei3}
\begin{split}
\int_{a}^{b} & \eta^{\Delta}(t){_a}\Delta^{-\nu}_h
\left(L_w[\hat{y}]\right)(t+\nu h)\Delta t\\
&=\int_{a}^{\rho(b)}\eta^{\Delta}(t){_a}\Delta^{-\nu}_h
\left(L_w[\hat{y}]\right)(t+\nu h)\Delta t\\
&\qquad +(\eta(b)-\eta(\rho(b))){_a}\Delta^{-\nu}_h
\left(L_w[\hat{y}]\right)(t+\nu h)|_{t=\rho(b)}\\
&=\left[\eta(t){_a}\Delta^{-\nu}_h \left(L_w[\hat{y}]\right)(t+\nu
h)\right]_{t=a}^{t=\rho(b)}
-\int_{a}^{\rho(b)}\eta^\sigma(t)({_a}\Delta^{-\nu}_h
\left(L_w[\hat{y}]\right)(t+\nu h))^\Delta \Delta t\\
&\qquad +\eta(b){_a}\Delta^{-\nu}_h
\left(L_w[\hat{y}]\right)(t+\nu
h)|_{t=\rho(b)}-\eta(\rho(b)){_a}\Delta^{-\nu}_h
\left(L_w[\hat{y}]\right)(t+\nu h)|_{t=\rho(b)}\\
&=\eta(b){_a}\Delta^{-\nu}_h \left(L_w[\hat{y}]\right)(t+\nu
h)|_{t=\rho(b)}-\eta(a){_a}\Delta^{-\nu}_h
\left(L_w[\hat{y}]\right)(t+\nu h)|_{t=a}\\
&\qquad -\int_{a}^{\rho(b)}\eta^\sigma(t){_a}\Delta^{\beta}_h
\left(L_w[\hat{y}]\right)(t)\Delta t.
\end{split}
\end{equation}
Since $\eta(a)=\eta(b)=0$ we obtain, from \eqref{naosei2} and
\eqref{naosei3}, that
$$\int_{a}^{b}L_w[\hat{y}](t){_h}\Delta_{\rho(b)}^{-\nu}
\eta^\Delta(t)\Delta t
=-\int_{a}^{\rho(b)}\eta^\sigma(t){_a}\Delta^{\beta}_h
\left(L_w[\hat{y}]\right)(t)\Delta t\, ,$$ and after inserting in
\eqref{naosei4}, that
\begin{equation}
\label{naosei6}
\int_{a}^{b}L_w[\hat{y}](t){_h}\Delta_b^\beta\eta(t)\Delta t
=\int_{a}^{\rho(b)}\eta^\sigma(t){_a}\Delta^{\beta}_h
\left(L_w[\hat{y}]\right)(t) \Delta t.
\end{equation}
By \eqref{naosei5} and \eqref{naosei6} we may write \eqref{rui33}
as
$$\int_{a}^{\rho(b)}\left[L_u[\hat{y}](t)
+{_h}\Delta_{\rho(b)}^\alpha
\left(L_v[\hat{y}]\right)(t)+{_a}\Delta_h^\beta
\left(L_w[\hat{y}]\right)(t)\right]\eta^\sigma(t) \Delta t =0\,
.$$ Since the values of $\eta^\sigma(t)$ are arbitrary for
$t\in\mathbb{T}^{\kappa^2}$, the Euler-Lagrange equation
\eqref{EL} holds along $\hat{y}$.
\end{proof}

The next result is a direct consequence of Theorem~\ref{teorem0}.

\begin{cor}[The $h$-Euler--Lagrange equation
-- \textrm{cf.}, \textrm{e.g.}, \cite{CD:Bohner:2004}]
\label{ELCor} Let $\mathbb{T}$ be the time scale $h \mathbb{Z}$,
$h > 0$, with the forward jump operator $\sigma$ and the delta
derivative $\Delta$. Assume $a, b \in \mathbb{T}$, $a < b$. If
$\hat{y}$ is a solution to the problem
\begin{equation*}
\mathcal{L}[y(\cdot)]=\int_{a}^{b}L(t,y^{\sigma}(t),y^\Delta(t))\Delta
t \longrightarrow \min, \  y(a)=A, \  y(b)=B\, ,
\end{equation*}
then the equality
$L_u(t,\hat{y}^{\sigma}(t),\hat{y}^\Delta(t))-\left(L_v(t,\hat{y}^{\sigma}(t),\hat{y}^\Delta(t))\right)^\Delta
=0$ holds for all $t\in\mathbb{T}^{\kappa^2}$.
\end{cor}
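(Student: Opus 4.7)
The plan is to deduce this classical $h$-Euler--Lagrange equation as a limiting/boundary case of the fractional Euler--Lagrange equation \eqref{EL} established in Theorem~\ref{teorem0}. First, I would recast the functional $\mathcal{L}$ as an instance of \eqref{naosei7}. Since $L$ depends only on the three variables $(t,u,v)$, I would trivially extend it to a four-variable Lagrangian $\tilde{L}(t,u,v,w) := L(t,u,v)$, so that $\tilde{L}_u = L_u$, $\tilde{L}_v = L_v$, and $\tilde{L}_w \equiv 0$. Then the minimization problem in Corollary~\ref{ELCor} is precisely the problem \eqref{naosei7} for $\tilde{L}$ when the fractional orders are chosen as $\alpha = 1$ and $\beta \in (0,1]$ arbitrary (the particular choice of $\beta$ is irrelevant because $\tilde{L}_w \equiv 0$).

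Next, I would compute the fractional differences appearing in the statement of Theorem~\ref{teorem0} under the choice $\alpha = 1$. Writing $\gamma := 1 - \alpha = 0$ and invoking the convention $_a\Delta_h^0 f = f$ laid down right before \eqref{seila1}, Definition~\ref{def1} gives
\begin{equation*}
{_a}\Delta_h^{1} y(t) \;=\; \bigl({_a}\Delta_h^{0} y(t+0\cdot h)\bigr)^\Delta \;=\; y^\Delta(t),
\end{equation*}
so that the bracket $[\hat y](t)$ appearing in \eqref{EL} reduces, in its third slot, to $\hat y^\Delta(t)$; its fourth slot is irrelevant since $\tilde{L}_w \equiv 0$. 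Applying Definition~\ref{def1} once more, this time with the endpoint $\rho(b)$,
\begin{equation*}
{_h}\Delta_{\rho(b)}^{1} \bigl(\tilde{L}_v[\hat y]\bigr)(t) \;=\; -\bigl({_h}\Delta_{\rho(b)}^{0}\tilde{L}_v[\hat y](t-0\cdot h)\bigr)^\Delta \;=\; -\bigl(L_v(t,\hat y^\sigma(t),\hat y^\Delta(t))\bigr)^\Delta.
\end{equation*}

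Finally, I would substitute these two identities, together with $\tilde{L}_w \equiv 0$, into the fractional Euler--Lagrange equation \eqref{EL} guaranteed by Theorem~\ref{teorem0}. The third summand in \eqref{EL} vanishes, and what remains is
\begin{equation*}
L_u\bigl(t,\hat y^\sigma(t),\hat y^\Delta(t)\bigr) \;-\; \bigl(L_v(t,\hat y^\sigma(t),\hat y^\Delta(t))\bigr)^\Delta \;=\; 0,
\end{equation*}
valid for all $t \in \mathbb{T}^{\kappa^2}$, which is exactly the desired conclusion. The only delicate point is the careful bookkeeping of the fractional operators at the boundary value $\alpha = 1$; once the conventions of Definition~\ref{def1} and the identities $_a\Delta_h^0 = {_h}\Delta_b^0 = \mathrm{Id}$ are applied correctly, no further obstacle arises, and the corollary falls out as a one-line specialization of Theorem~\ref{teorem0}.
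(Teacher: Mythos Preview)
Your proposal is correct and takes exactly the same route as the paper: the paper's proof is the single sentence ``Choose $\alpha=1$ and a $L$ that does not depend on $w$ in Theorem~\ref{teorem0},'' and your write-up is simply a careful unpacking of that sentence, verifying via Definition~\ref{def1} that ${_a}\Delta_h^{1}y=y^\Delta$ and ${_h}\Delta_{\rho(b)}^{1}f=-f^\Delta$ so that \eqref{EL} collapses to the stated Euler--Lagrange equation.
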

\begin{proof}
Choose $\alpha=1$ and a $L$ that does not depend on $w$ in
Theorem~\ref{teorem0}.
\end{proof}

\begin{remark}
If we take $h=1$ in Corollary~\ref{ELCor} we have that
$$L_u(t,\hat{y}^{\sigma}(t),\Delta\hat{y}(t))-\Delta L_v(t,\hat{y}^{\sigma}(t),\Delta\hat{y}(t)) =0,$$
holds for all $t\in\mathbb{T}^{\kappa^2}$ (see also Theorem
\ref{teoremadisc}).
\end{remark}


\subsubsection{Natural boundary conditions}

If the initial condition $y(a)=A$ is not present in problem
\eqref{naosei7} (\textrm{i.e.}, $y(a)$ is free), besides the
$h$-fractional Euler--Lagrange equation \eqref{EL} the following
supplementary condition must be fulfilled:
\begin{multline}\label{rui11}
-h^\gamma L_v[\hat{y}](a)+\frac{\gamma}{\Gamma(\gamma+1)}\left(
\int_{a}^{b}(t+\gamma h-a)_h^{(\gamma-1)}L_v[\hat{y}](t)\Delta t\right.\\
\left.-\int_{\sigma(a)}^{b}(t+\gamma
h-\sigma(a))_h^{(\gamma-1)}L_v[\hat{y}](t)\Delta t\right)+
L_w[\hat{y}](a)=0.
\end{multline}
Similarly, if $y(b)=B$ is not present in \eqref{naosei7} ($y(b)$
is free), the extra condition
\begin{multline}\label{rui22}
h L_u[\hat{y}](\rho(b))+h^\gamma L_v[\hat{y}](\rho(b))-h^\nu L_w[\hat{y}](\rho(b))\\
+\frac{\nu}{\Gamma(\nu+1)}\left(\int_{a}^{b}(b+\nu
h-\sigma(t))_h^{(\nu-1)}L_w[\hat{y}](t)\Delta t \right.\\ \left.
-\int_{a}^{\rho(b)}(\rho(b)+\nu
h-\sigma(t))_h^{(\nu-1)}L_w[\hat{y}](t)\Delta t\right)=0
\end{multline}
is added to Theorem~\ref{teorem0}. The proofs of these facts are
immediate and we don't write them here. We just note that the
first term in \eqref{rui22} arises from the first term of the
left-hand side of \eqref{rui33}. Equalities \eqref{rui11} and
\eqref{rui22} are called \emph{natural boundary
conditions}\index{Natural boundary conditions}.


\subsubsection{Second order optimality condition}

We now obtain a second order necessary condition for problem
\eqref{naosei7}, \textrm{i.e.}, we prove a Legendre optimality
type condition for the fractional $h$-difference
setting\index{Legendre's necessary condition!discrete fractional
case}.
\begin{theorem}[The $h$-fractional Legendre necessary condition]
\label{teorem1} If $\hat{y}\in\mathcal{F}_\mathbb{T}$ is a local
minimum for problem \eqref{naosei7}, then the inequality
\begin{equation}
\label{eq:LC}
\begin{split}
h^2
&L_{uu}[\hat{y}](t)+2h^{\gamma+1}L_{uv}[\hat{y}](t)+2h^{\nu+1}(\nu-1)L_{uw}[\hat{y}](t)
+h^{2\gamma}(\gamma -1)^2 L_{vv}[\hat{y}](\sigma(t))\\
&+2h^{\nu+\gamma}(\gamma-1)L_{vw}[\hat{y}](\sigma(t))+2h^{\nu+\gamma}(\nu-1)L_{vw}[\hat{y}](t)+h^{2\nu}(\nu-1)^2 L_{ww}[\hat{y}](t)\\
&+h^{2\nu}L_{ww}[\hat{y}](\sigma(t))
+\int_{a}^{t}h^3L_{ww}[\hat{y}](s)\left(\frac{\nu(1-\nu)}{\Gamma(\nu+1)}(t+\nu
h - \sigma(s))_h^{(\nu-2)}\right)^2\Delta s\\
&+h^{\gamma}L_{vv}[\hat{y}](t)
+\int_{\sigma(\sigma(t))}^{b}h^3L_{vv}[\hat{y}](s)\left(\frac{\gamma(\gamma-1)}{\Gamma(\gamma+1)}(s+\gamma
h -\sigma(\sigma(t)))_h^{(\gamma-2)}\right)^2\Delta s \geq 0
\end{split}
\end{equation}
holds for all $t\in\mathbb{T}^{\kappa^2}$, where
$[\hat{y}](t)=(t,\hat{y}^{\sigma}(t),{_a}\Delta_t^\alpha
\hat{y}(t),{_t}\Delta_b^\beta\hat{y}(t))$.
\end{theorem}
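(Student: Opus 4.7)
The plan is to derive \eqref{eq:LC} from the second-order necessary condition $\Phi''(0)\geq 0$, where $\Phi$ is defined in \eqref{fi}, by testing against a carefully localized admissible variation. Since the inequality is to be established pointwise at an arbitrary $t\in\mathbb{T}^{\kappa^2}$, it suffices to exhibit, for each such $t$, a single variation that concentrates all its mass at the single node $\sigma(t)$ (guaranteed to lie strictly between $a$ and $b$ precisely because $t\in\mathbb{T}^{\kappa^2}$) and to extract the associated scalar inequality.

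First I would compute $\Phi''(0)$ by differentiating twice under the integral, using the linearity of $y\mapsto y^\sigma$, $y\mapsto {_a}\Delta_h^\alpha y$ and $y\mapsto {_h}\Delta_b^\beta y$, to obtain
\begin{equation*}
\begin{split}
\Phi''(0)=\int_{a}^{b}\Big\{ & L_{uu}[\hat y](\eta^\sigma)^2 + 2L_{uv}[\hat y]\,\eta^\sigma\,{_a}\Delta_h^\alpha\eta + 2L_{uw}[\hat y]\,\eta^\sigma\,{_h}\Delta_b^\beta\eta \\
& + L_{vv}[\hat y]\,({_a}\Delta_h^\alpha\eta)^2 + 2L_{vw}[\hat y]\,{_a}\Delta_h^\alpha\eta\,{_h}\Delta_b^\beta\eta + L_{ww}[\hat y]\,({_h}\Delta_b^\beta\eta)^2\Big\}\Delta s.
\end{split}
\end{equation*}
I would then take the admissible spike variation given by $\eta(\sigma(t))=h$ and $\eta(s)=0$ for all other nodes $s\in\mathbb{T}$, so that automatically $\eta(a)=\eta(b)=0$. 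Using Definitions~\ref{def0} and~\ref{def1} together with the identity $((\nu-1)h)_h^{(\nu-1)}=h^{\nu-1}\Gamma(\nu)$ and the calculation performed in Lemma~\ref{lemma:tl}, one finds that $\eta^\sigma$ is supported only at $s=t$; that ${_a}\Delta_h^\alpha\eta(s)=0$ for $s<t$, with the explicit pointwise values $h^\gamma$ at $s=t$, $(\gamma-1)h^\gamma$ at $s=\sigma(t)$, and the non-local tail $h\,\frac{\gamma(\gamma-1)}{\Gamma(\gamma+1)}(s+\gamma h-\sigma^2(t))_h^{(\gamma-2)}$ for $s\geq \sigma^2(t)$; and symmetrically that ${_h}\Delta_b^\beta\eta(s)=0$ for $s>\sigma(t)$, with values $(\nu-1)h^\nu$ at $s=t$, $h^\nu$ at $s=\sigma(t)$, and the tail $-h\,\frac{\nu(1-\nu)}{\Gamma(\nu+1)}(t+\nu h-\sigma(s))_h^{(\nu-2)}$ for $s\leq \rho(t)$.

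Finally I would substitute these explicit values back into the six integrands. The three terms carrying the factor $\eta^\sigma$ collapse to the single node $s=t$ and yield the contributions $h^2 L_{uu}$, $2h^{\gamma+1}L_{uv}$ and $2h^{\nu+1}(\nu-1)L_{uw}$; the $L_{vw}$ cross term, whose integrand is nonzero only at the two nodes $s=t$ and $s=\sigma(t)$ where both fractional differences of $\eta$ are nonzero, yields $2h^{\nu+\gamma}(\nu-1)L_{vw}[\hat y](t)+2h^{\nu+\gamma}(\gamma-1)L_{vw}[\hat y](\sigma(t))$; the $L_{vv}$ and $L_{ww}$ terms each decompose into two isolated-node pieces (giving the $h^{2\gamma}$, $h^{2\nu}$ coefficients and their $(\gamma-1)^2$, $(\nu-1)^2$ counterparts) together with one genuine integral tail over $[\sigma^2(t),b)_\mathbb{T}$ and $[a,t)_\mathbb{T}$ respectively. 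Summing the six contributions and invoking $\Phi''(0)\geq 0$ together with the arbitrariness of $t\in\mathbb{T}^{\kappa^2}$ yields \eqref{eq:LC}. The main obstacle is the explicit evaluation of the tail values of ${_a}\Delta_h^\alpha\eta$ and ${_h}\Delta_b^\beta\eta$, which is where the non-locality of the fractional operators really manifests and where the coefficients $\frac{\gamma(\gamma-1)}{\Gamma(\gamma+1)}(\cdot)_h^{(\gamma-2)}$ and $\frac{\nu(1-\nu)}{\Gamma(\nu+1)}(\cdot)_h^{(\nu-2)}$ appearing inside the two integrals of \eqref{eq:LC} are produced; this step reduces, via the discrete identity $\Delta_{s,h}\{(s-c)_h^{(\nu-1)}\}=(\nu-1)(s-c)_h^{(\nu-2)}$ used in Lemmas~\ref{lem:tl} and~\ref{lemma:tl}, to a careful bookkeeping of the forward jumps hidden in the inner definitions of ${_a}\Delta_h^\alpha$ and ${_h}\Delta_b^\beta$.
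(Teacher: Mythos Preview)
Your approach is exactly the one the paper takes: derive $\Phi''(0)\ge 0$ as the quadratic form \eqref{des1}, test it against the spike $\eta(\sigma(\tau))=h$, $\eta\equiv 0$ otherwise, compute ${_a}\Delta_h^\alpha\eta$ and ${_h}\Delta_b^\beta\eta$ node by node (the paper does this through the $\eta^\Delta$–representation from \eqref{naosei1} and \eqref{naosei12}, which is equivalent), and assemble the six pieces. One small slip to fix when you carry out the bookkeeping: the tail values of the fractional differences carry a factor $h^2$, not $h$, i.e.\ ${_a}\Delta_h^\alpha\eta(s)=h^{2}\frac{\gamma(\gamma-1)}{\Gamma(\gamma+1)}(s+\gamma h-\sigma^2(t))_h^{(\gamma-2)}$ for $s\ge\sigma^2(t)$ and analogously for the right difference (this is precisely the computation \eqref{rui10}); the extra $h$ is what ultimately produces the $h^3$ inside the two integral terms of \eqref{eq:LC} after one global factor of $h$ is divided out.
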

\begin{proof}
By the hypothesis of the theorem, and letting $\Phi$ be as in
\eqref{fi}, we have as necessary optimality condition that
$\Phi''(0)\geq 0$ for an arbitrary admissible variation
$\eta(\cdot)$. Inequality $\Phi''(0)\geq 0$ is equivalent to
\begin{multline}
\label{des1}
\int_{a}^{b}\left[L_{uu}[\hat{y}](t)(\eta^\sigma(t))^2
+2L_{uv}[\hat{y}](t)\eta^\sigma(t){_a}\Delta_h^\alpha\eta(t)
+2L_{uw}[\hat{y}](t)\eta^\sigma(t){_h}\Delta_b^\beta\eta(t)\right.\\
\left. +L_{vv}[\hat{y}](t)({_a}\Delta_h^\alpha\eta(t))^2
+2L_{vw}[\hat{y}](t){_a}\Delta_h^\alpha\eta(t){_h}\Delta_b^\beta\eta(t)
+L_{ww}(t)({_h}\Delta_b^\beta\eta(t))^2\right]\Delta t\geq 0.
\end{multline}
Let $\tau\in\mathbb{T}^{\kappa^2}$ be arbitrary, and choose
$\eta:\mathbb{T}\rightarrow\mathbb{R}$ given by
$$\eta(t) =
\left\{
\begin{array}{ll}
h & \mbox{if $t=\sigma(\tau)$};\\
0 & \mbox{otherwise}.\end{array} \right.$$
It follows that
$\eta(a)=\eta(b)=0$, \textrm{i.e.}, $\eta$ is an admissible
variation. Using \eqref{naosei1} we get
\begin{equation*}
\begin{split}
\int_{a}^{b}&\left[L_{uu}[\hat{y}](t)(\eta^\sigma(t))^2
+2L_{uv}[\hat{y}](t)\eta^\sigma(t){_a}\Delta_h^\alpha\eta(t)
+L_{vv}[\hat{y}](t)({_a}\Delta_h^\alpha\eta(t))^2\right]\Delta t\\
&=\int_{a}^{b}\Biggl[L_{uu}[\hat{y}](t)(\eta^\sigma(t))^2\\
&\qquad\quad +2L_{uv}[\hat{y}](t)\eta^\sigma(t)\left(h^\gamma
\eta^{\Delta}(t)+
\frac{\gamma}{\Gamma(\gamma+1)}\int_{a}^{t}(t+\gamma h
-\sigma(s))_h^{(\gamma-1)}\eta^{\Delta}(s)\Delta s\right)\\
&\qquad\quad +L_{vv}[\hat{y}](t)\left(h^\gamma \eta^{\Delta}(t)
+\frac{\gamma}{\Gamma(\gamma+1)}\int_{a}^{t}(t+\gamma h
-\sigma(s))_h^{(\gamma-1)}\eta^{\Delta}(s)\Delta s\right)^2\Biggr]\Delta t\\
&=h^3L_{uu}[\hat{y}](\tau)+2h^{\gamma+2}L_{uv}[\hat{y}](\tau)+h^{\gamma+1}L_{vv}[\hat{y}](\tau)\\
&\quad
+\int_{\sigma(\tau)}^{b}L_{vv}[\hat{y}](t)\left(h^\gamma\eta^{\Delta}(t)
+\frac{\gamma}{\Gamma(\gamma+1)}\int_{a}^{t}(t+\gamma
h-\sigma(s))_h^{(\gamma-1)}\eta^{\Delta}(s)\Delta s\right)^2\Delta
t.
\end{split}
\end{equation*}
Observe that
\begin{multline*}
h^{2\gamma+1}(\gamma -1)^2 L_{vv}[\hat{y}](\sigma(\tau))\\
+\int_{\sigma^2(\tau)}^{b}L_{vv}[\hat{y}](t)\left(\frac{\gamma}{\Gamma(\gamma+1)}\int_{a}^{t}(t+\gamma
h-\sigma(s))_h^{(\gamma-1)}\eta^{\Delta}(s)\Delta s\right)^2\Delta t\\
=\int_{\sigma(\tau)}^{b}L_{vv}[\hat{y}](t)\left(h^\gamma
\eta^\Delta(t)+\frac{\gamma}{\Gamma(\gamma+1)}\int_{a}^{t}(t+\gamma
h-\sigma(s))_h^{(\gamma-1)}\eta^{\Delta}(s)\Delta s\right)^2\Delta
t.
\end{multline*}
Let $t\in[\sigma^2(\tau),\rho(b)]\cap h\mathbb{Z}$. Since
\begin{equation}
\label{rui10}
\begin{split}
\frac{\gamma}{\Gamma(\gamma+1)}&\int_{a}^{t}(t+\gamma h -\sigma(s))_h^{(\gamma-1)}\eta^{\Delta}(s)\Delta s\\
&= \frac{\gamma}{\Gamma(\gamma+1)}\left[\int_{a}^{\sigma(\tau)}(t+\gamma h-\sigma(s))_h^{(\gamma-1)}\eta^{\Delta}(s)\Delta s\right.\\
&\qquad\qquad\qquad\qquad \left.+\int_{\sigma(\tau)}^{t}(t+\gamma h-\sigma(s))_h^{(\gamma-1)}\eta^{\Delta}(s)\Delta s\right]\\
&=h\frac{\gamma}{\Gamma(\gamma+1)}\left[(t+\gamma h-\sigma(\tau))_h^{(\gamma-1)}-(t+\gamma h-\sigma(\sigma(\tau)))_h^{(\gamma-1)}\right]\\
&=\frac{\gamma}{\Gamma(\gamma+1)}h\left[h^{\gamma-1}\frac{\Gamma\left(\frac{t-\tau}{h}+\gamma\right)}{\Gamma\left(\frac{t-\tau}{h}+1\right)}
-h^{\gamma-1}\frac{\Gamma\left(\frac{t-\tau}{h}+\gamma-1\right)}{\Gamma\left(\frac{t-\tau}{h}\right)}\right]\\
&=\frac{\gamma h^\gamma}{\Gamma(\gamma+1)}\left[
\frac{\left(\frac{t-\tau}{h}+\gamma-1\right)\Gamma\left(\frac{t-\tau}{h}+\gamma-1\right)
-\left(\frac{t-\tau}{h}\right)\Gamma\left(\frac{t-\tau}{h}+\gamma-1\right)}
{\left(\frac{t-\tau}{h}\right)\Gamma\left(\frac{t-\tau}{h}\right)}\right]\\
&=\frac{\gamma
h^\gamma}{\Gamma(\gamma+1)}\left[\frac{(\gamma-1)\Gamma\left(\frac{t-\tau}{h}
+\gamma-1\right)}{\Gamma\left(\frac{t-\tau}{h}+1\right)}\right]\\
&=h^{2}\frac{\gamma(\gamma-1)}{\Gamma(\gamma+1)}(t+\gamma h
-\sigma(\sigma(\tau)))_h^{(\gamma-2)},
\end{split}
\end{equation}
we conclude that
\begin{multline*}
\int_{\sigma^2(\tau)}^{b}L_{vv}[\hat{y}](t)\left(\frac{\gamma}{\Gamma(\gamma+1)}\int_{a}^{t}(t
+\gamma h-\sigma(s))_h^{(\gamma-1)}\eta^{\Delta}(s)\Delta s\right)^2\Delta t\\
=\int_{\sigma^2(\tau)}^{b}L_{vv}[\hat{y}](t)\left(h^2\frac{\gamma(\gamma-1)}{\Gamma(\gamma+1)}(t
+\gamma h-\sigma^2(\tau))_h^{(\gamma-2)}\right)^2\Delta t.
\end{multline*}
Note that we can write
${_t}\Delta_b^\beta\eta(t)=-{_h}\Delta_{\rho(b)}^{-\nu}
\eta^\Delta(t-\nu h)$ because $\eta(b)=0$. It is not difficult to
see that the following equality holds:
\begin{equation*}
\begin{split}
\int_{a}^{b}2L_{uw}[\hat{y}](t)\eta^\sigma(t){_h}\Delta_b^\beta\eta(t)\Delta
t
&=-\int_{a}^{b}2L_{uw}[\hat{y}](t)\eta^\sigma(t){_h}\Delta_{\rho(b)}^{-\nu}
\eta^\Delta(t-\nu h)\Delta t\\
&=2h^{2+\nu}L_{uw}[\hat{y}](\tau)(\nu-1) \, .
\end{split}
\end{equation*}
Moreover,
\begin{equation*}
\begin{split}
\int_{a}^{b} &2L_{vw}[\hat{y}](t){_a}\Delta_h^\alpha\eta(t){_h}\Delta_b^\beta\eta(t)\Delta t\\
&=-2\int_{a}^{b}L_{vw}[\hat{y}](t)\left\{\left(h^\gamma\eta^{\Delta}(t)+\frac{\gamma}{\Gamma(\gamma+1)}
\cdot\int_{a}^{t}(t+\gamma h-\sigma(s))_h^{(\gamma-1)}\eta^{\Delta}(s)\Delta s\right)\right.\\
&\qquad\qquad
\left.\cdot\left[h^\nu\eta^{\Delta}(t)+\frac{\nu}{\Gamma(\nu+1)}\int_{\sigma(t)}^{b}(s
+\nu h-\sigma(t))_h^{(\nu-1)}\eta^{\Delta}(s)\Delta s\right]\right\}\Delta t\\
&=2h^{\gamma+\nu+1}(\nu-1)L_{vw}[\hat{y}](\tau)+2h^{\gamma+\nu+1}(\gamma-1)L_{vw}[\hat{y}](\sigma(\tau)).
\end{split}
\end{equation*}
Finally, we have that
\begin{equation*}
\begin{split}
&\int_{a}^{b} L_{ww}[\hat{y}](t)({_h}\Delta_b^\beta\eta(t))^2\Delta t\\
&=\int_{a}^{b}L_{ww}[\hat{y}](t)\left[h^\nu\eta^{\Delta}(t)+\frac{\nu}{\Gamma(\nu+1)}\int_{\sigma(t)}^{b}(s+\nu
h
-\sigma(t))_h^{(\nu-1)}\eta^{\Delta}(s)\Delta s\right]^2\Delta t\\
&=\int_{a}^{\sigma(\sigma(\tau))}L_{ww}[\hat{y}](t)\left[h^\nu\eta^{\Delta}(t)+\frac{\nu}{\Gamma(\nu+1)}
\int_{\sigma(t)}^{b}(s+\nu h-\sigma(t))_h^{(\nu-1)}\eta^{\Delta}(s)\Delta s\right]^2\Delta t\\
&=\int_{a}^{\tau}L_{ww}[\hat{y}](t)\left[\frac{\nu}{\Gamma(\nu+1)}\int_{\sigma(t)}^{b}(s
+\nu h-\sigma(t))_h^{(\nu-1)}\eta^{\Delta}(s)\Delta s\right]^2\Delta t\\
&\qquad +hL_{ww}[\hat{y}](\tau)(h^\nu-\nu h^\nu)^2+h^{2\nu+1}L_{ww}[\hat{y}](\sigma(\tau))\\
&=\int_{a}^{\tau}L_{ww}[\hat{y}](t)\left[h\frac{\nu}{\Gamma(\nu+1)}\left\{(\tau+\nu
h
-\sigma(t))_h^{(\nu-1)}-(\sigma(\tau)+\nu h-\sigma(t))_h^{(\nu-1)}\right\}\right]^2\\
&\qquad + hL_{ww}[\hat{y}](\tau)(h^\nu-\nu
h^\nu)^2+h^{2\nu+1}L_{ww}[\hat{y}](\sigma(\tau)).
\end{split}
\end{equation*}
Similarly as we did in \eqref{rui10}, we can prove that
\begin{multline*}
h\frac{\nu}{\Gamma(\nu+1)}\left\{(\tau+\nu
h-\sigma(t))_h^{(\nu-1)}-(\sigma(\tau)+\nu
h-\sigma(t))_h^{(\nu-1)}\right\}\\
=h^{2}\frac{\nu(1-\nu)}{\Gamma(\nu+1)}(\tau+\nu
h-\sigma(t))_h^{(\nu-2)}.
\end{multline*}
Thus, we have that inequality \eqref{des1} is equivalent to
\begin{multline}
\label{des2}
h\Biggl\{h^2L_{uu}[\hat{y}](t)+2h^{\gamma+1}L_{uv}[\hat{y}](t)
+h^{\gamma}L_{vv}[\hat{y}](t)+L_{vv}(\sigma(t))(\gamma h^\gamma-h^\gamma)^2\\
+\int_{\sigma(\sigma(t))}^{b}h^3L_{vv}(s)\left(\frac{\gamma(\gamma-1)}{\Gamma(\gamma+1)}(s
+\gamma h -\sigma(\sigma(t)))_h^{(\gamma-2)}\right)^2\Delta s\\
+2h^{\nu+1}L_{uw}[\hat{y}](t)(\nu-1)+2h^{\gamma+\nu}(\nu-1)L_{vw}[\hat{y}](t)\\
+2h^{\gamma+\nu}(\gamma-1)L_{vw}(\sigma(t))+h^{2\nu}L_{ww}[\hat{y}](t)(1-\nu)^2+h^{2\nu}L_{ww}[\hat{y}](\sigma(t))\\
+\int_{a}^{t}h^3L_{ww}[\hat{y}](s)\left(\frac{\nu(1-\nu)}{\Gamma(\nu+1)}(t+\nu
h - \sigma(s))^{\nu-2}\right)^2\Delta s\Biggr\}\geq 0.
\end{multline}
Because $h>0$, \eqref{des2} is equivalent to \eqref{eq:LC}. The
theorem is proved.
\end{proof}

The next result is a simple corollary of Theorem~\ref{teorem1}.
\begin{cor}[The $h$-Legendre necessary condition -- \textrm{cf.} Result~1.3 of \cite{CD:Bohner:2004}]
\label{CorDis:Bohner} Let $\mathbb{T}$ be the time scale $h
\mathbb{Z}$, $h > 0$, with the forward jump operator $\sigma$ and
the delta derivative $\Delta$. Assume $a, b \in \mathbb{T}$, $a <
b$. If $\hat{y}$ is a solution to the problem
\begin{equation*}
\mathcal{L}[y(\cdot)]=\int_{a}^{b}L(t,y^{\sigma}(t),y^\Delta(t))\Delta
t \longrightarrow \min, \  y(a)=A, \ y(b)=B \, ,
\end{equation*}
then the inequality
\begin{equation}
\label{LNCBohner}
h^2L_{uu}[\hat{y}](t)+2hL_{uv}[\hat{y}](t)+L_{vv}[\hat{y}](t)+L_{vv}[\hat{y}](\sigma(t))
\geq 0
\end{equation}
holds for all $t\in\mathbb{T}^{\kappa^2}$, where
$[\hat{y}](t)=(t,\hat{y}^{\sigma}(t),\hat{y}^\Delta(t))$.
\end{cor}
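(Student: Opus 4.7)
The plan is to derive Corollary~\ref{CorDis:Bohner} directly from the $h$-fractional Legendre necessary condition of Theorem~\ref{teorem1} by specializing the orders $\alpha$ and $\beta$. Specifically, I would take $\alpha = 1$ (so that $\gamma := 1-\alpha = 0$) and consider a Lagrangian $L(t,u,v,w)$ that does not depend on the fourth argument $w$, so that the right-fractional component of the variational problem \eqref{naosei7} collapses to the classical problem of Corollary~\ref{CorDis:Bohner}. Under this reduction, ${_a}\Delta_h^1 y(t) = y^\Delta(t)$, which matches the integrand $L(t,y^\sigma(t),y^\Delta(t))$ of the classical problem.

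Next I would substitute these choices into inequality \eqref{eq:LC} and simplify term by term. Since $L$ is independent of $w$, every partial derivative involving $w$ vanishes: $L_{uw} \equiv L_{vw} \equiv L_{ww} \equiv 0$. This kills the terms $2h^{\nu+1}(\nu-1)L_{uw}$, $2h^{\nu+\gamma}(\gamma-1)L_{vw}[\hat y](\sigma(t))$, $2h^{\nu+\gamma}(\nu-1)L_{vw}[\hat y](t)$, $h^{2\nu}(\nu-1)^2 L_{ww}[\hat y](t)$, $h^{2\nu}L_{ww}[\hat y](\sigma(t))$, and the integral $\int_a^t h^3 L_{ww}[\hat y](s)(\cdots)^2\Delta s$. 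What remains is
\begin{equation*}
h^2 L_{uu}[\hat y](t) + 2h^{\gamma+1}L_{uv}[\hat y](t) + h^{2\gamma}(\gamma-1)^2 L_{vv}[\hat y](\sigma(t)) + h^{\gamma}L_{vv}[\hat y](t) + R(t) \geq 0,
\end{equation*}
where $R(t) = \int_{\sigma^2(t)}^{b} h^3 L_{vv}[\hat y](s)\bigl(\tfrac{\gamma(\gamma-1)}{\Gamma(\gamma+1)}(s+\gamma h - \sigma^2(t))_h^{(\gamma-2)}\bigr)^2\Delta s$.

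Finally I would plug in $\gamma = 0$: the prefactor $\gamma(\gamma-1) = 0$ makes the integrand $R(t)$ vanish identically; the coefficient $h^{2\gamma}(\gamma-1)^2 = 1$, so the third term becomes $L_{vv}[\hat y](\sigma(t))$; the coefficient $h^{\gamma+1}=h$, so the second term becomes $2h\,L_{uv}[\hat y](t)$; and $h^{\gamma}=1$, giving $L_{vv}[\hat y](t)$. Collecting these reductions yields exactly
\begin{equation*}
h^2 L_{uu}[\hat y](t) + 2h\,L_{uv}[\hat y](t) + L_{vv}[\hat y](t) + L_{vv}[\hat y](\sigma(t)) \geq 0,
\end{equation*}
which is \eqref{LNCBohner}. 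There is no real obstacle here; the only bookkeeping care required is to check that the boundary value $\gamma = 0$ is permissible in the statement of Theorem~\ref{teorem1} (the operator $_a\Delta_h^{0}$ was explicitly set to the identity in \eqref{seila1}), and to confirm that the factor $\gamma(\gamma-1)$ genuinely annihilates $R(t)$ at $\gamma=0$ without producing an indeterminate form through the $h$-factorial $(\cdot)_h^{(\gamma-2)}$ — this is fine since the factorial is finite while the prefactor is zero.
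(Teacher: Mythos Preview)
Your proposal is correct and follows exactly the paper's approach: set $\alpha=1$ (hence $\gamma=0$) and take $L$ independent of $w$, then read off \eqref{LNCBohner} from Theorem~\ref{teorem1}. The paper's proof is a one-line statement of this specialization, and your term-by-term bookkeeping (including the check that $\gamma(\gamma-1)=0$ kills $R(t)$ without an indeterminate form) simply spells out what the paper leaves implicit.
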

\begin{proof}
Choose $\alpha=1$ and a Lagrangian $L$ that does not depend on
$w$. Then, $\gamma=0$ and the result follows immediately from
Theorem~\ref{teorem1}.
\end{proof}

\begin{remark}
When $h\rightarrow 0$ we get $\sigma(t) = t$ and inequality
\eqref{LNCBohner} coincides with Legendre's classical necessary
optimality condition $L_{vv}[\hat{y}](t) \ge 0$ (\textrm{cf.}
Theorem \ref{teor2}).
\end{remark}


\section{Examples}
\label{sec2}

In this section we present some illustrative examples.
Calculations were done using Maxima Software.

\begin{example}
\label{ex:2} Let us consider the following problem:
\begin{equation}
\label{eq:ex2} \mathcal{L}[y(\cdot)]=\frac{1}{2} \int_{0}^{1}
\left({_0}\Delta_h^{\frac{3}{4}} y(t)\right)^2\Delta t
\longrightarrow \min \, , \quad y(0)=0 \, , \quad y(1)=1 \, .
\end{equation}
We consider (\ref{eq:ex2}) with different values of $h$. Numerical
results show that when $h$ tends to zero the $h$-fractional
Euler--Lagrange extremal tends to the fractional continuous
extremal: when $h \rightarrow 0$ (\ref{eq:ex2}) tends to the
fractional continuous variational problem in the
Riemann--Liouville sense studied in \cite[Example~1]{agr0}, with
solution given by
\begin{equation}
\label{solEx2}
y(t)=\frac{1}{2}\int_0^t\frac{dx}{\left[(1-x)(t-x)\right]^{\frac{1}{4}}}
\, .
\end{equation}
This is illustrated in Figure~\ref{Fig:2}.
\begin{figure}[ht]
\begin{center}
\includegraphics[scale=0.45]{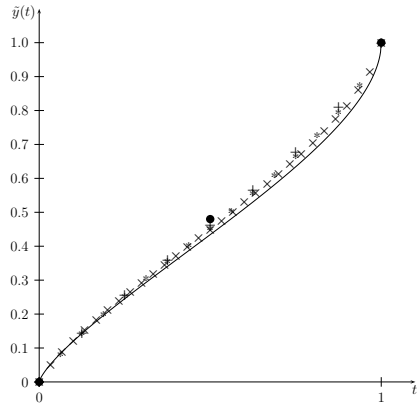}
  \caption{Extremal $\tilde{y}(t)$ for problem of Example~\ref{ex:2}
  with different values of $h$:
  $h=0.50$ ($\bullet$); $h=0.125$ ($+$);
  $h=0.0625$ ($\ast$); $h=1/30$ ($\times$).
  The continuous line represent function
  (\ref{solEx2}).}\label{Fig:2}
\end{center}
\end{figure}
In this example for each value of $h$ there is a unique
$h$-fractional Euler--Lagrange extremal, solution of \eqref{EL},
which always verifies the $h$-fractional Legendre necessary
condition \eqref{eq:LC}.
\end{example}

\begin{example}
\label{ex:1} Let us consider the following problem:
\begin{equation}
\label{eq:ex1} \mathcal{L}[y(\cdot)]=\int_{0}^{1}
\left[\frac{1}{2}\left({_0}\Delta_h^\alpha
y(t)\right)^2-y^{\sigma}(t)\right]\Delta t \longrightarrow \min \,
, \quad y(0) = 0 \, , \quad y(1) = 0 \, .
\end{equation}
We begin by considering problem (\ref{eq:ex1}) with a fixed value
for $\alpha$ and different values of $h$. The extremals
$\tilde{y}$ are obtained using our Euler--Lagrange equation
(\ref{EL}). As in Example~\ref{ex:2} the numerical results show
that when $h$ tends to zero the extremal of the problem tends to
the extremal of the corresponding continuous fractional problem of
the calculus of variations in the Riemann--Liouville sense. More
precisely, when $h$ approximates zero problem (\ref{eq:ex1}) tends
to the fractional continuous problem studied in
\cite[Example~2]{agr2}. For $\alpha=1$ and $h \rightarrow 0$ the
extremal of (\ref{eq:ex1}) is given by $y(t)=\frac{1}{2} t (1-t)$,
which coincides with the extremal of the classical problem of the
calculus of variations
\begin{equation*}
\mathcal{L}[y(\cdot)]=\int_{0}^{1} \left(\frac{1}{2}
y'(t)^2-y(t)\right) dt \longrightarrow \min \, , \quad y(0) = 0 \,
, \quad y(1) = 0 \, .
\end{equation*}
This is illustrated in Figure~\ref{Fig:0} for $h = \frac{1}{2^i}$,
$i = 1, 2, 3, 4$.
\begin{figure}[ht]
\begin{minipage}[b]{0.45\linewidth}
\begin{center}
\includegraphics[scale=0.45]{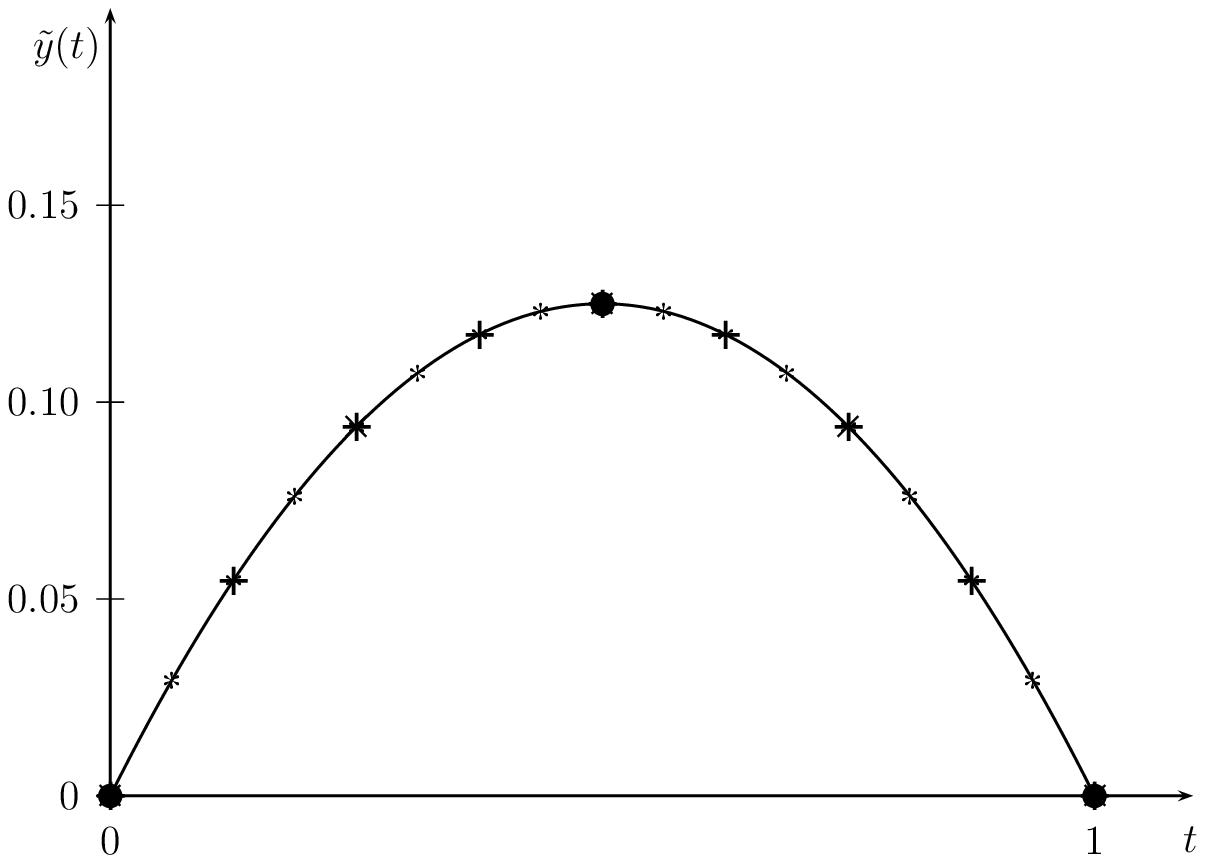}
  \caption{Extremal $\tilde{y}(t)$ for problem \eqref{eq:ex1}
  with $\alpha=1$ and different values of $h$:
  $h=0.5$ ($\bullet$); $h=0.25$ ($\times$);
  $h=0.125$ ($+$); $h=0.0625$ ($\ast$).}\label{Fig:0}
\end{center}
\end{minipage}
\hspace{0.05cm}
\begin{minipage}[b]{0.45\linewidth}
\begin{center}
\includegraphics[scale=0.45]{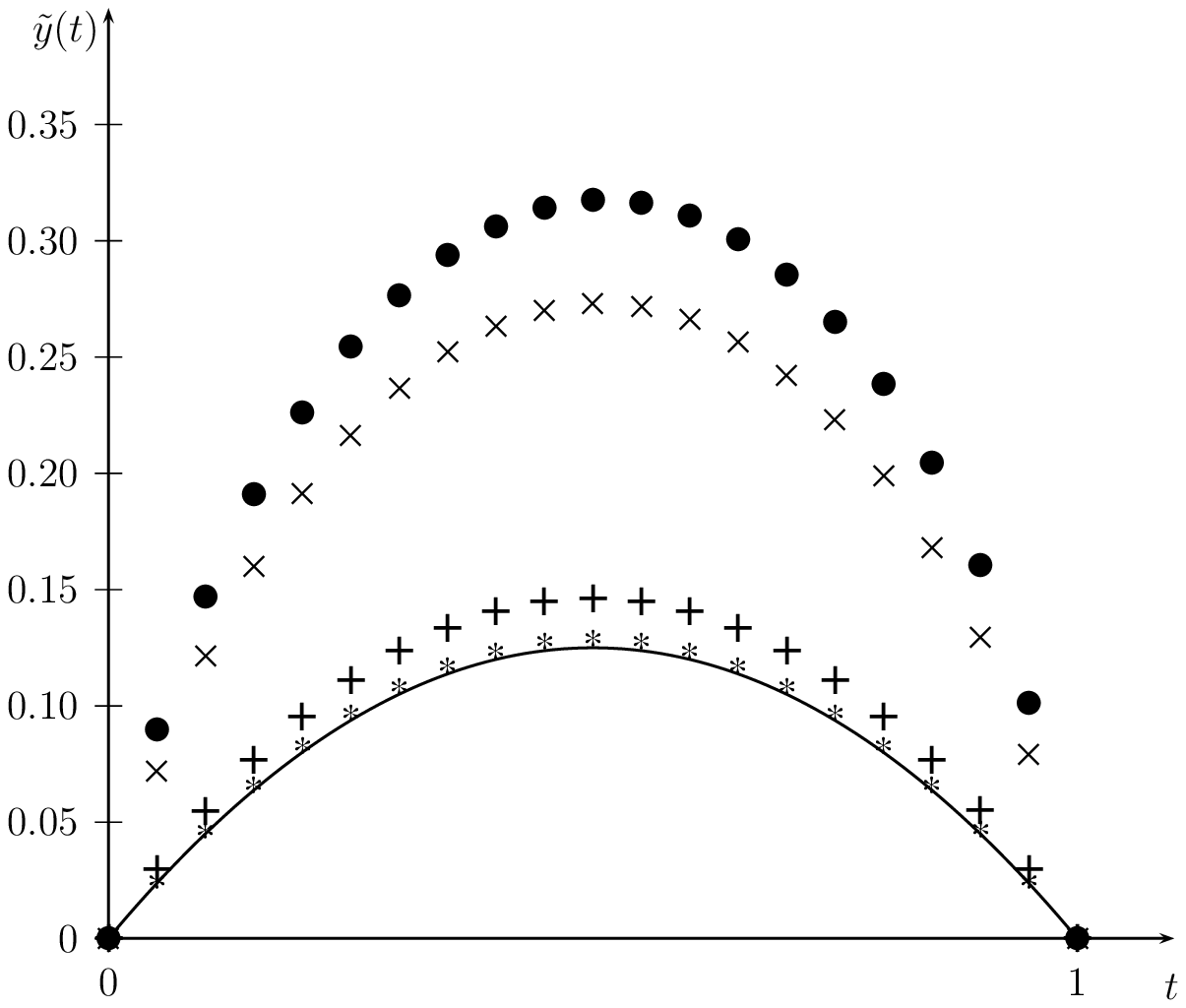}
  \caption{Extremal $\tilde{y}(t)$ for \eqref{eq:ex1}
with $h=0.05$ and different values of $\alpha$: $\alpha=0.70$
($\bullet$); $\alpha=0.75$ ($\times$); $\alpha=0.95$ ($+$);
$\alpha=0.99$ ($\ast$). The continuous line is $y(t)=\frac{1}{2} t
(1-t)$.}\label{Fig:1}
\end{center}
\end{minipage}
\end{figure}
In this example, for each value of $\alpha$ and $h$, we only have
one extremal (we only have one solution to (\ref{EL}) for each
$\alpha$ and $h$). Our Legendre condition \eqref{eq:LC} is always
verified along the extremals. Figure~\ref{Fig:1} shows the
extremals of problem \eqref{eq:ex1} for a fixed value of $h$
($h=1/20$) and different values of $\alpha$. The numerical results
show that when $\alpha$ tends to one the extremal tends to the
solution of the classical (integer order) discrete-time problem.
\end{example}

Our last example shows that the $h$-fractional Legendre necessary
optimality condition can be a very useful tool. In
Example~\ref{ex:3} we consider a problem for which the
$h$-fractional Euler--Lagrange equation gives several candidates
but just a few of them verify the Legendre condition
\eqref{eq:LC}.

\begin{example}
\label{ex:3} Let us consider the following problem:
\begin{equation}
\label{eq:ex3} \mathcal{L}[y(\cdot)]=\int_{a}^{b}
\left({_a}\Delta_h^\alpha
y(t)\right)^3+\theta\left({_h}\Delta_b^\alpha y(t)\right)^2\Delta
t \longrightarrow \min \, , \quad y(a)=0 \, , \quad y(b)=1 \, .
\end{equation}
For $\alpha=0.8$, $\beta=0.5$, $h=0.25$, $a=0$, $b=1$, and
$\theta=1$, problem (\ref{eq:ex3}) has eight different
Euler--Lagrange extremals. As we can see on
Table~\ref{candidates:ex3} only two of the candidates verify the
Legendre condition. To determine the best candidate we compare the
values of the functional $\mathcal{L}$ along the two good
candidates. The extremal we are looking for is given by the
candidate number five on Table~\ref{candidates:ex3}.
\begin{table}
\footnotesize \centering
\begin{tabular}{|c|c|c|c|c|c|}\hline
\# & $\tilde{y}\left(\frac{1}{4}\right)$ &
$\tilde{y}\left(\frac{1}{2}\right)$
& $\tilde{y}\left(\frac{3}{4}\right)$ & $\mathcal{L}(\tilde{y})$ & Legendre condition \eqref{eq:LC}\\
\hline
         1 & -0.5511786 &  0.0515282 &  0.5133134 &  9.3035911 &         Not verified \\
\hline
         2 &  0.2669091 &  0.4878808 &  0.7151924 &  2.0084203 &        Verified \\
\hline
         3 & -2.6745703 &  0.5599360 & -2.6730125 & 698.4443232 &         Not verified \\
\hline
         4 &  0.5789976 &  1.0701515 &  0.1840377 & 12.5174960 &         Not verified \\
\hline
         5 &  1.0306820 &  1.8920322 &  2.7429222 & -32.7189756 &        Verified \\
\hline
         6 &  0.5087946 & -0.1861431 &  0.4489196 & 10.6730959 &         Not verified \\
\hline
         7 &  4.0583690 & -1.0299054 & -5.0030989 & 2451.7637948 &         Not verified \\
\hline
         8 & -1.7436106 & -3.1898449 & -0.8850511 & 238.6120299 &         Not verified \\
\hline
  \end{tabular}
\smallskip
  \caption{There exist 8 Euler-Lagrange extremals for problem \eqref{eq:ex3}
  with $\alpha=0.8$, $\beta=0.5$, $h=0.25$, $a=0$, $b=1$, and $\theta=1$,
  but only 2 of them satisfy the fractional Legendre condition \eqref{eq:LC}.}
  \label{candidates:ex3}
\end{table}
\begin{table}
\footnotesize \centering
\begin{tabular}{|c|c|c|c|c|c|c|} \hline
\# & $\tilde{y}(0.1)$ & $\tilde{y}(0.2)$ & $\tilde{y}(0.3)$ &
$\tilde{y}(0.4)$ &  $\mathcal{L}(\tilde{y})$ & \eqref{eq:LC}\\
\hline

         1 & -0.305570704 & -0.428093486 & 0.223708338 & 0.480549114 & 12.25396166 &         No \\\hline

         2 & -0.427934654 & -0.599520948 & 0.313290997 & -0.661831134 & 156.2317667 &         No \\\hline

         3 & 0.284152257 & -0.227595659 & 0.318847274 & 0.531827387 & 8.669645848 &         No \\\hline

         4 & -0.277642565 & 0.222381632 & 0.386666793 & 0.555841555 & 6.993518478 &         No \\\hline

         5 & 0.387074742 & -0.310032839 & 0.434336603 & -0.482903047 & 110.7912605 &         No \\\hline

         6 & 0.259846344 & 0.364035314 & 0.463222456 & 0.597907505 & 5.104389191 &        Yes \\\hline

         7 & -0.375094681 & 0.300437245 & 0.522386246 & -0.419053781 & 93.95316858 &         No \\\hline

         8 & 0.343327771 & 0.480989769 & 0.61204299 & -0.280908953 & 69.23497954 &         No \\\hline

         9 & 0.297792192 & 0.417196073 & -0.218013689 & 0.460556635 & 14.12227593 &         No \\\hline

        10 & 0.41283304 & 0.578364133 & -0.302235104 & -0.649232892 & 157.8272685 &         No \\\hline

        11 & -0.321401682 & 0.257431098 & -0.360644857 & 0.400971272 & 19.87468886 &         No \\\hline

        12 & 0.330157414 & -0.264444122 & -0.459803086 & 0.368850105 & 24.84475504 &         No \\\hline

        13 & -0.459640837 & 0.368155651 & -0.515763025 & -0.860276767 & 224.9964788 &         No \\\hline

        14 & -0.359429958 & -0.50354835 & -0.640748011 & 0.294083676 & 34.43515839 &         No \\\hline

        15 & 0.477760586 & -0.382668914 & -0.66536683 & -0.956478654 & 263.3075289 &         No \\\hline

        16 & -0.541587541 & -0.758744525 & -0.965476394 & -1.246195157 & 392.9592508 &         No \\\hline
\end{tabular}
\smallskip
\caption{There exist 16 Euler-Lagrange extremals for problem
\eqref{eq:ex3}
  with $\alpha=0.3$, $h=0.1$, $a=0$, $b=0.5$, and $\theta=0$,
  but only 1 (candidate \#6) satisfy the fractional Legendre condition \eqref{eq:LC}.}\label{16dados}
\end{table}

For problem (\ref{eq:ex3}) with $\alpha=0.3$, $h=0.1$, $a=0$,
$b=0.5$, and $\theta=0$, we obtain the results of
Table~\ref{16dados}: there exist sixteen Euler--Lagrange extremals
but only one satisfy the fractional Legendre condition. The
extremal we are looking for is given by the candidate number six
on Table~\ref{16dados}.
\end{example}

\section{State of the Art}

Discrete fractional calculus is a theory that is in its infancy.
To the best of the author's knowledge there is no research paper
providing any kind of result for the time scale
$\mathbb{T}=h\mathbb{Z}$, $h>0$. Nevertheless, the three papers
\cite{Atici0,Atici,Miller} are already published and contain some
basic results for the time scale $\mathbb{T}=\mathbb{Z}$ and also
some methods to solve initial value problems with discrete
left-fractional derivatives. None of these papers contain results
involving any subject related with the calculus of variations. We
have already two research papers \cite{BAFT0,BAFT}
within this topic and further investigation is being undertaken by
Nuno R. O. Bastos in his PhD thesis under the supervision
of Delfim F. M. Torres.

The results of this chapter were presented at the ICDEA 2009,
International Conference on Difference Equations and Applications,
Lisbon, Portugal, October 19--23, 2009, in a contributed talk
entitled \emph{Calculus of Variations with Discrete Fractional
Derivatives}.

\clearpage{\thispagestyle{empty}\cleardoublepage}

\chapter{Conclusions and future work}\label{conclusao}

This thesis had two major objectives: the development of the
calculus of variations on a general time scale and the development
of a discrete fractional calculus of variations theory.

For the calculus of variations on time scales we contributed with
a necessary optimality condition for the problem depending on more
than one $\Delta$-derivative (cf. Theorem \ref{E-LHigher}) and
also for the isoperimetric problem (cf. Theorem \ref{T1}). Some
Sturm--Liouville eigenvalue problems and their relation with
isoperimetric problems were also considered (cf. Section
\ref{sub:sec:ep}). In a first step towards the development of
direct methods for the calculus of variations on a general time
scale we derived some useful integral inequalities\footnote{We
remark that the Gronwall--Bellman--Bihari type inequalities
presented in Section \ref{duasvar} for functions of two variables
can be very useful in studying partial delta dynamic equations.}
and solved some variational problems (see Section
\ref{sec:app:CV}). As an ``outsider" consequence of the herein
obtained integral inequalities we were able to prove existence of
solution to an integrodynamic equation (cf. Theorem \ref{teor}).
In fact, it is our purpose to study the existence of solutions to
boundary value problems of the form
\begin{equation}
\begin{gathered}
y^{\Delta^2}(t)=f(t,y^\sigma(t),y^\Delta(t)),\quad t\in[a,b]_\mathbb{T}^{\kappa^2}, \\
y(a)=A,  \ \  y(b)=B,
\end{gathered}
\end{equation}
and apply the possible obtained results to prove existence of
solution to some classes of Euler--Lagrange equations (remember
that the Euler--Lagrange equation is a second order dynamic
equation). To the best of our knowledge no such a study exists for
a general time scale (we don't even are aware of such a study for
discrete Euler--Lagrange equations).

\begin{remark}
While studying integral inequalities and its applications we were
able to derive some results which are not within the scope of the
calculus of variations and optimal control. This work is published
in the three research papers \cite{Ferr0,Ferr1,Ferr5}.
\end{remark}
We would also like to give answer to a deeper question: find the
Euler--Lagrange equation for the higher-order calculus of
variations problem without restriction (H) on the forward jump
operator. We have some work in progress towards this end, namely,
we obtain in \cite{FerrMalTorres}, using the weak maximum
principle on time scales \cite{zeidan2}, the desired
Euler--Lagrange equation but in integral form.

With respect to the discrete fractional calculus theory we proved
some properties for the fractional sum and difference operators in
Section \ref{sec0} and then, in Section \ref{sec1}, we proved the
Euler--Lagrange equation as well as Legendre's necessary
condition. We also indicate, in Section \ref{sec2}, that as the
parameter $h$ tends to zero, the $h$-fractional Euler--Lagrange
extremal seems to converge to the continuous fractional one. This
indicates that the discrete extremal can be used to approximate
the continuous one. One of the subjects that we are willing to
study within the discrete fractional calculus is the existence of
solutions to the Euler--Lagrange equation \eqref{EL}. Note that
this equation contains both the left and the right discrete
fractional derivatives. Since we obtained so far two main results,
there is plenty to do in the development of the theory of discrete
fractional calculus of variations. For example, we want to
discover what one gets with a general variable endpoint
variational problem. One can also think in obtaining some criteria
in order to establish a sufficient condition. Moreover, we want to
find if it is possible to make an analogue study to that done in
Chapter~\ref{chap2} in order to solve directly some discrete
fractional variational problems. We think that this is a fruitful
area with much to be done, in both of theoretical and practical
directions, and therefore we would like to make part of the
efforts to accomplish it.

To close this manuscript it follows a list of the author
publications during his PhD thesis:
\cite{rchid,Ferr0,Ferr1,Ferr8,Ferr2,Ferr3,Ferr4,Ferr5,Ferr6,Ferr7}.

\clearpage{\thispagestyle{empty}\cleardoublepage}


\clearpage{\thispagestyle{empty}\cleardoublepage}

\addcontentsline{toc}{chapter}{Index}

\printindex

\clearpage{\thispagestyle{empty}\cleardoublepage}

\end{document}